\newcommand{\pFq}[5]{\ensuremath{{}_{#1}F_{#2} \left( \genfrac{}{}{0pt}{}{#3}
{#4} \bigg| {#5} \right)}}
 \renewcommand{\a}{\alpha}
\renewcommand{\b}{\beta}
\renewcommand{\d}{{\delta}}
\newcommand{\g}{\gamma}
\newcommand{\G}{\Gamma}
\renewcommand{\l}{\lambda}
\renewcommand{\(}{\left\(}
\renewcommand{\)}{\right\)}
\renewcommand{\[}{\left\[}
\renewcommand{\]}{\right\]}
\newtheorem{remark}[]{Remark}
\numberwithin{equation}{section}
 \theoremstyle{plain}
\newtheorem{theorem}{Theorem}[section]
\newtheorem{lemma}[theorem]{Lemma}
\newtheorem{corollary}[theorem]{Corollary}
\def\proof{\@ifnextchar[{\@oproof}{\@nproof}}
\def\@oproof[#1][#2]{\trivlist\item[\hskip\labelsep\textit{#2 Proof of\
#1.}~]\ignorespaces}
\def\@nproof{\trivlist\item[\hskip\labelsep\textit{Proof.}~]\ignorespaces}
\let\dotlessi=\i
\begin{document}
\title[Explicit transformations of certain Lambert series]{Explicit transformations of certain Lambert series}
\dedicatory{In memory of Srinivasa Ramanujan}

\author{Atul Dixit}
\address{Discipline of Mathematics, Indian Institute of Technology Gandhinagar, Palaj, Gandhinagar 382355, Gujarat, India} 
\email{adixit@iitgn.ac.in}

\author{Aashita Kesarwani}
\address{Computing and Information Services, Harvey Mudd College, 301 Platt Boulevard, Claremont, CA 91711, USA}
\email{contact@aashitak.com}

\author{Rahul Kumar}
\address{Discipline of Mathematics, Indian Institute of Technology Gandhinagar, Palaj, Gandhinagar 382355, Gujarat, India} 
\curraddr{Center for Geometry and Physics, Institute for Basic Science (IBS), Pohang 37673, Republic of Korea.}
\email{rahul@ibs.re.kr}
\thanks{2020 \textit{Mathematics Subject Classification.} Primary 11M06, 33C10, 44A20; Secondary 11F11, 33C20.\\
\textit{Keywords and phrases.} Bessel functions, Watson kernel, modular transformations, Ramanujan's formula for odd zeta values, sums-of-squares function}
\begin{abstract}
An exact transformation, which we call the \emph{master identity}, is obtained for the first time for the series $\sum_{n=1}^{\infty}\sigma_{a}(n)e^{-ny}$ for $a\in\mathbb{C}$ and Re$(y)>0$. New modular-type transformations when $a$ is a non-zero even integer are obtained as its special cases. The precise obstruction to modularity is explicitly seen in these transformations. These include a novel companion to Ramanujan's famous formula for $\zeta(2m+1)$. The Wigert-Bellman identity arising from the $a=0$ case of the master identity is derived too. When $a$ is an odd integer, the well-known modular transformations of the Eisenstein series on $\textup{SL}_{2}\left(\mathbb{Z}\right)$, that of the Dedekind eta function as well as Ramanujan's formula for $\zeta(2m+1)$ are derived from the master identity. The latter identity itself is derived using Guinand's version of the Vorono\"{\dotlessi} summation formula and an integral evaluation of N.~S.~Koshliakov involving a generalization of the modified Bessel function $K_{\nu}(z)$. Koshliakov's integral evaluation is proved for the first time. It is then generalized using a well-known kernel of Watson to obtain an interesting two-variable generalization of the modified Bessel function. This generalization allows us to obtain a new modular-type transformation involving the sums-of-squares function $r_k(n)$. Some results on functions self-reciprocal in the Watson kernel are also obtained.
\end{abstract}
\maketitle
\vspace{-1cm}
\tableofcontents
\vspace{-1.3cm}
\section{Introduction}\label{intro}
The Bessel functions of the first and second kinds of order $\nu$ are defined by \cite[p.~40, 64]{watson-1944a}
\begin{align}
	J_{\nu}(z)&:=\sum_{m=0}^{\infty}\frac{(-1)^m(z/2)^{2m+\nu}}{m!\Gamma(m+1+\nu)} \hspace{9mm} (z,\nu\in\mathbb{C}),\label{sumbesselj}\\
	Y_{\nu}(z)&:=\frac{J_{\nu}(z)\cos(\pi \nu)-J_{-\nu}(z)}{\sin{\pi \nu}}\hspace{5mm}(z\in\mathbb{C}, \nu\notin\mathbb{Z}),\label{ybesse}
	\end{align}
	along with $Y_n(z)=\lim_{\nu\to n}Y_\nu(z)$ for $n\in\mathbb{Z}$. Here $\G(s)$ denotes Euler's gamma function.
The modified Bessel functions of the first and second kinds are defined by \cite[p.~77, 78]{watson-1944a}
\begin{align}
I_{\nu}(z)&:=
\begin{cases}
e^{-\frac{1}{2}\pi\nu i}J_{\nu}(e^{\frac{1}{2}\pi i}z), & \text{if $-\pi<\arg(z)\leq\frac{\pi}{2}$,}\\
e^{\frac{3}{2}\pi\nu i}J_{\nu}(e^{-\frac{3}{2}\pi i}z), & \text{if $\frac{\pi}{2}<\arg(z)\leq \pi$,}
\end{cases}\label{besseli}\\
K_{\nu}(z)&:=\frac{\pi}{2}\frac{I_{-\nu}(z)-I_{\nu}(z)}{\sin\nu\pi}\label{kbesse}
\end{align}
respectively. When $\nu=n\in\mathbb{Z}$, $K_{n}(z)$ is interpreted as the limit $\nu\to n$ of the right-hand side of \eqref{kbesse}.

Several generalizations of the Bessel functions exist in the literature, for example, see \cite{dkmt}, \cite{dk2}, \cite{jekhowsky1}, \cite{jekhowsky2}, \cite{masparpog}, to list a few. A new generalization of the modified Bessel function of the second kind $K_{\nu}(z)$ was obtained by N.~S.~Koshliakov in \cite{kosh1938} through an interesting integral evaluation, and it seems to have gone unnoticed. One of the goals of this paper is to bring it to light. It has an application in obtaining an explicit modular-type transformation for an infinite series involving the generalized divisor function $\sigma_a(n):=\sum_{d|n}d^{a}$, where $a\in\mathbb{C}$. See Theorems \ref{main} and \ref{extendedid}. This is the main focus of this paper, for, this transformation gives, as corollaries, not only important results in the theory of modular forms but also surprising identities that were hitherto unknown. 

In \cite{kosh1938}, Koshliakov studied integrals involving the kernels
\begin{align}
 &\cos(\pi\nu) M_{2\nu}(2 \sqrt{xt}) -
\sin(\pi \nu) J_{2\nu}(2 \sqrt{xt}),\label{kk1}\\
&\sin(\pi\nu) J_{2\nu}(2 \sqrt{xt}) -\cos(\pi\nu) L_{2\nu}(2 \sqrt{xt}),\label{kk2}
\end{align} 
where \begin{align}\label{mldef}
{M_{\nu} }(x) := \frac{2}{\pi }{K_{\nu} }(x) - {Y_{\nu} }(x),\hspace{6mm}
{L_{\nu} }(x) := -\frac{2}{\pi }{K_{\nu} }(x) - {Y_{\nu} }(x).
\end{align}
The special case $\nu=0$ of some of his integrals involving the kernel in \eqref{kk1} were previously obtained by Dixon and Ferrar \cite{dixfer3}.

In the same paper \cite[Equation (8)]{kosh1938}, Koshliakov proved a remarkable result that $K_{\nu}(t)$ is self-reciprocal in the kernel \eqref{kk1}, that is, for $-\tfrac{1}{2}<\nu<\tfrac{1}{2}$,
\begin{equation}\label{koshlyakov-1}
\int_{0}^{\infty} K_{\nu}(t) \left( \cos(\pi\nu) M_{2\nu}(2 \sqrt{xt}) -
\sin(\pi\nu) J_{2\nu}(2 \sqrt{xt}) \right)\, dt = K_{\nu}(x).
\end{equation}
It is easy to see though that this identity is valid for complex $\nu$ such that $-\tfrac{1}{2}<\textup{Re}(\nu)<\tfrac{1}{2}$. Prior to this, the special case $\nu=0$ of \eqref{koshlyakov-1} was stated and proved by Dixon and
Ferrar \cite[p.~164, Equation (4.1)]{dixfer3} though they surmise in their paper that although Koshliakov does not give a reference to the formula itself, it is clear that he is familiar with the result.

In \cite[p.~897]{bdrz}, the kernel in \eqref{kk1} is called the first Koshliakov kernel, and the integral 
\begin{equation*}
\int_{0}^{\infty}f(t, \nu)\left(\cos(\pi\nu) M_{2\nu}(2 \sqrt{xt}) -\sin(\pi \nu) J_{2\nu}(2 \sqrt{xt})\right)\, dt,
\end{equation*}
the first Koshliakov transform of $f(t, \nu)$. It is important in the context of the Vorono\"{\dotlessi} summation formula for $\sigma_a(n)$, see for example \cite{bdrz}. It must be mentioned here that there are few functions in literature whose Koshliakov transforms have closed-form evaluations. However, it is always desirable to have such closed form evaluations, whenever possible, in view of their applications in number theory. Dixon and Ferrar \cite[p.~161]{dixfer3} evaluated such integrals for the special case $\nu=0$ of the Koshliakov kernel, and recently further such evaluations were obtained in \cite{dkmt}, \cite{dk2} and \cite{dixitroy1} along with their applications given.

In the same paper \cite[Equation (15)]{kosh1938}, Koshliakov gives a more general result\footnote{Koshliakov inadvertently missed the factor $\pi$ in front on the right-hand side.} of which \eqref{koshlyakov-1} is a special case, that is, for\footnote{It will be shown later that when $\mu\neq-\nu$, this result actually holds for $\nu\in\mathbb{C}\backslash\left(\mathbb{Z}\backslash\{0\}\right)$, $\textup{Re}(\mu)>-1/2$, $\textup{Re}(\nu)>-1/2$ and $\textup{Re}(\mu+\nu)>-1/2$; otherwise, it holds for $-1/2<\textup{Re}(\nu)<1/2$.} $\mu>-1/2$ and $\nu>-\tfrac{1}{2}+|\mu|$, 
\begin{align}\label{koshlyakovg-1}
&\int_{0}^{\infty} K_{\mu}(t)t^{\mu+\nu} \left( \cos(\pi\nu) M_{2\nu}(2 \sqrt{xt}) -
\sin(\pi\nu) J_{2\nu}(2 \sqrt{xt}) \right)\, dt \nonumber\\
&=\frac{\pi 2^{\mu+\nu-1}}{\sin(\nu\pi)}\bigg\{\left(\frac{x}{2}\right)^{-\nu}\frac{\Gamma(\mu+\tfrac{1}{2})}{\Gamma(1-\nu)\Gamma(\tfrac{1}{2}-\nu)}\pFq12{\mu+\tfrac{1}{2}}{\tfrac{1}{2}-\nu,1-\nu}{\frac{x^2}{4}}\nonumber\\
&\quad\quad\quad\quad\quad\quad-\left(\frac{x}{2}\right)^{\nu}\frac{\Gamma(\mu+\nu+\tfrac{1}{2})}{\Gamma(1+\nu)\Gamma(\tfrac{1}{2})}\pFq12{\mu+\nu+\tfrac{1}{2}}{\tfrac{1}{2},1+\nu}{\frac{x^2}{4}}\bigg\},
\end{align}
where, for $b_j\notin\mathbb{Z}^{-}\cup\{0\}, 1\leq j\leq q$, and $(a)_n:=a(a+1)\cdots(a+n-1)=\G(a+n)/\G(a)$,
\begin{equation}\label{pfqgen}
\pFq{p}{q}{a_1, a_2, \cdots, a_p}{b_1,b_2, \cdots, b_q}{z}:=\sum_{n=0}^{\infty}\frac{(a_1)_n(a_2)_n\cdots(a_p)_n}{(b_1)_n(b_2)_n\cdots(b_q)_n}\frac{z^n}{n!}
\end{equation}
is the generalized hypergeometric series. It is well-known \cite[p.~62, Theorem 2.1.1]{aar} that the above series converges absolutely for all $z$ if $p\leq q$ and for $|z|<1$ if $p=q+1$, and it diverges for all $z\neq0$ if $p>q+1$ and the series does not terminate.

Indeed, letting $\mu=-\nu$ in \eqref{koshlyakovg-1} gives \eqref{koshlyakov-1} as the right-hand side reduces to
\begin{align}\label{kdef}
&\frac{\pi}{2\sin(\nu\pi)}\left\{\frac{(x/2)^{-\nu}}{\G(1-\nu)}\pFq01{-}{1-\nu}{\frac{x^2}{4}}-\frac{(x/2)^{\nu}}{\G(1+\nu)}\pFq01{-}{1+\nu}{\frac{x^2}{4}}\right\}\nonumber\\
&=K_{\nu}(x)\hspace{6mm} (\text{using}\hspace{1mm}\eqref{sumbesselj}, \eqref{besseli}\hspace{1mm}\text{and}\hspace{1mm}\eqref{kbesse}).
\end{align}
Thus, either side of \eqref{koshlyakovg-1} can be conceived to be a one-variable generalization of $K_{\nu}(x)$. 

Equation \eqref{koshlyakovg-1} is the identity of Koshliakov \cite{kosh1938} whose importance seems to have gone unnoticed. Koshliakov does not prove the identity but only indicates that it can be obtained by considering 
$\int_{0}^{\infty}\left(\sin(\nu\pi)J_{\nu}(xu)+\cos(\nu\pi)Y_{\nu}(xu)\right)\frac{u^{2\mu+\nu}\, du}{(u^2+1)^{\mu+\nu+1}}$.
He does not even evaluate the above integral, unlike its special case $\mu=-\nu$, for which he \cite[Equation (7)]{kosh1938} gives the evaluation as $-K_{\nu}(x)$, again without proof. Since \eqref{koshlyakovg-1} has never been proved in the literature, we do so by deriving it as a special case of a more general result. See Corollary \ref{koshlyakovfir} of Section \ref{proof}. Its number-theoretic application is given in Section \ref{mr}.

Our generalization of Koshliakov's result \eqref{koshlyakovg-1} naturally leads us to a new two-variable generalization of the modified Bessel function. The latter allows us to derive a novel modular-type transformation involving $r_k(n)$, the number of representations of $n$ as a sum of $k$ squares, where representations with different orders or signs of the summands are regarded as distinct, i.e.,
\begin{equation}\label{rkndef}
r_k(n):=\#\{(a_1,a_2,\cdots, a_k)\in\mathbb{Z}^{k}: n=a_1^2+a_2^2+\cdots+a_k^2\}.
\end{equation}
See Theorem \ref{rk(n)identity} below.

The two-variable generalization of $K_{\nu}(x)$ is achieved by first obtaining a common extension of the first and the second Koshliakov kernels in \eqref{kk1} and \eqref{kk2} respectively. This common extension is in terms of the hyper-Bessel functions ${}_0F_{3}$ whose theory, in the general case, that is for ${}_0F_{n}$, was initiated by Delerue in a series of five papers \cite{delerue1}-\cite{delerue5}. The hyper-Bessel functions have been found useful in many applications, for example, they are used to understand the wavefields and the elements of the non-adiabatic transition matrix and the tunnelling loss matrix \cite{witte}.

Let $x>0$, $\nu\in\mathbb{C}\backslash\left(\mathbb{Z}\backslash\{0\}\right)$ and $w\in\mathbb{C}$. We define the generalization of the two Koshliakov kernels in \eqref{kk1} and \eqref{kk2} by
 \begin{multline}\label{genkoshk}
\mathscr{G}_{\nu}(x,w) 
:= \frac{\pi}{\sin(\nu \pi)} \left(\frac{x}{4}\right)^w\\
\times\Bigg[ \left(\frac{x}{4}\right)^{-\nu} \frac{1}{\Gamma(1-\nu)\Gamma\left(w+\frac{1}{2}\right) \Gamma\left(w+\frac{1}{2}-\nu\right)} 
\pFq03{-}{1-\nu, w+\frac{1}{2}, w+\frac{1}{2}-\nu}{\frac{x^2}{16} }\\
-\left(\frac{x}{4}\right)^{\nu} \frac{1}{\Gamma(1+\nu)\Gamma\left(w+\frac{1}{2}\right) \Gamma\left(w+\frac{1}{2}+\nu\right)} 
\pFq03{-}{1+\nu, w+\frac{1}{2}, w+\frac{1}{2}+\nu}{\frac{x^2}{16}}
\Bigg].
\end{multline}
The two expressions inside the square brackets in \eqref{genkoshk} are entire functions of\footnote{One might as well take $x$ in the definition of $\mathscr{G}_{\nu}(x, w)$ to be such that $-\pi<\arg(x)<\pi$, thereby having analyticity in $x$ as well. However, in this paper, we will be working with $x>0$ only.} $\nu$ and $w$. As a function of $\nu$, $\mathscr{G}_{\nu}(x, w)$ has a simple pole at every non-zero integer and a removable singularity at $0$.

The kernel $\mathscr{G}_{\nu}(x, w)$ is not new. In fact, it is a special case of the well-known kernel introduced by Watson in \cite{watsonself}, namely,
\begin{equation*}
\varpi_{\mu, \nu}(xy)=A(xy)^{1/2}\int_{0}^{\infty}J_{\nu}(xt)J_{\mu}\left(\frac{Ay}{t}\right)\frac{dt}{t}.
\end{equation*}
Watson formally studied how this kernel gives rise to a transform. Letting $A=y=1$ in the above kernel, using Hanumanta Rao's formula \cite{watsonself} with $\rho=0, a=x$, and $b=1$ to express $\varpi_{\mu, \nu}(x)$ in the form of two ${}_0F_{3}s$, replacing $\nu$ by $w+\nu-\frac{1}{2}$ and then $\mu$ by $w-\nu-\frac{1}{2}$ in the resulting identity, we see that for $|\textup{Re}(\nu)|<\textup{Re}(w)+1$,
\begin{equation}\label{kernelequi}
\mathscr{G}_{\nu}(x, w)=\varpi_{w-\nu-\frac{1}{2}, w+\nu-\frac{1}{2}}(x).
\end{equation}
However, note that the definition of the function $\mathscr{G}_{\nu}(x, w)$ in \eqref{genkoshk} itself makes sense for any $\nu\in\mathbb{C}\backslash\left(\mathbb{Z}\backslash\{0\}\right)$ and $w\in\mathbb{C}$. Henceforth, we call $\mathscr{G}_{\nu}(x, w)$ as the Watson kernel only.

Observe that $\varpi_{\mu, \nu}(xy)$ is the resultant\footnote{For the definition of a resultant of two kernels, see \cite{hardyresultant}.} of the kernels $\sqrt{t}J_{\nu}(t)$ and $t^{-3/2}J_{\mu}(1/t)$, and that a simple change of variable shows that 
\begin{equation}\label{symmetry}
\varpi_{\mu, \nu}(xy)=\varpi_{\nu, \mu}(xy).
\end{equation}

The literature on Watson's kernel is vast. Bhatnagar \cite{bhatnagarbelgique}, \cite{bhatnagar1954f} obtained a necessary and sufficient condition for a function in a certain class $A(\omega, a)$ considered by Hardy and Titchmarsh \cite{hartitch} to be self-reciprocal in $\varpi_{\mu, \nu}(xy)$. Among other things, he \cite[Theorem 8]{bhatnagarbelgique} also obtained a theorem where functions reciprocal in the Hankel transform give rise to other functions reciprocal in the Watson kernel and vice-versa. See also \cite[Theorem 7]{bhatnagar1954f}. In the same paper \cite{bhatnagar1954f}, Bhatnagar also obtained the inverse Mellin transform representation of $\varpi_{\mu, \nu}(x)$ although in Theorem \ref{gkkmellin} below, we re-derive it with maximum possible domain where it is valid. Watson's kernel was further generalized by him in \cite{bhatnagarbelgique} and \cite{bhatnagar1954}, and in a different direction by Olkha and Rathie in \cite{olkharathie}. An expansion of the generalized Watson kernel $\varpi_{\mu_1, \cdots, \mu_n}(x)$ in terms of Jacobi polynomials was obtained by Dahiya \cite{dahiyajapan}. A representation of $\varpi_{\mu, \nu}(x)$ (more generally for $\varpi_{\mu_1, \cdots, \mu_n}(x)$) in terms of the Meijer-$G$ function was given by Narain \cite[Section 4(b)]{narain}.

That $\varpi_{\mu, \nu}(x)=O(x^{-1/4})$ as $x\to\infty$ was established by Watson \cite[p.~308]{watsonself}. Mainra and Singh \cite{mainrasingh} derived the differential equation satisfied by $\varpi_{\mu, \nu}(x)$ and then used it to obtain its asymptotic expansion as $x\to\infty$. Numerous integrals containing $\varpi_{\mu, \nu}(x)$ in its integrand have been evaluated by Singh in \cite{singhrajasthan}, \cite{singhgerman}.

In spite of so much work done on Watson's kernel, its importance from the point of view of number theory has not been recognized before. 
For example, while it is known \cite[p.~5, Example 3]{hardyresultant} that for $x>0$,
\begin{equation}\label{gkkhalf}
\mathscr{G}_{\frac{1}{2}}(x,w)=J_{2w-1}(2\sqrt{x}),
\end{equation}
it has not been noticed before that when $w=0$ and $1$, the kernel $\varpi_{w-\nu-\frac{1}{2}, w+\nu-\frac{1}{2}}(x)$, or equivalently $\mathscr{G}_{\nu}(x, w)$, reduces respectively to the first and second Koshliakov kernels defined in \eqref{kk1} and \eqref{kk2}. See Theorem \ref{gkkkoshliakov} of Section \ref{genknow}. Earlier, this was known \cite[Section 4]{mainrasingh} only when we additionally consider $\nu=0$. These kernels are useful in number theory.

Our generalization of Koshliakov's modified Bessel function is defined for $\nu\in\mathbb{C}\backslash\left(\mathbb{Z}\backslash\{0\}\right)$, and $z, \mu, w\in\mathbb{C}$ such that $\mu+w\neq-\frac{1}{2}, -\frac{3}{2}, -\frac{5}{2},\cdots$, by
\begin{align}\label{def2varbessel}
{}_{\mu}K_{\nu}(z, w)&:=\frac{\pi z^w 2^{\mu+\nu-1}}{\sin(\nu\pi)}\bigg\{\left(\frac{z}{2}\right)^{-\nu}\frac{\Gamma(\mu+w+\tfrac{1}{2})}{\Gamma(1-\nu)\Gamma(w+\tfrac{1}{2}-\nu)}\pFq12{\mu+w+\tfrac{1}{2}}{w+\tfrac{1}{2}-\nu,1-\nu}{\frac{z^2}{4}}\nonumber\\
&\quad\quad\quad\quad\quad\quad-\left(\frac{z}{2}\right)^{\nu}\frac{\Gamma(\mu+\nu+w+\tfrac{1}{2})}{\Gamma(1+\nu)\Gamma(w+\tfrac{1}{2})}\pFq12{\mu+\nu+w+\tfrac{1}{2}}{w+\tfrac{1}{2},1+\nu}{\frac{z^2}{4}}\bigg\},
\end{align}
with ${}_{\mu}K_{0}(z, w)=\lim_{\nu\to0}{}_{\mu}K_{\nu}(z, w)$.
When $w=0$, ${}_{\mu}K_{\nu}(z, w)$ reduces to \emph{Koshliakov's generalized modified Bessel function of the second kind} whose special case when $z$ is a positive real number $x$ occurs on the right-hand side of \eqref{koshlyakovg-1}. Clearly, from \eqref{kdef} and \eqref{def2varbessel}, 
\begin{equation}\label{mueq-nu}
{}_{-\nu}K_{\nu}(z, w)=z^{w}K_{\nu}(z).
\end{equation}






\section{Main results}\label{mr}

Our first result is a generalization of Koshliakov's identity \eqref{koshlyakovg-1}. It gives an integral representation for ${}_{\mu}K_{\nu}(z, w)$. 
\begin{theorem}\label{koshlyakovg-wthm}
Let $x>0$, $\textup{Re}(w)>-1/2$ and $\nu\in\mathbb{C}\backslash\left(\mathbb{Z}\backslash\{0\}\right)$. Let $\mathscr{G}_{\nu}(x, w)$ be defined in \eqref{genkoshk}. If $\mu\neq-\nu$, then for $\textup{Re}(\mu), \textup{Re}(\nu), \textup{Re}(\mu+\nu)>-\textup{Re}(w)-\frac{1}{2}$, we have
\begin{align}\label{koshlyakovg-w}
\int_{0}^{\infty} K_{\mu}(t)t^{\mu+\nu+w} \mathscr{G}_{\nu}(xt, w)  \, dt={}_{\mu}K_{\nu}(x, w),
\end{align}
otherwise, for $-\textup{Re}(w)-\frac{1}{2}<\textup{Re}(\nu)<\textup{Re}(w)+\frac{1}{2}$, we have
\begin{align}\label{selff}
\int_{0}^{\infty}t^{w}K_{\nu}(t)\mathscr{G}_{\nu}(xt, w)\, dt=x^wK_{\nu}(x).
\end{align}
\end{theorem}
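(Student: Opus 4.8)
The plan is to substitute the series definition \eqref{genkoshk} of $\mathscr{G}_{\nu}(xt,w)$ into the integral, integrate term by term, and recognize the two resulting power series as the two ${}_1F_2$'s appearing in the definition \eqref{def2varbessel} of ${}_{\mu}K_{\nu}(x,w)$. Writing each ${}_0F_3$ in \eqref{genkoshk} as a power series in $(xt)^2/16$, the generic term of the first bracketed piece carries a factor $t^{w-\nu+2n}$ and that of the second a factor $t^{w+\nu+2m}$; after multiplication by $K_{\mu}(t)\,t^{\mu+\nu+w}$ these become $t^{\mu+2w+2n}$ and $t^{\mu+2\nu+2w+2m}$. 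Each monomial integral is then evaluated by the classical Mellin transform
\begin{equation*}
\int_{0}^{\infty}t^{s-1}K_{\mu}(t)\, dt=2^{s-2}\,\Gamma\!\left(\tfrac{s-\mu}{2}\right)\Gamma\!\left(\tfrac{s+\mu}{2}\right),\qquad \textup{Re}(s)>|\textup{Re}(\mu)|.
\end{equation*}

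The identification rests on the bookkeeping of the Pochhammer symbols. In the first piece the factor $\Gamma(w+n+\tfrac12)$ produced by the Mellin transform cancels the $(w+\tfrac12)_n$ in the denominator of the ${}_0F_3$, while $\Gamma(\mu+w+n+\tfrac12)$ supplies the numerator Pochhammer $(\mu+w+\tfrac12)_n$; collecting the powers of $t$ and of $2$ (using $2^{2n}(x^2/16)^n=(x^2/4)^n$), one finds that the surviving series is precisely $\pFq12{\mu+w+\tfrac12}{w+\tfrac12-\nu,1-\nu}{x^2/4}$ with prefactor $\frac{\pi}{\sin(\nu\pi)}\,x^{w}2^{\mu+\nu-1}(x/2)^{-\nu}\frac{\Gamma(\mu+w+\tfrac12)}{\Gamma(1-\nu)\Gamma(w+\tfrac12-\nu)}$. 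An entirely parallel computation turns the second piece into the second ${}_1F_2$ of \eqref{def2varbessel}, and adding the two yields ${}_{\mu}K_{\nu}(x,w)$. I would check in passing that the three hypotheses $\textup{Re}(\mu),\textup{Re}(\nu),\textup{Re}(\mu+\nu)>-\textup{Re}(w)-\tfrac12$ are exactly what makes the $n=0$ and $m=0$ monomial integrals (hence all of them) converge through the condition $\textup{Re}(s)>|\textup{Re}(\mu)|$ above.

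The one genuinely delicate point is justifying the term-by-term integration, for which I would invoke Tonelli's theorem. Replacing every coefficient by its modulus turns each ${}_0F_3$ into a series with nonnegative terms whose sum is dominated, as $t\to\infty$, by $\exp(c\sqrt{t})$ for some $c>0$ (the standard growth of a ${}_0F_3$ of argument proportional to $t^2$); hence the majorant $|K_{\mu}(t)|\,t^{\textup{Re}(\mu+\nu+w)}\,e^{c\sqrt{t}}$ is integrable on $(0,\infty)$, its convergence at infinity being secured by $K_{\mu}(t)=O(e^{-t})$ and its convergence at the origin by the very inequalities just recorded. Alternatively one may bound the tail directly through $\mathscr{G}_{\nu}(xt,w)=\varpi_{w-\nu-\tfrac12,\,w+\nu-\tfrac12}(xt)=O(t^{-1/4})$ via \eqref{kernelequi}. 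Once Tonelli licenses the interchange, \eqref{koshlyakovg-w} follows; the excluded value $\nu=0$, where both sides are defined by their limits as $\nu\to0$, is then recovered by continuity in $\nu$.

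Finally, the degenerate case $\mu=-\nu$ yielding \eqref{selff} demands no new work. When $\mu=-\nu$ the numerator parameter $\mu+w+\tfrac12$ of the first ${}_1F_2$ equals its denominator parameter $w+\tfrac12-\nu$, and likewise $\mu+\nu+w+\tfrac12=w+\tfrac12$ in the second, so each ${}_1F_2$ collapses to a ${}_0F_1$ and ${}_{-\nu}K_{\nu}(x,w)=x^{w}K_{\nu}(x)$ by \eqref{kdef} and \eqref{mueq-nu}. The only difference is the admissible range: setting $\mu=-\nu$, the governing convergence inequalities reduce to $-\textup{Re}(w)-\tfrac12<\textup{Re}(\nu)<\textup{Re}(w)+\tfrac12$, which is precisely the stated hypothesis, so the same term-by-term argument establishes \eqref{selff}. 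I expect the Tonelli estimate---pinning the subexponential growth of the ${}_0F_3$ majorant against the exponential decay of $K_{\mu}$---to be the principal technical obstacle, the remainder being careful manipulation of Pochhammer symbols.
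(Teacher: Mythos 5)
Your proposal is correct, but it follows a genuinely different route from the paper. The paper does not expand the kernel in series at all: it computes the Mellin transform of $\mathscr{G}_{\nu}(xt,w)$ (Theorem \ref{gkkmellin}), pairs it with the Mellin transform \eqref{kbessmel} of $K_{\mu}$ via Parseval's formula \eqref{parseval-1}, and then evaluates the resulting Mellin--Barnes integral by Slater's theorem after the substitution $s=2\xi+\nu$; because the Parseval contour must lie in a common strip, this forces a temporary extra hypothesis $\textup{Re}(\nu)<\textup{Re}(w)+\tfrac12$ that is removed afterwards by analytic continuation in $\nu$. Your term-by-term integration avoids Slater's theorem and the contour bookkeeping entirely, and -- since your only constraints are those making the monomial integrals converge -- it reaches the full stated parameter range in one pass; your Pochhammer accounting and the powers of $2$ do check out against \eqref{def2varbessel}, and the Tonelli justification is sound because $|(a)_n|\asymp_a \Gamma(n)\,n^{\textup{Re}(a)}$ keeps the modulus-majorant of each ${}_0F_3$ at the subexponential growth $\exp(c\sqrt{t})$. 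What the paper's heavier machinery buys is reuse: the Mellin transform \eqref{mellin-kkk} and the Parseval template are needed again (e.g.\ in Theorem \ref{selfrecigkk}), so the authors get Theorem \ref{koshlyakovg-wthm} as a byproduct of tools they must develop anyway. Two small polish items on your side: the displayed majorant $|K_{\mu}(t)|\,t^{\textup{Re}(\mu+\nu+w)}e^{c\sqrt{t}}$ should carry the additional factor $t^{\textup{Re}(w\mp\nu)}$ coming from the prefactors $(xt/4)^{w\mp\nu}$ of the two pieces (otherwise the exponent at the origin does not reduce to the stated hypotheses), and the $\nu=0$ case should be settled not by bare ``continuity'' but by a dominated-convergence or analyticity-in-$\nu$ argument, which is exactly how the paper handles it.
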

The above integral evaluations are proved using the theory of Mellin transforms, in particular, Parseval's formula \eqref{parseval-1}. Equations \eqref{koshlyakovg-w} and \eqref{selff}, and the special case \eqref{koshlyakovg-1} due to Koshliakov, are crucial in obtaining the main theorems of this section. 

It must be noted that the second part of Theorem \ref{koshlyakovg-wthm}, that is \eqref{selff}, is not new. In fact, Bhatnagar \cite[Section 3, (1) (i)]{bhatnagar1954} obtained it as a special case of a more general identity proved by him in \cite{bhatnagar1954f}, but only for real values of $w$ and $\nu$. Varma \cite[p.~103, Example 2]{varma} considered the special case $w=\nu$ of \eqref{selff} whereas Mitra \cite[Example 2]{mitra} did the same for $w=1/2$. However, to the best of our knowledge, \eqref{koshlyakovg-w} has not appeared in the literature before.

We now give a new modular-type transformation between two infinite series involving $r_k(n)$ as an application of Theorem \ref{koshlyakovg-wthm}.
\begin{theorem}\label{rk(n)identity}
Let $k\in\mathbb{N},k\geq2$ and $\textup{Re}(z)>0$. Let $r_k(n)$ be defined in \eqref{rkndef}. Define $\mathfrak{R}(\mu, k, z)$ by
\begin{equation}\label{rmuk}
\mathfrak{R}(\mu, k, z):=\begin{cases}
0,\hspace{22mm}\text{if}\hspace{1mm}\textup{Re}(\mu)>-\frac{1}{2},\\
\frac{1}{\sqrt{2z}}\pi^{\frac{1-k}{2}}\Gamma\left(\frac{k}{2}\right),\hspace{1mm}\text{if}\hspace{1mm}\mu=-\frac{1}{2}.
\end{cases}
\end{equation}
Then for $\mathrm{Re}(\mu)>-\frac{1}{2}$ or $\mu=-1/2$, the following transformation holds:
\begin{align}\label{rk(n)identityeqn}
&\sum_{n=1}^\infty r_k(n)n^{\mu+1}K_{\mu}(nz)-\frac{\pi^{\frac{k+1}{2}}2^{\mu}\Gamma\left(\mu+\frac{k}{4}+\frac{1}{2}\right)}{z^{\mu+\frac{k}{2}+1}\Gamma\left(\frac{k}{4}\right)}\nonumber\\
&=\frac{\pi}{z^{\mu+\frac{k}{4}+\frac{3}{2}}}\sum_{n=1}^\infty r_k(n)n^{\frac{1}{2}-\frac{k}{4}}{}_\mu K_{\frac{1}{2}}\left(\frac{\pi^2n}{z},\frac{k}{4}\right)-\frac{\pi^{\frac{k}{2}}}{\G\left(\frac{k}{2}\right)}\mathfrak{R}(\mu, k, z).
\end{align}
\end{theorem}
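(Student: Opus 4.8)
The plan is to view the first series in \eqref{rk(n)identityeqn} as a lattice sum over $\mathbb{Z}^k$ and apply the Poisson summation formula, using Theorem \ref{koshlyakovg-wthm} to identify the dual sum. Set $\phi(\mathbf{x}):=|\mathbf{x}|^{2\mu+2}K_\mu(|\mathbf{x}|^2 z)$ for $\mathbf{x}\in\mathbb{R}^k$, a radial function. Since $|\mathbf{m}|^2$ runs over the nonnegative integers with multiplicity $r_k(n)$, we have $\sum_{\mathbf{m}\in\mathbb{Z}^k}\phi(\mathbf{m})=\phi(\mathbf{0})+\sum_{n=1}^\infty r_k(n)\,n^{\mu+1}K_\mu(nz)$, which reproduces the left-hand series. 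The identity $\sum_{\mathbf{m}}\phi(\mathbf{m})=\sum_{\mathbf{m}}\widehat{\phi}(\mathbf{m})$ will convert this into a dual lattice sum, and the whole proof reduces to computing $\widehat\phi$ and the two ``$n=0$'' contributions.

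First I would compute $\widehat\phi$. As $\phi$ is radial, its Fourier transform is the Hankel transform of order $k/2-1$: writing $\phi(\mathbf{x})=F(|\mathbf{x}|)$ with $F(r)=r^{2\mu+2}K_\mu(r^2z)$, one has $\widehat\phi(\mathbf{y})=G(|\mathbf{y}|)$, where $G(\rho)=2\pi\rho^{1-k/2}\int_0^\infty F(r)J_{k/2-1}(2\pi r\rho)\,r^{k/2}\,dr$. The substitution $t=zr^2$ turns this into $\pi\rho^{1-k/2}z^{-\mu-\frac{k}{4}-\frac{3}{2}}\int_0^\infty K_\mu(t)\,t^{\mu+\frac12+\frac{k}{4}}J_{k/2-1}\big(2\sqrt{(\pi^2\rho^2/z)\,t}\big)\,dt$. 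By \eqref{gkkhalf} the Bessel function $J_{k/2-1}(2\sqrt{xt})$ is exactly $\mathscr{G}_{1/2}(xt,k/4)$, so Theorem \ref{koshlyakovg-wthm} (equation \eqref{koshlyakovg-w} when $\mu\neq-\tfrac12$, and \eqref{selff} together with \eqref{mueq-nu} when $\mu=-\tfrac12$) identifies the last integral as ${}_\mu K_{1/2}(\pi^2\rho^2/z,\,k/4)$. Evaluating at $\rho=\sqrt n$ gives $G(\sqrt n)=\pi\,z^{-\mu-\frac{k}{4}-\frac32}\,n^{\frac12-\frac{k}{4}}\,{}_\mu K_{1/2}(\pi^2 n/z,\,k/4)$, precisely the summand of the dual series in \eqref{rk(n)identityeqn}. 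The hypotheses $\mathrm{Re}(\mu),\mathrm{Re}(\tfrac12),\mathrm{Re}(\mu+\tfrac12)>-\tfrac{k}{4}-\tfrac12$ required by Theorem \ref{koshlyakovg-wthm} hold throughout $\mathrm{Re}(\mu)\ge-\tfrac12$ since $k\ge2$.

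It remains to evaluate $\phi(\mathbf{0})=F(0)$ and $\widehat\phi(\mathbf{0})=G(0)$. For the main term I would let $\rho\to0$ in the formula for $G$ and feed in the $x\to0$ behaviour of ${}_\mu K_{1/2}(x,k/4)$ read off from \eqref{def2varbessel}: its leading term is a constant multiple of $x^{\frac{k}{4}-\frac12}$, so the powers of $\rho$ cancel and a finite limit survives. A short simplification using the duplication formula $\Gamma(k/2)=2^{k/2-1}\pi^{-1/2}\Gamma(k/4)\Gamma(k/4+\tfrac12)$ turns $G(0)$ into $\pi^{(k+1)/2}2^\mu\Gamma(\mu+\tfrac{k}{4}+\tfrac12)\big/\big(z^{\mu+\frac{k}{2}+1}\Gamma(\tfrac{k}{4})\big)$, i.e.\ the second term on the left of \eqref{rk(n)identityeqn}. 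For $F(0)$, the small-argument expansion of $K_\mu$ gives $F(r)=O(r^2)+O(r^{4\mathrm{Re}(\mu)+2})$ as $r\to0$, so $F(0)=0$ when $\mathrm{Re}(\mu)>-\tfrac12$; when $\mu=-\tfrac12$ one has $K_{-1/2}(x)=\sqrt{\pi/(2x)}\,e^{-x}$, whence $F(r)=\sqrt{\pi/(2z)}\,e^{-zr^2}$ and $F(0)=\sqrt{\pi/(2z)}$. Since $\tfrac{\pi^{k/2}}{\Gamma(k/2)}\mathfrak{R}(-\tfrac12,k,z)=\sqrt{\pi/(2z)}$, this contribution is exactly $-\tfrac{\pi^{k/2}}{\Gamma(k/2)}\mathfrak{R}(\mu,k,z)$, and rearranging $F(0)+(\text{LHS series})=G(0)+(\text{dual series})$ yields \eqref{rk(n)identityeqn}.

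The main obstacle is the rigorous justification of Poisson summation, because $\phi$ is not a Schwartz function: for non-integral $\mu$ it fails to be smooth at the origin (its radial part behaves like $r^2$ plus $r^{4\mu+2}$), and the endpoint $\mu=-\tfrac12$ sits exactly on the boundary. Both $\phi$ and $\widehat\phi$ do decay rapidly---$\phi$ exponentially through $K_\mu$, and $\widehat\phi$ through the fast decay of ${}_\mu K_{1/2}$---so an absolutely convergent version of Poisson summation for continuous functions with sufficient decay should apply, and the case $\mu=-\tfrac12$ is literally a Gaussian. To avoid the regularity issue entirely, the alternative I would fall back on is the Mellin-transform route: write $\sum_n r_k(n)f(n)=\frac{1}{2\pi i}\int_{(c)}\widetilde f(s)Z_k(s)\,ds$ with $f(t)=t^{\mu+1}K_\mu(tz)$ and $Z_k(s)=\sum_n r_k(n)n^{-s}$, shift the contour to the left past the simple pole of $Z_k(s)$ at $s=k/2$ (producing the main term) and the pole of $\widetilde f(s)$ at $s=0$ that appears only when $\mu=-\tfrac12$ (producing $\mathfrak{R}$, via $Z_k(0)=-1$), and apply the functional equation $\pi^{-s}\Gamma(s)Z_k(s)=\pi^{-(k/2-s)}\Gamma(k/2-s)Z_k(k/2-s)$ on the shifted line; recognizing the resulting Mellin--Barnes integral as ${}_\mu K_{1/2}$ through \eqref{def2varbessel} gives the dual series and sidesteps the convergence subtleties of Poisson summation.
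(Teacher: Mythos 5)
Your computations are correct and the analytic heart of your argument coincides with the paper's: both proofs hinge on evaluating the order-$\tfrac{k}{2}-1$ Hankel transform of $t^{\mu+\frac{k}{4}+\frac12}K_\mu(tz)$ via Theorem \ref{koshlyakovg-wthm} with $\nu=\tfrac12$, $w=\tfrac{k}{4}$ and \eqref{gkkhalf}, which is exactly how the dual summand $\pi z^{-\mu-\frac{k}{4}-\frac32}n^{\frac12-\frac{k}{4}}\,{}_\mu K_{\frac12}(\pi^2n/z,\tfrac{k}{4})$ arises in the paper as well. Where you differ is in the summation device and in how the two constant terms are produced. The paper does not use raw Poisson summation over $\mathbb{Z}^k$; it invokes Guinand's rigorously formulated summation formula for $r_k(n)$ (Theorem \ref{guirknsuminfthm}, quoted from \cite{bdkz}), whose hypotheses ($F,F',\dots$ are integrals, $x^jF^{(j)}\in L^2(0,\infty)$, and the decay bounds \eqref{Fbigo}, \eqref{Gbigo}) are then checked for $F(x)=x^{\mu+\frac{k}{4}+\frac12}K_\mu(xz)$ using \eqref{kbessasym}, \eqref{kbessasym0} and Lemma \ref{asymukhalfo}. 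This is precisely the ``rigorous version of Poisson summation for non-Schwartz radial functions'' that you identify as your main obstacle, so that obstacle is already disposed of by a tool inside the paper; your worry is legitimate but not fatal, since both $\phi$ and $\widehat\phi$ here decay faster than $|\mathbf{x}|^{-k-\delta}$ and an absolutely convergent Poisson formula would also apply. The second difference is genuinely a simplification on your side: in Guinand's formulation the term $\mathfrak{R}(\mu,k,z)$ comes from $\int_0^\infty x^{\frac{k}{4}-\frac12}G(x)\,dx$, which the paper evaluates in Lemma \ref{mukhalfintegraliszero} by a fairly delicate regularization (inserting $e^{-\pi^2x^2y}$, expressing the result through the Tricomi function $U$, and taking $y\to0$); in your framing the same quantity is $\phi(\mathbf{0})=F(0)$, which is immediate from \eqref{khalfz} and \eqref{kbessasym0}. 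The two agree by Fourier inversion at the origin, so your route trades the paper's Lemma \ref{mukhalfintegraliszero} for the (harder to justify in general, but here cheap) inclusion of the $\mathbf{m}=\mathbf{0}$ terms on both sides of Poisson summation. Two small omissions: the paper first proves the identity for $z>0$ and then extends to $\mathrm{Re}(z)>0$ by analytic continuation, using Lemma \ref{asymukhalfo} and the bound $r_k(n)=O_k(n^{\frac{k}{2}-1+\epsilon})$ to get uniform convergence of the dual series; your sketch should include this step (or argue directly for complex $z$). Your Mellin fallback via the Epstein zeta function $Z_k(s)$ and its functional equation is a viable third route and would indeed sidestep the regularity issue, but it is not what the paper does.
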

There are a number of transformations involving infinite series of $r_k(n)$ and modified Bessel functions in the literature. See, for example, \cite[Equation (4)]{watsonself}, \cite[Equation (3.12)]{dixfer2} or its generalization \cite[Corollary 4.6]{bdkz}. However, a transformation for the series $\sum_{n=1}^\infty r_k(n)n^{\mu+1}K_{\mu}(nz)$, given in Theorem \ref{rk(n)identity}, is obtained for the first time.

Theorem \ref{rk(n)identity} gives, as a special case, the well-known modular transformation \cite[Equation (64)]{cn} recorded below.
\begin{corollary}\label{rkncor}
Let $k$ be an integer such that $k\geq2$ and let $\textup{Re}(z)>0$. Then
\begin{equation*}
\sum_{n=0}^{\infty}r_k(n)e^{-nz}=\left(\frac{\pi}{z}\right)^{k/2}\sum_{n=0}^{\infty}r_k(n)e^{-\pi^2 n/z}.
\end{equation*}
\end{corollary}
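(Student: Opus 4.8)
The plan is to derive Corollary \ref{rkncor} from Theorem \ref{rk(n)identity} by specializing $\mu=-\tfrac12$, the case in which $\mathfrak{R}(\mu,k,z)$ is nonzero and the half-integer index makes every Bessel object collapse to an exponential. First I would invoke the classical closed form $K_{-1/2}(u)=K_{1/2}(u)=\sqrt{\pi/(2u)}\,e^{-u}$. Applied to the left-hand side of \eqref{rk(n)identityeqn}, the summand becomes
\begin{equation*}
r_k(n)n^{\mu+1}K_{\mu}(nz)=r_k(n)n^{1/2}\sqrt{\frac{\pi}{2nz}}e^{-nz}=\sqrt{\frac{\pi}{2z}}\,r_k(n)e^{-nz},
\end{equation*}
so after pulling out $\sqrt{\pi/(2z)}$ the first series is exactly $\sqrt{\pi/(2z)}\sum_{n=1}^{\infty}r_k(n)e^{-nz}$. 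The isolated second term on the left, with $\mu=-\tfrac12$, simplifies using the duplication/reflection identities for $\Gamma$; I expect it to reproduce precisely the $n=0$ contribution $\sqrt{\pi/(2z)}\,r_k(0)=\sqrt{\pi/(2z)}$ (since $r_k(0)=1$), thereby converting the sum over $n\geq1$ into the sum over $n\geq0$ that appears in the corollary.

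Next I would treat the right-hand side. By \eqref{mueq-nu} with $\nu=\tfrac12$ and $w=k/4$, together with \eqref{gkkhalf} in the guise of the half-integer reduction, the two-variable Bessel function ${}_{-1/2}K_{1/2}\!\left(\tfrac{\pi^2 n}{z},\tfrac{k}{4}\right)$ collapses to $\left(\tfrac{\pi^2 n}{z}\right)^{k/4}K_{1/2}\!\left(\tfrac{\pi^2 n}{z}\right)$. Substituting the closed form of $K_{1/2}$ once more turns this into an explicit product of a power of $\pi^2 n/z$ and $e^{-\pi^2 n/z}$. The power of $n$ then combines with the prefactor $n^{1/2-k/4}$ and the external factor $\pi z^{-\mu-k/4-3/2}=\pi z^{-1-k/4}$ so that, after bookkeeping, the entire first term on the right reduces to $\sqrt{\pi/(2z)}\,(\pi/z)^{k/2}\sum_{n=1}^{\infty}r_k(n)e^{-\pi^2 n/z}$.

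The remaining piece on the right is $-\pi^{k/2}\mathfrak{R}(-\tfrac12,k,z)/\Gamma(k/2)$, which by \eqref{rmuk} equals $-\pi^{k/2}\cdot\frac{1}{\sqrt{2z}}\pi^{(1-k)/2}\Gamma(k/2)/\Gamma(k/2)=-\sqrt{\pi/(2z)}\,(\pi/z)^{k/2}$; this should account for the $n=0$ term on the right-hand side, promoting $\sum_{n\geq1}$ to $\sum_{n\geq0}$ there as well. Assembling everything, every surviving term carries the common factor $\sqrt{\pi/(2z)}$, and dividing through by it yields the claimed identity. The main obstacle I anticipate is purely computational: matching the scattered powers of $\pi$, $2$, and $z$, and in particular verifying that the Gamma-function constants in the isolated left term and in $\mathfrak{R}$ land exactly right to supply the $n=0$ summands on each side. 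None of this is conceptually hard, but the reflection/duplication simplifications must be carried out carefully; the half-integer index is precisely the value at which Theorem \ref{rk(n)identity} degenerates cleanly, so no genuine analytic difficulty should arise once the constants are tracked.
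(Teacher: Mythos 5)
Your proposal is correct and is exactly the paper's proof, which simply sets $\mu=-\tfrac12$ in Theorem \ref{rk(n)identity}, invokes \eqref{mueq-nu} together with $K_{1/2}(u)=\sqrt{\pi/(2u)}\,e^{-u}$, and simplifies. One small bookkeeping caveat: the isolated Gamma-term on the left of \eqref{rk(n)identityeqn} evaluates to $\sqrt{\pi/(2z)}\,(\pi/z)^{k/2}$ (supplying the $n=0$ term of the \emph{right-hand} series), while the $\mathfrak{R}$-term equals $-\sqrt{\pi/(2z)}$ (supplying the $n=0$ term of the \emph{left-hand} series) --- the opposite of what you predicted --- but after moving these constants across the equality the corollary comes out exactly as claimed.
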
 
Let $a\in\mathbb{C}$. Consider the series 
\begin{equation}\label{equivalence}
\sum_{n=1}^{\infty}\sigma_a(n)e^{-ny}=\sum_{n=1}^{\infty}\frac{n^a}{e^{ny}-1}.
\end{equation}
When $a=2m+1, m\in\mathbb{N}$ and $y=-2\pi iz$ (so that $z\in\mathbb{H}$, the upper half-plane), either of the above series essentially represents the Eisenstein series of weight $2m+2$ on the full modular group $\textup{SL}_{2}\left(\mathbb{Z}\right)$. When $a=-2m-1, m\in\mathbb{N}$, the series $\sum_{n=1}^{\infty}\sigma_a(n)e^{2\pi inz}$ represents the Eichler integral corresponding to the weight $2m+2$ Eisenstein series \cite[Section 5]{berndtstraubzeta}. Moreover, while $\sum_{n=1}^{\infty}\sigma_{1}(n)e^{2\pi inz}$ is essentially the quasi-modular form $E_2(z)$, that is, the Eisenstein series of weight $2$, the series $\sum_{n=1}^{\infty}\sigma_{-1}(n)e^{2\pi inz}$ is what appears in the modular transformation of logarithm of the Dedekind eta function $\eta(z)$ \cite[Equation (3.10)]{bls}. 

In all of the above cases, that is, when $a$ is an odd integer, the series $\sum_{n=1}^\infty\sigma_a(n)e^{-ny}$ satisfies a modular transformation, and hence plays a fundamental role in the theory of modular forms. However, an explicit transformation for the series $\sum_{n=1}^\infty\sigma_a(n)e^{-ny}$ when $a$ is an \emph{even integer} is conspicuously absent from the literature except in the special case $a=0$; see, for example, the paper by Kanemitsu, Tanigawa and Tsukada \cite[Theorem 2]{kanemitsu} or \cite[p.~33]{ober1}, \cite[Equation (3.7)]{yak1}. Using the concept of transseries \cite{transseries}, Dorigoni and Kleinschmidt \cite{dorigoni} have considered the case when $a$ is \emph{negative} even integer (see \cite[Equation (2.43)]{dorigoni}). However, as special cases of our master identity (see Theorem \ref{extendedid} below), we not only obtain the identity for negative even integer $a$ (see Theorem \ref{trans-2m} below) but also when $a$ is a non-negative even integer.

For Re$(a)>2$, Ramanujan \cite[p.~416]{berndt1} obtained a transformation for the series in \eqref{equivalence} which shows how the modularity is obstructed by means of the appearance of an extra integral. Later, Guinand \cite[Section 7]{guinand1944} also studied the series $\sum_{n=1}^\infty\sigma_{2k}(n)e^{-ny}$ for $k\in\mathbb{R},$ such that $k+\frac{1}{2}\notin\mathbb{Z}$ and 
obtained \cite[Theorem 8]{guinand1944} similar results for $\textup{Re}(y)>0$. However, these transformations are not explicit in the sense that apart from the series getting transformed and the residual terms, certain extra objects appear, for example, contour integrals or principal value integrals etc. 

Berndt \cite[Theorem 2.2]{berndtrocky} derived an important modular-type transformation for a vast generalization of Eisenstein series. We rewrite one case of this transformation in the version given by O'Sullivan \cite[Equation (1.23)]{sullivan}, namely, for $z\in\mathbb{H}$ and $k\in\mathbb{Z}$,
{\allowdisplaybreaks\begin{align}\label{br}
&z^k(1+(-1)^k)\sum_{n=1}^{\infty}\sigma_{k-1}(n)e^{2\pi inz}-(1+(-1)^k)\sum_{n=1}^{\infty}\sigma_{k-1}(n)e^{2\pi in(-1/z)}\nonumber\\
&=(2\pi i)^{1-k}\sum_{u, v\in\mathbb{Z}_{\geq 0}\atop u+v=2-k}\frac{B_u}{u!}\frac{B_v}{v!}z^{1-v}-\begin{cases}
\pi i-\log z\hspace{12mm}\text{if}\hspace{1mm}k=0;\\
(z^{k}-(-1)^{k})\zeta(1-k)\hspace{5mm}\text{if}\hspace{1mm}k\neq0,
\end{cases}
\end{align}}%
where $\zeta(s)$ is the Riemann zeta function. When $k=1-2m, m\in\mathbb{Z}$, it results in
\begin{equation}\label{zeta(2m)}
\zeta(2 m ) = \begin{cases}
(-1)^{m +1} \displaystyle\frac{(2\pi)^{2 m}B_{2 m }}{2 (2 m)!},\hspace{2mm}\text{if}\hspace{1mm} m\geq 0,\\
0, \hspace{35mm}\text{if}\hspace{1mm} m<0,
\end{cases}
\end{equation}
thus incorporating Euler's formula \cite[p.~5, Equation (1.14)]{temme} whereas on letting $k=-2m,m\in\mathbb{Z}\backslash\{0\}$, it gives Ramanujan's formula for $\zeta(2m+1)$ \cite[p.~173, Ch. 14, Entry 21(i)]{ramnote}, \cite[p.~319-320, formula (28)]{lnb}, \cite[p.~275-276]{bcbramsecnote}, namely, for\footnote{Ramanujan's formula is actually valid for any complex $\a, \b$ such that $\textup{Re}(\a)>0, \textup{Re}(\b)>0$ and $\a\b=\pi^2$.} $\alpha,\ \beta>0$ with $\alpha\beta=\pi^2$, 
\begin{align}\label{rameqn}
\alpha^{-m}\left\{\frac{1}{2}\zeta(2m+1)+\sum_{n=1}^\infty \frac{n^{-2m-1}}{e^{2n\alpha}-1}\right\}&=(-\beta)^{-m}\left\{\frac{1}{2}\zeta(2m+1)+\sum_{n=1}^\infty\frac{n^{-2m-1}}{e^{2n\beta}-1}\right\}\nonumber\\
&\qquad-2^{2m}\sum_{k=0}^{m+1}\frac{(-1)^{k}B_{2k}B_{2m+2-2k}}{(2k)!(2m+2-2k)!}\alpha^{m+1-k}\beta^k.
\end{align}
Note, however, that \eqref{br} does not give any transformation for $\sum_{n=1}^{\infty}\sigma_{-2m}(n)e^{-ny}$, $m\in\mathbb{Z}$.

Ramanujan's formula encodes the transformation properties of Eisenstein series on $\textup{SL}_2\left(\mathbb{Z}\right)$ as well as their Eichler integrals \cite{berndtstraubzeta}. Guinand \cite[Theorem 9 (iv)]{guinand1944} rediscovered the above formula. 

Let $y=\log(1/q)$ and let $\mathcal{P}$ denote the set of integer partitions. Then for functions $f:\mathcal{P}\to\mathbb{C}$, the $q$-brackets are the power series
\begin{align*}
\langle f\rangle_q:=\frac{\sum_{\lambda\in\mathcal{P}}f(\lambda)q^{|\lambda|}}{\sum_{\lambda\in\mathcal{P}}q^{|\lambda|}}\in\mathbb{C}[[q]],
\end{align*}
which represent ``weighted average" of $f$. Also let $\mathcal{H}_t(\l)$ be the multiset of partition hook numbers which are multiples of $t$, where the hook number of a hook in the Ferrers diagram of a partition is the number of dots in its arm and leg together. Recently Han \cite{han1}, Han and Ji \cite{han2} showed that the series $\sum_{n=1}^{\infty}\sigma_{1-a}(n)e^{-ny}$ are the $q$-brackets of $f_{a,t}(\l)$, where
\begin{equation*}
f_{a, t}(\l):=t^{a-1}\sum_{h\in\mathcal{H}_t(\l)}\frac{1}{h^{a}}.
\end{equation*}
Bringmann, Ono and Wagner \cite{ono} showed that for even $a\geq2$, the series $\sum_{n=1}^{\infty}\sigma_{1-a}(n)e^{-ny}$ are pieces of weight $2-a$ sesquiharmonic and harmonic Maass forms, whereas for odd $a\leq-1$, they are holomorphic quantum modular forms. For more details, see \cite[Theorems 1.2, 1.6]{ono}.

The series $\sum_{n=1}^{\infty}d(n)e^{-ny}$, or, in general, $\sum_{n=1}^{\infty}\sigma_a(n)e^{-ny}$, where $a\in\mathbb{C}$, is also instrumental in the study of moments of the Riemann zeta function and have long been employed for this purpose. See, for example, \cite[p.~163, Theorem 7.15]{titch} or \cite{laurincikas}. Lukkarinen \cite{mari} obtained meromorphic continuation of the modified Mellin transform of $\left|\zeta(1/2+ix)\right|^{2}$, defined for $\textup{Re}(s)>1$, by
\begin{equation*}
\int_{1}^{\infty}\left|\zeta\left(\frac{1}{2}+ix\right)\right|^{2}x^{-s}\, dx,
\end{equation*}
by using a transformation for $\sum_{n=1}^{\infty}d(n)e^{-ny}$. Period functions, in general, are real analytic functions $\Psi(x)$ satisfying the three-term recurrence relations $\Psi(x)=\Psi(x+1)+(x+1)^{-2s}\Psi\left(\frac{x}{x+1}\right)$, where $s=1/2+it$.  In an interesting paper, Bettin and Conrey \cite{betcon} studied the period function of the series $\sum_{n=1}^{\infty}\sigma_a(n)e^{2\pi inz}$, where $a\in\mathbb{C}$ and $z\in\mathbb{H}$, showing that it can be analytically continued to $|\arg(z)|<\pi$, and as an application of their result they gave a simple proof of the Vorono\"{\dotlessi} summation formula
as well as an exact asymptotic expansion for the smoothly weighted second moment of the Riemann zeta function on the critical line, namely,
\begin{align*}
\int_0^\infty\left|\zeta\left(\frac{1}{2}+it\right)\right|^2e^{-\delta t}\ dt,
\end{align*}
as $\delta\to0$, in the form of a \emph{convergent} asymptotic series.

However, in the above results, either an asymptotic estimate for the series $\sum_{n=1}^{\infty}\sigma_a(n)e^{2\pi inz}$ is given or a transformation involving a line integral, and hence an explicit transformation is missing. We fill this gap and obtain an explicit transformation for the series $\sum_{n=1}^{\infty}\sigma_a(n)e^{-ny}$ first, for any $a\in\mathbb{C}$ such that $\textup{Re}(a)>-1$, and then for Re$(a)>-2m-3$, where $m\in\mathbb{N}\cup\{0\}$, by analytic continuation. We then obtain, as corollaries, not only the well-known results in the theory of modular forms but also a transformation between $\sum_{n=1}^{\infty}\sigma_{2m}(n)e^{-ny}$ and the series 
\begin{equation}\label{seriesimp}
\sum_{n=1}^\infty\sigma_{2m}(n)\Bigg\{\sinh\left(\frac{4\pi^2n}{y}\right)\mathrm{Shi}\left(\frac{4\pi^2n}{y}\right)-\cosh\left(\frac{4\pi^2n}{y}\right)\mathrm{Chi}\left(\frac{4\pi^2n}{y}\right)+\sum_{j=1}^m(2j-1)!\left(\frac{4\pi^2n}{y}\right)^{-2j}\Bigg\},
\end{equation}
where $m$ is any integer and Re$(y)>0$. Note, of course, that when $m$ is a non-positive integer, the finite sum over $j$ in the summand of \eqref{seriesimp} is empty. Here, the functions $\mathrm{Shi}(z)$ and $\mathrm{Chi}(z)$ are the \emph{hyperbolic sine and cosine integrals} defined by \cite[p.~150, Equation (6.2.15), (6.2.16)]{nist}
\begin{align}\label{shichi}
\mathrm{Shi}(z):=\int_0^z\frac{\sinh(t)}{t}\ dt,\hspace{3mm}
\mathrm{Chi}(z):=\gamma+\log(z)+\int_0^z\frac{\cosh(t)-1}{t}\ dt,
\end{align}
where $\g$ is Euler's constant. (Observe that $\mathrm{Shi}'(z)=\sinh(z)/z$ and $\mathrm{Chi}'(z)=\cosh(z)/z$.) We note that $\textup{Shi}(z)$ is an entire function of $z$ whereas $\textup{Chi}(z)$ is analytic in $\mathbb{C}\backslash(-\infty,0]$. The series in \eqref{seriesimp} is a natural analogue of the series
\begin{equation*}
\sum_{n=1}^\infty\sigma_{2m+1}(n)\Bigg\{\sinh\left(\frac{4\pi^2n}{y}\right)-\cosh\left(\frac{4\pi^2n}{y}\right)\Bigg\}=-\sum_{n=1}^\infty\sigma_{2m+1}(n)e^{-4\pi^2n/y},
\end{equation*}
which appears in the corresponding modular transformation satisfied by $\sum_{n=1}^{\infty}\sigma_{2m+1}(n)e^{-ny}$.

Let $\Phi_{k, \ell}(z):=\sum_{n=1}^{\infty}n^{k}\sigma_{\ell-k}(n)q^{n}$ be a function considered by Ramanujan \cite{ram1916}. In \cite[Corollary 1]{bs}, Bhand and Shankhadhar recently showed that for $k+\ell$ even and any $c\in\mathbb{C}$, the function $c+\Phi_{k,\ell}(z)$ is not a quasimodular form of any weight and depth (and hence, obviously, not a modular form). Note that when $k=0$, this gives $\sum_{n=1}^{\infty}\sigma_{\ell}(n)q^n$ for $\ell$ even. However, as expressed in the concluding remarks of \cite{bs}, no transformation property for the latter series is known. In Theorem \ref{a=2mcase} below, we obtain the precise transformation that this series satisfies. It shows explicitly the obstruction to modularity.

When $a=2m$, $m\in\mathbb{Z}^{-}$, or $a=0$, we also obtain simplified versions of the transformations for the series in \eqref{seriesimp}. This is achieved using a recent result \cite[Theorem 2.2]{dgkm}, valid for $\textup{Re}(u)>0$:
\begin{align}\label{dgkmresult}
\sum_{n=1}^\infty\int_0^\infty\frac{t\cos(t)}{t^2+n^2u^2}\ dt=\frac{1}{2}\left\{\log\left(\frac{u}{2\pi}\right)-\frac{1}{2}\left(\psi\left(\frac{iu}{2\pi}\right)+\psi\left(-\frac{iu}{2\pi}\right)\right)\right\},
\end{align}
where $\psi(z):=\G'(z)/\G(z)$ is the logarithmic derivative of $\G(z)$. See \eqref{kanot} and \eqref{equiforma=-2mano} for the same.

Our transformation for the series $\sum_{n=1}^\infty  \sigma_a(n)e^{-ny}$ is now given. It is proved using Guinand's version of the Vorono\"{\dotlessi} summation formula (see Theorem \ref{guinandSummationFormula} below).
\begin{theorem}\label{main}
Let $\textup{Re}(y)>0$. For $\textup{Re}(a)>-1$, the following transformation holds:
\begin{align}\label{maineqn}
&\sum_{n=1}^\infty  \sigma_a(n)e^{-ny}+\frac{1}{2}\left(\left(\frac{2\pi}{y}\right)^{1+a}\mathrm{cosec}\left(\frac{\pi a}{2}\right)+1\right)\zeta(-a)-\frac{1}{y}\zeta(1-a)\nonumber\\
&=\frac{2\pi}{y\sin\left(\frac{\pi a}{2}\right)}\sum_{n=1}^\infty \sigma_{a}(n)\Bigg(\frac{(2\pi n)^{-a}}{\Gamma(1-a)} {}_1F_2\left(1;\frac{1-a}{2},1-\frac{a}{2};\frac{4\pi^4n^2}{y^2} \right) -\left(\frac{2\pi}{y}\right)^{a}\cosh\left(\frac{4\pi^2n}{y}\right)\Bigg).
\end{align}
\end{theorem}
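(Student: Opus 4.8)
The plan is to recast the series in \eqref{equivalence} and feed it into Guinand's version of the Vorono\"{\i} summation formula (Theorem~\ref{guinandSummationFormula}). Using $\sigma_a(n)=n^a\sigma_{-a}(n)$, I would write $\sum_{n=1}^\infty\sigma_a(n)e^{-ny}=\sum_{n=1}^\infty\sigma_{-a}(n)f(n)$ with $f(t)=t^a e^{-ty}$, and then verify that $f$ satisfies the hypotheses of the summation formula when $\mathrm{Re}(a)>-1$ and $\mathrm{Re}(y)>0$: the factor $e^{-ty}$ controls decay at infinity, while $\mathrm{Re}(a)>-1$ guarantees integrability at the origin. Guinand's formula then produces a collection of residual terms together with a companion series $\sum_{n=1}^\infty\sigma_{-a}(n)\widehat f(n)$, where $\widehat f$ is the first Koshliakov transform of $f$ against the kernel in \eqref{kk1} of order $2\nu=a$.

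The residual terms are the residues of the Mellin--Barnes integrand $\zeta(s)\zeta(s+a)\Gamma(s+a)y^{-(s+a)}$, since the Mellin transform of $f$ is $\Gamma(s+a)y^{-(s+a)}$ and $\sum_{n=1}^\infty\sigma_{-a}(n)n^{-s}=\zeta(s)\zeta(s+a)$. The pole of $\zeta(s)$ at $s=1$ gives $\zeta(1+a)\Gamma(1+a)y^{-(1+a)}$, which the functional equation of $\zeta$ rewrites as $-\tfrac12(2\pi/y)^{1+a}\mathrm{cosec}(\tfrac{\pi a}{2})\zeta(-a)$; the pole of $\zeta(s+a)$ at $s=1-a$ gives $\zeta(1-a)/y$; and the pole of $\Gamma(s+a)$ at $s=-a$ gives $\zeta(-a)\zeta(0)=-\tfrac12\zeta(-a)$. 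Transferred to the left-hand side with the sign dictated by the formula, these three residues reproduce precisely the three main terms in \eqref{maineqn}. I would pay careful attention to the signs and confirm that the remaining poles of $\Gamma(s+a)$ play no independent role, their contribution being carried entirely by the companion series.

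The crux is the evaluation of $\widehat f(n)$, and here the decisive observation is that $K_{1/2}(t)=\sqrt{\pi/(2t)}\,e^{-t}$, so $e^{-ty}$ is, up to an elementary power of $t$, a half-integer order $K$-Bessel function. After the substitution $t\mapsto t/y$, the integral defining $\widehat f(n)$ becomes exactly the integral evaluated in Koshliakov's identity \eqref{koshlyakovg-1} with $\mu=\tfrac12$, $\nu=\tfrac a2$ and argument $x=4\pi^2n/y$; the weight $t^{\mu+\nu}=t^{(1+a)/2}$ there absorbs the $t^{-1/2}$ from $K_{1/2}$ together with the power carried by the kernel. Thus $\widehat f(n)$ is a constant multiple of ${}_{1/2}K_{a/2}(4\pi^2n/y,0)$. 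Expanding this through \eqref{def2varbessel} with $w=0$, the first ${}_1F_2$ is ${}_1F_2\!\left(1;\tfrac{1-a}{2},1-\tfrac a2;4\pi^4n^2/y^2\right)$, while in the second factor the choice $\mu=\tfrac12$ makes the numerator parameter $\mu+\nu+\tfrac12=1+\nu$ cancel the denominator parameter $1+\nu$, collapsing it to ${}_0F_1(\,;\tfrac12;4\pi^4n^2/y^2)=\cosh(4\pi^2n/y)$. Matching the accumulated constants---the $2\pi$ of the summation formula, the $\sqrt{\pi/2}$ from $K_{1/2}$, the powers of $y$ from the substitution, the factor $\pi/\sin(\tfrac{\pi a}{2})$ from \eqref{def2varbessel}, and the Legendre duplication formula needed to turn $\Gamma(1-\tfrac a2)\Gamma(\tfrac{1-a}{2})$ into $\Gamma(1-a)$---then reproduces the bracketed summand on the right-hand side of \eqref{maineqn}.

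The hardest part will be the analytic justification rather than the formal identity. Each of ${}_1F_2(1;\cdots)$ and $(2\pi/y)^a\cosh(4\pi^2n/y)$ grows like $e^{4\pi^2n/y}$, so only their difference is summable; I would therefore need the large-argument asymptotics of ${}_{1/2}K_{a/2}(z,0)$ to show that the bracketed term decays in $n$ and that regrouping the transform into this convergent difference, and interchanging it with the sum over $n$, is legitimate. This is precisely the analytic content that makes \eqref{maineqn} a genuine convergent transformation and that exhibits the obstruction to modularity. The remaining work---tracking constants and pinning down the domain---is bookkeeping; the condition $\mathrm{Re}(a)>-1$ is exactly what convergence at the origin requires, and the identity can afterwards be propagated to other ranges of $a$ by analytic continuation.
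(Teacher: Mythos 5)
Your overall architecture is the paper's: feed $f(t)=t^{a/2}e^{-ty}$ (your $t^{a}e^{-ty}$, once the weight $n^{a/2}$ built into Theorem \ref{guinandSummationFormula} is accounted for) into Guinand's summation formula, recognize $e^{-ty}$ as $K_{1/2}$ up to a power of $t$, and evaluate the first Koshliakov transform via \eqref{koshlyakovg-1} with $\mu=\tfrac12$, $\nu=\tfrac a2$, so that $g$ is a multiple of ${}_{\frac{1}{2}}K_{\frac{a}{2}}(4\pi^2n/y,0)$ with the second ${}_1F_2$ collapsing to $\cosh$. That part, including your point that only the difference of the two exponentially growing pieces is summable and that the asymptotics of ${}_{\frac{1}{2}}K_{\frac{a}{2}}$ are needed to justify the rearrangement, matches the paper's proof.

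The genuine gap is in the residual terms. Theorem \ref{guinandSummationFormula} does not hand you residues of a Mellin--Barnes integrand; its residual terms are the four integrals $\zeta(1\pm a)\int_0^\infty x^{\mp a/2}f(x)\,dx$ and $\zeta(1\pm a)\int_0^\infty x^{\mp a/2}g(x)\,dx$. The $f$-integrals are elementary and give $\Gamma(1+a)\zeta(1+a)y^{-1-a}$ and $\zeta(1-a)/y$, as you say. But the $g$-integrals are a substantial piece of analysis that your proposal skips entirely: each of the two terms inside $g$ grows like $e^{4\pi^2x/y}$, so neither integral converges termwise, and the paper must regularize with a factor $e^{-ux^2}$, invoke Kim's asymptotic expansion of ${}_2F_2$ and the full asymptotics of ${}_1F_1$, watch the exponentially large pieces cancel, and then let $u\to 0$ by dominated convergence (Lemmas \ref{integralevaluation} and \ref{integralevaluationmain}). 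The outcome --- $\int_0^\infty x^{a/2}g(x)\,dx=2^{-2-a}\pi^{-a}\sec\left(\tfrac{\pi a}{2}\right)/\Gamma(-a)$ while $\int_0^\infty x^{-a/2}g(x)\,dx=0$ --- is precisely what produces the $\tfrac12\zeta(-a)$ on the left of \eqref{maineqn}; your attribution of that term to the pole of $\Gamma(s+a)$ at $s=-a$ predicts the right constant but is not a derivation inside the framework you set up, and the asymmetry between the two $g$-integrals is exactly the subtle point one would have to prove. A secondary issue: Guinand's formula requires $-\tfrac12<\textup{Re}(a)<\tfrac12$, so the direct argument only establishes \eqref{maineqn} in that strip (the paper works with $0<a<\tfrac12$ and $y>0$); the full range $\textup{Re}(a)>-1$, $\textup{Re}(y)>0$ is reached only by the analytic continuation you defer to the end, not by the integrability-at-the-origin condition you cite.
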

\begin{remark}\label{sigmatrans}
With ${}_\mu K_{\nu}(z,w)$ defined in \eqref{def2varbessel}, the right-hand side of the above transformation is nothing but $
2\sqrt{2\pi}y^{-1-\frac{a}{2}}\sum_{n=1}^{\infty}\sigma_a(n)n^{-\frac{a}{2}}{}_{\frac{1}{2}}K_{\frac{a}{2}}\left(\frac{4\pi^2n}{y},0\right)$.
\end{remark}

Through analytic continuation, the identity in Theorem \ref{main} can be extended to the half-plane $\mathrm{Re}(a)>-2m-3$, where $m$ is any non-negative integer.  
\begin{theorem}\label{extendedid}
Let $m\in\mathbb{N}\cup\{0\}$ and $\textup{Re}(y)>0$. Then for $\mathrm{Re}(a)>-2m-3$, we have
{\allowdisplaybreaks\begin{align}\label{extendedideqn}
&\sum_{n=1}^\infty  \sigma_a(n)e^{-ny}+\frac{1}{2}\left(\left(\frac{2\pi}{y}\right)^{1+a}\mathrm{cosec}\left(\frac{\pi a}{2}\right)+1\right)\zeta(-a)-\frac{\zeta(1-a)}{y}\nonumber\\
&=\frac{2\sqrt{2\pi}}{y^{1+\frac{a}{2}}}\sum_{n=1}^\infty\sigma_a(n)n^{-\frac{a}{2}}\left\{{}_{\frac{1}{2}}{K}_{\frac{a}{2}}\left(\frac{4\pi^2n}{y},0\right)-\frac{\pi2^{\frac{3}{2}+a}}{\sin\left(\frac{\pi a}{2}\right)}\left(\frac{4\pi^2n}{y}\right)^{-\frac{a}{2}-2}A_m\left(\frac{1}{2},\frac{a}{2},0;\frac{4\pi^2n}{y}\right)\right\}\nonumber\\
&\qquad-\frac{y(2\pi)^{-a-3}}{\sin\left(\frac{\pi a}{2}\right)}\sum_{k=0}^m\frac{\zeta(a+2k+2)\zeta(2k+2)}{\Gamma(-a-1-2k)}\left(\frac{4\pi^2}{y}\right)^{-2k},
\end{align}}
where
\begin{align}\label{am}
A_m(\mu,\nu,w;z):=\sum_{k=0}^m\frac{(-1)^{-\mu-w-\frac{1}{2}}\Gamma\left(\mu+w+\frac{1}{2}+k\right)}{k!\Gamma\left(-\nu-\mu-k\right)\Gamma\left(\frac{1}{2}-\nu-\mu-w-k\right)}\left(\frac{z}{2}\right)^{-2k}.
\end{align}
\end{theorem}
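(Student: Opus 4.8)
The plan is to obtain \eqref{extendedideqn} from Theorem \ref{main} by analytic continuation in the parameter $a$, fixing $y$ with $\textup{Re}(y)>0$ throughout. First I would observe that the left-hand side of \eqref{maineqn} (which is identical to that of \eqref{extendedideqn}) is a meromorphic function of $a$ on all of $\mathbb{C}$: the Lambert series $\sum_{n=1}^{\infty}\sigma_a(n)e^{-ny}$ is entire in $a$, since $\sigma_a(n)=\sum_{d\mid n}e^{a\log d}$ and the bound $|\sigma_a(n)|\le d(n)n^{|\textup{Re}(a)|}$ makes the series converge locally uniformly against $e^{-n\,\textup{Re}(y)}$, while the remaining terms are built from $\zeta(-a)$, $\zeta(1-a)$ and $\textup{cosec}(\pi a/2)$. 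Hence it suffices to continue the right-hand side of \eqref{maineqn}, which by Remark \ref{sigmatrans} equals $2\sqrt{2\pi}\,y^{-1-a/2}\sum_{n=1}^{\infty}\sigma_a(n)n^{-a/2}\,{}_{\frac12}K_{\frac a2}(4\pi^2n/y,0)$, from the half-plane $\textup{Re}(a)>-1$ of Theorem \ref{main} to the half-plane $\textup{Re}(a)>-2m-3$.

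The key analytic input is the large-$z$ behaviour of ${}_{\frac12}K_{\frac a2}(z,0)$. Specialising \eqref{def2varbessel} to $\mu=\tfrac12$, $w=0$, $\nu=\tfrac a2$ and using the asymptotics of the two ${}_1F_2$-functions there (equivalently, of the combination $\sinh(z)\,\textup{Shi}(z)-\cosh(z)\,\textup{Chi}(z)$ that governs the summand in \eqref{seriesimp}), I would prove an asymptotic expansion in descending even powers of $z$,
\begin{equation*}
{}_{\frac12}K_{\frac a2}(z,0)=\frac{\pi\,2^{\frac32+a}}{\sin(\pi a/2)}\,z^{-\frac a2-2}\left(A_m\left(\tfrac12,\tfrac a2,0;z\right)+O\left(z^{-2m-2}\right)\right)\qquad(z\to\infty),
\end{equation*}
where $A_m$ is exactly the partial sum defined in \eqref{am}; the reciprocal-Gamma coefficients and the sign $(-1)^{-\mu-w-\frac12}$ appearing in \eqref{am} are precisely those produced by this expansion. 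Taking the error bound locally uniform in $a$ and substituting $z=4\pi^2n/y$ shows that the regularised summand appearing inside the braces of \eqref{extendedideqn}, namely ${}_{\frac12}K_{\frac a2}(4\pi^2n/y,0)$ minus its $A_m$-tail, is $O(n^{-\textup{Re}(a)/2-2m-4})$. Multiplied by the prefactor $\sigma_a(n)n^{-a/2}$, which grows at most polynomially, the resulting series converges locally uniformly, and therefore defines an analytic function of $a$, exactly on $\textup{Re}(a)>-2m-3$.

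Next I would sum the subtracted tail in closed form on the overlap region. For each fixed $k$, the $k$-th term of the $A_m$-tail contributes a quantity proportional to $\sigma_a(n)n^{-a/2}\cdot(4\pi^2n/y)^{-a/2-2-2k}$, i.e. a constant times $\sigma_a(n)n^{-a-2-2k}$; summing over $n$ and invoking the classical evaluation $\sum_{n=1}^{\infty}\sigma_a(n)n^{-s}=\zeta(s)\zeta(s-a)$ at $s=a+2k+2$ produces the factor $\zeta(a+2k+2)\zeta(2k+2)$. Collecting the powers of $2\pi$ and $y$, and converting the product $\Gamma(-\tfrac a2-\tfrac12-k)\Gamma(-\tfrac a2-k)$ from \eqref{am} into the single $\Gamma(-a-1-2k)$ by the Legendre duplication formula, reproduces exactly the finite correction sum on the last line of \eqref{extendedideqn}. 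On $\textup{Re}(a)>-1$ every series here converges absolutely, so this term-by-term summation is legitimate; moreover the $\Gamma(-a-1-2k)$ in the denominator cancels the poles of $\zeta(a+2k+2)$ at $a=-2k-1$, so the closed form is meromorphic in $a$ on all of $\mathbb{C}$ through the analytic continuation of $\zeta$.

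Finally, on the strip $\textup{Re}(a)>-1$ the right-hand side of \eqref{extendedideqn} is, by construction, nothing but the right-hand side of \eqref{maineqn}: one has merely added and subtracted the $A_m$-tail and regrouped two absolutely convergent series. Hence the two sides of \eqref{extendedideqn} coincide there by Theorem \ref{main}, both sides are meromorphic on $\textup{Re}(a)>-2m-3$, and the identity theorem propagates the equality to the whole half-plane. The \emph{main obstacle} is the asymptotic expansion of the second paragraph: establishing it with an error term uniform in $a$ on compact subsets of the strip — so that termwise analyticity lifts to the series — and verifying that its leading $m+1$ coefficients coincide on the nose with those packaged in $A_m$ of \eqref{am}, including the correct sign and reciprocal-Gamma factors.
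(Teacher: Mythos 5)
Your proposal is correct and follows essentially the same route as the paper: add and subtract the $A_m$-tail, evaluate the subtracted piece via $\sum_{n\ge1}\sigma_a(n)n^{-s}=\zeta(s)\zeta(s-a)$ and the duplication formula, and extend by uniform convergence plus analytic continuation. The asymptotic expansion you single out as the main obstacle is precisely the paper's Lemma \ref{asymukhalfo} (specialised to $\mu=\tfrac12$, $w=0$, where the companion sum $B_m$ vanishes identically), which is established separately via the large-$z$ expansions of the two ${}_1F_2$'s and the cancellation of their exponentially growing parts.
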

It is the above transformation that we call the \emph{master identity}, for, it gives numerous well-known as well as new results as corollaries. Note, of course, that if we let $m=0$ in Theorem \ref{extendedid}, we get Theorem \ref{main}.

We now give a special case of the above theorem.  
\begin{corollary}\label{ram}
Ramanujan's formula \eqref{rameqn} holds for $m>0$.
\end{corollary}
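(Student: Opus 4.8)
The plan is to obtain \eqref{rameqn} as the specialization of the master identity \eqref{extendedideqn} to the negative odd integer $a=-2m-1$, taking the free parameter in Theorem \ref{extendedid} to be this same $m$; since $-2m-1>-2m-3$, the point lies inside the region of validity. I will then put $y=2\alpha$ and $4\pi^2/y=2\beta$, so that $\alpha\beta=\pi^2$, and match \eqref{extendedideqn} against \eqref{rameqn} term by term. The odd parity of $a$ keeps every trigonometric factor regular, since $\sin(\pi a/2)=(-1)^{m+1}=\mathrm{cosec}(\pi a/2)$; in particular the two-variable Bessel function is evaluated at the half-integer order $\nu=a/2=-m-\tfrac12$, where it has no pole.

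On the left of \eqref{extendedideqn} I will use $\sum_{n=1}^\infty\sigma_{-2m-1}(n)e^{-ny}=\sum_{n=1}^\infty n^{-2m-1}/(e^{2n\alpha}-1)$ together with $\zeta(-a)=\zeta(2m+1)$ and $\zeta(1-a)=\zeta(2m+2)$. The series together with one half of $\zeta(2m+1)$ forms the bracket $\tfrac12\zeta(2m+1)+\sum_n n^{-2m-1}/(e^{2n\alpha}-1)$ carried by $\alpha^{-m}$ in \eqref{rameqn}, whereas the term $\tfrac12(2\pi/y)^{1+a}\mathrm{cosec}(\pi a/2)\zeta(2m+1)$ becomes, after $y=2\alpha$ and $\alpha\beta=\pi^2$, the $\tfrac12\zeta(2m+1)$ part of the companion bracket carried by $(-\beta)^{-m}$. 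The lone term $\zeta(2m+2)/y$ will be absorbed into the Bernoulli sum produced on the right.

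The heart of the argument is the right-hand side. At $\nu=-m-\tfrac12$ the second hypergeometric factor in \eqref{def2varbessel} has its upper parameter $\mu+\nu+w+\tfrac12$ equal to the lower parameter $1+\nu$ (both equal $\tfrac12-m$), so it collapses to ${}_0F_1(;\tfrac12;z^2/4)=\cosh z$, exactly the $\cosh$ term foreseen in Remark \ref{sigmatrans}. The first factor ${}_1F_2(1;m+1,m+\tfrac32;z^2/4)$ is not elementary on its own, but I will show that, once the algebraic tail carried by $A_m(\tfrac12,\tfrac{a}{2},0;z)$ in \eqref{am} is subtracted and the result combined with the $\cosh$ term, the braces in \eqref{extendedideqn} reduce to a constant multiple of $z^{a/2}e^{-z}$. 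No Shi or Chi functions survive here, in contrast to the even-$a$ companion \eqref{seriesimp}, precisely because for odd $a$ the lower parameters $m+1$ and $m+\tfrac32$ are non-resonant and force no logarithmic (limiting) evaluation. Consequently the transform series collapses to a multiple of $\sum_{n=1}^\infty\sigma_{-2m-1}(n)e^{-4\pi^2n/y}=\sum_{n=1}^\infty n^{-2m-1}/(e^{2n\beta}-1)$, supplying the $(-\beta)^{-m}$ series of \eqref{rameqn}. Establishing this elementary reduction of ${}_{1/2}K_{-m-1/2}(z,0)$ minus its $A_m$ correction to a pure exponential is the step I expect to be the main obstacle.

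Finally I will convert the finite sum of \eqref{extendedideqn} into the Bernoulli expression of \eqref{rameqn}. For $k<m$ one has $\zeta(a+2k+2)=\zeta(2(k-m)+1)=-B_{2(m-k)}/(2(m-k))$, Euler's formula \eqref{zeta(2m)} turns $\zeta(2k+2)$ into $(-1)^k(2\pi)^{2k+2}B_{2k+2}/(2(2k+2)!)$, and $\Gamma(-a-1-2k)=(2(m-k)-1)!$. The top term $k=m$ is an indeterminate $\zeta(1)/\Gamma(0)$ and must be read as the limit $a\to-2m-1$, in which the two simple poles cancel to a finite value; together with the $\zeta(2m+2)/y$ term from the left it completes the $k=m+1$ endpoint of the Bernoulli sum. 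Substituting $y=2\alpha$, $4\pi^2/y=2\beta$ and using $\alpha\beta=\pi^2$ repeatedly to balance the powers of $\pi$, the prefactor $-y(2\pi)^{-a-3}/\sin(\pi a/2)$ times the converted sum reproduces $-2^{2m}\sum_{k=0}^{m+1}(-1)^kB_{2k}B_{2m+2-2k}\alpha^{m+1-k}\beta^k/\big((2k)!(2m+2-2k)!\big)$. This Bernoulli-number bookkeeping is routine but lengthy; combined with the exponential reduction of the preceding paragraph it yields \eqref{rameqn} for all $m>0$.
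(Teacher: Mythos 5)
Your proposal follows essentially the same route as the paper: specialize the master identity \eqref{extendedideqn} at $a=-2m-1$, observe that the second ${}_1F_2$ in \eqref{def2varbessel} collapses to $\cosh$, reduce ${}_{\frac{1}{2}}K_{\frac{-2m-1}{2}}(z,0)$ minus the $A_m$ tail to a pure multiple of $z^{-m-\frac12}e^{-z}$ (this is exactly the paper's Lemma \ref{khalfwithm} combined with \eqref{khalfexp}, proved via the contiguous relation \eqref{1f2transrec}), interpret the $k=m$ term of the zeta sum as a limit, and convert the remaining zeta values to Bernoulli numbers via \eqref{zeta(2m)} and \eqref{zeta1-2m} before setting $y=2\alpha$, $\alpha\beta=\pi^2$. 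The step you flag as the main obstacle is precisely what the paper isolates as Lemma \ref{khalfwithm}, and your overall plan is correct.
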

Theorem \ref{main} gives below the modular transformation satisfied by the Eisenstein series on $\textup{SL}_{2}\left(\mathbb{Z}\right)$ as a special case.
\begin{corollary}\label{a=2m-1}
Let $m>1$ be a natural number. If $\textup{Re}(\alpha), \textup{Re}(\beta)>0$ and $\alpha\beta=\pi^2$,
\begin{align}\label{a=2m-1equv}
\alpha^m\sum_{n=1}^\infty\sigma_{2m-1}(n)e^{-2n\alpha}-(-\beta)^m\sum_{n=1}^\infty\sigma_{2m-1}(n)e^{-2n\beta}=\left(\alpha^m-(-\beta)^m\right)\frac{B_{2m}}{4m}.
\end{align}
\end{corollary}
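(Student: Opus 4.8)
The plan is to specialize Theorem \ref{main} to $a=2m-1$ and put $y=2\alpha$, using $\alpha\beta=\pi^2$ so that $\tfrac{4\pi^2 n}{y}=2\beta n$ and $\tfrac{2\pi}{y}=\tfrac{\beta}{\pi}$. Since $\textup{Re}(a)=2m-1>-1$ for $m\ge 1$, \eqref{maineqn} is available. First I would dispose of the elementary terms on the left-hand side: $\zeta(-a)=\zeta(1-2m)=-B_{2m}/(2m)$, $\operatorname{cosec}(\pi a/2)=(-1)^{m+1}$, and $\sin(\pi a/2)=(-1)^{m+1}$. The decisive simplification is that $\zeta(1-a)=\zeta(2-2m)=0$, because $2-2m$ is a negative even integer precisely when $m>1$; this vanishing is what removes the anomalous term and is exactly why the hypothesis $m>1$ is needed (the excluded value $m=1$ corresponds to the quasimodular Eisenstein series $E_2$, whose transformation carries the extra $\zeta(0)$ contribution).

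The heart of the argument is to show that each summand on the right of \eqref{maineqn} collapses to an elementary function. The subtlety is that at the odd integer $a=2m-1$ the lower parameter $\tfrac{1-a}{2}=1-m$ of ${}_1F_2$ is a non-positive integer while $\Gamma(1-a)$ has a pole, producing a $0\cdot\infty$ indeterminacy; this is the main obstacle. I would resolve it, rather than by a separate limiting argument in $a$, by applying the Legendre duplication formula in the form $\Gamma(1-a)=2^{-a}\pi^{-1/2}\Gamma(\tfrac{1-a}{2})\Gamma(1-\tfrac a2)$ to rewrite the combination as an entire function of $a$:
\[
\frac{1}{\Gamma(1-a)}\,{}_1F_2\!\left(1;\tfrac{1-a}{2},1-\tfrac a2;\,Z\right)=\sqrt{\pi}\,2^{a}\sum_{k=0}^{\infty}\frac{Z^{k}}{\Gamma\!\left(\tfrac{1-a}{2}+k\right)\Gamma\!\left(1-\tfrac a2+k\right)},\qquad Z:=\frac{4\pi^{4}n^{2}}{y^{2}}.
\]
The offending pole now sits in a denominator, so no term blows up. (Equivalently, this is the statement that the order $\nu=a/2=m-\tfrac12$ of the generalized Bessel function ${}_{1/2}K_{a/2}$ appearing in Remark \ref{sigmatrans} is a half-integer, hence elementary.)

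Evaluating at $a=2m-1$, the factor $1/\Gamma(1-m+k)$ annihilates the terms $k=0,\dots,m-1$; shifting $k\mapsto k+m$ and using $1-\tfrac a2+m=\tfrac32$ gives
\[
\frac{1}{\Gamma(1-a)}\,{}_1F_2\Big|_{a=2m-1}=\sqrt{\pi}\,2^{2m-1}Z^{m}\sum_{j\ge 0}\frac{Z^{j}}{j!\,\Gamma\!\left(j+\tfrac32\right)}=2^{2m-1}Z^{m-\frac12}\sinh\!\left(2\sqrt{Z}\right),
\]
using $\sum_{j\ge0}Z^{j}/\!\big(j!\,\Gamma(j+\tfrac32)\big)=\sinh(2\sqrt Z)/\sqrt{\pi Z}$. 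With $Z=\beta^2 n^2$ this makes the prefactor $(2\pi n)^{-a}$ collapse the first term to $(\beta/\pi)^{2m-1}\sinh(2\beta n)$, which combines with the subtracted term $(2\pi/y)^{a}\cosh(4\pi^2 n/y)=(\beta/\pi)^{2m-1}\cosh(2\beta n)$ to give $-(\beta/\pi)^{2m-1}e^{-2\beta n}$ for each $n$.

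Finally I would reassemble: carrying the prefactor $\tfrac{2\pi}{y\sin(\pi a/2)}=(\beta/\pi)(-1)^{m+1}$ and reducing powers of $\pi$ via $\pi^{2m}=(\alpha\beta)^{m}$ turns the right-hand side of \eqref{maineqn} into $\alpha^{-m}(-\beta)^{m}\sum_{n}\sigma_{2m-1}(n)e^{-2\beta n}$, while the Bernoulli terms on the left become $-\tfrac{B_{2m}}{4m}\big(1-\alpha^{-m}(-\beta)^{m}\big)$ after using $(-1)^{m+1}\beta^{m}=-(-\beta)^{m}$. Multiplying the resulting identity through by $\alpha^{m}$ and rearranging then yields exactly \eqref{a=2m-1equv}. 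The only genuinely nontrivial step is the indeterminacy noted above; everything after the duplication-formula rewriting is routine bookkeeping with $\alpha\beta=\pi^{2}$.
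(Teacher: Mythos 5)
Your proposal is correct and follows essentially the same route as the paper: specialize Theorem \ref{main} at $a=2m-1$, use $\zeta(1-2m)=-B_{2m}/(2m)$ and $\zeta(2-2m)=0$ for $m>1$, show that $\frac{1}{\Gamma(1-a)}{}_1F_2$ collapses to $\left(\frac{4\pi^2n}{y}\right)^{2m-1}\sinh\left(\frac{4\pi^2n}{y}\right)$ so that each summand becomes $-\left(\frac{2\pi}{y}\right)^{2m-1}e^{-4\pi^2n/y}$, and then set $y=2\alpha$ with $\alpha\beta=\pi^2$. The only difference is that you supply, via the duplication formula, a detailed derivation of the limit that the paper merely asserts as Equation \eqref{derreg1f22m-1}.
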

Next, we give a corollary of Theorem \ref{extendedid} which is equivalent to the transformation formula satisfied by the logarithm of the Dedekind eta function \cite[Equation (3.10)]{bls}.
\begin{corollary}\label{maineqn2cor}
If $\a,\b$ are such that $\textup{Re}(\a)>0, \textup{Re}(\b)>0$ and $\a\b=\pi^2$, then
\begin{equation}\label{maineqn2alt}
\sum_{n=1}^{\infty}\sigma_{-1}(n)e^{-2n\a}-\sum_{n=1}^{\infty}\sigma_{-1}(n)e^{-2n\b}=\frac{\b-\a}{12}+\frac{1}{4}\log\left(\frac{\a}{\b}\right),
\end{equation}
\end{corollary}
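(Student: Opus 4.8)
The plan is to obtain \eqref{maineqn2alt} as the special case $a=-1$ of the master identity \eqref{extendedideqn}, taken with $m=0$ (any $m\ge0$ works, since at $a=-1$ the terms of the finite $k$-sum with $k\ge1$ vanish because $1/\Gamma(-2k)=0$). As $-1>-3$, the value $a=-1$ lies in the admissible half-plane, but it is exactly the point at which several auxiliary terms of \eqref{extendedideqn} are individually singular while their combination stays finite, so the whole argument is the careful evaluation of $\lim_{a\to-1}$ of each piece. The Lambert series $\sum_{n=1}^\infty\sigma_{-1}(n)e^{-ny}$ on the left is harmless, so all the work is in the remaining terms.

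First I would dispatch the two indeterminate forms. On the left, $\tfrac12\big((2\pi/y)^{1+a}\,\mathrm{cosec}(\tfrac{\pi a}{2})+1\big)\zeta(-a)$ is of type $0\cdot\infty$: the bracket vanishes because $(2\pi/y)^{0}\,\mathrm{cosec}(-\tfrac{\pi}{2})+1=0$, while $\zeta(-a)=\zeta(1-\varepsilon)$ has a simple pole as $\varepsilon=a+1\to0$. Expanding $\zeta$ by its Laurent series at $1$ and $(2\pi/y)^{1+a}$, $\mathrm{cosec}(\tfrac{\pi a}{2})$ by Taylor series, the $1/\varepsilon$ pole meets the $O(\varepsilon)$ zero of the bracket and the limit is $\tfrac12\log(2\pi/y)$. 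On the right, the surviving ($k=0$) term of the finite sum contains $\zeta(a+2)/\Gamma(-a-1)=\infty/\infty$; since $\zeta(a+2)=\zeta(1+\varepsilon)\sim1/\varepsilon$ and $\Gamma(-a-1)=\Gamma(-\varepsilon)\sim-1/\varepsilon$, this ratio tends to $-1$, and the full displayed term $-\,y(2\pi)^{-a-3}\zeta(a+2)\zeta(2)/\big(\sin(\tfrac{\pi a}{2})\Gamma(-a-1)\big)$ tends to $-y/24$.

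For the main Bessel sum I would invoke \eqref{mueq-nu} with $\nu=-\tfrac12$, giving ${}_{\frac12}K_{-\frac12}(z,0)=K_{-\frac12}(z)=\sqrt{\pi/(2z)}\,e^{-z}$, and note that $A_0$ in \eqref{am} vanishes at $a=-1$ because it carries a factor $1/\Gamma(0)$. Hence the braced summand collapses to $\sqrt{\pi/(2\cdot 4\pi^2 n/y)}\,e^{-4\pi^2 n/y}$, and after absorbing the prefactor $2\sqrt{2\pi}\,y^{-1-a/2}n^{-a/2}$ the main sum reduces to $\sum_{n=1}^\infty\sigma_{-1}(n)e^{-4\pi^2n/y}$. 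The hard part will be justifying that this limit may be taken termwise: for $a$ slightly above $-1$ the function ${}_{\frac12}K_{a/2}(z,0)$ carries a slowly decaying algebraic tail whose summation against $\sigma_a(n)$ produces, through the pole of $\zeta(a+2)$ at $a=-1$, a finite residual contribution, so that naive term-by-term passage to the limit in the \emph{un}corrected sum would miss a constant. This is precisely why the master identity is organized with the $A_0$-correction subtracted inside the sum and the compensating $k$-sum displayed separately: subtracting $A_0$ strips the tail, making the corrected summand decay rapidly and uniformly near $a=-1$, which legitimizes the interchange of limit and summation, while the removed contribution is carried by the explicit $k$-sum and equals the $-y/24$ found above.

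Combining the three limits, \eqref{extendedideqn} at $a=-1$ becomes
\[
\sum_{n=1}^\infty\sigma_{-1}(n)e^{-ny}+\tfrac12\log\!\Big(\tfrac{2\pi}{y}\Big)-\tfrac{\zeta(2)}{y}
=\sum_{n=1}^\infty\sigma_{-1}(n)e^{-4\pi^2n/y}-\tfrac{y}{24}.
\]
Finally I would set $y=2\alpha$ and use $\alpha\beta=\pi^2$, so that $4\pi^2/y=2\beta$, $\zeta(2)/y=\beta/12$, $y/24=\alpha/12$, and $\tfrac12\log(2\pi/y)=\tfrac12\log(\pi/\alpha)=-\tfrac14\log(\alpha/\beta)$; rearranging yields \eqref{maineqn2alt}. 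The only genuinely nontrivial steps are the two indeterminate-limit evaluations and the justification of the termwise limit in the Bessel sum, everything else being bookkeeping.
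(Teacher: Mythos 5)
Your proposal is correct and follows essentially the same route as the paper: apply the master identity with $m=0$, let $a\to-1$, evaluate the two indeterminate limits (obtaining $\tfrac12\log(2\pi/y)$ and $-y/24$), collapse the Bessel sum via ${}_{\frac12}K_{-\frac12}(z,0)=K_{\frac12}(z)$ together with the vanishing of $A_0$ at $a=-1$, and finally set $y=2\alpha$ with $\alpha\beta=\pi^2$. Your remark that the termwise limit is legitimate because the $A_0$-corrected summand yields a series uniformly convergent (hence analytic) near $a=-1$ is exactly the justification already built into the proof of Theorem \ref{extendedid}, so nothing is missing.
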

The limiting case $a\to1$ of the Theorem \ref{main} gives an equivalent form of the transformation satisfied by the weight-$2$ Eisenstein series $E_2(z)$ on $\textup{SL}_{2}\left(\mathbb{Z}\right)$, namely, $E_2\left(\frac{-1}z\right)=z^2E_2(z)+\frac{6z}{\pi i}$.
\begin{corollary}\label{e2transcor}
Let $\a,\b$ be such that $\textup{Re}(\a)>0, \textup{Re}(\b)>0$ and $\a\b=\pi^2$. Then
\begin{align}\label{e2trans}
\a\sum_{n=1}^{\infty}\frac{n}{e^{2n\a}-1}+\b\sum_{n=1}^{\infty}\frac{n}{e^{2n\b}-1}&=\frac{\a+\b}{24}-\frac{1}{4}.
\end{align}
\end{corollary}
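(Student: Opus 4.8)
The plan is to derive \eqref{e2trans} as the limiting case $a\to1$ of Theorem \ref{main}. Since $\mathrm{Re}(1)=1>-1$, that theorem applies, and I would first specialize $y=2\alpha$ (so that $\alpha\beta=\pi^2$ gives $4\pi^2/y=2\beta$ and, by \eqref{equivalence}, $\sum_{n\ge1}\sigma_1(n)e^{-ny}=\sum_{n\ge1}n/(e^{2n\alpha}-1)$). Evaluating the left-hand side of \eqref{maineqn} at $a=1$ is routine since every term there is continuous at $a=1$: using $\mathrm{cosec}(\pi/2)=1$, $\zeta(-1)=-\tfrac{1}{12}$, $\zeta(0)=-\tfrac12$ and $(2\pi/y)^2=\beta/\alpha$, the left-hand side collapses to $\sum_{n\ge1}n/(e^{2n\alpha}-1)-\tfrac{\beta}{24\alpha}-\tfrac{1}{24}+\tfrac{1}{4\alpha}$.

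The delicate step is the $a\to1$ limit of the right-hand side. The prefactor $2\pi/(y\sin(\pi a/2))$ and the factors $\sigma_a(n)$, $(2\pi n)^{-a}$, $\cosh(4\pi^2n/y)$ are all continuous at $a=1$; the sole indeterminacy is $\frac{1}{\Gamma(1-a)}\,{}_1F_2\!\left(1;\frac{1-a}{2},1-\frac a2;\frac{4\pi^4n^2}{y^2}\right)$, which is of type $0\cdot\infty$ because $1/\Gamma(1-a)\to0$ while the vanishing lower parameter $\frac{1-a}{2}\to0$ forces the hypergeometric series to blow up. I would resolve this termwise: writing the series as $\sum_{k\ge0}(u^2/4)^k/\big((\tfrac{1-a}{2})_k\,(1-\tfrac a2)_k\big)$ with $u=4\pi^2n/y$, the $k=0$ term is annihilated by $1/\Gamma(1-a)\sim(1-a)$, whereas for $k\ge1$ one has $(\tfrac{1-a}{2})_k\sim\tfrac{1-a}{2}(k-1)!$, so $\frac{1}{\Gamma(1-a)(\tfrac{1-a}{2})_k}\to\frac{2}{(k-1)!}$ and $(1-\tfrac a2)_k\to(\tfrac12)_k$. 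Using $(\tfrac12)_k=(2k)!/(4^kk!)$ the surviving sum telescopes to $\sum_{k\ge1}\frac{2k\,u^{2k}}{(2k)!}=u\sinh(u)$. Since $\sinh(u)-\cosh(u)=-e^{-u}$, the two pieces of the $n$-th summand combine, and after restoring all factors the right-hand side tends to $-\frac{4\pi^2}{y^2}\sum_{n\ge1}\sigma_1(n)e^{-4\pi^2n/y}=-\frac{\beta}{\alpha}\sum_{n\ge1}n/(e^{2n\beta}-1)$ under $y=2\alpha$.

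Equating the two sides, multiplying through by $\alpha$, and moving the $\beta$-series to the left then produces \eqref{e2trans} at once. The main obstacle is precisely the indeterminate limit just described, together with the justification for passing $a\to1$ through the infinite sum over $n$; I expect to handle the latter by noting that the combined summand decays like $e^{-4\pi^2n/y}$ (with $\mathrm{Re}(4\pi^2/y)>0$ since $\mathrm{Re}(y)>0$) locally uniformly for $a$ near $1$, so that the limit commutes with the summation. Finally I would remark that \eqref{e2trans} is a rescaled form of $E_2(-1/z)=z^2E_2(z)+6z/(\pi i)$, consistent with the identification of $\sum_{n\ge1}\sigma_1(n)e^{2\pi inz}$ with the quasimodular form $E_2$ recalled in the introduction.
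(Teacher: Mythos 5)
Your proposal is correct and follows essentially the same route as the paper: both take the limit $a\to1$ in Theorem \ref{main}, evaluate $\lim_{a\to1}\frac{1}{\Gamma(1-a)}\,{}_1F_2\bigl(1;\tfrac{1-a}{2},1-\tfrac a2;\tfrac{4\pi^4n^2}{y^2}\bigr)=\tfrac{4\pi^2n}{y}\sinh\bigl(\tfrac{4\pi^2n}{y}\bigr)$ (the paper's equation \eqref{derreg1f22m-1} with $m=1$, which you rederive termwise), combine $\sinh-\cosh=-e^{-u}$, and then set $y=2\alpha$ with $\alpha\beta=\pi^2$. Your explicit termwise justification of that limit and of the interchange of limit and summation only fills in details the paper leaves implicit.
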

Note that Corollary \ref{e2transcor} is also deducible from Ramanujan's formula \eqref{rameqn}. The transformation for $\sum_{n=1}^{\infty}d(n)e^{-ny}$, where $d(n)$ is the number of positive divisors of $n$, also results as a special case of the master identity. 
\begin{corollary}\label{kan}
For $\textup{Re}(y)>0$, we have
\begin{align}\label{kaneqn}
&\sum_{n=1}^{\infty}d(n)e^{-ny}-\frac{1}{4}-\frac{\left(\gamma-\log(y)\right)}{y}\nonumber\\
&=\frac{4}{y}\sum_{n=1}^{\infty}d(n)\left\{\sinh\left(\frac{4\pi^2n}{y}\right)\mathrm{Shi}\left(\frac{4\pi^2n}{y}\right)-\cosh\left(\frac{4\pi^2n}{y}\right)\mathrm{Chi}\left(\frac{4\pi^2n}{y}\right)\right\}.
\end{align}
Equivalently,
\begin{align}\label{kanot}
\sum_{n=1}^{\infty}\frac{1}{e^{ny}-1}-\frac{1}{4}-\frac{\left(\gamma-\log(y)\right)}{y}=\frac{2}{y}\sum_{n=1}^{\infty}\left\{\log\left(\frac{2\pi n}{y}\right)-\frac{1}{2}\left(\psi\left(\frac{2\pi in}{y}\right)+\psi\left(-\frac{2\pi in}{y}\right)\right)\right\},
\end{align}
\end{corollary}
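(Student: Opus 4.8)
The plan is to derive \eqref{kaneqn} as the limiting case $a\to 0$ of the master identity in the form of Theorem \ref{main} (valid since $\textup{Re}(a)>-1$ includes $a=0$), and then to recast \eqref{kaneqn} into \eqref{kanot} by means of \eqref{dgkmresult}. Note at the outset that $\sigma_0(n)=d(n)$ and $\sum_{n=1}^{\infty}d(n)e^{-ny}=\sum_{n=1}^{\infty}1/(e^{ny}-1)$, which already matches the left-hand sides of the two asserted forms. Since the factors $\mathrm{cosec}(\pi a/2)$, $\zeta(1-a)$ and $1/\sin(\pi a/2)$ are each singular at $a=0$, the first task is to observe that both sides of \eqref{maineqn} are holomorphic across $a=0$, the singularities being removable, so that the identity at $a=0$ is obtained by letting $a\to0$.

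For the left side I would insert the expansions $\zeta(-a)=-\tfrac12+\tfrac a2\log(2\pi)+O(a^2)$, $\zeta(1-a)=-\tfrac1a+\gamma+O(a)$ and $(2\pi/y)^{1+a}=\tfrac{2\pi}{y}\left(1+a\log(2\pi/y)+\cdots\right)$, together with $\mathrm{cosec}(\pi a/2)=\tfrac{2}{\pi a}\left(1+O(a^2)\right)$. The two resulting $\tfrac{1}{ay}$ poles cancel, and the surviving constant and $O(1)$ terms combine to $-\tfrac14-(\gamma-\log y)/y$, producing precisely the left side of \eqref{kaneqn}.

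The crux is the right-hand limit. Writing the bracketed summand as $S_n(a)$ and using that ${}_1F_2\left(1;\tfrac12,1;u\right)=\cosh(2\sqrt{u})$, one checks $S_n(0)=0$; hence $S_n(a)/\sin(\pi a/2)\to \tfrac{2}{\pi}S_n'(0)$ and the prefactor $\tfrac{2\pi}{y\sin(\pi a/2)}$ gives $\tfrac{4}{y}\sum_{n}d(n)S_n'(0)$ once the interchange of limit and summation is justified (the limiting summand is $O(n^{-2})$, so $\sum d(n)n^{-2}$ dominates). Computing $S_n'(0)$ is the main obstacle, since $a$ occurs in both lower parameters of the ${}_1F_2$ as well as in the Gamma and power prefactors. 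Differentiating the series termwise produces digamma values, and the identity $\psi(\tfrac12+k)-\psi(\tfrac12)+\psi(1+k)-\psi(1)=2H_{2k}$ together with $(\tfrac12)_k\,k!=(2k)!/4^{k}$ collapses the hypergeometric derivative to $\sum_{k\ge1}\frac{X^{2k}}{(2k)!}H_{2k}$, where $X=4\pi^2n/y$; the prefactor derivatives contribute $-(\gamma+\log X)\cosh X$. Recognizing $\sum_{k\ge1}\frac{X^{2k}}{(2k)!}H_{2k}=\sinh X\,\mathrm{Shi}(X)-\cosh X\int_0^X\frac{\cosh t-1}{t}\,dt$ (via $H_{2k}=\int_0^1\frac{1-t^{2k}}{1-t}\,dt$ and \eqref{shichi}) and combining with the prefactor contribution gives $S_n'(0)=\sinh X\,\mathrm{Shi}(X)-\cosh X\,\mathrm{Chi}(X)$, which establishes \eqref{kaneqn}.

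Finally, to pass from \eqref{kaneqn} to \eqref{kanot}, I would use the closed form $\sinh X\,\mathrm{Shi}(X)-\cosh X\,\mathrm{Chi}(X)=\int_0^\infty\frac{t\cos t}{t^2+X^2}\,dt$, which follows from $\mathrm{Ei}(X)=\mathrm{Chi}(X)+\mathrm{Shi}(X)$ and $\mathrm{Ei}(-X)=\mathrm{Chi}(X)-\mathrm{Shi}(X)$. With $X=4\pi^2n/y$ and $d(n)=\sum_{d\mid n}1$, the right side of \eqref{kaneqn} becomes $\tfrac{4}{y}\sum_{d\ge1}\sum_{m\ge1}\int_0^\infty\frac{t\cos t}{t^2+m^2(4\pi^2 d/y)^2}\,dt$; applying \eqref{dgkmresult} with $u=4\pi^2 d/y$ to the inner sum yields $\tfrac12\big\{\log(2\pi d/y)-\tfrac12(\psi(2\pi i d/y)+\psi(-2\pi i d/y))\big\}$, and summing over $d$ reproduces the right side of \eqref{kanot}. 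The rearrangement of the double series is legitimate because $\int_0^\infty\frac{t\cos t}{t^2+X^2}\,dt=O(X^{-2})$ guarantees absolute convergence.
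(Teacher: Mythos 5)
Your proposal is correct and follows essentially the same route as the paper: let $a\to0$ in Theorem \ref{main}, handle the $\tfrac{0}{0}$ form on the right by L'Hopital (the paper packages the derivative of the regularized ${}_1F_2$ as Lemma \ref{der1f2at2m} with $m=0$, resting on Lemma \ref{closedformpsi}, whereas you recompute it inline via $\psi(\tfrac12+k)-\psi(\tfrac12)+\psi(1+k)-\psi(1)=2H_{2k}$ and the generating function of $H_{2k}$ --- the same digamma series in different clothing), expand $\zeta(-a)$, $\zeta(1-a)$, $\csc(\pi a/2)$ on the left, and then pass to \eqref{kanot} via the integral representation $\sinh X\,\mathrm{Shi}(X)-\cosh X\,\mathrm{Chi}(X)=\int_0^\infty \frac{t\cos t}{t^2+X^2}\,dt$ (the paper's Lemma \ref{minusonelemma}) combined with \eqref{dgkmresult}. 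All the key identities you invoke check out, so no gap.
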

The equivalent version of \eqref{kaneqn} given above was first discovered by Wigert \cite[p.~203, Equation~(A)]{wig0}, who considered it as `\emph{la formule importante}' (an important formula).

The modular-type transformation for $\sum_{n=1}^{\infty}\sigma_{2m}(n)e^{-ny}, m>0,$ is now given.
\begin{theorem}\label{a=2mcase}
Let $m\in\mathbb{N}$. Then for $\mathrm{Re}(y)>0$, we have
\begin{align}\label{a=2midentity} 
&\sum_{n=1}^\infty \sigma_{2m}(n)e^{-ny}-\frac{(2m)!}{y^{2m+1}}\zeta(2m+1)+\frac{B_{2m}}{2my}\nonumber\\
&=(-1)^m\frac{2}{\pi}\left(\frac{2\pi}{y}\right)^{2m+1}\sum_{n=1}^\infty\sigma_{2m}(n)\Bigg\{\sinh\left(\frac{4\pi^2n}{y}\right)\mathrm{Shi}\left(\frac{4\pi^2n}{y}\right)\nonumber\\
&\quad-\cosh\left(\frac{4\pi^2n}{y}\right)\mathrm{Chi}\left(\frac{4\pi^2n}{y}\right)+\sum_{j=1}^m(2j-1)!\left(\frac{4\pi^2n}{y}\right)^{-2j}\Bigg\}.
\end{align}
\end{theorem}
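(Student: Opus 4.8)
The plan is to obtain Theorem \ref{a=2mcase} as the limiting case $a\to 2m$ of the transformation in Theorem \ref{main} (equivalently, the $m=0$ instance of the master identity, which is legitimate since $\mathrm{Re}(2m)=2m>-1$). Both sides of \eqref{maineqn} are analytic at $a=2m$: the series $\sum_{n\ge1}\sigma_a(n)e^{-ny}$ is entire in $a$, while the singular factors $\mathrm{cosec}(\pi a/2)$ and $1/\sin(\pi a/2)$, which blow up at the even integer $a=2m$, are each multiplied by a quantity that vanishes there, so the apparent poles are removable. It therefore suffices to evaluate $\lim_{a\to2m}$ of each side.

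On the left, the only nontrivial limit is $\tfrac12(2\pi/y)^{1+a}\,\mathrm{cosec}(\pi a/2)\,\zeta(-a)$, an indeterminate $0\cdot\infty$ since $\zeta(-2m)=0$. Writing $a=2m+\varepsilon$ and using $\sin(\pi a/2)\sim(-1)^m\pi\varepsilon/2$ together with $\zeta(-a)\sim-\varepsilon\,\zeta'(-2m)$ produces the finite value $-(-1)^m(2\pi/y)^{1+2m}\zeta'(-2m)/\pi$; inserting the classical evaluation $\zeta'(-2m)=(-1)^m(2m)!\,\zeta(2m+1)/\bigl(2(2\pi)^{2m}\bigr)$, read off from the functional equation, collapses this to $-(2m)!\,\zeta(2m+1)/y^{2m+1}$. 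Meanwhile $\tfrac12\zeta(-a)\to0$ and $-\zeta(1-a)/y\to B_{2m}/(2my)$ via $\zeta(1-2m)=-B_{2m}/(2m)$, so the left side tends exactly to the left side of \eqref{a=2midentity}.

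On the right, write the summand as $g_n(a):=\tfrac{(2\pi n)^{-a}}{\Gamma(1-a)}\,{}_1F_2\!\left(1;\tfrac{1-a}{2},1-\tfrac{a}{2};\tfrac{u^2}{4}\right)-\bigl(\tfrac{2\pi}{y}\bigr)^a\cosh u$ with $u:=4\pi^2n/y$. The first step is to verify $g_n(2m)=0$: in the hypergeometric series the factor $1/\Gamma(1-a)$ forces the terms with $k<m$ to vanish and tames the poles that $(1-a/2)_k$ develops for $k\ge m$, and after reindexing $k\mapsto k-m$ and simplifying the half-integer Pochhammer symbols one finds $\lim_{a\to2m}\Gamma(1-a)^{-1}{}_1F_2=u^{2m}\cosh u$, whence the first piece of $g_n$ tends to $(2\pi/y)^{2m}\cosh u$ and cancels the second. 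Since $g_n(2m)=0$ for every $n$ (and $\sum_n\sigma_a(n)g_n(a)$ converges uniformly near $a=2m$, which must be checked to interchange limit and summation), L'Hôpital's rule in $a$ converts the right side into $\tfrac{4(-1)^m}{y}\sum_{n\ge1}\sigma_{2m}(n)\,g_n'(2m)$, the contribution of $\sigma_a'(n)g_n(a)$ dropping out precisely because $g_n(2m)=0$.

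It remains to compute $g_n'(2m)$ and to recognise the hyperbolic integrals. Differentiating the elementary factors yields a clean $-\log(u)\,(2\pi/y)^{2m}\cosh u$, so the heart of the matter is $P'(2m)$, where $P(a):=\Gamma(1-a)^{-1}{}_1F_2(1;\tfrac{1-a}{2},1-\tfrac a2;\tfrac{u^2}4)$. The terms with $k<m$, where $P$ vanishes, contribute through $\tfrac{d}{da}\Gamma(1-a)^{-1}\big|_{2m}=(2m-1)!$ and, after reducing $(\tfrac12-m)_k$ and $(1-m)_k$, assemble into the finite tail $\sum_{j=1}^m(2j-1)!\,u^{-2j}$; the terms with $k\ge m$ are differentiated in the ordinary way, the digamma functions coming from $\Gamma(1-a)$ and the two Pochhammer symbols summing into the power series of $\sinh u\,\mathrm{Shi}(u)-\cosh u\,\mathrm{Chi}(u)$, with the $\gamma+\log u$ built into $\mathrm{Chi}$ absorbing the $\log u$ noted above. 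Matching the constants then reproduces \eqref{a=2midentity}. I expect this last step to be the main obstacle: organising the $a$-derivative of the regularised ${}_1F_2$, tracking the digamma contributions for $k\ge m$, and proving they sum to the $\mathrm{Shi}$/$\mathrm{Chi}$ combination rather than to some other transcendental function; a secondary but genuine difficulty is the uniform-convergence argument that licenses the term-by-term passage to the limit.
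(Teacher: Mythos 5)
Your proposal is correct and follows essentially the same route as the paper: the authors also take the limit $a\to 2m$ in Theorem \ref{main} via L'H\^opital's rule, evaluate the left side with $\zeta'(-2m)=(-1)^m(2m)!\,\zeta(2m+1)/(2(2\pi)^{2m})$ and $\zeta(1-2m)=-B_{2m}/(2m)$, and compute the $a$-derivative of the regularised ${}_1F_2$ exactly as you outline (Lemma \ref{der1f2at2m}, splitting the series at $k=m$ and identifying the digamma tail with $\sinh\,\mathrm{Shi}-\cosh\,\mathrm{Chi}$ through Lemma \ref{closedformpsi}). The two steps you flag as the main obstacles are precisely the content of those lemmas, so nothing further is needed.
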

Note that the $m=1$ case of the series on the left-hand side above is connected with the generating function for plane partitions considered by MacMahon \cite[p.~184]{gea} in that, with $y=\log(1/q)$ and $F(q):=\displaystyle\prod_{n=1}^{\infty}\frac{1}{(1-q^n)^n}$, we have
$\displaystyle\sum_{n=1}^\infty \sigma_{2}(n)e^{-ny}=q\frac{d}{dq}\log F(q)$.

We now state a transformation for $\sum_{n=1}^{\infty}\sigma_{2m}(n)e^{-ny}$ when $m<0$.
\begin{theorem}\label{trans-2m}
Let $m\in\mathbb{N}$. Then for $\mathrm{Re}(y)>0$, the following transformation holds:
{\allowdisplaybreaks\begin{align}\label{resultinsincosh}
&\sum_{n=1}^\infty \sigma_{-2m}(n) e^{-ny}+\left\{\frac{1}{2}+(-1)^m\frac{\gamma}{\pi}\left(\frac{y}{2\pi}\right)^{2m-1}+\frac{(-1)^m}{\pi}\left(\frac{y}{2\pi}\right)^{2m-1}\log\left(\frac{2\pi}{y}\right)\right\}\zeta(2m)\nonumber\\
&+\frac{2(-1)^m}{\pi}\left(\frac{y}{2\pi}\right)^{2m-1}\left(-\frac{1}{2}\zeta'(2m)+\frac{2\pi^2}{y^2}\sum_{k=0}^{m-1}(-1)^{k+1}\zeta(2k+3)\zeta(2m-2k-2)\left(\frac{2\pi}{y}\right)^{2k}\right)\nonumber\\
&=\frac{2(-1)^{m}}{\pi}\left(\frac{y}{2\pi}\right)^{2m-1}\sum_{n=1}^\infty\sigma_{-2m}(n)\left\{\sinh\left(\frac{4\pi^2n}{y}\right)\mathrm{Shi}\left(\frac{4\pi^2n}{y}\right)-\cosh\left(\frac{4\pi^2n}{y}\right)\mathrm{Chi}\left(\frac{4\pi^2n}{y}\right)\right\}.
\end{align}}
Equivalently,
\begin{align}\label{equiforma=-2mano}
&\frac{1}{2}\zeta(2m)+\sum_{n=1}^\infty\frac{n^{-2m}}{e^{ny}-1}-\frac{1}{y}\sum_{k=0}^{m-1}\frac{B_{2k}}{(2k)!}\zeta(2m-2k+1)y^{2k}\nonumber\\
&=\frac{(-1)^{m+1}}{y}\left(\frac{y}{2\pi}\right)^{2m}\left\{2\g\zeta(2m)+\sum_{n=1}^\infty n^{-2m}\left(\psi\left(\frac{2\pi in}{y}\right)+\psi\left(-\frac{2\pi in}{y}\right)\right)\right\}.
\end{align}
\end{theorem}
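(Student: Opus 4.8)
The plan is to derive Theorem \ref{trans-2m} as the limiting case $a\to-2m$ of the master identity \eqref{extendedideqn}. Because \eqref{extendedideqn} holds for $\textup{Re}(a)>-2M-3$ for every non-negative integer $M$, I would take $M=m-1$: this makes the identity valid in a full two-sided neighbourhood of $a=-2m$ (namely $\textup{Re}(a)>-2m-1$) and, crucially, produces exactly the finite sum $\sum_{k=0}^{m-1}$ appearing in \eqref{resultinsincosh}. The quantity $\sum_{n=1}^\infty\sigma_a(n)e^{-ny}+\tfrac12\zeta(-a)-\zeta(1-a)/y$ is analytic at $a=-2m$, so the full identity extends analytically there; the apparent singularities of the individual terms (each carrying a factor $\mathrm{cosec}(\pi a/2)$ or $1/\sin(\pi a/2)$, which has a simple pole at the even integer $a=-2m$) must therefore cancel, and the transformation is recovered by extracting the finite part of each side.

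First I would locate the sources of singularity and check that their residues balance. Writing $\varepsilon=a+2m$ and using $\sin(\pi a/2)=(-1)^m\sin(\pi\varepsilon/2)\sim(-1)^m\pi\varepsilon/2$, the only divergent term on the left is $\tfrac12\left(\tfrac{2\pi}{y}\right)^{1+a}\mathrm{cosec}(\pi a/2)\,\zeta(-a)$, while on the right the last sum diverges only through its $k=m-1$ term, for which $\zeta(a+2k+2)=\zeta(\varepsilon)\to\zeta(0)=-\tfrac12$. A direct residue computation shows these two poles coincide, so after moving the last right-hand sum across the combination is regular at $a=-2m$. The remaining terms $k=0,\dots,m-2$ stay finite because there $\zeta(a+2k+2)\to\zeta(2(k+1-m))=0$ is a trivial zero, so the $1/\varepsilon$ is absorbed and one is left with $\zeta'$ at negative even integers; I would then invoke the functional-equation value $\zeta'(-2j)=\frac{(-1)^j(2j)!}{2(2\pi)^{2j}}\zeta(2j+1)$ to turn these into the odd zeta values $\zeta(2k+3)$ of \eqref{resultinsincosh}. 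Expanding $\left(\tfrac{2\pi}{y}\right)^{1+a}$, $\zeta(-a)$ and $\zeta(\varepsilon)$ to first order in $\varepsilon$ supplies the $\gamma$, $\log(2\pi/y)$, $\zeta'(2m)$ and the residual $\zeta(0)$-pieces; in particular the $-\zeta(1-a)/y=-\zeta(2m+1)/y$ contribution is precisely the $k=m-1$ summand of the finite sum after rearrangement.

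The main obstacle is the limit of the ``braces'' on the right-hand side, namely
\begin{equation*}
\lim_{a\to-2m}\Bigg\{{}_{\frac12}K_{\frac a2}\left(\frac{4\pi^2 n}{y},0\right)-\frac{\pi\,2^{3/2+a}}{\sin(\pi a/2)}\left(\frac{4\pi^2 n}{y}\right)^{-\frac a2-2}A_{m-1}\left(\tfrac12,\tfrac a2,0;\tfrac{4\pi^2 n}{y}\right)\Bigg\}.
\end{equation*}
Here $\nu=a/2\to-m$ is an integer, so ${}_{\frac12}K_{a/2}$ inherits the integer-order confluence of $K_\nu$: in \eqref{def2varbessel} the factor $1/\sin(\pi a/2)$ blows up while $1/\Gamma(1+a/2)$ vanishes, and the subtraction built from $A_{m-1}$—whose coefficients in \eqref{am} are exactly the would-be polar terms—renders the bracket finite. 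To evaluate the finite remainder I would rewrite ${}_{\frac12}K_{a/2}(z,0)$ through the explicit ${}_1F_2$-representation underlying Theorem \ref{main} and Remark \ref{sigmatrans}, so that, up to the prefactor, the bracket is governed by
\begin{equation*}
\frac{1}{\sin(\pi a/2)}\left(\frac{(2\pi n)^{-a}}{\Gamma(1-a)}\,{}_1F_2\left(1;\tfrac{1-a}{2},1-\tfrac a2;\tfrac{4\pi^4 n^2}{y^2}\right)-\left(\tfrac{2\pi}{y}\right)^a\cosh\left(\tfrac{4\pi^2 n}{y}\right)\right).
\end{equation*}
The leading term of the ${}_1F_2$ cancels the $\cosh$, so the simple pole of $\mathrm{cosec}(\pi a/2)$ forces a derivative in $a$ (an application of L'H\^{o}pital's rule): differentiating $(2\pi n)^{-a}$ and $(2\pi/y)^a$ produces logarithms which, together with the $\gamma+\log$ built into $\mathrm{Chi}$, assemble into $\sinh(x)\mathrm{Shi}(x)-\cosh(x)\mathrm{Chi}(x)$ with $x=4\pi^2 n/y$, while the finite parameter shifts of the ${}_1F_2$ reproduce the polynomial terms already carried by $A_{m-1}$. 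This is the exact analogue, for even $a<0$, of the mechanism that yields the Shi--Chi series in the $a=0$ case of Corollary \ref{kan}; combining with the prefactor $2\sqrt{2\pi}\,y^{-1-a/2}\sigma_a(n)n^{-a/2}$ and collecting the constants then gives \eqref{resultinsincosh}.

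Finally, the equivalent form \eqref{equiforma=-2mano} follows by converting each summand of the Shi--Chi series into a digamma expression. Using the integral representation of $\sinh(x)\mathrm{Shi}(x)-\cosh(x)\mathrm{Chi}(x)$ and summing over $n$, the evaluation \eqref{dgkmresult} turns $\sum_{n}\sigma_{-2m}(n)\{\cdots\}$ into the series $\sum_n n^{-2m}\left(\psi(2\pi i n/y)+\psi(-2\pi i n/y)\right)$ together with the $\log(2\pi n/y)$ and $2\gamma\zeta(2m)$ terms, precisely as in the passage from \eqref{kaneqn} to \eqref{kanot} of Corollary \ref{kan}. Rearranging, and writing the residual polynomial in $y$ via the Bernoulli numbers $B_{2k}$, then yields \eqref{equiforma=-2mano}.
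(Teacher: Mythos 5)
Your proposal is correct and follows essentially the same route as the paper: take the master identity with exactly $m-1$ correction terms so that $a=-2m$ lies in the domain of validity, move the finite zeta-sum to the left so that both sides exhibit a $\frac{0}{0}$ form at $a=-2m$, apply L'H\^opital's rule (using $\zeta'(-2j)=\frac{(-1)^j(2j)!}{2(2\pi)^{2j}}\zeta(2j+1)$ on the left and the derivative of the ${}_1F_2$/$\cosh$ combination on the right to assemble the $\sinh\,\mathrm{Shi}-\cosh\,\mathrm{Chi}$ terms), and then pass to the digamma form via the integral representation and \eqref{dgkmresult}. This is precisely the paper's argument, down to the role of Lemmas \ref{closedformpsi}, \ref{derivatveof1f2a=-2m} and \ref{1f2at-2m}.
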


The above theorem gives a companion to Ramanujan's formula \eqref{rameqn} which is new. The reason we call this as a companion to \eqref{rameqn} is because its left-hand side  involves even zeta values $\zeta(2m)$ and the series $\sum_{n=1}^\infty\frac{n^{-2m}}{e^{2n\a}-1}$ whereas that in \eqref{rameqn} involves the odd zeta values $\zeta(2m+1)$ and $\sum_{n=1}^\infty\frac{n^{-2m-1}}{e^{2n\a}-1}$. However, we note that while the latter series transforms to itself (with $\alpha$ replaced by $\beta$), the former transforms into  a combination of the \emph{higher Herglotz functions} of Vlasenko and Zagier \cite{vz} (see also Radchenko and Zagier \cite{raza}).
\begin{corollary}\label{companion}
Let $m\in\mathbb{N}$. If $\a$ and $\b$ are complex numbers such that $\textup{Re}(\a)>0$, $\textup{Re}(\b)>0$, and $\a\b=\pi^2$, then
\begin{align}\label{companioneqn}
&\a^{-\left(m-\frac{1}{2}\right)}\left\{\frac{1}{2}\zeta(2m)+\sum_{n=1}^\infty\frac{n^{-2m}}{e^{2n\a}-1}\right\}-\sum_{k=0}^{m-1}\frac{2^{2k-1}B_{2k}}{(2k)!}\zeta(2m-2k+1)\a^{2k-m-\frac{1}{2}}\nonumber\\
&=(-1)^{m+1}\b^{-\left(m-\frac{1}{2}\right)}\left\{\frac{\g}{\pi}\zeta(2m)+\frac{1}{2\pi}\sum_{n=1}^\infty n^{-2m}\left(\psi\left(\frac{in\b}{\pi}\right)+\psi\left(-\frac{in\beta}{\pi}\right)\right)\right\}.
\end{align}
\end{corollary}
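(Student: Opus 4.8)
The plan is to obtain Corollary \ref{companion} directly from the equivalent form \eqref{equiforma=-2mano} of Theorem \ref{trans-2m} by means of the substitution $y=2\alpha$ together with the relation $\alpha\beta=\pi^2$. No new analytic input is required: the entire content of \eqref{companioneqn} is an algebraic rearrangement of \eqref{equiforma=-2mano}, so the work lies purely in tracking the powers of $\alpha$, $\beta$, $\pi$ and $2$. I first note that since $\textup{Re}(2\alpha)>0$ whenever $\textup{Re}(\alpha)>0$, the hypothesis $\textup{Re}(y)>0$ of Theorem \ref{trans-2m} is met, and that if $\alpha\beta=\pi^2$ with $\textup{Re}(\alpha)>0$ then $\beta=\pi^2/\alpha$ automatically has $\textup{Re}(\beta)>0$, so the stated domain is legitimate.

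First I would set $y=2\alpha$ in \eqref{equiforma=-2mano}. On the left the series $\sum_{n\geq1}n^{-2m}/(e^{ny}-1)$ becomes $\sum_{n\geq1}n^{-2m}/(e^{2n\alpha}-1)$, while the finite Bernoulli sum $\tfrac{1}{y}\sum_{k}\tfrac{B_{2k}}{(2k)!}\zeta(2m-2k+1)y^{2k}$ transforms, via $(2\alpha)^{2k}/(2\alpha)=2^{2k-1}\alpha^{2k-1}$, into $\sum_{k=0}^{m-1}\tfrac{2^{2k-1}B_{2k}}{(2k)!}\zeta(2m-2k+1)\alpha^{2k-1}$. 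This already reproduces the shape of the terms in \eqref{companioneqn} up to an overall power of $\alpha$.

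Next I would simplify the right-hand side of \eqref{equiforma=-2mano}. The prefactor $\tfrac{1}{y}(y/2\pi)^{2m}$ collapses to $\tfrac{\alpha^{2m-1}}{2\pi^{2m}}$, and inside the digamma terms the argument $\tfrac{2\pi i n}{y}=\tfrac{\pi i n}{\alpha}$ is rewritten using $\alpha\beta=\pi^2$, equivalently $\pi/\alpha=\beta/\pi$, as $\tfrac{i n\beta}{\pi}$. Hence the right-hand side takes the form $\tfrac{(-1)^{m+1}\alpha^{2m-1}}{2\pi^{2m}}\bigl\{2\gamma\zeta(2m)+\sum_{n\geq1}n^{-2m}\bigl(\psi(in\beta/\pi)+\psi(-in\beta/\pi)\bigr)\bigr\}$.

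Finally I would multiply the resulting identity through by $\alpha^{-(m-1/2)}$. On the left this sends the divisor-type series into $\alpha^{-(m-1/2)}\{\tfrac{1}{2}\zeta(2m)+\sum_{n\geq1}n^{-2m}/(e^{2n\alpha}-1)\}$ and the Bernoulli power into $\alpha^{2k-1}\cdot\alpha^{1/2-m}=\alpha^{2k-m-1/2}$, which is exactly the left-hand side of \eqref{companioneqn}. On the right the power of $\alpha$ becomes $\alpha^{2m-1}\cdot\alpha^{1/2-m}=\alpha^{m-1/2}$, which I convert by $\alpha=\pi^2/\beta$ into $\pi^{2m-1}\beta^{-(m-1/2)}$; cancelling $\pi^{2m-1}/\pi^{2m}=1/\pi$ and distributing the remaining factor $1/(2\pi)$ over the brace yields precisely $(-1)^{m+1}\beta^{-(m-1/2)}\{\tfrac{\gamma}{\pi}\zeta(2m)+\tfrac{1}{2\pi}\sum_{n\geq1}n^{-2m}(\psi(in\beta/\pi)+\psi(-in\beta/\pi))\}$, the right-hand side of \eqref{companioneqn}. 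The only delicate point in the whole argument is this bookkeeping of exponents and constants; there is no genuine obstacle, since the absolute convergence of all the series involved is inherited from Theorem \ref{trans-2m} and the rewriting of the $\psi$-arguments is an exact consequence of $\alpha\beta=\pi^2$.
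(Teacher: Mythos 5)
Your proposal is correct and matches the paper's proof exactly: the paper likewise sets $y=2\a$ with $\a\b=\pi^2$ in \eqref{equiforma=-2mano} and multiplies through by $\a^{-(m-\frac{1}{2})}$, and your bookkeeping of the exponents and constants checks out. The only difference is that you carry out the simplification explicitly, which the paper leaves to the reader.
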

Character analogues of Ramanujan's formula for $\zeta(2m+1)$ have been obtained by Katayama \cite{katayama} and Berndt \cite[Section 5]{berndtacta1975}. Berndt \cite[Section 11]{bcbspfunc} and Bradley \cite[Theorem 1]{bradley2002} gave generalizations and/or analogues of Ramanujan's formula for $\zeta(2m+1)$ for periodic sequences. See also Equation (3.12) of Berndt \cite{berndtrocky}. In all these results, series similar to the one on the left-hand side of \eqref{companioneqn} appear. 
 Yet, \eqref{companioneqn} is quite different in nature from each of these results. This is because, in the series similar to that on the left-hand side of \eqref{companioneqn} in the aforementioned results of Katayama, Berndt and Bradley, one cannot put the periodic function (or, the Dirichlet character, in the case of Dirichlet $L$-functions) to be identically equal to $1$ as their results in this setting hold when the periodic function or the Dirichlet character is odd.

New results concerning functions self-reciprocal in Watson's kernel are given in Section \ref{sffwk}.

\section{Nuts and bolts}
We will be frequently using the duplication and reflection formulas for $\Gamma(s)$ \cite[p.~46]{temme}:
\begin{align}
\G(s)\G\left(s+\frac{1}{2}\right)&=\frac{\sqrt{\pi}}{2^{2s-1}}\G(2s),\label{dupl}\\
\G(s)\G(1-s)&=\frac{\pi}{\sin(\pi s)}\hspace{5mm}(s\notin\mathbb{Z}).\label{refl}
\end{align}
Let $\mathfrak{F}(s)=M[f;s]$ and $\mathfrak{G}(s)=M[g;s]$ denote the Mellin transforms of functions $f(x)$ and $g(x)$ respectively. Let $c=\textup{Re}(s)$. If $M[f;1-c-it]\in L(-\infty, \infty)$ and $x^{c-1}g(x)\in L[0,\infty)$, then Parseval's formula \cite[p.~83]{kp} is given by
\begin{equation}
\int_{0}^{\infty} f(x) g(x) dx = \frac{1}{2 \pi i } 
\int_{(c)}\mathfrak{F}(1-s)\mathfrak{G}(s)\, ds,
\label{parseval-1}
\end{equation}
Here the vertical line Re$(s) = c$ lies in the common strip of 
analyticity of the Mellin transforms $\mathfrak{F}(1-s)$ and $\mathfrak{G}(s)$.

We need Slater's theorem \cite[p.~56-59]{marichev} to evaluate inverse Mellin transforms of certain functions. We give its statement below to make the paper self-contained. We need some notations to begin with. Let $|\arg(z)|<\pi$. Let $\pFq{p}{q}{a_1, a_2, \cdots, a_p}{b_1,b_2, \cdots, b_q}{z}$ be defined in \eqref{pfqgen}. Let
\begin{equation*}
\Gamma\genfrac[]{0pt}{}{a_1, a_2, \dots, a_A}{b_1, b_2, \dots, b_B}  \equiv \G[(a);(b)]=\frac{\Gamma \left( a_1 \right) \Gamma \left( a_2 \right) \dots \Gamma \left( a_A \right) }{\Gamma \left( b_1 \right) \Gamma \left( b_2 \right) \dots \Gamma \left( b_B \right) }, 
\end{equation*}
\begin{equation*}
(a) + s := a_1+s, a_2+s, \dots, a_A+s,
\end{equation*}
\begin{equation*}
(b)' - b_k := b_1-b_k, \dots, b_{k-1}-b_k,  b_{k+1}-b_k, \dots, b_B-b_k,
\end{equation*}
\begin{equation*}
\Sigma_A(z) := \sum_{j=1}^A z^{a_j} \Gamma \genfrac[]{0pt}{}{(a)' - a_j, (b) +  a_j}{(c) - a_j, (d) +  a_j} 
{}_{B+C}F_{A+D-1}
 \left( \genfrac{}{}{0pt}{}{(b)+a_j, 1+a_j-(c)}{1+a_j-(a)', (d)+a_j} 
 \bigg| {(-1)^{C-A} z} \right),
\end{equation*}
\begin{equation*}
\Sigma_B(1/z) := \sum_{k=1}^B z^{-b_k} \Gamma \genfrac[]{0pt}{}{(b)' - b_k, (a) +  b_k}{(d) - b_k, (c) +  b_k} 
{}_{A+D}F_{B+C-1}
 \left( \genfrac{}{}{0pt}{}{(a)+b_k, 1+b_k-(d)}{1+b_k-(b)', (c)+b_k} 
 \bigg| {\frac{(-1)^{D-B}}{z}} \right),
\end{equation*}
\begin{theorem}[Slater's Theorem]\label{slater}
Let 
\begin{equation}\label{functionHs}
\mathscr{H}(s) = \Gamma \genfrac[]{0pt}{}{(a)+s, (b)-s}{(c)+s, (d)-s},
\end{equation}
where the vectors $(a)$, $(b)$, $(c)$, and $(d)$ have, respectively, A, B, C, and D components $a_j$, $b_k$, $c_l$, and $d_m$. Then if the following two groups of conditions hold:
\begin{equation}\label{condition1}
-\textup{Re}(a_j) < \textup{Re}(s)  < \textup{Re}(b_k)  \quad (j = 1,2, \dots, A, \quad  k=1,2, \dots, B),\\
\end{equation}
\begin{equation}\label{condition2}
\begin{cases}
A+B > C+D, \\
A+B = C+D, \quad \textup{Re}(s(A+D-B-C)) < -\textup{Re}(\eta) \\
A=C, \quad B=D, \quad \textup{Re}(\eta) <0,
\end{cases}
\end{equation}
where
\begin{equation*}
\eta := \sum_{j=1}^A a_j + \sum_{k=1}^B b_k - \sum_{l=1}^C c_l - \sum_{m=1}^D d_m,
\end{equation*}
then for these $s$ we have
\begin{equation*}
\mathscr{H}(s) = 
\begin{cases}
\displaystyle\int_{0}^{\infty} x^{s-1} \Sigma_A(x)\, dx, \text{ if } A+D>B+C, \\
\displaystyle\int_{0}^{1} x^{s-1} \Sigma_A(x)\, dx  + \int_{1}^{\infty} x^{s-1} \Sigma_B(1/x)\, dx, \text{ if } A+D=B+C,\\
\displaystyle\int_{0}^{\infty} x^{s-1} \Sigma_B(1/x)\, dx, \text{ if } A+D<B+C,
\end{cases}
\end{equation*}
$\Sigma_A(1) = \Sigma_B(1)$ if $A+D=B+C$, \textup{Re}$(\eta) +C-A+1<0, A \geq C$.
\end{theorem}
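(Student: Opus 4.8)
The plan is to read the claimed formulas as the assertion that the inverse Mellin transform of $\mathscr{H}(s)$, namely
$$f(x)=\frac{1}{2\pi i}\int_{(\gamma)}\mathscr{H}(s)\,x^{-s}\,ds,$$
equals $\Sigma_A(x)$, $\Sigma_B(1/x)$, or the appropriate glued combination; the theorem then follows by Mellin inversion. First I would fix the vertical line $\mathrm{Re}(s)=\gamma$ inside the strip $-\mathrm{Re}(a_j)<\gamma<\mathrm{Re}(b_k)$ isolated by \eqref{condition1}. By construction this strip is nonempty and separates the two families of poles of $\mathscr{H}(s)$ in \eqref{functionHs}: the left poles of $\Gamma(a_j+s)$ at $s=-a_j-n$ and the right poles of $\Gamma(b_k-s)$ at $s=b_k+n$, for $n\ge 0$. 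Along this line the integral converges because of the exponential decay of the Gamma quotient in the imaginary direction.

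Next I would evaluate $f(x)$ by pushing the contour to $\mathrm{Re}(s)=\mp\infty$ and summing residues, the legitimacy of which is exactly what the trichotomy \eqref{condition2} guarantees. Using Stirling's formula on the quotient in \eqref{functionHs}, the growth of $\mathscr{H}(s)$ on large left/right semicircles is dictated by $A+B$ versus $C+D$: when $A+B>C+D$ the quotient decays super-exponentially in both half-planes, so either arc contribution vanishes; when $A+B=C+D$ the decay is merely algebraic and only the sharpened estimate $\mathrm{Re}\!\big(s(A+D-B-C)\big)<-\mathrm{Re}(\eta)$ forces the far horizontal edges to drop out; the degenerate case $A=C,\,B=D$ is covered by $\mathrm{Re}(\eta)<0$. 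These are precisely the three groups of hypotheses in \eqref{condition2}.

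It then remains to compute the residues and resum. At a left pole $s=-a_j-n$ the factor $\Gamma(a_j+s)$ contributes $(-1)^n/n!$, and substituting $s=-a_j-n$ into the remaining Gamma factors together with $x^{-s}=x^{a_j}x^{n}$ gives, after summing over $n\ge 0$, exactly the $j$-th summand $x^{a_j}\,\Gamma[\,\cdots\,]\,{}_{B+C}F_{A+D-1}(\cdots)$ of $\Sigma_A(x)$; the sign $(-1)^{C-A}$ in its argument is the net effect of the reflection of $x^{-s}$ against the residue signs. Summing over $j$ produces $\Sigma_A(x)$, and the mirror-image computation at the right poles $s=b_k+n$ produces $\Sigma_B(1/x)$. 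Which representation is valid is governed now by $A+D$ versus $B+C$, because this comparison fixes the order $(B+C,\,A+D-1)$ of the inner hypergeometric: if $A+D>B+C$ it converges for all $x$ (first case), if $A+D=B+C$ it has radius $1$ so $\Sigma_A(x)$ serves for $x<1$ and $\Sigma_B(1/x)$ for $x>1$ (middle case), and if $A+D<B+C$ only the reciprocal series $\Sigma_B(1/x)$ converges (third case). The balanced subcase at $x=1$ is reconciled by the stated identity $\Sigma_A(1)=\Sigma_B(1)$, valid when $\mathrm{Re}(\eta)+C-A+1<0$ and $A\ge C$.

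The step I expect to be the real obstacle is not the residue algebra but the rigorous arc estimates: bounding the integral over the horizontal edges of an expanding rectangle and showing it tends to $0$ in each regime of \eqref{condition2}, the balanced case $A+B=C+D$ being genuinely delicate since nothing cruder than $\mathrm{Re}(s(A+D-B-C))<-\mathrm{Re}(\eta)$ will do. A second subtlety is the implicit genericity assumption that the poles are simple, i.e.\ that no two $a_j$ (and no two $b_k$) differ by an integer; when they collide the residues involve derivatives of Gamma factors and the clean ${}_pF_q$ form acquires logarithmic terms, so the statement should be understood in the non-confluent case and extended to coincidences by continuity in the parameters.
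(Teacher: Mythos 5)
First, a point of comparison: the paper does not prove this statement at all. Slater's theorem is imported verbatim from Marichev \cite[p.~56--59]{marichev} purely to make the paper self-contained, so there is no in-paper argument to measure your proposal against. Judged on its own, your sketch is the standard and essentially correct route (Mellin--Barnes integral for the inverse transform, contour displacement, residue summation into $\Sigma_A(x)$ and $\Sigma_B(1/x)$, Mellin inversion at the end), and your residue bookkeeping -- including the origin of the sign $(-1)^{C-A}$ in the argument of the inner ${}_{B+C}F_{A+D-1}$ and the caveat about confluent poles producing logarithmic terms -- is right.

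There is, however, one concrete misattribution that a full write-up would have to repair. You assert that the behaviour of $\mathscr{H}(s)$ on large left and right arcs is ``dictated by $A+B$ versus $C+D$'' and that $A+B>C+D$ forces super-exponential decay ``in both half-planes,'' so that either closure is legitimate. That is false: by Stirling, $A+B$ versus $C+D$ controls the decay of $\mathscr{H}(\sigma+it)$ as $|t|\to\infty$, i.e.\ the convergence of the Barnes integral along the vertical line (and on the portions of the arcs far from the real axis), which is exactly what the hypotheses \eqref{condition2} are for. The behaviour as $\operatorname{Re}(s)\to\mp\infty$ near the real axis -- hence the admissible direction of contour displacement -- is governed by $A+D$ versus $B+C$ (the numerator factors $\Gamma(a_j+s)$ and the denominator factors $\Gamma(d_m-s)$ produce decay as $\operatorname{Re}(s)\to-\infty$, while $\Gamma(b_k-s)$ and $\Gamma(c_l+s)$ produce growth), together with the size of $x$ in the balanced case. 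A counterexample to your claim is $\mathscr{H}(s)=\Gamma(b_1-s)\Gamma(b_2-s)$, where $A+B=2>0=C+D$ but the integrand grows factorially as $\operatorname{Re}(s)\to-\infty$, so only closure to the right is possible -- consistent with $A+D=0<2=B+C$ and the third case of the conclusion. You do later invoke $A+D$ versus $B+C$ correctly, but only as a statement about the radius of convergence of the resulting series; the arc estimate itself, which you rightly identify as the delicate step, must be run with this comparison and not with $A+B$ versus $C+D$.
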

\begin{corollary}\cite[p.~58]{marichev} \label{SlatersCor}
Under the conditions \eqref{condition1} and \eqref{condition2}, the inverse Mellin transform of the function in \eqref{functionHs} is a function $H(x)$ of hypergeometric type given by
\begin{equation*}
H(x) = 
\begin{cases}
\Sigma_A(x) \text{ for } x>0, \quad \text{ if } A+D>B+C, \\
\Sigma_A(x) \text{ for } 0<x<1, \quad \text{ or }  \quad \Sigma_B(1/x) \text{ for } x>1, \quad  \text{ if } A+D=B+C,\\
\Sigma_B(1/x) \text{ for } x>0, \quad \text{ if } A+D<B+C,
\end{cases}
\end{equation*}
$\mathscr{H}(1) = \Sigma_A(1) = \Sigma_B(1)$ if $A+D=B+C$, Re$(\eta) +C-A+1<0, A \geq C$.
\end{corollary}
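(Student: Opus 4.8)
The plan is to obtain Corollary \ref{SlatersCor} as an immediate transcription of Slater's Theorem \ref{slater} into the language of inverse Mellin transforms. The point is that each of the three integral representations furnished by Theorem \ref{slater} is precisely a statement that $\mathscr{H}(s)$, defined in \eqref{functionHs}, equals the Mellin transform of a specific hypergeometric-type function; Mellin inversion then identifies that function as the sought inverse transform.

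First I would recall that the Mellin transform of a locally integrable $H$ is $M[H;s]=\int_0^\infty x^{s-1}H(x)\,dx$, and that on a strip of absolute convergence it is inverted by $H(x)=\frac{1}{2\pi i}\int_{(c)}\mathscr{H}(s)x^{-s}\,ds$. In the regime $A+D>B+C$, Theorem \ref{slater} gives $\mathscr{H}(s)=\int_0^\infty x^{s-1}\Sigma_A(x)\,dx=M[\Sigma_A;s]$ on the strip \eqref{condition1}; by uniqueness of the Mellin transform, the inverse transform of $\mathscr{H}(s)$ is therefore $\Sigma_A(x)$ for $x>0$, which is the first case of the corollary. Symmetrically, when $A+D<B+C$ the theorem identifies $\mathscr{H}(s)$ with $M[\Sigma_B(1/\cdot);s]$, so that $H(x)=\Sigma_B(1/x)$.

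For the borderline case $A+D=B+C$ the theorem splits the integral at $x=1$, expressing $\mathscr{H}(s)$ as $\int_0^1 x^{s-1}\Sigma_A(x)\,dx+\int_1^\infty x^{s-1}\Sigma_B(1/x)\,dx$. This is exactly the Mellin transform of the function equal to $\Sigma_A(x)$ on $(0,1)$ and to $\Sigma_B(1/x)$ on $(1,\infty)$; the supplementary hypothesis $\Sigma_A(1)=\Sigma_B(1)$ (imposed under $\textup{Re}(\eta)+C-A+1<0$ and $A\geq C$) guarantees that the two branches agree at $x=1$, so that $H$ is unambiguously defined there with common value $\mathscr{H}(1)=\Sigma_A(1)=\Sigma_B(1)$.

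The main obstacle is not combinatorial but analytic: the legitimacy of Mellin inversion. One must verify that $\mathscr{H}(c+it)$ decays fast enough as $|t|\to\infty$ for the inversion contour in the strip \eqref{condition1} to converge absolutely and to reproduce $H$. This decay is controlled by Stirling's asymptotics applied to the ratio of Gamma functions in \eqref{functionHs}, and it is exactly the parameter and growth conditions \eqref{condition2}—including the sign restriction on $\textup{Re}(\eta)$—that secure it. Since this analytic groundwork is already discharged in establishing Theorem \ref{slater}, the corollary follows at once by reading those integral evaluations as Mellin-transform pairs.
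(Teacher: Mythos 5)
The paper does not actually prove this corollary: it is quoted directly from Marichev \cite[p.~58]{marichev}, exactly as Theorem \ref{slater} is, so there is no in-paper argument to compare against. Your derivation --- reading each of the three integral evaluations in Theorem \ref{slater} as a forward Mellin transform of a hypergeometric-type function and then invoking Mellin inversion and uniqueness on the strip \eqref{condition1} --- is the standard route and is essentially how the result is obtained in Marichev, so the proposal is correct in substance. The one place where your wording overstates matters is the claim that the analytic groundwork for inversion is ``already discharged in establishing Theorem \ref{slater}'': that theorem only asserts convergence of the \emph{forward} integrals $\int_0^\infty x^{s-1}\Sigma_A(x)\,dx$ (resp.\ the split integral, resp.\ $\int_0^\infty x^{s-1}\Sigma_B(1/x)\,dx$), whereas the legitimacy of recovering $H(x)=\frac{1}{2\pi i}\int_{(c)}\mathscr{H}(s)x^{-s}\,ds$ is a separate verification, resting on the decay of the Gamma-quotient $\mathscr{H}(c+it)$ as $|t|\to\infty$. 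That decay is exactly what the alternatives in \eqref{condition2} encode via Stirling's formula (exponential decay when $A+B>C+D$; algebraic decay of order governed by $\textup{Re}(\eta)$ in the balanced case $A+B=C+D$, which is why the extra inequality on $\textup{Re}(\eta)$ appears), and you do point to this mechanism correctly; it just should be presented as an additional check rather than a consequence of the theorem. With that caveat the argument is sound, including the observation that in the balanced case the hypothesis $\Sigma_A(1)=\Sigma_B(1)$ (under $\textup{Re}(\eta)+C-A+1<0$, $A\geq C$) is what makes the two branches match at $x=1$.
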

\begin{remark}\label{slaterextension}
As given in \cite[Remark 2]{marichev}, whenever we have $|A+D-B-C|>1, A+B=C+D$, the condition $\textup{Re}(s(A+D-B-C)) < -\textup{Re}(\eta)$ can be weakened to $\textup{Re}(s(A+D-B-C)) < 1/2-\textup{Re}(\eta)$. 
\end{remark}
We will also be making use of the following asymptotic formula for it as $z\to\infty$ \cite[Equation (28)]{witte}, namely,
\begin{equation}\label{hypbessasym}
\frac{1}{\G(a)\G(b)\G(c)}\pFq03{-}{a, b, c}{z}\sim\frac{z^{\theta/4}}{2(2\pi)^{3/2}}\left\{\exp{\left(4z^{1/4}\right)}+2\cos\left(4z^{1/4}+\frac{\pi}{2}\theta\right)\right\},
\end{equation}
where $\theta=3/2-a-b-c$. 

The asymptotic formulas for the modified Bessel function $K_{\mu}(z)$ are required as well and hence are noted below. As $|z|\to\infty, |\arg(z)|<\pi$, we have \cite[p.~202]{watson-1944a}
\begin{equation}\label{kbessasym}
K_{\mu}(z)\sim \sqrt{\frac{\pi}{2 z}}e^{-z},
\end{equation}
whereas as for $z\to0$, we have \cite[p.~375, Equations (9.6.8), (9.6.9)]{stab}, \cite[Equation (2.14)]{dunster}
\begin{equation}
K_{\mu}(z)\sim\begin{cases}\label{kbessasym0}
\frac{1}{2}\G(\mu)\left(\frac{z}{2}\right)^{-\mu},\hspace{1mm}\text{if}\hspace{1mm}\textup{Re}(\mu)>0,\\
-\log z, \hspace{1mm}\text{if}\hspace{1mm}\mu=0,\\
-\sqrt{\frac{\pi}{y\sinh(\pi y)}}\sin\left(y\log\left(\frac{z}{2}\right)-\arg(\G(1+iy))\right),\hspace{1mm}\text{if}\hspace{1mm}\mu=iy, y>0.
\end{cases}
\end{equation}
We will also frequently make use of the well-known fact that
\begin{equation}\label{khalfz}
K_{\frac{1}{2}}(z)=\sqrt{\frac{\pi}{2z}}e^{-z}.
\end{equation}
\section{The Watson kernel $\mathscr{G}_{\nu}(x, w)$}\label{genknow}
As discussed in the introduction, the kernel $\mathscr{G}_{\nu}(x, w)$ defined in \eqref{genkoshk} is a simultaneous generalization of the Koshliakov kernels and (essentially) the Hankel kernel, all of which are quite important from the point of view of their applications in number theory. This section is devoted to obtaining some properties of this kernel.
\begin{theorem}\label{gkkkoshliakov}
Let $x>0$. Let $\mathscr{G}_{\nu}(x, w)$ be defined in \eqref{genkoshk}, and $M_{\nu}(x)$ and $L_{\nu}(x)$ in \eqref{mldef}. Then
\begin{align}
\mathscr{G}_{\nu}(x,0)&=\cos(\pi\nu) M_{2\nu}(2 \sqrt{x}) -\sin(\pi \nu) J_{2\nu}(2 \sqrt{x}),\label{1st}\\
\mathscr{G}_{\nu}(x,1)&=\sin(\pi\nu) J_{2\nu}(2 \sqrt{x}) -\cos(\pi\nu) L_{2\nu}(2 \sqrt{x}).\label{2nd}
\end{align}
\end{theorem}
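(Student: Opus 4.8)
The plan is to verify both identities by expanding the right-hand sides into power series and matching them, term by term, against the ${}_0F_3$-series in the definition \eqref{genkoshk}. Throughout I would treat $\nu$ as generic (non-integer), the integer cases following from the limiting conventions attached to \eqref{ybesse} and \eqref{kbesse}.

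First I would write every Bessel function occurring on the right in series form. From \eqref{sumbesselj}, \eqref{besseli} one has, for $x>0$, the expansions of $J_{\pm 2\nu}(2\sqrt{x})$ and $I_{\pm 2\nu}(2\sqrt{x})$ as $x^{\mp\nu}\sum_{m}\frac{(\mp x)^m\text{ or }x^m}{m!\,\Gamma(m+1\pm 2\nu)}$, whence $K_{2\nu}(2\sqrt{x})$ via \eqref{kbesse}, $Y_{2\nu}(2\sqrt{x})$ via \eqref{ybesse}, and finally $M_{2\nu}$, $L_{2\nu}$ via \eqref{mldef}. Substituting these into $\cos(\pi\nu)M_{2\nu}(2\sqrt{x})-\sin(\pi\nu)J_{2\nu}(2\sqrt{x})$ for \eqref{1st}, and into $\sin(\pi\nu)J_{2\nu}(2\sqrt{x})-\cos(\pi\nu)L_{2\nu}(2\sqrt{x})$ for \eqref{2nd}, and then grouping the two homogeneity classes $x^{-\nu}$ and $x^{\nu}$, each class becomes a single power series in $x$ whose coefficients are built from $\Gamma(m+1\mp 2\nu)$ and trigonometric factors.

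The crucial simplification — and the step I expect to carry the weight of the proof — is the collapse of those trigonometric factors. Using $\sin(2\pi\nu)=2\sin(\pi\nu)\cos(\pi\nu)$ one turns the overall prefactor $\cos(\pi\nu)/\sin(2\pi\nu)$ into $1/(2\sin(\pi\nu))$, while the identity $\cos(2\pi\nu)+2\sin^2(\pi\nu)=1$ forces the coefficient of $x^m$ to be proportional to $1\pm(-1)^m$. For \eqref{1st} this annihilates the odd $m$, leaving only $x^{2k}$ terms; for \eqref{2nd} it annihilates the even $m$, leaving only $x^{2k+1}$ terms. Thus each side reduces to $\frac{1}{\sin(\pi\nu)}\big(x^{-\nu}\sum_k c_k^{-}x^{2k+\epsilon}-x^{\nu}\sum_k c_k^{+}x^{2k+\epsilon}\big)$ with $\epsilon=0$ for $w=0$ and $\epsilon=1$ for $w=1$, where $c_k^{\mp}=1/\big((2k+\epsilon)!\,\Gamma(2k+1+\epsilon\mp 2\nu)\big)$.

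Finally I would recognize these as the required ${}_0F_3$'s. Applying the duplication formula \eqref{dupl} to both $(2k+\epsilon)!=\Gamma(2k+1+\epsilon)$ and $\Gamma(2k+1+\epsilon\mp 2\nu)$ rewrites $c_k^{\mp}$ as a constant times $1/\big(k!\,(w+\tfrac12)_k\,(w+\tfrac12\mp\nu)_k\,(1\mp\nu)_k\big)$, with $w+\tfrac12=\tfrac12$ or $\tfrac32$ according as $\epsilon=0$ or $1$, and with argument $x^{2k}/2^{4k}=(x^2/16)^k$. Summing over $k$ produces exactly $\frac{1}{\Gamma(1\mp\nu)\Gamma(w+\frac12\mp\nu)}{}_0F_3\!\big(;1\mp\nu,w+\tfrac12,w+\tfrac12\mp\nu;\tfrac{x^2}{16}\big)$ up to explicit powers of $2$. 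Comparing the resulting prefactors with those in \eqref{genkoshk} — here the values $\Gamma(\tfrac12)=\sqrt{\pi}$ and $\Gamma(\tfrac32)=\tfrac{\sqrt{\pi}}{2}$, together with the bookkeeping of the powers $(x/4)^{w\mp\nu}$ and of the factor $\pi$ against $1/\sin(\pi\nu)$ — yields \eqref{1st} and \eqref{2nd}. The only genuinely nontrivial ingredient is the trigonometric collapse; everything after it is routine duplication-formula bookkeeping.
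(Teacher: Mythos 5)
Your proposal is correct and is essentially the paper's own argument run in the opposite direction: the paper starts from the ${}_0F_3$ definition, extracts the even part $\tfrac12\left({}_0F_1(x)+{}_0F_1(-x)\right)$ via the duplication formula to recognize $I_{\pm2\nu}$ and $J_{\pm2\nu}$, and then applies the connection formulas \eqref{kbesse} and \eqref{ybesse}, whereas you expand the right-hand side and collapse it with the equivalent trigonometric identities before reassembling the ${}_0F_3$. Same decomposition, same key identities, so no substantive difference.
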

\begin{proof}
We only prove \eqref{1st}. Then, \eqref{2nd} can be proved similarly. In the second step, use \eqref{dupl} twice and the fact $2^{2k}k!(1/2)_k=(2k)!$ to see that 
{\allowdisplaybreaks\begin{align}\label{0f3bess}
\pFq03{-}{1\pm\nu, \frac{1}{2}, \frac{1}{2}\pm\nu}{\frac{x^2}{16}}&=\sum_{k=0}^{\infty}\frac{\G(1\pm\nu)\G\left(\frac{1}{2}\pm\nu\right)}{k!\left(\frac{1}{2}\right)_k\G(1\pm\nu+k)\G\left(\frac{1}{2}\pm\nu+k\right)}\left(\frac{x}{4}\right)^{2k}\nonumber\\
&=\sum_{k=0}^{\infty}\frac{x^{2k}}{(2k)!(1\pm2\nu)_{2k}}\nonumber\\
&=\frac{1}{2}\left(\pFq01{-}{1\pm2\nu}{x}+\pFq01{-}{1\pm2\nu}{-x}\right).
\end{align}}
However, from the definitions \eqref{sumbesselj} and \eqref{besseli},
\begin{align}\label{0f1bess}
J_{\pm2\nu}(2\sqrt{x})=\frac{x^{\pm\nu}}{\G(1\pm2\nu)}\pFq01{-}{1\pm2\nu}{-x},\hspace{2mm}I_{\pm2\nu}(2\sqrt{x})=\frac{x^{\pm\nu}}{\G(1\pm2\nu)}\pFq01{-}{1\pm2\nu}{x}
\end{align}
so that from \eqref{genkoshk}, \eqref{0f3bess} and \eqref{0f1bess},
\begin{align}\label{althere}
\mathscr{G}_{\nu}(x,0)&=\frac{1}{2\sin(\nu\pi)}\left\{\left(I_{-2\nu}(2\sqrt{x})-I_{2\nu}(2\sqrt{x})\right)+\left(J_{-2\nu}(2\sqrt{x})-J_{2\nu}(2\sqrt{x})\right)\right\}\nonumber\\
&=\frac{2}{\pi}\cos(\nu\pi)K_{2\nu}(2\sqrt{x})+\frac{1}{2\sin(\nu\pi)}\left\{J_{-2\nu}(2\sqrt{x})-J_{2\nu}(2\sqrt{x})\right\},
\end{align}
where in the last step we used \eqref{kbesse}. Now the fact that
\begin{align}\label{althere1}
\frac{1}{2\sin(\nu\pi)}\left\{J_{-2\nu}(2\sqrt{x})-J_{2\nu}(2\sqrt{x})\right\}=-\cos(\nu\pi)Y_{2\nu}(2\sqrt{x})-\sin(\nu\pi)J_{2\nu}(2\sqrt{x})
\end{align}
can be proved using the definition of $Y_{\nu}(z)$ in \eqref{ybesse}. Finally, \eqref{althere} and \eqref{althere1} together imply \eqref{1st}.
\end{proof}
In \cite[p.~842]{bdrz}, it was remarked that $\cos(\pi\nu) M_{2\nu}(2 \sqrt{x}) -\sin(\pi \nu) J_{2\nu}(2 \sqrt{x})$, that is, the first Koshliakov kernel, is an even function of $\nu$. While it is not trivial to prove this directly using properties of Bessel functions, it \emph{is} so when viewed through the Watson kernel in \eqref{genkoshk} since, by definition, 
\begin{equation*}
\mathscr{G}_{-\nu}(x,w)=\mathscr{G}_{\nu}(x, w)\hspace{5mm}(w\in\mathbb{C}),
\end{equation*}
and hence, in particular, this is true for $w=0$.

The following big-O result for small positive values of $x$ is used in the sequel. The bound for the case $\nu=0$ has not appeared in the literature although the bound for the case $\nu\in\mathbb{C}\backslash\mathbb{Z}$ has been considered by Dahiya, see  \cite[Section 1 (iii)]{dahiyaromanian} in conjunction with \eqref{kernelequi} and \eqref{symmetry}. However, we prove both the cases to make the paper self-contained.
\begin{theorem}\label{gkkx0thm}
Let $x>0$ and $w\in\mathbb{C}$. As $x\to0^{+}$,
\begin{equation*}
\mathscr{G}_{\nu}(x, w)=\begin{cases}
O_{\nu, w}\left(x^{\textup{Re}(w)}(1+|\log x|)\right),\hspace{1mm}\text{if}\hspace{1mm}\nu=0,\\
O_{\nu, w}\left(x^{\textup{Re}(w)-|\textup{Re}(\nu)|}\right),\hspace{9mm}\text{if}\hspace{1mm}\nu\in\mathbb{C}\backslash\mathbb{Z}.
\end{cases}
\end{equation*}
\end{theorem}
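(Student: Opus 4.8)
The plan is to read the two asymptotic regimes directly off the series definition \eqref{genkoshk}, treating $\nu\notin\mathbb{Z}$ and $\nu=0$ separately; the former is routine and the latter carries the only real difficulty. For $\nu\in\mathbb{C}\backslash\mathbb{Z}$, I would first record that each combined factor
\begin{equation*}
h_{\pm}(\nu):=\frac{1}{\Gamma(1\mp\nu)\Gamma(w+\tfrac12)\Gamma(w+\tfrac12\mp\nu)}\,\pFq03{-}{1\mp\nu, w+\tfrac12, w+\tfrac12\mp\nu}{\tfrac{x^2}{16}}=\sum_{k=0}^{\infty}\frac{(x^2/16)^k}{k!\,\Gamma(1\mp\nu+k)\Gamma(w+\tfrac12+k)\Gamma(w+\tfrac12\mp\nu+k)}
\end{equation*}
is, by \eqref{pfqgen}, an entire function of $x$ (a power series in $x^2$), hence bounded near $x=0$, i.e. $h_{\pm}(\nu)=O_{\nu,w}(1)$ as $x\to0^{+}$. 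Since $x>0$ makes $\left|(x/4)^{w\mp\nu}\right|=(x/4)^{\textup{Re}(w)\mp\textup{Re}(\nu)}$, the definition \eqref{genkoshk} gives
\begin{equation*}
\mathscr{G}_{\nu}(x,w)=\frac{\pi}{\sin(\nu\pi)}\left[(x/4)^{w-\nu}h_{+}(\nu)-(x/4)^{w+\nu}h_{-}(\nu)\right]=O_{\nu,w}\left(x^{\textup{Re}(w)-\textup{Re}(\nu)}+x^{\textup{Re}(w)+\textup{Re}(\nu)}\right),
\end{equation*}
and the larger of these two powers as $x\to0^{+}$ is $x^{\textup{Re}(w)-|\textup{Re}(\nu)|}$, which is the second bound. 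Note that I use only the boundedness of $h_{\pm}$, so no cancellation or vanishing-leading-coefficient issue arises and no logarithm is produced.

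For $\nu=0$ the prefactor $\pi/\sin(\nu\pi)$ forces a genuine $0/0$ limit, and this is where the logarithm enters. Writing the bracket in \eqref{genkoshk} as $g(\nu)-g(-\nu)$ with $g(\nu):=(x/4)^{-\nu}h_{+}(\nu)$ (using $h_{+}(-\nu)=h_{-}(\nu)$), the function $g(\nu)-g(-\nu)$ is odd and analytic in $\nu$, so $g(\nu)-g(-\nu)=2\nu\,g'(0)+O(\nu^{3})$; combined with $\sin(\nu\pi)=\pi\nu+O(\nu^{3})$ this yields
\begin{equation*}
\mathscr{G}_{0}(x,w)=\lim_{\nu\to0}\mathscr{G}_{\nu}(x,w)=2\,(x/4)^{w}g'(0),\qquad g'(0)=-\log(x/4)\,h_{+}(0)+h_{+}'(0).
\end{equation*}
Both $h_{+}(0)$ and $h_{+}'(0)=\partial_{\nu}h_{+}(\nu)|_{\nu=0}$ are power series in $x^{2}$ and hence $O_{w}(1)$ as $x\to0^{+}$, so $g'(0)=O_{w}(1+|\log x|)$ and therefore $\mathscr{G}_{0}(x,w)=O_{w}\!\left(x^{\textup{Re}(w)}(1+|\log x|)\right)$, which is the first bound.

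The main obstacle is precisely this $\nu=0$ analysis: one must justify that the single power of $\log x$ comes \emph{only} from the $\nu$-differentiation of $(x/4)^{-\nu}$, and that differentiating the Gamma-and-hypergeometric factor $h_{+}(\nu)$ contributes nothing worse. This is handled by the observation that $h_{+}(\nu)$, in its combined power-series form displayed above, is entire in $(\nu,x)$ with coefficients that are $O_{w}(x^{2k})$; termwise $\nu$-differentiation is therefore legitimate and $h_{+}'(0)$ remains $O_{w}(1)$ near $x=0$, strictly below the $\log x$ term. Once this is in place the two cases combine to give the stated bounds.
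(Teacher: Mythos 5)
Your proposal is correct and follows essentially the same route as the paper: for $\nu\notin\mathbb{Z}$ the paper likewise reads the bound off the definition by noting the ${}_0F_3$ factors are $O_{\nu,w}(1)$, and for $\nu=0$ it computes the same $0/0$ limit as $\tfrac{2}{\pi}f'(0)$ with $f(\nu)=\pi(x/4)^{w-\nu}h_{+}(\nu)$, extracting the single $\log(x/4)$ from differentiating the power of $x$ and justifying termwise $\nu$-differentiation of the series exactly as you do. No substantive differences.
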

\begin{proof}
By the definition of a ${}_0F_3$, as $x\to0$, 
\begin{equation*}
\frac{1}{\Gamma(1\pm\nu)\Gamma\left(w+\frac{1}{2}\right) \Gamma\left(w+\frac{1}{2}\pm\nu\right)} \pFq03{-}{1\pm\nu, w+\frac{1}{2}, w+\frac{1}{2}\pm\nu}{\frac{x^2}{16} }=O_{\nu,w}(1),
\end{equation*}
we see that $\mathscr{G}_{\nu}(x, w)=O_{\nu, w}\left(x^{\textup{Re}(w)-|\textup{Re}(\nu)|}\right)$ when $\nu\in\mathbb{C}\backslash\mathbb{Z}$. 

However, for $\nu=0$, observe that $\mathscr{G}_{\nu}(x, w)$ admits $\frac{0}{0}$ form. So we, of course, mean 
\begin{equation*}
\mathscr{G}_{0}(x, w)=\lim_{\nu\to0}\mathscr{G}_{\nu}(x, w).
\end{equation*}
For brevity, let $f(\nu)$ be defined by
\begin{equation*}
f(\nu):=\pi\left(\frac{x}{4}\right)^{w-\nu}\frac{1}{\Gamma(1-\nu)\Gamma\left(w+\frac{1}{2}\right) \Gamma\left(w+\frac{1}{2}-\nu\right)} \pFq03{-}{1-\nu, w+\frac{1}{2}, w+\frac{1}{2}-\nu}{\frac{x^2}{16}}.
\end{equation*}
Then 
\begin{align*}\label{g0xw}
\mathscr{G}_{0}(x, w)=\lim_{\nu\to0}\frac{f(\nu)-f(-\nu)}{\sin(\nu\pi)}=\frac{2f'(0)}{\pi}.
\end{align*}
Clearly, the ${}_0F_{3}$ converges uniformly in a neighborhood of $\nu=0$. Hence by \cite[p.~152, \textbf{7.17}]{rudin},
{\allowdisplaybreaks\begin{align}
f'(\nu)&=\pi\left(\frac{x}{4}\right)^{w}\bigg\{-\frac{(x/4)^{-\nu}\log\left(\frac{x}{4}\right)}{\G(1-\nu)\G\left(w+\frac{1}{2}\right)\G\left(w+\frac{1}{2}-\nu\right)}\pFq03{-}{1-\nu, w+\frac{1}{2}, w+\frac{1}{2}-\nu}{\frac{x^2}{16}}\nonumber\\
&\quad+\left(\frac{x}{4}\right)^{-\nu}\sum_{k=0}^{\infty}\frac{d}{d\nu}\left(\frac{(x^2/16)^{k}}{k!\G(1-\nu+k)\G\left(w+\frac{1}{2}+k\right)\G\left(w+\frac{1}{2}-\nu+k\right)}\right)\bigg\}\nonumber\\
&=\pi\left(\frac{x}{4}\right)^{w}\bigg\{-\frac{(x/4)^{-\nu}\log\left(\frac{x}{4}\right)}{\G(1-\nu)\G\left(w+\frac{1}{2}\right)\G\left(w+\frac{1}{2}-\nu\right)}\pFq03{-}{1-\nu, w+\frac{1}{2}, w+\frac{1}{2}-\nu}{\frac{x^2}{16}}\nonumber\\
&\qquad\qquad\quad+\left(\frac{x}{4}\right)^{-\nu}\sum_{k=0}^{\infty}\frac{(x^2/16)^{k}\left(\psi(w+\frac{1}{2}-\nu+k)+\psi(1-\nu+k)\right)}{k!\G\left(w+\frac{1}{2}+k\right)\G(1-\nu+k)\G(w+\frac{1}{2}-\nu+k)}\bigg\},
\end{align}}
where $\psi(x)=\G'(x)/\G(x)$. Now let $\nu\to0$ on both sides and use \cite[p.~149, \textbf{7.11}]{rudin} so that
\begin{align}\label{fd0}
f'(0)&=\pi\left(\frac{x}{4}\right)^{w}\bigg\{-\frac{\log\left(\frac{x}{4}\right)}{\G^{2}\left(w+\frac{1}{2}\right)}\pFq03{-}{1, w+\frac{1}{2}, w+\frac{1}{2}}{\frac{x^2}{16}}\nonumber\\
&\qquad\qquad\quad+\sum_{k=0}^{\infty}\frac{(x^2/16)^{k}\left(\psi(w+\frac{1}{2}+k)+\psi(1+k)\right)}{k!\G^{2}\left(w+\frac{1}{2}+k\right)\G(1+k)}\bigg\}.
\end{align}
From \eqref{g0xw} and \eqref{fd0}, we conclude that as $x\to0$,
\begin{align*}
\mathscr{G}_0(x,w)=O\left(x^{\textup{Re}(w)}(1+|\log x|)\right).
\end{align*}
\end{proof}
\begin{remark}
Theorem \textup{\ref{gkkx0thm}} with $w=0$ gives a well-known result \cite[Equation (2.11)]{drrz02} for $\nu\in\mathbb{C}\backslash\left(\mathbb{Z}\backslash\{0\}\right)$.
\end{remark}
The Mellin transform of $\mathscr{G}_{\nu}(x, w)$ was first given by Bhatnagar in \cite[p.~23]{bhatnagarganita}\footnote{The condition for the validity of the Mellin transform given in this paper, namely $-\textup{Re}(w)\pm\textup{Re}(\nu)<\textup{Re}(s)<\frac{1}{4}$, is too restrictive. Here we extend the region of validity.} However, he did not give a proof of it. We prove it here using Theorem \ref{slater} of Slater. Here, and throughout the sequel, we use $\int_{(c)}$ to denote the line integral $\int_{c-i\infty}^{c+i\infty}$.
\begin{theorem}\label{gkkmellin}
For $-\textup{Re}(w)\pm\textup{Re}(\nu)<\textup{Re}(s)<\frac{3}{4}$,
\begin{align}\label{mellin-kkk}
\int_{0}^{\infty} t^{s-1} \mathscr{G}_{\nu}(xt, w)\, dt
= 2^{2s-1} x^{-s}
\frac{\G\left(\frac{s-\nu+w}{2}\right)\G\left(\frac{s+\nu+w}{2}\right)}{\G\left(\frac{1-s-\nu+w}{2}\right)\G\left(\frac{1-s+\nu+w}{2}\right)}.
\end{align}
\end{theorem}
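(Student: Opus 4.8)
The plan is to establish \eqref{mellin-kkk} by recognizing its right-hand side as a quotient of gamma functions of the normal form \eqref{functionHs} and then invoking Slater's theorem (Theorem~\ref{slater}, via Corollary~\ref{SlatersCor}) to recover $\mathscr{G}_{\nu}(x,w)$ as the associated inverse Mellin transform. By the elementary scaling property of the Mellin transform one has $\int_{0}^{\infty}t^{s-1}\mathscr{G}_{\nu}(xt,w)\,dt = x^{-s}\int_{0}^{\infty}t^{s-1}\mathscr{G}_{\nu}(t,w)\,dt$, so it suffices to treat the case $x=1$; the factor $x^{-s}$ then reappears automatically. Write the $x=1$ target as $\mathscr{H}(s)=2^{2s-1}\widetilde{\mathscr{H}}(s)$ with $\widetilde{\mathscr{H}}(s)=\G\left(\tfrac{s+w-\nu}{2}\right)\G\left(\tfrac{s+w+\nu}{2}\right)\big/\left[\G\left(\tfrac{1-s+w-\nu}{2}\right)\G\left(\tfrac{1-s+w+\nu}{2}\right)\right]$.

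The first genuine step is the substitution $s=2\sigma$, which turns $\widetilde{\mathscr{H}}(2\sigma)$ into the Slater form \eqref{functionHs} with $A=2$, $B=C=0$, $D=2$, parameters $(a)=\left(\tfrac{w-\nu}{2},\tfrac{w+\nu}{2}\right)$ and $(d)=\left(\tfrac{1+w-\nu}{2},\tfrac{1+w+\nu}{2}\right)$, and empty $(b),(c)$. Since $A+D=4>B+C=0$, Corollary~\ref{SlatersCor} identifies the inverse transform with $\Sigma_A$; because $B+C=0$ the accompanying hypergeometric is a ${}_0F_3$ and $(-1)^{C-A}=1$, so the two terms $j=1,2$ of $\Sigma_A(y)$ carry prefactors $\G(\nu)\big/[\G(w+\tfrac12)\G(w+\tfrac12-\nu)]$ and $\G(-\nu)\big/[\G(w+\tfrac12)\G(w+\tfrac12+\nu)]$ together with the ${}_0F_3$'s appearing in \eqref{genkoshk}. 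Applying the reflection formula \eqref{refl} in the guise $\G(\nu)=\tfrac{\pi}{\sin(\pi\nu)}\tfrac{1}{\G(1-\nu)}$ and $\G(-\nu)=-\tfrac{\pi}{\sin(\pi\nu)}\tfrac{1}{\G(1+\nu)}$ (the sign supplying exactly the minus between the two bracketed terms of \eqref{genkoshk}), I would verify that $\Sigma_A(y)$ with $y=t^{2}/16$ is literally $\mathscr{G}_{\nu}(t,w)$. The change of variable $y=t^2/16$ in $\int_{0}^{\infty}t^{s-1}\mathscr{G}_{\nu}(t,w)\,dt$ then produces the factor $2^{2s-1}$ and matches $\int_{0}^{\infty}y^{\sigma-1}\Sigma_A(y)\,dy$ at $\sigma=s/2$, completing the $x=1$ case.

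The delicate point, and the reason the stated strip is $\textup{Re}(s)<\tfrac34$ rather than the more restrictive $\tfrac14$ found in earlier literature, is the verification of the range conditions \eqref{condition1} and \eqref{condition2}. With $B=0$, condition \eqref{condition1} only bounds $\sigma$ from below and yields the left edge $-\textup{Re}(w)\pm\textup{Re}(\nu)<\textup{Re}(s)$. For the right edge one computes $\eta=(a_1+a_2)-(d_1+d_2)=w-(1+w)=-1$ and $A+D-B-C=4$, so the middle branch of \eqref{condition2} reads $\textup{Re}(4\sigma)<-\textup{Re}(\eta)=1$, i.e.\ $\textup{Re}(s)<\tfrac14$. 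Since $|A+D-B-C|=4>1$ and $A+B=C+D$, Remark~\ref{slaterextension} permits weakening this to $\textup{Re}(4\sigma)<\tfrac12-\textup{Re}(\eta)=\tfrac32$, i.e.\ $\textup{Re}(s)<\tfrac34$, which is precisely the claimed upper bound and reflects the conditional, oscillatory convergence of the integral for $\tfrac14\le\textup{Re}(s)<\tfrac34$. Thus the main obstacle is careful bookkeeping: tracking the $s\mapsto 2\sigma$ and $y=t^2/16$ substitutions so that the $2^{2s-1}$ prefactor and the halved gamma arguments emerge correctly, and invoking the refined Slater range of Remark~\ref{slaterextension} to obtain the sharp strip.
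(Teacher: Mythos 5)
Your proposal follows essentially the same route as the paper: the substitution $s=2\sigma$, Slater's theorem with $A=2$, $B=C=0$, $D=2$ and the same parameter vectors $(a)=\left(\tfrac{w-\nu}{2},\tfrac{w+\nu}{2}\right)$, $(d)=\left(\tfrac{1+w-\nu}{2},\tfrac{1+w+\nu}{2}\right)$, followed by Remark \ref{slaterextension} to widen the strip to $\textup{Re}(s)<\tfrac{3}{4}$. One small internal slip: since $\sigma=s/2$, the unextended condition $\textup{Re}(4\sigma)<-\textup{Re}(\eta)=1$ yields $\textup{Re}(s)<\tfrac{1}{2}$ (as the paper states), not $\tfrac{1}{4}$ (that is Bhatnagar's over-restrictive bound from the footnote), though your extended bound $\textup{Re}(4\sigma)<\tfrac{3}{2}$, i.e.\ $\textup{Re}(s)<\tfrac{3}{4}$, is computed correctly.
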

\begin{proof}
We prove the equivalent statement, namely, for $-\textup{Re}(w)\pm\textup{Re}(\nu)<c=\textup{Re}(s)<\frac{3}{4}$,
\begin{align*}
\frac{1}{2\pi i}\int_{(c)}\frac{\G\left(\frac{s-\nu+w}{2}\right)\G\left(\frac{s+\nu+w}{2}\right)}{\G\left(\frac{1-s-\nu+w}{2}\right)\G\left(\frac{1-s+\nu+w}{2}\right)}2^{2s-1}(xt)^{-s}\, ds=\mathscr{G}_{\nu}(xt, w).
\end{align*}
This is obtained by first employing the change of variable $s=2s_1$ in the integral on the left-hand side of the above equation. Then use Theorem \ref{slater} with $A=2, B=0, C=0, D=2, a_1=\frac{1}{2}(w-\nu), a_2=\frac{1}{2}(w+\nu), d_1=\frac{1}{2}(1-\nu+w), d_2=\frac{1}{2}(1+\nu+w)$, and $z=(xt)^2/16$. This establishes \eqref{mellin-kkk} for $-\textup{Re}(w)\pm\textup{Re}(\nu)<\textup{Re}(s)<\frac{1}{2}$. However, observe that the Remark \ref{slaterextension} applies here and thus we can extend the region of validity to $-\textup{Re}(w)\pm\textup{Re}(\nu)<\textup{Re}(s)<\frac{3}{4}$.
\end{proof}
When $w=0$ and $1$, the above result respectively gives Lemmas 5.1 and 5.2 from \cite{dixitmoll} as special cases as can be seen from Theorem \ref{gkkkoshliakov} and the standard properties of $\G(s)$.

\section{Proofs of Theorem \ref{koshlyakovg-wthm} and its corollaries}\label{proof}
In this section, we prove Theorem \ref{koshlyakovg-wthm} and Corollaries \ref{koshlyakovfir}, \ref{koshlyakovsec} and \ref{besinteval}. 

\begin{proof}[Theorem \textup{\ref{koshlyakovg-wthm}}][]
First, let $\mu\neq-\nu$. We prove \eqref{koshlyakovg-w}. We begin by showing that the integral on the left-hand side of \eqref{koshlyakovg-w} converges for the said values of the variables $\mu, \nu$ and $w$. That the convergence at the upper limit of integration is secured can be clearly seen from \eqref{hypbessasym} and \eqref{kbessasym}. To show that it is also secured at $0$, we consider two cases - first, $\nu\in\mathbb{C}\backslash\mathbb{Z}$ and the second, $\nu=0$.

\underline{Case 1:} $\nu\in\mathbb{C}\backslash\mathbb{Z}$

(i) Let $\textup{Re}(\mu)>0$ and Re$(\nu)>0$. From \eqref{kbessasym0} and Theorem \ref{gkkx0thm}, we see that the integrand
\begin{align*}
 K_{\mu}(t)t^{\mu+\nu+w} \mathscr{G}_{\nu}(xt, w)&= O_{\mu, \nu, w, x}\left(t^{-\textup{Re}(\mu)+\textup{Re}(\mu+\nu+w)+\textup{Re}(w)-|\textup{Re}(\nu)|}\right)\nonumber\\
&=O_{\mu, \nu, w, x}\left(t^{2\textup{Re}(w)}\right),
\end{align*}
as $t\to0$, so that Re$(w)>-\frac{1}{2}$ is needed.

(ii) Let $\textup{Re}(\mu)<0$ and Re$(\nu)>0$. Then we replace $\mu$ by $-\mu$ in \eqref{kbessasym0} and then use the fact $K_{-\mu}(t)=K_{\mu}(t)$ so that as $t\to0$,
\begin{align*}
 K_{\mu}(t)t^{\mu+\nu+w} \mathscr{G}_{\nu}(xt, w)&= O_{\mu, \nu, w, x}\left(t^{\textup{Re}(\mu)+\textup{Re}(\mu+\nu+w)+\textup{Re}(w)-|\textup{Re}(\nu)|}\right)\nonumber\\
&=O_{\mu, \nu, w, x}\left(t^{2\textup{Re}(\mu+w)}\right),
\end{align*}
so that we need Re$(\mu)>-\textup{Re}(w)-\frac{1}{2}$.

(iii) When $\textup{Re}(\mu)>0$ and Re$(\nu)<0$, one can similarly see that we require Re$(\nu)>-\textup{Re}(w)-\frac{1}{2}$.

(iv) Now let $\textup{Re}(\mu)<0$ and Re$(\nu)<0$. As $t\to0$,
\begin{align*}
 K_{\mu}(t)t^{\mu+\nu+w} \mathscr{G}_{\nu}(xt, w)&= O_{\mu, \nu, w, x}\left(t^{\textup{Re}(\mu)+\textup{Re}(\mu+\nu+w)+\textup{Re}(w)+\textup{Re}(\nu)}\right)\nonumber\\
&=O_{\mu, \nu, w, x}\left(t^{2\textup{Re}(\mu+\nu+w)}\right),
\end{align*}
so that Re$(\mu+\nu)>-\textup{Re}(w)-\frac{1}{2}$ is required.

Similarly in the remaining five cases $\textup{Re}(\mu)=0, \textup{Re}(\pm\nu)>0$  or $\textup{Re}(\pm\mu)>0, \textup{Re}(\nu)=0$ or $\textup{Re}(\mu)=\textup{Re}(\nu)=0$, we see that we require $\textup{Re}(w)>-1/2, \textup{Re}(\mu)>-\textup{Re}(w)-\frac{1}{2}, \textup{Re}(\nu)>-\textup{Re}(w)-\frac{1}{2}$ and Re$(\mu+\nu)>-\textup{Re}(w)-\frac{1}{2}$.

\underline{Case 2:} $\nu=0$

(i) Let $\textup{Re}(\mu)\geq0$. Using Theorem \ref{gkkx0thm} and \eqref{kbessasym0}, it is clear that as $t\to0$,
\begin{equation*}
K_{\mu}(t)t^{\mu+\nu+w} \mathscr{G}_{\nu}(xt, w)= O_{\mu, w, x}\left(t^{2\textup{Re}(w)}(1+|\log t|)\right).
\end{equation*}
Hence as long as $\textup{Re}(w)>-1/2$, the convergence is secured.

(ii) Similarly when $\textup{Re}(\mu)<0$, it can be seen that we need $\textup{Re}(\mu+w)>-1/2$.

Thus we conclude that the integral on the left-hand side of \eqref{koshlyakovg-w} converges when  $\textup{Re}(w)>-1/2$ and  $\textup{Re}(\mu), \textup{Re}(\nu), \textup{Re}(\mu+\nu)>-\textup{Re}(w)-1/2$.

We now prove \eqref{koshlyakovg-w}. Apart from the conditions 
\begin{align}\label{con1}
\textup{Re}(w)>-1/2, \hspace{3mm}\text{and}\hspace{1mm}\textup{Re}(\mu), \textup{Re}(\nu), \textup{Re}(\mu+\nu)>-\textup{Re}(w)-1/2,
\end{align}
we initially require one more condition 
\begin{align}\label{con2}
\textup{Re}(\nu)<\textup{Re}(w)+\frac{1}{2}.
\end{align}
This additional restriction is removed later.

Let $I=I(\mu, \nu, x, w)$ denote the integral on the left-hand side of \eqref{koshlyakovg-w}. For Re$(s)>\pm\textup{Re}(\mu)$ and $a>0$, we have \cite[p.~115, Equation (11.1)]{ober},
\begin{equation}\label{kbessmel}
\int_{0}^{\infty}t^{s-1}K_{\mu}(at)\, dt=2^{s-2}a^{-s}\G\left(\frac{s-\mu}{2}\right)\G\left(\frac{s+\mu}{2}\right).
\end{equation}
From Theorem \ref{gkkmellin}, \eqref{kbessmel}, and an application of Parseval's formula \eqref{parseval-1}, we see that for $c=$Re$(s)<\min{(\textup{Re}(1+\nu+w), \textup{Re}(1+2\mu+\nu+w))}$ and Re$(-w\pm\nu)<c=\textup{Re}(s)<3/4$,
\begin{equation*}
I= \frac{1}{2\pi i} \int_{(c)} 
\frac{ \Gamma\left(\frac{s-\nu+w}{2}\right)
\Gamma\left(\frac{s+\nu+w}{2}\right)
\Gamma\left(\frac{1-s+\nu+w}{2}+\mu\right)}
{\Gamma\left(\frac{1-s-\nu+w}{2}\right)}2^{\mu+\nu+w+s-2}  x^{-s} \ ds
\end{equation*}
Now employ the change of variable $s=2\xi+\nu$ so that for $\max{\left(-\textup{Re}\left(\frac{w}{2}\right), -\textup{Re}\left(\frac{w}{2}+\nu\right)\right)}<c_1=\textup{Re}(\xi)<\min{\left(\textup{Re}\left(\frac{1+w}{2}\right), \textup{Re}\left(\mu+\frac{w+1}{2}\right), \frac{3}{8}-\frac{1}{2}\textup{Re}(\nu)\right)}$, we have
\begin{align}\label{aftcond}
I=\frac{2^{\mu+\nu+w-1}}{2\pi i} \left(\frac{x}{2}\right)^{-\nu} \int_{(c_1)}   
\left(\frac{x^2}{4}\right)^{-\xi} 
\frac{ \Gamma\left( \tfrac{w+1}{2}+\mu-\xi\right)\Gamma(\xi + \tfrac{w}{2})\Gamma(\xi+\nu+\tfrac{w}{2})}
{\Gamma\left( \tfrac{w+1}{2}-\nu-\xi\right)} \,d\xi.
\end{align}
It is important to note here that \eqref{con1} and \eqref{con2} imply that the strip in the $\xi$-complex plane in the sentence above \eqref{aftcond} is of non-zero width.

Now we evaluate the inverse Mellin transform in \eqref{aftcond} using Slater's theorem, that is, Theorem \ref{slater}, with $A=2, B=1, C=0, D=1, a_1=w/2, a_2=\nu+w/2, b_1=\mu+(w+1)/2$ and $d_1=-\nu+(w+1)/2$. Note that \eqref{con1} and \eqref{con2} imply that the conditions \eqref{condition1} in the hypotheses of Slater's theorem are satisfied. 

This indeed gives \eqref{koshlyakovg-w} upon simplification when the conditions in \eqref{con1} and \eqref{con2} are satisfied. 
 
However, using \eqref{kbessasym0}, Theorem \ref{gkkx0thm} and \cite[p.~30, Theorem 2.3]{temme}, we see that the left-hand side of \eqref{koshlyakovg-w} is analytic in $\nu$ as long as $\nu\in\mathbb{C}\backslash\left(\mathbb{Z}\backslash\{0\}\right)$ and the conditions in \eqref{con1} are met. Since the right-hand side of \eqref{koshlyakovg-w} too is analytic in $\nu$ when these conditions hold, by the principle of analytic continuation, \eqref{koshlyakovg-w} holds when $\nu\in\mathbb{C}\backslash\left(\mathbb{Z}\backslash\{0\}\right)$, $\textup{Re}(w)>-1/2$ and  $\textup{Re}(\mu), \textup{Re}(\nu), \textup{Re}(\mu+\nu)>-\textup{Re}(w)-1/2$.

Equation \eqref{selff} can be derived by simply letting $\mu=-\nu$ in \eqref{koshlyakovg-w} and then using \eqref{mueq-nu}.


\end{proof}
Koshliakov's \eqref{koshlyakov-1} and \eqref{koshlyakovg-1} can now be derived simply as special cases of Theorem \ref{koshlyakovg-wthm}.

\begin{corollary}\label{koshlyakovfir}
Let $x>0$. For $\mu\neq-\nu$, Equation \eqref{koshlyakovg-1} holds for $\textup{Re}(\mu)>-1/2$, $\textup{Re}(\nu)>-1/2$, $\nu\notin\mathbb{Z}^{+}$, and $\textup{Re}(\mu+\nu)>-1/2$, otherwise, Equation \eqref{koshlyakov-1} holds for $-\tfrac{1}{2}<\textup{Re}(\nu)<\tfrac{1}{2}$.
\end{corollary}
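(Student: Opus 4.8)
The plan is to obtain both \eqref{koshlyakovg-1} and \eqref{koshlyakov-1} by specializing Theorem \ref{koshlyakovg-wthm} to the single value $w=0$. The first thing to observe is that at $w=0$ the Watson kernel collapses onto the first Koshliakov kernel: by \eqref{1st} of Theorem \ref{gkkkoshliakov},
\begin{equation*}
\mathscr{G}_{\nu}(xt, 0) = \cos(\pi\nu)M_{2\nu}(2\sqrt{xt}) - \sin(\pi\nu)J_{2\nu}(2\sqrt{xt}),
\end{equation*}
which is precisely the kernel appearing in \eqref{koshlyakovg-1} and \eqref{koshlyakov-1}. Likewise, setting $w=0$ in the definition \eqref{def2varbessel} makes the factor $x^{w}$ disappear and turns ${}_{\mu}K_{\nu}(x, w)$ into exactly the right-hand side of \eqref{koshlyakovg-1} (the combination of two ${}_1F_2$ series). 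Hence the two specializations of Theorem \ref{koshlyakovg-wthm} at $w=0$ are, verbatim, the two Koshliakov identities, and what remains is only to reconcile the hypotheses.

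For the case $\mu\neq-\nu$, I would take \eqref{koshlyakovg-w} with $w=0$. The standing requirement $\textup{Re}(w)>-1/2$ is then automatic, and the three inequalities $\textup{Re}(\mu),\textup{Re}(\nu),\textup{Re}(\mu+\nu)>-\textup{Re}(w)-\tfrac{1}{2}$ become $\textup{Re}(\mu),\textup{Re}(\nu),\textup{Re}(\mu+\nu)>-\tfrac{1}{2}$, matching the statement. The one subtlety — and really the only point requiring a word of justification — is the translation of the ambient restriction $\nu\in\mathbb{C}\backslash(\mathbb{Z}\backslash\{0\})$ into the stated $\nu\notin\mathbb{Z}^{+}$: since $\textup{Re}(\nu)>-\tfrac{1}{2}$ already forbids every negative integer (the largest, $-1$, has real part $-1<-\tfrac{1}{2}$) while $\nu=0$ is permitted as a removable singularity, the only excluded values that survive are the positive integers.

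For the case $\mu=-\nu$, I would instead use \eqref{selff} with $w=0$. Its validity condition $-\textup{Re}(w)-\tfrac{1}{2}<\textup{Re}(\nu)<\textup{Re}(w)+\tfrac{1}{2}$ reduces to $-\tfrac{1}{2}<\textup{Re}(\nu)<\tfrac{1}{2}$, and, using the same kernel identification via \eqref{1st} together with $x^{0}=1$, the identity becomes
\begin{equation*}
\int_{0}^{\infty}K_{\nu}(t)\left(\cos(\pi\nu)M_{2\nu}(2\sqrt{xt})-\sin(\pi\nu)J_{2\nu}(2\sqrt{xt})\right)\,dt=K_{\nu}(x),
\end{equation*}
which is exactly \eqref{koshlyakov-1}. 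Since every step here is a direct substitution, there is no genuine analytic obstacle; the entire content of the corollary is the kernel identification supplied by Theorem \ref{gkkkoshliakov} together with the careful bookkeeping of the parameter ranges described above.
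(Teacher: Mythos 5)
Your proposal is correct and follows exactly the paper's own route: the paper likewise proves Corollary \ref{koshlyakovfir} by setting $w=0$ in Theorem \ref{koshlyakovg-wthm} and invoking the kernel identification \eqref{1st} of Theorem \ref{gkkkoshliakov}. Your additional remarks on translating $\nu\in\mathbb{C}\backslash(\mathbb{Z}\backslash\{0\})$ together with $\textup{Re}(\nu)>-\tfrac{1}{2}$ into $\nu\notin\mathbb{Z}^{+}$ are accurate bookkeeping that the paper leaves implicit.
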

\begin{proof}
Let $w=0$ in Theorem \ref{koshlyakovg-wthm} and make use of the first identity in Theorem \ref{gkkkoshliakov}, namely, Equation \eqref{1st}. 
\end{proof}
A companion to \eqref{koshlyakovg-1} can also be derived from Theorem \ref{koshlyakovg-wthm}.
\begin{corollary}\label{koshlyakovsec}
Let $x>0$. For $\mu\neq-\nu$, if $\textup{Re}(\mu)>-3/2$, $\textup{Re}(\nu)>-3/2, \nu\notin\mathbb{Z}^{+}\cup\{-1\}$, and $\textup{Re}(\mu+\nu)>-3/2$, we have
{\allowdisplaybreaks\begin{align*}
&\int_{0}^{\infty} K_{\mu}(t)t^{\mu+\nu+1} \left( \sin(\pi\nu) J_{2\nu}(2 \sqrt{xt}) -
\cos(\pi\nu) L_{2\nu}(2 \sqrt{xt}) \right)\, dt \nonumber\\
&=\frac{\pi x 2^{\mu+\nu-1}}{\sin(\nu\pi)}\bigg\{\left(\frac{x}{2}\right)^{-\nu}\frac{\Gamma(\mu+\tfrac{3}{2})}{\Gamma(1-\nu)\Gamma(\tfrac{3}{2}-\nu)}\pFq12{\mu+\tfrac{3}{2}}{\tfrac{3}{2}-\nu,1-\nu}{\frac{x^2}{4}}\nonumber\\
&\quad\quad\quad\quad\quad\quad-\left(\frac{x}{2}\right)^{\nu}\frac{\Gamma(\mu+\nu+\tfrac{3}{2})}{\Gamma(1+\nu)\Gamma(\tfrac{3}{2})}\pFq12{\mu+\nu+\tfrac{3}{2}}{\tfrac{3}{2},1+\nu}{\frac{x^2}{4}}\bigg\},
\end{align*}}
otherwise, for $-3/2<\textup{Re}(\nu)<3/2$, we have\footnote{This integral evaluation was obtained by Koshliakov \cite[Equation (13)]{kosh1938}.}
\begin{equation*}
\int_{0}^{\infty} tK_{\nu}(t) \left( \sin(\nu\pi) J_{2 \nu}(2 \sqrt{xt}) -
\cos(\nu\pi) L_{2\nu}(2 \sqrt{xt}) \right)\, dt = xK_{\nu}(x),
\end{equation*}
\end{corollary}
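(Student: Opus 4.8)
The plan is to obtain this companion result in exactly the same way that Corollary \ref{koshlyakovfir} was obtained, but now setting $w=1$ instead of $w=0$ in Theorem \ref{koshlyakovg-wthm} and invoking the second identity of Theorem \ref{gkkkoshliakov}. Concretely, for $\mu\neq-\nu$ I would start from \eqref{koshlyakovg-w}, replace $w$ by $1$ throughout, and use \eqref{2nd} to rewrite $\mathscr{G}_{\nu}(xt,1)$ as the second Koshliakov kernel $\sin(\pi\nu)J_{2\nu}(2\sqrt{xt})-\cos(\pi\nu)L_{2\nu}(2\sqrt{xt})$. This turns the left-hand side of \eqref{koshlyakovg-w} into precisely the integral appearing in the statement, carrying the weight $t^{\mu+\nu+1}$.

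For the right-hand side, I would specialize the definition \eqref{def2varbessel} of ${}_{\mu}K_{\nu}(x,w)$ at $w=1$: the factors $\Gamma(\mu+w+\tfrac12)$, $\Gamma(w+\tfrac12-\nu)$, $\Gamma(\mu+\nu+w+\tfrac12)$ and $\Gamma(w+\tfrac12)$ become $\Gamma(\mu+\tfrac32)$, $\Gamma(\tfrac32-\nu)$, $\Gamma(\mu+\nu+\tfrac32)$ and $\Gamma(\tfrac32)$, and the two hypergeometric series acquire exactly the parameters displayed in the corollary. Thus ${}_{\mu}K_{\nu}(x,1)$ equals the bracketed expression in the statement, and \eqref{koshlyakovg-w} becomes the claimed evaluation. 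It then remains only to translate the hypotheses: the general conditions $\textup{Re}(\mu),\textup{Re}(\nu),\textup{Re}(\mu+\nu)>-\textup{Re}(w)-\tfrac12$ become $>-\tfrac32$ at $w=1$, while the requirement $\nu\in\mathbb{C}\backslash(\mathbb{Z}\backslash\{0\})$ of Theorem \ref{koshlyakovg-wthm}, intersected with $\textup{Re}(\nu)>-\tfrac32$, is precisely the exclusion $\nu\notin\mathbb{Z}^{+}\cup\{-1\}$ recorded in the statement, the negative integers $\leq-2$ being automatically ruled out by $\textup{Re}(\nu)>-\tfrac32$.

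For the remaining case $\mu=-\nu$, I would instead specialize the self-reciprocal identity \eqref{selff} at $w=1$, again rewriting the kernel by \eqref{2nd}; the right-hand side $x^{w}K_{\nu}(x)$ then collapses to $xK_{\nu}(x)$, consistently with \eqref{mueq-nu}, which gives ${}_{-\nu}K_{\nu}(x,1)=xK_{\nu}(x)$. The admissible range here is the symmetric strip $-\textup{Re}(w)-\tfrac12<\textup{Re}(\nu)<\textup{Re}(w)+\tfrac12$ of Theorem \ref{koshlyakovg-wthm}, i.e.\ $-\tfrac32<\textup{Re}(\nu)<\tfrac32$, matching Koshliakov's evaluation quoted in the footnote.

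Since every analytic ingredient---the kernel identity \eqref{2nd}, the integral evaluations \eqref{koshlyakovg-w} and \eqref{selff}, and the specialization \eqref{mueq-nu}---has already been established, there is no genuine analytic obstacle here. The only point demanding care is the bookkeeping of the parameter restrictions, in particular verifying that the listed exclusion of $\nu$ is exactly the set of nonzero integers surviving the constraint $\textup{Re}(\nu)>-\tfrac32$, so that both sides are simultaneously well defined and finite.
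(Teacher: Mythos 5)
Your proposal is correct and is essentially identical to the paper's own proof, which likewise sets $w=1$ in Theorem \ref{koshlyakovg-wthm} and invokes the second identity \eqref{2nd} of Theorem \ref{gkkkoshliakov}, with the $\mu=-\nu$ case handled via \eqref{selff} and \eqref{mueq-nu}. Your careful bookkeeping of the parameter restrictions is accurate and merely elaborates what the paper leaves implicit.
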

\begin{proof}
Let $w=1$ in Theorem \ref{koshlyakovg-wthm} use the second identity in Theorem \ref{gkkkoshliakov}, that is, Equation \eqref{2nd}.
\end{proof} 
Yet another corollary of Theorem \ref{koshlyakovg-wthm} gives explicit evaluations of the Hankel transforms of well-known functions.
\begin{corollary}\label{besinteval}
Let $x>0$. If $\mu\neq-1/2$, then for $\textup{Re}(w)>-1/2$, $\textup{Re}(\mu)>-\textup{Re}(w)-1/2$, 
\begin{align}\label{5.12}
\int_{0}^{\infty}t^{\mu+w+\frac{1}{2}}K_{\mu}(t)J_{2w-1}(2\sqrt{xt})\, dt&=2^{\mu}\sqrt{\pi}x^{w-\frac{1}{2}}\bigg\{\frac{\Gamma\left(\mu+w+\frac{1}{2}\right)}{\G(w)}\pFq12{\mu+w+\frac{1}{2}}{\frac{1}{2}, w}{\frac{x^2}{4}}\nonumber\\
&\quad-x\frac{\G(\mu+w+1)}{\G(w+\frac{1}{2})}\pFq12{\mu+w+1}{\frac{3}{2}, w+\frac{1}{2}}{\frac{x^2}{4}}\bigg\},
\end{align}
else, for $\textup{Re}(w)>0$,
\begin{equation*}
\int_{0}^{\infty}e^{-t}t^{w-\frac{1}{2}}J_{2w-1}(2\sqrt{xt})\, dt=e^{-x}x^{w-\frac{1}{2}}.
\end{equation*}
\end{corollary}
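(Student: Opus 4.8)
The plan is to derive both cases of Corollary \ref{besinteval} by specializing Theorem \ref{koshlyakovg-wthm} to $\nu=\tfrac12$, which is the unique value of $\nu$ at which the Watson kernel collapses to an ordinary Bessel function. Indeed, by \eqref{gkkhalf} we have $\mathscr{G}_{\frac12}(xt,w)=J_{2w-1}(2\sqrt{xt})$, so with $\nu=\tfrac12$ the integrand $K_{\mu}(t)t^{\mu+\nu+w}\mathscr{G}_{\nu}(xt,w)$ in \eqref{koshlyakovg-w} becomes exactly $t^{\mu+w+\frac12}K_{\mu}(t)J_{2w-1}(2\sqrt{xt})$, the left-hand side of \eqref{5.12}. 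Since $\tfrac12\in\mathbb{C}\backslash(\mathbb{Z}\backslash\{0\})$, Theorem \ref{koshlyakovg-wthm} applies, and its two regimes ($\mu\neq-\nu$ versus $\mu=-\nu$) map precisely onto the two cases $\mu\neq-\tfrac12$ and $\mu=-\tfrac12$ of the corollary.

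For the first case I would start from \eqref{koshlyakovg-w}, giving $\int_{0}^{\infty}t^{\mu+w+\frac12}K_{\mu}(t)J_{2w-1}(2\sqrt{xt})\,dt={}_{\mu}K_{\frac12}(x,w)$, and then simplify the right-hand side by setting $\nu=\tfrac12$ in the definition \eqref{def2varbessel}. The only real work is bookkeeping of the gamma and power factors: one uses $\sin(\tfrac{\pi}{2})=1$, $\Gamma(1-\tfrac12)=\Gamma(\tfrac12)=\sqrt{\pi}$, $\Gamma(1+\tfrac12)=\Gamma(\tfrac32)=\tfrac12\sqrt{\pi}$, together with $w+\tfrac12-\nu=w$ and $w+\tfrac12+\nu=w+1$, which turn the two lower parameter pairs into $\{\tfrac12,w\}$ and $\{\tfrac32,w+\tfrac12\}$ and the upper parameters into $\mu+w+\tfrac12$ and $\mu+w+1$. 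Collecting the factors $2^{\mu+\nu-1}=2^{\mu-\frac12}$ with $(x/2)^{\mp\frac12}$ then produces precisely the two terms $2^{\mu}\sqrt{\pi}\,x^{w-\frac12}\tfrac{\Gamma(\mu+w+\frac12)}{\Gamma(w)}\pFq12{\mu+w+\frac12}{\frac12,w}{\frac{x^2}{4}}$ and $-2^{\mu}\sqrt{\pi}\,x^{w+\frac12}\tfrac{\Gamma(\mu+w+1)}{\Gamma(w+\frac12)}\pFq12{\mu+w+1}{\frac32,w+\frac12}{\frac{x^2}{4}}$ appearing in \eqref{5.12}. One must also check that the hypotheses transfer correctly: with $\nu=\tfrac12$ the requirements $\textup{Re}(\nu),\textup{Re}(\mu+\nu)>-\textup{Re}(w)-\tfrac12$ follow automatically from $\textup{Re}(w)>-\tfrac12$ and $\textup{Re}(\mu)>-\textup{Re}(w)-\tfrac12$, so the stated conditions $\textup{Re}(w)>-\tfrac12$, $\textup{Re}(\mu)>-\textup{Re}(w)-\tfrac12$, $\mu\neq-\tfrac12$ are exactly what is needed.

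For the second case $\mu=-\tfrac12=-\nu$, I would instead invoke \eqref{selff}, which with $\nu=\tfrac12$ and \eqref{gkkhalf} reads $\int_{0}^{\infty}t^{w}K_{\frac12}(t)J_{2w-1}(2\sqrt{xt})\,dt=x^{w}K_{\frac12}(x)$. Substituting the elementary closed form \eqref{khalfz}, namely $K_{\frac12}(u)=\sqrt{\pi/(2u)}\,e^{-u}$, on both sides and cancelling the common factor $\sqrt{\pi/2}$ yields $\int_{0}^{\infty}e^{-t}t^{w-\frac12}J_{2w-1}(2\sqrt{xt})\,dt=e^{-x}x^{w-\frac12}$, which is the asserted evaluation; the condition $-\textup{Re}(w)-\tfrac12<\textup{Re}(\nu)<\textup{Re}(w)+\tfrac12$ with $\nu=\tfrac12$ reduces to $\textup{Re}(w)>0$, matching the corollary. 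There is no genuine obstacle here beyond the specialization itself; the most error-prone step is the gamma-factor simplification in the first case, so I would carry it out carefully term by term to confirm that the $2^{\mu}$ and $\sqrt{\pi}$ factors align and that the power of $x$ shifts by one between the two hypergeometric terms exactly as written in \eqref{5.12}.
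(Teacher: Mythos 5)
Your proposal is correct and follows exactly the paper's own route: the paper proves Corollary \ref{besinteval} by setting $\nu=\tfrac12$ in Theorem \ref{koshlyakovg-wthm}, using \eqref{gkkhalf} to reduce the kernel to $J_{2w-1}(2\sqrt{xt})$, and handling the case $\mu=-\tfrac12$ via \eqref{selff} (equivalently \eqref{mueq-nu}) together with \eqref{khalfz}. Your gamma-factor bookkeeping and the verification that the hypotheses of the theorem specialize to the stated conditions are both accurate.
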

\begin{proof}
Let $\nu=1/2$ in Theorem \ref{koshlyakovg-wthm} and use \eqref{gkkhalf} to obtain \eqref{5.12}. In the second half of the theorem, use \eqref{mueq-nu} and \eqref{khalfz}.
\end{proof}

\section{Self-reciprocal functions in the Watson kernel and a modular relation}\label{sffwk}

In \eqref{selff}, we found an example of a function which is self-reciprocal in the Watson kernel $\mathscr{G}_{\nu}(xt, w)$. Letting $\nu=z/2$ in \eqref{selff} and replacing $x$ and $t$ by $2\pi x$ and $2\pi t$ respectively, we have an equivalent form
\begin{equation}\label{selffano}
2\pi\int_{0}^{\infty}t^wK_{\frac{z}{2}}(2\pi t)\mathscr{G}_\frac{z}{2}(4\pi^2xt,w)\, dt=x^wK_{\frac{z}{2}}(2\pi x),
\end{equation}
valid for $x>0$ and Re$(w)>\textup{max}\left(-\frac{1}{2}, \frac{1}{2}(|\textup{Re}(z)|-1)\right)$.

Several criteria have been derived for a function to be self-reciprocal in the Watson kernel, for example, see \cite{bhatnagarbelgique} and \cite{bhatnagar1954f}. Here, in the case of a \emph{non-negative integer} $w$, we obtain a new criterion that seems to have been missed. This result is given in the following theorem. 
\begin{theorem}\label{selfrecigkk}
Let $w$ be a non-negative integer and $z\in\mathbb{C}$ such that $|\textup{Re}(z)|<2w+1$. Suppose there exist functions $f(t, z, w)$ and $F(s, z, w)$ with the following three properties:

\textup{(i)} $F(s, z, w)=F(1-s, z, w)$;

\textup{(ii)} For $-w\pm\textup{Re}\left(\frac{z}{2}\right)<\textup{Re}(s)<1+w\pm\textup{Re}\left(\frac{z}{2}\right)$, the Mellin transform of $f(t, z, w)$ is
\begin{align}\label{ftzweven}
\int_{0}^{\infty}t^{s-1}f(t, z, w)\, dt=F(s, z, w)\zeta\left(1-s-\frac{z}{2}\right)\zeta\left(1-s+\frac{z}{2}\right)\left(\frac{s}{2}-\frac{z}{4}\right)_{\frac{w}{2}}\left(\frac{s}{2}+\frac{z}{4}\right)_{\frac{w}{2}},
\end{align}
if $w$ is even, and
\begin{align}\label{ftzwodd}
\int_{0}^{\infty}t^{s-1}f(t, z, w)\, dt&=\frac{F(s, z, w)}{2\left(\cos\left(\frac{\pi z}{2}\right)+\cos(\pi s)\right)}\zeta\left(1-s-\frac{z}{2}\right)\zeta\left(1-s+\frac{z}{2}\right)\nonumber\\
&\quad\times\left(\frac{s}{2}-\frac{z}{4}+\frac{1}{2}\right)_{\frac{w-1}{2}}\left(\frac{s}{2}+\frac{z}{4}+\frac{1}{2}\right)_{\frac{w-1}{2}},
\end{align}
if $w$ is odd;

\textup{(iii)} The Mellin transform of $f(t, z, w)$, namely $M[f;c+it]$ and given by the right-hand sides of \eqref{ftzweven} and \eqref{ftzwodd}, satisfies $M[f;1-c-it]\in L(-\infty,\infty)$.

Then $f$ is self-reciprocal (up to the constant $2\pi$) in the Watson kernel, that is,
\begin{equation*}
f(x, z, w)=2\pi\int_{0}^{\infty}f(t, z, w)\mathscr{G}_{\frac{z}{2}}(4\pi^2xt,w)\, dt.
\end{equation*}
\end{theorem}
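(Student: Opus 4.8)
The plan is to prove the self-reciprocity by passing to Mellin transforms and invoking Parseval's formula \eqref{parseval-1}, exactly as in the derivation of \eqref{selff} and its rescaled form \eqref{selffano}. Writing $g(t):=\mathscr{G}_{z/2}(4\pi^2 xt,w)$, its Mellin transform in $t$ is supplied by Theorem \ref{gkkmellin} with $\nu=z/2$ and $x$ replaced by $4\pi^2 x$; hypothesis (iii) guarantees that $M[f;1-c-it]\in L(-\infty,\infty)$, so Parseval's formula applies on a vertical line $\textup{Re}(s)=c$ in the common strip of analyticity. The inequality $|\textup{Re}(z)|<2w+1$, together with the strip for $M[f;s]$ from \eqref{ftzweven} and \eqref{ftzwodd} and the strip $-w\pm\textup{Re}(z/2)<\textup{Re}(s)<3/4$ from Theorem \ref{gkkmellin}, is precisely what keeps this common strip nonempty. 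Thus
\begin{equation*}
2\pi\int_0^\infty f(t,z,w)\,\mathscr{G}_{z/2}(4\pi^2 xt,w)\,dt=\frac{1}{2\pi i}\int_{(c)}2\pi\,M[f;1-s]\,M[g;s]\,ds,
\end{equation*}
and the problem reduces to showing that the integrand on the right equals $M[f;s]\,x^{-s}$, the integrand of the inverse Mellin transform recovering $f(x,z,w)$.

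Next I would substitute the three ingredients: $M[g;s]$ from \eqref{mellin-kkk}, and $M[f;1-s]$ obtained from \eqref{ftzweven} (resp.\ \eqref{ftzwodd}) by replacing $s$ with $1-s$. Hypothesis (i) turns $F(1-s)$ into $F(s)$, while replacing $s$ by $1-s$ sends the zeta product $\zeta(1-s\mp z/2)$ into $\zeta(s\mp z/2)$. The decisive tool is the functional equation of the Riemann zeta function in the symmetric form $\pi^{-u/2}\Gamma(u/2)\zeta(u)=\pi^{-(1-u)/2}\Gamma((1-u)/2)\zeta(1-u)$, applied with $u=s-z/2$ and $u=s+z/2$; multiplying the two instances converts $\zeta(s-z/2)\zeta(s+z/2)$ back into $\zeta(1-s+z/2)\zeta(1-s-z/2)$ at the cost of a power $\pi^{2s-1}$ and a ratio of Gamma factors. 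The constants collect cleanly, since $2\pi\cdot\pi^{2s-1}\cdot 2^{2s-1}\cdot(4\pi^2)^{-s}=1$, leaving $x^{-s}$ and the common factor $F(s)\,\zeta(1-s-z/2)\zeta(1-s+z/2)$ shared with $M[f;s]$.

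For even $w$ the remaining verification is a pure Gamma-function identity. Writing each Pochhammer symbol as a Gamma quotient via $(a)_n=\Gamma(a+n)/\Gamma(a)$ from \eqref{pfqgen} --- so that, for instance, $\left(\tfrac{s}{2}-\tfrac{z}{4}\right)_{w/2}=\Gamma\!\left(\tfrac{s-z/2+w}{2}\right)/\Gamma\!\left(\tfrac{s-z/2}{2}\right)$ and likewise for the $1-s$ versions --- every Gamma produced by the functional equation and by $M[g;s]$ telescopes against the Pochhammer Gammas, and what survives is exactly $\left(\tfrac{s}{2}-\tfrac{z}{4}\right)_{w/2}\left(\tfrac{s}{2}+\tfrac{z}{4}\right)_{w/2}$, as required. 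I expect this cascade of cancellations to be the routine, if bookkeeping-heavy, core of the even case.

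The main obstacle is the odd-$w$ case, where the half-integer shifts in \eqref{ftzwodd} break the clean telescoping and, crucially, the trigonometric denominator must be reproduced. Replacing $s$ by $1-s$ turns $\cos(\pi s)$ into $-\cos(\pi s)$, so $M[f;1-s]$ carries the denominator $\cos\!\left(\tfrac{\pi z}{2}\right)-\cos(\pi s)$, whereas the target $M[f;s]$ has $\cos\!\left(\tfrac{\pi z}{2}\right)+\cos(\pi s)$. The plan is to dispose of the leftover Gamma ratio --- which no longer cancels because of the $\tfrac12$-shifts --- by the reflection formula \eqref{refl}, generating factors $\sin\!\left(\tfrac{\pi}{2}(s\pm z/2)\right)$, and then to use the product-to-sum identities $\cos\!\left(\tfrac{\pi z}{2}\right)-\cos(\pi s)=2\sin\!\left(\tfrac{\pi}{2}(s+\tfrac z2)\right)\sin\!\left(\tfrac{\pi}{2}(s-\tfrac z2)\right)$ and $\cos\!\left(\tfrac{\pi z}{2}\right)+\cos(\pi s)=2\cos\!\left(\tfrac{\pi}{2}(s+\tfrac z2)\right)\cos\!\left(\tfrac{\pi}{2}(s-\tfrac z2)\right)$ to express both denominators through the same sines and cosines. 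Matching these trigonometric factors against those produced by reflection is the delicate step; once it is checked, the two integrands coincide, and by the uniqueness of the Mellin representation of $f$ the contour integral equals $f(x,z,w)$, completing the proof.
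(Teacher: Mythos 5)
Your proposal is correct and follows essentially the same route as the paper: Parseval's formula with the Mellin transform of $\mathscr{G}_{z/2}(4\pi^2xt,w)$ from Theorem \ref{gkkmellin}, the symmetry $F(s,z,w)=F(1-s,z,w)$, the functional equation of $\zeta(s)$ applied at $s\pm z/2$, and the telescoping of the Gamma factors against the Pochhammer symbols via $(a)_n=\Gamma(a+n)/\Gamma(a)$. The only immaterial differences are that the paper uses the asymmetric form \eqref{zetafealt} (cleaning up afterwards with \eqref{dupl} and \eqref{refl}) rather than the symmetric functional equation, and that it writes out only the even-$w$ case, leaving the odd case --- whose trigonometric bookkeeping you correctly identify via the product-to-sum identities --- to the reader.
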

When $w=0$ and $w=1$, Theorem \ref{selfrecigkk} reduces to Theorems 5.3 and 5.5 from \cite{dixitmoll} respectively. 
\begin{proof}[Theorem \textup{\ref{selfrecigkk}}][]
We prove only the case when $w$ is even. The case when $w$ is odd can be similarly proved. Replacing $\nu$ by $z/2$, $x$ by $2\pi x$ and $t$ by $2\pi t$ in \eqref{mellin-kkk}, we see that if $-w\pm\textup{Re}\left(\frac{z}{2}\right)<\textup{Re}(s)<\frac{3}{4}$, then
\begin{equation}\label{mellin-kkk1}
\int_{0}^{\infty}t^{s-1}\mathscr{G}_{\frac{z}{2}}(4\pi^2xt,w)\, dt=\frac{1}{2(\pi^2x)^{s}}\frac{\G\left(\frac{s+w}{2}-\frac{z}{4}\right)\G\left(\frac{s+w}{2}+\frac{z}{4}\right)}{\G\left(\frac{1-s+w}{2}-\frac{z}{4}\right)\G\left(\frac{1-s+w}{2}+\frac{z}{4}\right)}.
\end{equation}
Observe that the hypothesis $|\textup{Re}(z)|<2w+1$ ensures that $-w\pm\textup{Re}\left(\frac{z}{2}\right)<\frac{1}{2}<\frac{3}{4}$  and hence we can use \eqref{mellin-kkk1}.
 
From \eqref{ftzweven}, hypothesis (iii), \eqref{mellin-kkk1} and Parseval's formula \eqref{parseval-1}, for $-w\pm\textup{Re}\left(\frac{z}{2}\right)<c=\textup{Re}(s)<\min{\left(\frac{1}{2}, 1+w\pm\textup{Re}\left(\frac{z}{2}\right)\right)}$, we have
{\allowdisplaybreaks\begin{align}\label{prodzeta0}
&2\pi\int_{0}^{\infty}f(t, z, w)\mathscr{G}_{\frac{z}{2}}(4\pi^2xt,w)\, dt\nonumber\\
&=\frac{1}{i}\int_{(c)}F(1-s, z, w)\zeta\left(s-\frac{z}{2}\right)\zeta\left(s+\frac{z}{2}\right)\left(\frac{1-s}{2}-\frac{z}{4}\right)_{\frac{w}{2}}\left(\frac{1-s}{2}+\frac{z}{4}\right)_{\frac{w}{2}}\nonumber\\
&\qquad\quad\times\frac{1}{2(\pi^2x)^{s}}\frac{\G\left(\frac{s+w}{2}-\frac{z}{4}\right)\G\left(\frac{s+w}{2}+\frac{z}{4}\right)}{\G\left(\frac{1-s+w}{2}-\frac{z}{4}\right)\G\left(\frac{1-s+w}{2}+\frac{z}{4}\right)}\, ds\nonumber\\
&=\frac{1}{i}\int_{(c)}F(s, z, w)\zeta\left(s-\frac{z}{2}\right)\zeta\left(s+\frac{z}{2}\right)\left(\frac{s}{2}-\frac{z}{4}\right)_{\frac{w}{2}}\left(\frac{s}{2}+\frac{z}{4}\right)_{\frac{w}{2}}\nonumber\\
&\qquad\quad\times\frac{1}{2(\pi^2x)^{s}}\frac{\G\left(\frac{s}{2}-\frac{z}{4}\right)\G\left(\frac{s}{2}+\frac{z}{4}\right)}{\G\left(\frac{1-s}{2}-\frac{z}{4}\right)\G\left(\frac{1-s}{2}+\frac{z}{4}\right)}\, ds,
\end{align}}
where in the last step we used $F(s, z, w)=F(1-s, z, w)$.

Next, use the asymmetric form of the functional equation of the Riemann zeta function \cite[p.~259, Theorem 12.7]{apostol}
\begin{equation}\label{zetafealt}
\zeta(1-s)=2^{1-s}\pi^{-s}\G(s)\zeta(s)\cos\left(\frac{\pi s}{2}\right)
\end{equation}
twice, once with $s$ replaced by $s+\frac{z}{2}$, and then by $s-\frac{z}{2}$ resulting in
\begin{align}\label{prodzeta}
\zeta\left(s-\frac{z}{2}\right)\zeta\left(s+\frac{z}{2}\right)&=2^{2s}\pi^{2s-2}\G\left(1-s-\frac{z}{2}\right)\G\left(1-s+\frac{z}{2}\right)\zeta\left(1-s-\frac{z}{2}\right)\nonumber\\
&\quad\times\zeta\left(1-s+\frac{z}{2}\right)\sin\left(\frac{\pi}{2}\left(s-\frac{z}{2}\right)\right)\sin\left(\frac{\pi}{2}\left(s+\frac{z}{2}\right)\right).
\end{align}
Now substitute \eqref{prodzeta} in \eqref{prodzeta0} and use both \eqref{dupl} and \eqref{refl} twice thereby deducing
\begin{align*}
&2\pi\int_{0}^{\infty}f(t, z, w)g_{\frac{z}{2}}(4\pi^2xt,w)\, dt\nonumber\\
&=\frac{1}{2\pi i}\int_{(c)}x^{-s}F(s, z, w)\zeta\left(1-s-\frac{z}{2}\right)\zeta\left(1-s+\frac{z}{2}\right)\left(\frac{s}{2}-\frac{z}{4}\right)_{\frac{w}{2}}\left(\frac{s}{2}+\frac{z}{4}\right)_{\frac{w}{2}}\, ds\nonumber\\
&=f(x, z, w),
\end{align*}
where the last step follows from \eqref{ftzweven}.
\end{proof}
\begin{remark}
In the case when $w$ is a non-negative integer, \eqref{selffano} can also be derived as a corollary of Theorem \textup{\ref{selfrecigkk}}. To see this, take $f(t, z, w)=t^{w}K_{\frac{z}{2}}(2\pi t)$ and 
\begin{align}\label{fszw2def}
F(s, z, w)=\begin{cases}
\frac{2^{w-2}\pi^{-s}\G\left(\frac{s}{2}-\frac{z}{4}\right)\G\left(\frac{s}{2}+\frac{z}{4}\right)}{\zeta\left(1-s-\frac{z}{2}\right)\zeta\left(1-s+\frac{z}{2}\right)},\text{if}\hspace{1mm}w\hspace{1mm}\text{is even},\\
\frac{2^{w-1}\pi^{-s}\G\left(\frac{s}{2}-\frac{z}{4}+\frac{1}{2}\right)\G\left(\frac{s}{2}+\frac{z}{4}+\frac{1}{2}\right)\left(\cos\left(\frac{\pi z}{2}\right)+\cos(\pi s)\right)}{\zeta\left(1-s-\frac{z}{2}\right)\zeta\left(1-s+\frac{z}{2}\right)},\text{if}\hspace{1mm}w\hspace{1mm}\text{is odd},
\end{cases}
\end{align}
and show that the hypotheses of the theorem are satisfied if $|\textup{Re}(z)|<2w+1$ and $-w\pm\textup{Re}\left(\frac{z}{2}\right)<\textup{Re}(s)<1+w\pm\textup{Re}\left(\frac{z}{2}\right)$. Thus, we note that the function $F(s, z, w)$ may have two different expressions depending on the parity of $w$.
\end{remark}
\begin{remark}
Note that \eqref{selffano} is valid for any $w$ such that \textup{Re}$(w)>\textup{max}\left(-\frac{1}{2}, \frac{1}{2}(|\textup{Re}(z)|-1)\right)$ whereas Theorem \textup{\ref{selfrecigkk}} gives \eqref{selffano} only for a \emph{natural number} $w$ because of its hypotheses.
\end{remark}
Our last result of this section is the following modular relation between two integrals.
\begin{theorem}\label{mttwthm}
Let $w$ be a non-negative integer and $z\in\mathbb{C}$ such that $|\textup{Re}(z)|<2w+1$. Let $f(t, z, w)$ be as in the previous theorem. Then for $\a, \b>0$ such that $\a\b=1$, we have
\begin{equation*}
\a^{w+\frac{1}{2}}\int_{0}^{\infty}x^wK_{\frac{z}{2}}(2\pi\a x)f(x, z, w)\, dx=\b^{w+\frac{1}{2}}\int_{0}^{\infty}x^wK_{\frac{z}{2}}(2\pi\b x)f(x, z, w)\, dx.
\end{equation*}
\end{theorem}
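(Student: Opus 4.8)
The plan is to exploit that \emph{both} functions appearing in the two integrals are self-reciprocal, up to the same constant $2\pi$, in the Watson kernel $\mathscr{G}_{\frac{z}{2}}(4\pi^2 xt, w)$. Writing $g(x) := x^w K_{\frac{z}{2}}(2\pi x)$, the self-reciprocity of $g$ is precisely \eqref{selffano}, while that of $f(x,z,w)$ is the conclusion of Theorem \ref{selfrecigkk}. Since $g(\alpha x) = \alpha^w x^w K_{\frac{z}{2}}(2\pi\alpha x)$, the left-hand side of the statement can be rewritten as $\alpha^{w+\frac{1}{2}}\alpha^{-w}\int_{0}^{\infty} g(\alpha x) f(x,z,w)\, dx = \alpha^{1/2} J(\alpha)$, where $J(\alpha) := \int_{0}^{\infty} g(\alpha x) f(x,z,w)\, dx$, and likewise the right-hand side equals $\beta^{1/2} J(\beta)$. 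Thus the whole theorem reduces to the single scaling identity $\alpha^{1/2} J(\alpha) = \beta^{1/2} J(\beta)$.

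The core step is to prove $J(\alpha) = \beta\, J(\beta)$. First I would replace $f(x,z,w)$ inside $J(\alpha)$ by its self-reciprocal representation $2\pi\int_{0}^{\infty} f(t,z,w)\mathscr{G}_{\frac{z}{2}}(4\pi^2 xt, w)\, dt$ supplied by Theorem \ref{selfrecigkk}, and then interchange the order of integration to obtain
\begin{equation*}
J(\alpha) = 2\pi\int_{0}^{\infty} f(t,z,w)\left(\int_{0}^{\infty} g(\alpha x)\mathscr{G}_{\frac{z}{2}}(4\pi^2 xt, w)\, dx\right) dt.
\end{equation*}
In the inner integral I substitute $u=\alpha x$ and recognize, via \eqref{selffano} with $x$ replaced by $t/\alpha$, that $2\pi\int_{0}^{\infty} g(u)\mathscr{G}_{\frac{z}{2}}\left(4\pi^2 u\tfrac{t}{\alpha}, w\right) du = g(t/\alpha)$; hence the inner integral collapses to $\frac{1}{2\pi\alpha}\, g(t/\alpha)$. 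This yields $J(\alpha) = \frac{1}{\alpha}\int_{0}^{\infty} f(t,z,w)\, g(t/\alpha)\, dt = \beta\, J(\beta)$, using $1/\alpha = \beta$ and $t/\alpha = \beta t$. Multiplying by $\alpha^{1/2}$ and invoking $\alpha\beta=1$ then gives $\alpha^{1/2} J(\alpha) = \alpha^{1/2}\beta\, J(\beta) = (\alpha\beta)^{1/2}\beta^{1/2} J(\beta) = \beta^{1/2} J(\beta)$, which is exactly the required identity.

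The one genuinely delicate point is the interchange of integrations, i.e. the justification of Fubini's theorem for the double integral in $x$ and $t$. I would establish absolute convergence using the available asymptotics: near the origin the kernel $\mathscr{G}_{\frac{z}{2}}(\cdot, w)$ is controlled by Theorem \ref{gkkx0thm} and the Bessel factor $K_{\frac{z}{2}}$ by \eqref{kbessasym0}, while at infinity one uses $K_{\frac{z}{2}}(u)\sim\sqrt{\pi/(2u)}\,e^{-u}$ from \eqref{kbessasym} together with Watson's bound $\varpi_{\mu,\nu}(x)=O(x^{-1/4})$, equivalently $\mathscr{G}_{\frac{z}{2}}(x,w)=O(x^{-1/4})$, as $x\to\infty$. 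The exponential decay of the two Bessel factors dominates the mild growth of the kernel, and the hypothesis $|\textup{Re}(z)|<2w+1$ (which, as in the proof of Theorem \ref{selfrecigkk}, keeps the relevant vertical strip nonempty and the powers of $t$ near $0$ integrable) secures convergence at both endpoints; the requisite decay of the abstract function $f(t,z,w)$ is in turn furnished by hypothesis (iii) of Theorem \ref{selfrecigkk}, which places $M[f;\cdot]$ in $L(-\infty,\infty)$ on the critical line. Once Fubini is justified, the remainder consists only of the elementary substitution and the single application of $\alpha\beta=1$ described above.
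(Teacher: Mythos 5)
Your proof is correct and is essentially the paper's own argument run in mirror image: the paper expands $x^wK_{\frac{z}{2}}(2\pi\a x)$ via \eqref{selffano} and then collapses the inner integral using the self-reciprocity of $f$ from Theorem \ref{selfrecigkk}, whereas you expand $f$ first and collapse via \eqref{selffano}; both use the same two ingredients, the same Fubini interchange, and the same $\a\b=1$ bookkeeping. Your justification of the interchange is, if anything, slightly more careful than the paper's (which cites only \eqref{kbessasym}, \eqref{genkoshk} and \eqref{hypbessasym} and does not explicitly address the decay of $f$).
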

When $w=0$ and $w=1$, the above theorem reduces to Corollaries 5.4 and 5.6 in \cite{dixitmoll}. An example of a function self-reciprocal in the first Koshliakov kernel, other than $K_{\frac{z}{2}}(x)$, has been considered in \cite[Theorem 15.6]{bdrz}, which leads to the above modular relation in the special case $w=0$.

\begin{proof}[Theorem \textup{\ref{mttwthm}}][]
We are given $w\in\mathbb{N}\cup\{0\}$, $\a,\b>0$ such that $\a\b=1$. Replace $x$ and $t$ in \eqref{selffano} by $\a x$ and $\b y$ respectively so that for $|\textup{Re}(z)|<2w+1$,
\begin{equation*}
\a^{w}x^wK_{\frac{z}{2}}(2\pi\a x)=2\pi\b^{w+1}\int_{0}^{\infty}y^wK_{\frac{z}{2}}(2\pi \b y)\mathscr{G}_\frac{z}{2}(4\pi^2xy,w)\, dt,
\end{equation*}
Multiply both sides of the above equation by $\sqrt{\a}f(x, z, w)$, integrate both sides with respect to $x$ and use the fact $\a\b=1$ so as to get
{\allowdisplaybreaks\begin{align*}
&\a^{w+\frac{1}{2}}\int_{0}^{\infty}x^wK_{\frac{z}{2}}(2\pi\a x)f(x, z, w)\, dx\nonumber\\
&=2\pi\b^{w+\frac{1}{2}}\int_{0}^{\infty}\int_{0}^{\infty}y^wK_{\frac{z}{2}}(2\pi\b y)f(x, z, w)\mathscr{G}_\frac{z}{2}(4\pi^2xy,w)\, dy\, dx\nonumber\\
&=\b^{w+\frac{1}{2}}\int_{0}^{\infty}y^wK_{\frac{z}{2}}(2\pi\b y)\left(2\pi\int_{0}^{\infty}f(x, z, w)\mathscr{G}_\frac{z}{2}(4\pi^2xy,w)\, dx\right)\, dy\nonumber\\
&=\b^{w+\frac{1}{2}}\int_{0}^{\infty}y^wK_{\frac{z}{2}}(2\pi\a y)f(y, z, w)\, dy,
\end{align*}}
where in the last step, we used Theorem \ref{selfrecigkk}. The interchange of the order of integration in the second step is justified because of \eqref{kbessasym}, \eqref{genkoshk} and \eqref{hypbessasym}.
\end{proof}

\section{A transformation involving series of the sums-of-squares function $r_k(n)$}

In this section, we prove Theorem \ref{rk(n)identity}. We begin with a lemma which gives an asymptotic estimate for ${}_\mu{K}_{\nu}\left(z,w\right)$ as $z\to\infty$. This will be crucially required in the results established in both this and the next sections. 
\begin{lemma}\label{asymukhalfo}
Let $|\arg(-z)|\leq\pi$ and $m\in\mathbb{N}\cup\{0\}$. As $z\to\infty$,
\begin{align*}
{}_\mu{K}_{\nu}\left(z,w\right)&= \frac{\pi 2^{3\mu+2\nu+2w}}{\sin(\pi\nu)z^{w+2\mu+\nu+1}}\left\{A_m(\mu,\nu,w;z)-B_m(\mu,\nu,w;z)+O_{\mu,\nu,w}\left(|z|^{-2m-2}\right)\right\},
\end{align*}
where $A_m(\mu, \nu,w;z)$ is defined in \eqref{am} and
\begin{align}\label{bm}
B_m(\mu,\nu,w;z):=\sum_{k=0}^m\frac{(-1)^{-\mu-\nu-w-\frac{1}{2}}\Gamma\left(\mu+\nu+w+\frac{1}{2}+k\right)}{k!\Gamma\left(-\mu-\nu-k\right)\Gamma\left(\frac{1}{2}-\mu-w-k\right)}\left(\frac{z}{2}\right)^{-2k}.
\end{align}
\end{lemma}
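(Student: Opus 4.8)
The plan is to read off the asymptotics of ${}_\mu K_\nu(z,w)$ directly from its defining expression \eqref{def2varbessel}, where it is a fixed linear combination of the two hypergeometric functions $\pFq12{\mu+w+\tfrac12}{w+\tfrac12-\nu,1-\nu}{\frac{z^2}{4}}$ and $\pFq12{\mu+\nu+w+\tfrac12}{w+\tfrac12,1+\nu}{\frac{z^2}{4}}$, each carrying an explicit power-and-Gamma prefactor. Writing $\zeta=z^2/4$, the large-$\zeta$ behaviour of a ${}_1F_2$ splits into an \emph{algebraic} part (a descending power series starting at $\zeta^{-a}$) and an \emph{exponential} part built from $e^{\pm2\sqrt\zeta}=e^{\pm z}$, and I would treat these two parts separately.

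For the algebraic part I would use the Mellin--Barnes representation of each ${}_1F_2$ and collect the residues at the poles $s=-a-k$, $k\ge0$, of the factor $\Gamma(a+s)$. For the first function this produces $\frac{\Gamma(w+\tfrac12-\nu)\Gamma(1-\nu)}{\Gamma(\mu+w+\tfrac12)}\sum_{k\ge0}\frac{(-1)^k\Gamma(\mu+w+\tfrac12+k)}{k!\,\Gamma(-\nu-\mu-k)\Gamma(\tfrac12-\nu-\mu-w-k)}(-\zeta)^{-\mu-w-\frac{1}{2}-k}$; substituting $\zeta=(z/2)^2$, absorbing the sign into $(-1)^{-\mu-w-\frac{1}{2}}$, and multiplying by the prefactor of the first term in \eqref{def2varbessel} (whose Gammas cancel those just displayed) yields exactly $\frac{\pi2^{3\mu+2\nu+2w}}{\sin(\pi\nu)}z^{-w-2\mu-\nu-1}A_\infty(\mu,\nu,w;z)$, with $A_\infty$ the untruncated version of \eqref{am}. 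The second ${}_1F_2$ gives $-B_\infty$ in the same way, the factor $(-1)^{-\mu-\nu-w-\frac{1}{2}}$ of \eqref{bm} arising from $(-\zeta)^{-\mu-\nu-w-\frac{1}{2}}$. Truncating both series after $k=m$ leaves a remainder of order $(z/2)^{-2(m+1)}$, i.e.\ $O_{\mu,\nu,w}(|z|^{-2m-2})$ inside the braces; this is the routine, bookkeeping-heavy half of the argument, and it reproduces $A_m-B_m$ together with the asserted error.

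The crux is to show the exponential parts contribute below every algebraic order. Each ${}_1F_2$ carries a dominant piece $\sim\zeta^{\vartheta_i}e^{2\sqrt\zeta}$ and a recessive piece $\sim e^{-2\sqrt\zeta}$. A short check gives $\vartheta_1-\vartheta_2=\nu$, so after multiplication by the prefactors $(z/2)^{\mp\nu}$ the two dominant pieces carry the same power of $z$ and the same parameter-free leading constant; hence their leading terms cancel, exactly as the $e^{z}$-series of $I_{-\nu}$ and $I_\nu$ cancel in $I_{-\nu}-I_\nu=\tfrac{2}{\pi}\sin(\pi\nu)K_\nu$, which is the degenerate case $\mu=-\nu$ (there $A_\infty=B_\infty=0$ since $1/\Gamma(-\nu-\mu-k)=1/\Gamma(-k)=0$). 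To get the vanishing of the \emph{whole} dominant-exponential series at once, I would invoke the integral representation \eqref{koshlyakovg-w}: since $\mathscr{G}_\nu(xt,w)=\varpi_{w-\nu-\frac12,w+\nu-\frac12}(xt)=O\bigl((xt)^{-1/4}\bigr)$ by \eqref{kernelequi} and Watson's estimate \cite{watsonself}, the integral $\int_0^\infty K_\mu(t)t^{\mu+\nu+w}\mathscr{G}_\nu(xt,w)\,dt={}_\mu K_\nu(x,w)$ cannot grow exponentially as $x\to\infty$, so every coefficient of the $e^{+z}$-series of the ${}_1F_2$-combination must vanish; being meromorphic in $(\mu,\nu,w)$ and vanishing on the open set where the integral representation is valid, they vanish identically. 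What survives is only the recessive $e^{-z}$ contribution, of order $O(e^{-\textup{Re}(z)})$, which lies below every algebraic order as $z\to\infty$ in the relevant half-plane and is therefore absorbed into the error term.

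The main obstacle is exactly this control of the exponential contribution: isolating the algebraic part $A_m-B_m$ is mechanical, but justifying that the growing exponentials cancel \emph{identically}, rather than merely to leading order, and that the surviving exponential is genuinely recessive in the stated range $|\arg(-z)|\le\pi$, is where the real work lies. Everything else reduces to the residue computation and to collecting the powers of $2$ and applying the reflection formula \eqref{refl} as needed to assemble the prefactor $\tfrac{\pi2^{3\mu+2\nu+2w}}{\sin(\pi\nu)}z^{-(w+2\mu+\nu+1)}$.
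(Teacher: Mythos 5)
Your proposal is correct, and its skeleton is forced to coincide with the paper's: both split the large-$z$ behaviour of each ${}_1F_2$ in \eqref{def2varbessel} into an algebraic (inverse-factorial/residue) part and two exponential parts, identify $A_m-B_m$ with the truncated algebraic series, and argue that the dominant $e^{+z}$ contributions of the two ${}_1F_2$'s annihilate each other. The differences are in execution. For the algebraic part you compute Mellin--Barnes residues, whereas the paper simply quotes the compound expansion \cite[16.11.8]{nist}, whose $H_{1,2}$-term is exactly your residue sum; this is a cosmetic difference. The substantive divergence is at the crux you correctly single out: the paper writes down the $E_{1,2}$-series with their recursively defined coefficients $c_k$ and \emph{asserts} that ``the terms containing $e^z$ completely cancel out each other,'' verifying in effect only that the leading powers and constants agree (your computation that both dominant pieces reduce to $\frac{2^{-\mu-\nu}}{\sqrt{2\pi}}e^{z}z^{\mu+\nu-\frac12}$ after the $(z/2)^{\mp\nu}$ prefactors). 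Your indirect argument --- polynomial boundedness of ${}_{\mu}K_{\nu}(x,w)$ for $x>0$ via \eqref{koshlyakovg-w}, \eqref{kernelequi} and Watson's $O(x^{-1/4})$ bound, hence vanishing of every coefficient of the $e^{+z}$-series by uniqueness of such compound expansions, extended to all parameters by meromorphy --- actually supplies a proof of the term-by-term cancellation that the paper leaves unverified beyond leading order, at the cost of importing Theorem \ref{koshlyakovg-wthm} (whose proof is independent of this lemma, so no circularity arises). Both treatments handle the recessive $e^{-z}$ piece at the same level of rigour, absorbing it into the error on the half-plane where it decays; your explicit caveat about the range of $\arg z$ is, if anything, more careful than the paper's.
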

\begin{proof}
From \cite[p. 412, Formula 16.11.8]{nist}, for $|\arg(-z)|\leq\pi$, as $z\to\infty$,
\begin{align}\label{1f21}
\frac{\Gamma(a_1)}{\Gamma(b_1)\Gamma(b_2)}{}_1F_2\left(a_1;b_1,b_2;z\right)\sim H_{1,2}(-z)+E_{1,2}\left(-ze^{\pi i}\right)+E_{1,2}\left(-ze^{-\pi i}\right),
\end{align}
where
\begin{align}\label{1f2h1o}
H_{1,2}(z)=\sum_{k=0}^\infty\frac{(-1)^k}{k!}\frac{\Gamma(a_1+k)}{\Gamma(b_1-a_1-k)\Gamma(b_2-a_1-k)}z^{-a_1-k},
\end{align}
$\lambda=a_1-b_1-b_2+\frac{1}{2}$, and 
\begin{align}\label{1f2e1o}
E_{1,2}\left(z\right)=\frac{2^{-\lambda-1/2}}{\sqrt{2\pi}}e^{2z^{\frac{1}{2}}}(2z^{\frac{1}{2}})^\lambda\sum_{k=0}^\infty c_k\left(2z^{\frac{1}{2}}\right)^{-k},
\end{align}
with $c_0=1$, $b_3=1$ and 
\begin{align}\label{cek}
\ c_k&=-\frac{1}{4k}\sum_{m=0}^{k-1} c_me_{k,m},\nonumber\\
e_{k,m}&=\sum_{j=1}^3\frac{(a_1-bj)}{\displaystyle\prod_{\substack{\ell=1\\ \ell\neq j
                  }}^{3}(b_\ell-b_j)}(1-\lambda-2b_j+m)_{k+2-m}.
\end{align}
Let $a_1=\mu+w+\frac{1}{2},\ b_1=w+\frac{1}{2}-\nu,\ b_2=1-\nu$ and replace $z$ by $\frac{-z^2}{4}$ in \eqref{1f2h1o} and \eqref{1f2e1o} to get
\begin{align}\label{h120}
H_{1,2}\left(-\frac{z^2}{4}\right)=\sum_{k=0}^m\frac{(-1)^k}{k!}\frac{\Gamma\left(\mu+w+\frac{1}{2}+k\right)(-z^2/4)^{-\mu-w-\frac{1}{2}-k}}{\Gamma\left(-\nu-\mu-k\right)\Gamma\left(\frac{1}{2}-\nu-\mu-w-k\right)}+O_{\mu,w,\nu}\left(|z|^{-2\mu-2w-2m-3}\right),
\end{align}
Taking $e^{-\pi i}=-1$, we see that
\begin{align}\label{e1201}
E_{1,2}\left(-\frac{z^2}{4}e^{\pi i}\right)=\frac{2^{-\mu-2\nu}}{\sqrt{2\pi}}z^{\mu+2\nu-\frac{1}{2}}e^z\sum_{k=0}^\infty c_kz^{-k},
\end{align}
where
$c_0=1$ and 
\begin{align}\label{ck}
e_{k,m}&=-\frac{1}{4k}\Bigg[\frac{\left(\frac{1}{2}-\mu-2w+m\right)_{k+2-m}(\mu+\nu)}{\left(\frac{1}{2}-w\right)\left(\frac{1}{2}-w+\nu\right)}+\frac{\left(m-\frac{1}{2}-\mu\right)_{k+2-m}\left(\mu+w+\nu-\frac{1}{2}\right)}{\left(w-\frac{1}{2}\right)\nu}\nonumber\\
&\quad+\frac{\left(m-\frac{1}{2}-\mu-2\nu\right)_{k+2-m}\left(\mu+w-\frac{1}{2}\right)}{\left(w-\nu-\frac{1}{2}\right)(-\nu)}\Bigg].
\end{align}
From \eqref{1f21}, \eqref{h120} and \eqref{e1201}, as $z\to\infty$,
\begin{align}\label{1f101}
&\frac{\Gamma\left(\mu+w+\frac{1}{2}\right)}{\Gamma\left(w+\frac{1}{2}-\nu\right)\Gamma\left(1-\nu\right)}{}_1F_2\left(\mu+w+\frac{1}{2}; w+\frac{1}{2}-\nu, 1-\nu;\frac{z^2}{4}\right)\nonumber\\
&=H_{1,2}\left(-\frac{z^2}{4}\right)+E_{1,2}\left(-\frac{z^2}{4}e^{-\pi i}\right)+E_{1,2}\left(-\frac{z^2}{4}e^{\pi i}\right),
\end{align}
where 
\begin{align*}
E_{1,2}\left(-\frac{z^2}{4}e^{-\pi i}\right)&=\frac{2^{-\mu-2\nu}}{\sqrt{2\pi}}e^{\sqrt{-z^2e^{-\pi i}}}\left(\sqrt{-z^2e^{-\pi i}}\right)^{\mu+2\nu-\frac{1}{2}}+O\left(\frac{e^{-z}}{x^{\frac{1}{2}-\mu}}\right)\nonumber\\
&=\frac{2^{-\mu-2\nu}}{\sqrt{2\pi}}e^{-z}\left(-z\right)^{\mu+2\nu-\frac{1}{2}}+O\left(\frac{e^{-z}}{z^{\frac{3}{2}-\mu-2\nu}}\right).
\end{align*}
Similarly, letting $a_1=\mu+\nu+w+\frac{1}{2},\ b_1=w+\frac{1}{2},\ b_2=1+\nu$, replacing $z$ by $\frac{z^2}{4}$ in \eqref{1f21} and simplifying, we see that as $z\to\infty$,
\begin{align}\label{1f2asym2}
&\frac{\Gamma\left(\mu+\nu+w+\frac{1}{2}\right)}{\Gamma\left(w+\frac{1}{2}\right)\Gamma\left(1+\nu\right)}{}_1F_2\left(\mu+\nu+w+\frac{1}{2};w+\frac{1}{2},1+\nu;\frac{z^2}{4}\right)\nonumber\\
&=H_{1,2}^{*}\left(-\frac{z^2}{4}\right)+E_{1,2}^{*}\left(-\frac{z^2}{4}e^{-\pi i}\right)+E_{1,2}^{*}\left(-\frac{z^2}{4}e^{\pi i}\right).
\end{align}
Here
\begin{align*}
H_{1,2}^{*}\left(-\frac{z^2}{4}\right)&=\sum_{k=0}^m\frac{(-1)^k}{k!}\frac{\Gamma\left(\mu+\nu+w+\frac{1}{2}+k\right)(-z^2/4)^{-\mu-\nu-w-\frac{1}{2}-k}}{\Gamma\left(-\mu-\nu-k\right)\Gamma\left(\frac{1}{2}-\mu-w-k\right)}\nonumber\\
&\qquad+O_{\mu,\nu,w}\left(|z|^{-2\mu-2\nu-2w-2m-3}\right),
\end{align*}
and 
\begin{align*}
E_{1,2}^{*}\left(-\frac{z^2}{4}e^{\pi i}\right)=\frac{2^{-\mu}}{\sqrt{2\pi}}z^{\mu-\frac{1}{2}}e^z\sum_{k=0}^\infty c_kz^{-k},
\end{align*}
with $c_k$ defined in \eqref{cek} and $e_{k,m}$ resulting from \eqref{ck}. Also,
\begin{align*}
E_{1,2}^{*}\left(-\frac{z^2}{4}e^{-\pi i}\right)&=\frac{2^{-\mu}}{\sqrt{2\pi}}e^{-z}\left(-z\right)^{\mu-\frac{1}{2}}+O\left(\frac{e^{-z}}{z^{\frac{3}{2}-\mu-2\nu}}\right).
\end{align*}
From the definition of ${}_\mu{K}_{\nu}\left(z,w\right)$ in \eqref{def2varbessel}, \eqref{1f101}, \eqref{1f2asym2} and the observation that the terms containing $e^z$ completely cancel out each other, we deduce that
{\allowdisplaybreaks\begin{align*}
{}_\mu{K}_{\nu}\left(z,w\right)&= \frac{\pi z^w2^{\mu+\nu-1}}{\sin(\pi\nu)}\Bigg\{\sum_{k=0}^m\frac{(-1)^{-\mu-w-\frac{1}{2}}\Gamma\left(\mu+w+\frac{1}{2}+k\right)}{k!\Gamma\left(-\nu-\mu-k\right)\Gamma\left(\frac{1}{2}-\nu-\mu-w-k\right)}\left(\frac{z}{2}\right)^{-2\mu-\nu-2w-2k-1}\nonumber\\
&\quad -\sum_{k=0}^m\frac{(-1)^{-\mu-\nu-w-\frac{1}{2}}\Gamma\left(\mu+\nu+w+\frac{1}{2}+k\right)}{k!\Gamma\left(-\mu-\nu-k\right)\Gamma\left(\frac{1}{2}-\mu-w-k\right)}\left(\frac{z}{2}\right)^{-2\mu-\nu-2w-2k-1}\nonumber\\
&\quad+O_{\mu,\nu,w}\left(|z|^{-2\mu-2\nu-2w-2m-3}\right)+\frac{(-1)^{\mu+2\nu-\frac{1}{2}}2^{-\mu-\nu}}{\sqrt{2\pi}}e^{-z}z^{\mu+\nu-\frac{1}{2}}\nonumber\\
&\quad-\frac{(-1)^{\mu-\frac{1}{2}}2^{-\mu-\nu}}{\sqrt{2\pi}}e^{-z}z^{\mu+\nu-\frac{1}{2}}\Bigg\}.
\end{align*}}
Upon using the definitions of \eqref{am} and \eqref{bm}, we see that the proof of Lemma \ref{asymukhalfo} is complete.
\end{proof}

\begin{lemma}\label{mukhalfintegraliszero}
Let\footnote{This lemma is valid even if $k$ is complex such that $\textup{Re}(k)>0$.} $k\in\mathbb{N}$. Then,
\begin{align*}
\int_0^\infty x^{\frac{k}{4}-\frac{1}{2}}{}_\mu K_{\frac{1}{2}}\left(\pi^2x,\frac{k}{4}\right)\ dx=\begin{cases}
0,\hspace{24mm}\text{if}\hspace{1mm}\textup{Re}(\mu)>-\frac{1}{2},\\
\frac{1}{\sqrt{2}}\pi^{\frac{-k-1}{2}}\Gamma\left(\frac{k}{2}\right),\hspace{1mm}\text{if}\hspace{1mm}\mu=-\frac{1}{2}.
\end{cases}
\end{align*}
\end{lemma}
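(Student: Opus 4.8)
The plan is to recognise the integral as a single value of the Mellin transform of ${}_\mu K_{\frac12}(\cdot,\frac{k}{4})$ and to evaluate that transform in closed form. First I would put $z=\pi^2x$, giving
$$\int_0^\infty x^{\frac{k}{4}-\frac12}\,{}_\mu K_{\frac12}\left(\pi^2 x,\tfrac{k}{4}\right)dx=\pi^{-\frac{k}{2}-1}\int_0^\infty z^{s-1}\,{}_\mu K_{\frac12}\left(z,\tfrac{k}{4}\right)dz,\qquad s=\tfrac{k}{4}+\tfrac12,$$
so it suffices to compute $\Phi(s):=\int_0^\infty z^{s-1}\,{}_\mu K_{\frac12}(z,\frac{k}{4})\,dz$ at $s=\frac{k}{4}+\frac12$.

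To find $\Phi(s)$ I would reuse the Mellin--Barnes representation of ${}_\mu K_\nu(x,w)$ already produced inside the proof of Theorem \ref{koshlyakovg-wthm}: there $I={}_\mu K_\nu(x,w)$ is written (via Parseval's formula \eqref{parseval-1}, Theorem \ref{gkkmellin} and \eqref{kbessmel}) as $\frac{1}{2\pi i}\int_{(c)}\Psi(s)x^{-s}\,ds$ with
$$\Psi(s)=2^{s+\mu+\nu+w-2}\,\frac{\G\left(\frac{s-\nu+w}{2}\right)\G\left(\frac{s+\nu+w}{2}\right)\G\left(\frac{2\mu+\nu+w-s+1}{2}\right)}{\G\left(\frac{1-s-\nu+w}{2}\right)},$$
where the factor $\G\left(\frac{1-s+\nu+w}{2}\right)$ coming from Theorem \ref{gkkmellin} has cancelled against the factor $\G\left(\frac{\nu+w-s+1}{2}\right)$ coming from \eqref{kbessmel}. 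Reading the representation as a Mellin inversion gives $\Phi(s)=\Psi(s)$ on the strip $-\textup{Re}(w)\pm\textup{Re}(\nu)<\textup{Re}(s)<\frac34$ (intersected with $\textup{Re}(s)<\textup{Re}(1+2\mu+\nu+w)$) where it was established; since the left side is analytic on the whole strip of convergence of $\Phi$ while $\Psi$ is meromorphic, the equality $\Phi(s)=\Psi(s)$ persists throughout by analytic continuation.

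For $\textup{Re}(\mu)>-\frac12$ I would set $\nu=\frac12$, $w=\frac{k}{4}$ and $s=\frac{k}{4}+\frac12$, so that
$$\frac{s-\nu+w}{2}=\frac{k}{4},\qquad \frac{s+\nu+w}{2}=\frac{k}{4}+\frac12,\qquad \frac{2\mu+\nu+w-s+1}{2}=\mu+\frac12,\qquad \frac{1-s-\nu+w}{2}=0.$$
Hence $\Phi(s)$ is a finite numerator divided by $\G(0)=\infty$, so it vanishes. The one subtle point, and the main obstacle, is that $s=\frac{k}{4}+\frac12\ge\frac34$ lies \emph{outside} the strip where the Parseval step is valid, so I must verify that the continuation actually reaches it, i.e. that $s$ lies in the strip of convergence of $\Phi$. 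Convergence at infinity follows from ${}_\mu K_{\frac12}(z,\frac{k}{4})=O\left(z^{-\frac{k}{4}-2\mu-\frac32}\right)$, which is exactly Lemma \ref{asymukhalfo} and forces $\textup{Re}(s)<\frac{k}{4}+2\textup{Re}(\mu)+\frac32$; this holds at $s=\frac{k}{4}+\frac12$ precisely when $\textup{Re}(\mu)>-\frac12$, which is what pins down the hypothesis. Convergence at $0$, where ${}_\mu K_{\frac12}(z,\frac{k}{4})=O\left(z^{\frac{k}{4}-\frac12}\right)$ by \eqref{def2varbessel}, is automatic.

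For $\mu=-\frac12$ the closed form degenerates to the indeterminate $\G(0)/\G(0)$, so I would instead use the exact reduction \eqref{mueq-nu}: since here $\mu=-\nu$ with $\nu=\frac12$,
$${}_{-\frac12}K_{\frac12}\left(z,\tfrac{k}{4}\right)=z^{\frac{k}{4}}K_{\frac12}(z)=\sqrt{\tfrac{\pi}{2}}\,z^{\frac{k}{4}-\frac12}e^{-z}$$
by \eqref{khalfz}. Substituting $z=\pi^2x$ turns the target integral into $\sqrt{\frac{\pi}{2}}\,\pi^{\frac{k}{2}-1}\int_0^\infty x^{\frac{k}{2}-1}e^{-\pi^2x}\,dx=\sqrt{\frac{\pi}{2}}\,\pi^{\frac{k}{2}-1}\G\left(\frac{k}{2}\right)\pi^{-k}$, which simplifies to $\frac{1}{\sqrt2}\pi^{\frac{-k-1}{2}}\G\left(\frac{k}{2}\right)$, completing the proof.
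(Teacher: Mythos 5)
Your argument is correct, and it reaches the lemma by a genuinely different route than the paper. The paper regularizes the integral with a Gaussian factor $e^{-\pi^2x^2y}$, integrates the two ${}_1F_2$'s term by term via \eqref{intofpfq}, assembles the resulting pair of ${}_1F_1$'s into a single Tricomi function $U\left(\mu+\frac{k}{4}+\frac12;\frac12;\frac{\pi^2}{4y}\right)$, and then lets $y\to0^{+}$ using $U(a;c;z)\sim z^{-a}$; there the hypothesis $\textup{Re}(\mu)>-\frac12$ enters through the vanishing of $y^{\mu+\frac12}$, and Lemma \ref{asymukhalfo} is used only to justify the interchange of limit and integral by dominated convergence. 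You instead read the integral as the single Mellin-transform value $\pi^{-\frac{k}{2}-1}\Phi\left(\frac{k}{4}+\frac12\right)$ and evaluate $\Phi$ from the Mellin--Barnes representation already extracted in the proof of Theorem \ref{koshlyakovg-wthm}; the vanishing comes from the factor $1/\G\left(\frac{1-s-\nu+w}{2}\right)=1/\G(0)$, since the numerator gammas $\G\left(\frac{k}{4}\right)$, $\G\left(\frac{k}{4}+\frac12\right)$, $\G\left(\mu+\frac12\right)$ stay finite for $k\in\mathbb{N}$ and $\textup{Re}(\mu)>-\frac12$. Both routes lean on Lemma \ref{asymukhalfo}, but yours uses it for the essential step of showing that $s=\frac{k}{4}+\frac12$ lies inside the strip of convergence of $\Phi$, which is exactly where the hypothesis $\textup{Re}(\mu)>-\frac12$ is pinned down --- this makes the origin of that hypothesis more transparent than in the paper's limit computation. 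What the paper's longer route buys is the intermediate closed form \eqref{intxeku} of the damped integral in terms of $U$, which is not needed for the lemma itself. Your treatment of the case $\mu=-\frac12$ via \eqref{mueq-nu} and \eqref{khalfz} is identical to the paper's.
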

\begin{proof}
First let $\textup{Re}(\mu)>-\frac{1}{2}$. The idea is to introduce the exponential dampening factor $e^{-\pi^2x^2y}, y>0,$ in the integrand, evaluate the resulting integral explicitly, then let $y\to0$ and finally invoke Lebesgue's dominated convergence theorem.

From \cite[p.~814, Formula 7.522.5]{grad}, for $p<q$ and Re$(s)>0$, we have
\begin{align}\label{intofpfq}
\int_0^\infty e^{-x}x^{s-1}{}_pF_q(a_1,...,a_p;b_1,...,b_q;\d x)\ dx=\Gamma(s){}_{p+1}F_q(s,a_1,...a_p;b_1,...,b_q;\d).
\end{align}
First replace $x$ by $\pi^2yx^2$ and then let $\d=\pi^2/4y,\ p=1,\ q=2$ in the above equation so that
\begin{align}\label{intof1f2}
\int_0^\infty e^{-\pi^2x^2y}x^{2s-1}{}_1F_2\left(a_1;b_1,b_2;\frac{\pi^4x^2}{4}\right)\ dx=\frac{\Gamma(s)y^{-s}}{2\pi^{2s}}{}_2F_2\left(s,a_1;b_1,b_2;\frac{\pi^2}{4y}\right).
\end{align}
Letting $s=\frac{k}{4}$ and $a_1=\mu+\frac{k}{4}+\frac{1}{2},\ b_1=\frac{k}{4}$ and $b_2=\frac{1}{2}$ in the above equation, we get
\begin{align}\label{intof1f2evl1}
\int_0^\infty e^{-\pi^2x^2y}x^{\frac{k}{2}-1}{}_1F_2\left(\mu+\frac{k}{4}+\frac{1}{2};\frac{k}{4},\frac{1}{2};\frac{\pi^4x^2}{4}\right)\, dx
&=\frac{\Gamma\left(\frac{k}{4}\right)}{2\pi^{\frac{k}{2}}}y^{-\frac{k}{4}}{}_1F_1\left(\mu+\frac{k}{4}+\frac{1}{2};\frac{1}{2};\frac{\pi^2}{4y}\right).
\end{align}
Again use \eqref{intof1f2} with $s=\frac{k}{4}+\frac{1}{2}$ and $a_1=\mu+\frac{k}{4}+1,\ b_1=\frac{k}{4}+\frac{1}{2}$ and $b_2=\frac{3}{2}$ to get
\begin{align}\label{intof1f2evl2}
\int_0^\infty e^{-\pi^2x^2y}x^{\frac{k}{2}}{}_1F_2\left(\mu+\frac{k}{4}+1;\frac{k}{4}+\frac{1}{2},\frac{3}{2};\frac{\pi^4x^2}{4}\right)\, dx=\frac{\Gamma\left(\frac{k}{4}+\frac{1}{2}\right)}{2\pi^{\frac{k}{2}+1}}y^{-\frac{k}{4}-\frac{1}{2}}{}_1F_1\left(\mu+\frac{k}{4}+1;\frac{3}{2};\frac{\pi^2}{4y}\right).
\end{align}
Therefore from \eqref{intof1f2evl1}, \eqref{intof1f2evl2} and the definition of ${}_\mu K_{\nu}(z, w)$ in \eqref{def2varbessel}, we have
\begin{align*}
\int_0^\infty x^{\frac{k}{4}-\frac{1}{2}}e^{-\pi^2x^2y}{}_\mu K_{\frac{1}{2}}\left(\pi^2x,\frac{k}{4}\right)\ dx&=\sqrt{\pi}2^{\mu-1}y^{-\frac{k}{4}}\Bigg\{\frac{\Gamma\left(\mu+\frac{k}{4}+\frac{1}{2}\right)}{\pi}{}_1F_2\left(\mu+\frac{k}{4}+\frac{1}{2};\frac{1}{2};\frac{\pi^2}{4y}\right)	\nonumber\\
&\quad-\Gamma\left(\mu+\frac{k}{4}+1\right)y^{-\frac{1}{2}}{}_1F_1\left(\mu+\frac{k}{4}+1;\frac{3}{2};\frac{\pi^2}{4y}\right)\Bigg\}.
\end{align*}
Now express the right-hand side of the above equation in terms of the Tricomi confluent hypergeometric function $U(a;c;z)$ defined by
\cite[p.~176, Equation (7.14)]{temme}
\begin{align}\label{tricomiu}
U(a;c;z):=\frac{\Gamma(1-c)}{\Gamma(a-c+1)}{}_1F_1(a;c;z)+\frac{\Gamma(c-1)}{\Gamma(a)}z^{1-c}{}_1F_1(a-c+1;2-c;z)
\end{align}
so as to get
\begin{align}\label{intxeku}
\int_0^\infty x^{\frac{k}{4}-\frac{1}{2}}e^{-\pi^2x^2y}{}_\mu K_{\frac{1}{2}}\left(\pi^2x,\frac{k}{4}\right)\ dx&=\frac{y^{-\frac{k}{4}}}{\sqrt{\pi}2^{\mu+\frac{k}{2}+1}}\Gamma\left(2\mu+\frac{k}{2}+1\right)U\left(\mu+\frac{k}{4}+\frac{1}{2};\frac{1}{2};\frac{\pi^2}{4y}\right).
\end{align}
Let $y\to 0$ on both sides of \eqref{intxeku}. Using Lemma \ref{asymukhalfo} with $\nu=1/2, w=k/4$ and $z$ replaced by $\pi^2x$ and then invoking Lebesgue's dominated convergence theorem in order to interchange the order of limit and summation, we have
\begin{align}\label{intxku}
\int_0^\infty x^{\frac{k}{4}-\frac{1}{2}}{}_\mu K_{\frac{1}{2}}\left(\pi^2x,\frac{k}{4}\right)\ dx&=\frac{\Gamma\left(2\mu+\frac{k}{2}+1\right)}{\sqrt{\pi}2^{\mu+\frac{k}{2}+1}}\lim_{y\to0}\left\{y^{-\frac{k}{4}}U\left(\mu+\frac{k}{4}+\frac{1}{2};\frac{1}{2};\frac{\pi^2}{4y}\right)\right\}.
\end{align} 
From \cite[p.~174, Equation (7.13)]{temme}, as $z\to\infty$,
\begin{align}\label{asyofu}
U(a;c;z)\sim z^{-a}\sum_{n=0}^\infty \frac{(a)_n(a+c-1)_n}{n!}(-z)^n.
\end{align}
Let $a=\mu+\frac{k}{4}+\frac{1}{2},\ c=\frac{1}{2}$ and $z=\frac{\pi^2}{4y}$ in \eqref{asyofu}; then as $y\to0$,
\begin{align}\label{limitofu}
U\left(\mu+\frac{k}{4}+\frac{1}{2};\frac{1}{2};\frac{\pi^2}{4y}\right)\sim \left(\frac{4y}{\pi^2}\right)^{\mu+\frac{k}{4}+\frac{1}{2}}.
\end{align}
Therefore from \eqref{intxku} and \eqref{limitofu}, for $\mathrm{Re}(\mu)>-\frac{1}{2}$, we arrive at
\begin{align*}
\int_0^\infty x^{\frac{k}{4}-\frac{1}{2}}{}_\mu K_{\frac{1}{2}}\left(\pi^2x,\frac{k}{4}\right)\ dx=0.
\end{align*}
It remains to prove the result for $\mu=-1/2$. From \eqref{mueq-nu}, we have
\begin{align*}
{}_{-\frac{1}{2}} K_{\frac{1}{2}}\left(\pi^2x,\frac{k}{4}\right)=(\pi^2x)^{\frac{k}{4}}K_{\frac{1}{2}}(\pi^2x)
=\frac{1}{\sqrt{2}}\pi^{\frac{k}{2}-\frac{1}{2}}x^{\frac{k}{4}-\frac{1}{2}}e^{-\pi^2x},
\end{align*}
where we used \eqref{khalfz}. Hence,
\begin{align*}
\int_0^\infty x^{\frac{k}{4}-\frac{1}{2}}{}_{-\frac{1}{2}}K_{\frac{1}{2}}\left(\pi^2x,\frac{k}{4}\right)\ dx=\frac{1}{\sqrt{2}}\pi^{\frac{-k-1}{2}}\Gamma\left(\frac{k}{2}\right),
\end{align*}
using the well-known integral representation for Gamma function. This proves Lemma \ref{mukhalfintegraliszero}.
\end{proof}
In order to prove Theorem \ref{rk(n)identity}, we now employ the following transformation of Guinand \cite[p.~264, Equation (10.7)]{guinandconcord} (also known to Popov \cite[Equation (3)]{popov}) in a rigorous formulation given in \cite[Theorem 1.5]{bdkz}. The phrase `$f(x)$ is an integral' means $f$ is an integral of some function, that is, $f$ can be written in the form $f(x)=\int_{a}^{x}h(t)\, dt$ for some function $h$ and $-\infty\leq a<x$.
\begin{theorem}\label{guirknsuminfthm}
Let $k$ be a positive integer greater than $3$ and let $m=\left\lfloor\frac{1}{2}k\right\rfloor-1$. Let $F(x), F'(x)$, $F''(x), \dots, F^{(2m-1)}(x)$ be integrals, and $F(x), xF'(x), x^2F''(x), \dots, x^{2m}F^{(2m)}(x)$ belong to $L^{2}(0,\infty)$. Moreover, as $x\to\infty$, let
\begin{equation}\label{Fbigo}
F(x)=O_{k}\left(x^{-\frac{k}{4}-\frac{1}{2}-\tau}\right),
\end{equation}
for some fixed $\tau>0$.
Let the function $G$ be defined by
\begin{equation}\label{gee}
G(y)=\pi\int_{0}^{\infty}F(t)J_{\frac{k}{2}-1}(2\pi\sqrt{yt})\, dt,
\end{equation}
and assume that it satisfies
\begin{equation}\label{Gbigo}
G(y)=O_{k}\left(y^{-\frac{k}{4}-\frac{1}{2}-\tau}\right),
\end{equation}
for $\tau>0$, as $y\to\infty$. Then
{\allowdisplaybreaks\begin{align}\label{guirknsuminfallk}
\sum_{n=1}^{\infty}r_k(n)n^{\frac{1}{2}-\frac{k}{4}}F(n)-\frac{\pi^{\frac{k}{2}}}{\G(\frac{k}{2})}\int_{0}^{\infty}
x^{\frac{k}{4}-\frac{1}{2}}F(x)\, dx=\sum_{n=1}^{\infty}r_k(n)n^{\frac{1}{2}-\frac{k}{4}}G(n)-\frac{\pi^{\frac{k}{2}}}{\G(\frac{k}{2})}\int_{0}^{\infty}
x^{\frac{k}{4}-\frac{1}{2}}G(x)\, dx.
\end{align}}
For $k=2$ and $3$, \eqref{guirknsuminfallk} holds if $F$ is continuous on $[0,\infty)$,  $F(x), xF'(x)\in L^{2}(0,\infty)$, and  $F$ satisfies \eqref{Fbigo}, and if $G$ is defined in \eqref{gee} and  satisfies \eqref{Gbigo}.
\end{theorem}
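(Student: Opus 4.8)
The plan is to derive this as the Vorono\"i-type summation formula attached to the Epstein zeta function $Z_k(s):=\sum_{n=1}^{\infty}r_k(n)n^{-s}$, whose analytic behaviour is controlled by the modularity of the theta series. First I would record that $\vartheta_k(t):=\sum_{n=0}^{\infty}r_k(n)e^{-\pi n t}$ is the $k$-th power of Jacobi's theta function and hence satisfies the classical transformation $\vartheta_k(t)=t^{-k/2}\vartheta_k(1/t)$. Applying Riemann's argument to the Mellin transform of $\vartheta_k(t)-1$ yields the meromorphic continuation of $Z_k(s)$ together with the functional equation $\pi^{-s}\G(s)Z_k(s)=\pi^{-(k/2-s)}\G(k/2-s)Z_k(k/2-s)$. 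The only pole of $Z_k$ is simple, at $s=k/2$, with residue $\pi^{k/2}/\G(k/2)$, which is precisely the constant standing in front of the two integrals in \eqref{guirknsuminfallk}.

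Second, I would encode the Hankel transform \eqref{gee} in Mellin space. Writing $\mathfrak{F}(s)=M[F;s]$ and using the standard evaluation $\int_{0}^{\infty}J_{\nu}(u)u^{2s-1}\,du=2^{2s-1}\G\!\left(\tfrac{\nu}{2}+s\right)/\G\!\left(\tfrac{\nu}{2}+1-s\right)$ with $\nu=\tfrac{k}{2}-1$, a direct calculation gives $M[G;s]=\pi^{1-2s}\dfrac{\G\!\left(\tfrac{k}{4}-\tfrac12+s\right)}{\G\!\left(\tfrac{k}{4}+\tfrac12-s\right)}\mathfrak{F}(1-s)$, equivalently $M[G;1-s]=\pi^{2s-1}\dfrac{\G\!\left(\tfrac{k}{4}+\tfrac12-s\right)}{\G\!\left(\tfrac{k}{4}-\tfrac12+s\right)}\mathfrak{F}(s)$. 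The crucial observation is that this Gamma factor is exactly the one produced by the functional equation after the substitution $w=s-\tfrac12+\tfrac{k}{4}$: one has $Z_k\!\left(s-\tfrac12+\tfrac{k}{4}\right)=\pi^{2s-1}\dfrac{\G\!\left(\tfrac{k}{4}+\tfrac12-s\right)}{\G\!\left(\tfrac{k}{4}-\tfrac12+s\right)}Z_k\!\left(\tfrac{k}{4}+\tfrac12-s\right)$, so that multiplying by $\mathfrak{F}(s)$ the two Gamma factors combine and collapse to $M[G;1-s]\,Z_k\!\left(\tfrac{k}{4}+\tfrac12-s\right)$.

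Third, I would assemble the pieces. By Mellin inversion and the decay hypothesis \eqref{Fbigo}, for $c$ just to the right of $\tfrac{k}{4}+\tfrac12$ one may interchange summation and integration to obtain $\sum_{n}r_k(n)n^{\frac12-\frac{k}{4}}F(n)=\frac{1}{2\pi i}\int_{(c)}\mathfrak{F}(s)\,Z_k\!\left(s-\tfrac12+\tfrac{k}{4}\right)ds$. Shifting the contour to the central line $\textup{Re}(s)=\tfrac12$ crosses the single pole at $s=\tfrac{k}{4}+\tfrac12$, whose residue equals $\frac{\pi^{k/2}}{\G(k/2)}\mathfrak{F}\!\left(\tfrac{k}{4}+\tfrac12\right)=\frac{\pi^{k/2}}{\G(k/2)}\int_{0}^{\infty}x^{\frac{k}{4}-\frac12}F(x)\,dx$; hence the left-hand side of \eqref{guirknsuminfallk} equals $\frac{1}{2\pi i}\int_{(1/2)}\mathfrak{F}(s)Z_k\!\left(s-\tfrac12+\tfrac{k}{4}\right)ds$. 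Substituting the combined functional-equation/Hankel identity of the previous paragraph and then making the change of variable $s\mapsto 1-s$ (which fixes the line $\textup{Re}(s)=\tfrac12$) converts this into $\frac{1}{2\pi i}\int_{(1/2)}M[G;\sigma]Z_k\!\left(\sigma-\tfrac12+\tfrac{k}{4}\right)d\sigma$, which, running the identical argument backwards with $G$ in place of $F$ and using \eqref{Gbigo}, is the right-hand side of \eqref{guirknsuminfallk}. This establishes the formula for $k\geq 4$; the cases $k=2,3$ follow from the same scheme under the weaker hypotheses stated there.

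The hard part will be the analytic bookkeeping rather than the formal manipulation. One must justify the termwise Mellin inversion and the interchange of $\sum_n$ with $\int_{(c)}$, the absolute convergence of the integral on $\textup{Re}(s)=\tfrac12$, and the legitimacy of the shift from $(c)$ to $(\tfrac12)$ collecting only the residue at $s=\tfrac{k}{4}+\tfrac12$. All of these hinge on controlling $\mathfrak{F}(s)$ and $M[G;s]$ against the vertical growth of $Z_k(s)$, which is polynomial of order about $k/2$ by the functional equation together with Phragm\'en--Lindel\"of. This is exactly what the hypotheses are engineered to supply: the assumptions that $F,F',\dots,F^{(2m-1)}$ are integrals and that $F,xF',\dots,x^{2m}F^{(2m)}\in L^2(0,\infty)$ permit $2m=2\lfloor k/2\rfloor-2$ integrations by parts, forcing $\mathfrak{F}(s)=O_{}(|s|^{-2m})$ on vertical strips, while the $L^2$ condition furnishes, via Parseval's formula \eqref{parseval-1}, the extra half-power of decay needed to dominate $Z_k$. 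Verifying these bounds uniformly, ensuring the integrand decays along the top and bottom of the shifting rectangle, and tracking the reduced smoothness that suffices for $k=2,3$ are the genuinely delicate steps of the argument.
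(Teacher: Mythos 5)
You should first note that the paper does not prove Theorem \ref{guirknsuminfthm} at all: it is quoted as a known result of Guinand and Popov, in the rigorous formulation of \cite[Theorem 1.5]{bdkz}, and is then \emph{used} to prove Theorem \ref{rk(n)identity}. So there is no in-paper proof to match; what can be judged is whether your plan is a viable proof of the cited statement.

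Your architecture is the classical one (Hecke, Chandrasekharan--Narasimhan, Berndt's ``modular relation''): the functional equation $\pi^{-s}\G(s)Z_k(s)=\pi^{-(k/2-s)}\G\left(\tfrac{k}{2}-s\right)Z_k\left(\tfrac{k}{2}-s\right)$ with its single simple pole at $s=k/2$ of residue $\pi^{k/2}/\G(k/2)$, the Mellin computation $M[G;1-s]=\pi^{2s-1}\G\left(\tfrac{k}{4}+\tfrac12-s\right)\G\left(\tfrac{k}{4}-\tfrac12+s\right)^{-1}\mathfrak{F}(s)$, and the contour shift picking up exactly the two residue terms in \eqref{guirknsuminfallk}. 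I checked the Gamma-factor bookkeeping and the location/residue of the pole at $s=\tfrac{k}{4}+\tfrac12$; these are all correct, and the symmetry $s\mapsto 1-s$ on the line $\textup{Re}(s)=\tfrac12$ does convert one side into the other. This differs in flavour from Guinand's own route (and the one in \cite{bdkz}), which runs through his general $L^2$ summation-formula theory for Hankel transforms rather than a pointwise contour shift; that is precisely why the hypotheses are phrased as ``$F^{(j)}$ are integrals'' and ``$x^jF^{(j)}\in L^2(0,\infty)$'' rather than as decay of $\mathfrak{F}$ on vertical lines. Your approach buys a more transparent derivation of the residue terms; Guinand's buys weaker hypotheses with no convexity estimates needed.

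The one place where your sketch is genuinely loose is the treatment of the critical-line integral. Integration by parts converts $x^jF^{(j)}\in L^2$ into statements that $s(s+1)\cdots(s+j-1)\mathfrak{F}(s)$ lies in $L^2$ of vertical lines (via Plancherel), \emph{not} into the pointwise bound $\mathfrak{F}(s)=O(|s|^{-2m})$ you assert; and for $k=2,3$ (where $m=0$ and only $F,xF'\in L^2$ are assumed) no pointwise decay sufficient to dominate $Z_k\left(\tfrac{k}{4}+it\right)$ is available, so the integral over $\textup{Re}(s)=\tfrac12$ must be interpreted as an $L^2$ (mean-square) limit and the interchange with the sum justified by Parseval in the form \eqref{parseval-1} paired with mean-value bounds for $Z_k$ on the central line, not by absolute convergence. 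You flag this as ``the hard part,'' which is fair for a proposal, but as written the claimed decay of $\mathfrak{F}$ is not a consequence of the stated hypotheses, and closing this gap is where essentially all the work of \cite{bdkz} lies.
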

\begin{proof}[Theorem \textup{\ref{rk(n)identity}}][]
We first prove the result for $z>0$ and then extend it to $\textup{Re}(z)>0$ by analytic continuation. The idea is to invoke Theorem \ref{guirknsuminfthm} with 
\begin{equation}\label{effx}
F(x)=x^{\mu+\frac{k}{4}+\frac{1}{2}}K_{\mu}(xz).
\end{equation}
To that end, we need to show that all of the hypotheses of Theorem \ref{guirknsuminfthm} are satisfied. 

Since $F$ is infinitely differentiable on $(0,\infty)$, all of its derivatives, in particular, $F(x)$, $F'(x)$, $\dots, F^{(2m-1)}(x)$, are integrals. We next show that $F(x), xF'(x), x^2F''(x), \dots, x^{2m}F^{(2m)}(x)$ belong to $L^{2}(0,\infty)$ provided $\textup{Re}(\mu)>-\frac{k}{8}-\frac{1}{2}$. 

First, \eqref{kbessasym} clearly implies that the convergence of the integral of $\left|x^{n}F^{(n)}(x)\right|^2$ at the upper limit of integration is always secured. Next, from \eqref{kbessasym0} and \cite[p.~36, Formula \textbf{1.14.1.1}]{brch}
\begin{equation*}
\frac{d^n}{dx^{n}}K_{\nu}(xz)=\left(\frac{-z}{2}\right)^{n}\sum_{k=0}^{n}\binom{n}{k}K_{\nu\pm 2k\mp n}(xz),
\end{equation*}
for any $n\in\mathbb{N}\cup\{0\}$, as $x\to 0$, we have
\begin{equation*}
x^{n}F^{(n)}(x)=O_{\mu, k, n}\left(x^{2\mu+\frac{k}{4}+\frac{1}{2}}\right)+O_{\mu, k, n}\left(x^{\frac{k}{4}+\frac{1}{2}}\right).
\end{equation*}
This implies $F(x), xF'(x), x^2F''(x), \dots, x^{2m}F^{(2m)}(x)$ belong to $L^{2}(0,\infty)$, provided $\textup{Re}(\mu)\geq-\frac{k}{8}-\frac{1}{2}$. Further, \eqref{sumbesselj}, \eqref{besseli} and \eqref{kbesse} imply that $F$ is continuous on $[0,\infty)$.   

Replace $x$ by $\pi^2x/z$ in Theorem \ref{koshlyakovg-wthm} and $t$ by $tz$, then let $\nu=\frac{1}{2}$ and $w=\frac{k}4$ in the resulting equation, and use \eqref{gkkhalf}, so that for $\mathrm{Re}(\mu)>-\frac{k}{4}-\frac{1}{2}$,
\begin{align}\label{efftransform}
\int_0^\infty t^{\mu+\frac{k}{4}+\frac{1}{2}}K_\mu(tz)J_{\frac{k}{2}-1}(2\pi\sqrt{xt})\ dt=\frac{1}{z^{\mu+\frac{k}{4}+\frac{3}{2}}}{}_\mu K_{\frac{1}{2}}\left(\frac{\pi^2x}{z},\frac{k}{4}\right).
\end{align}
From \eqref{gee}, \eqref{effx} and \eqref{efftransform},
\begin{equation*}
G(x)=\frac{\pi}{z^{\mu+\frac{k}{4}+\frac{3}{2}}}{}_\mu K_{\frac{1}{2}}\left(\frac{\pi^2x}{z},\frac{k}{4}\right).
\end{equation*}
Equation \eqref{kbessasym} clearly shows that the bound in \eqref{Fbigo} holds for our choice of $F$ in \eqref{effx}. Also, Lemma \ref{asymukhalfo} with $m=0$ implies that as $x\to\infty$, $G(x)=O_{\mu, k, z}\left(x^{-\frac{k}{4}-2\mu-\frac{3}{4}}\right)=O_{\mu, k, z}\left(x^{-\frac{k}{4}-\frac{1}{2}-\tau}\right)$ for $\tau>0$, provided Re$(\mu)	>-1/2$. But when $\mu=-1/2$, along with our assumption $\nu=1/2$ and \eqref{mueq-nu}, we see that $G(x)=\left(\frac{\pi^2x}{z}\right)^{k/4}K_\frac{1}{2}\left(\frac{\pi^2x}{z}\right)=O_{k, z}\left(x^{-\frac{k}{4}-\frac{1}{2}-\tau}\right)$ for $\tau>0$, as can be clearly seen from \eqref{kbessasym}. Hence, \eqref{Gbigo} is also satisfied when Re$(\mu)>-1/2$ or $\mu=-1/2$. 

From the above analysis, we conclude that the hypotheses of Theorem \ref{guirknsuminfthm} are satisfied for any integer $k\geq2$ whenever Re$(\mu)>-1/2$ or $\mu=-1/2$. Therefore invoking Theorem \ref{guirknsuminfthm}, Lemma \ref{mukhalfintegraliszero} and \eqref{kbessmel}, we find that
\begin{align*}
&\sum_{n=1}^\infty r_k(n)n^{\mu+1}K_{\mu}(nz)-\frac{\pi^{\frac{k}{2}}}{\Gamma\left(\frac{k}{2}\right)}2^{\mu+\frac{k}{2}-1}z^{-\mu-\frac{k}{2}-1}\Gamma\left(\frac{k+2}{4}\right)\Gamma\left(\mu+\frac{1}{2}+\frac{k}{4}\right)\nonumber\\
&\qquad=\frac{\pi}{z^{\mu+\frac{k}{4}+\frac{3}{2}}}\sum_{n=1}^\infty r_k(n)n^{\frac{1}{2}-\frac{k}{4}}{}_\mu K_{\frac{1}{2}}\left(\frac{\pi^2n}{z},\frac{k}{4}\right)-\frac{\pi^{\frac{k}{2}}}{\G\left(\frac{k}{2}\right)}\mathfrak{R}(\mu, k, z),
\end{align*}
where $\mathfrak{R}(\mu, k, z)$ is defined in \eqref{rmuk}. The transformation in \eqref{rk(n)identityeqn} now follows for $z>0$ upon using \eqref{dupl} in the second expression on the left-hand side of the above equation. 

To see that the result is valid for Re$(z)>0$, note first that the series on the left-hand side of \eqref{rk(n)identityeqn} is analytic for Re$(z)>0$ as can be seen from \eqref{kbessasym}. Also, Lemma \ref{asymukhalfo} with $m=0$ and the bound $r_k(n)=O_{k}\left(n^{\frac{k}{2}-1+\epsilon}\right)$ for any integer $k\geq2$ and $\epsilon>0$, see for example, \cite[Equation (2.9)]{bdkz}, imply that 
\begin{equation*}
r_k(n)n^{\frac{1}{2}-\frac{k}{4}}{}_\mu K_{\frac{1}{2}}\left(\frac{\pi^2n}{z},\frac{k}{4}\right)=O_{\mu, k, z}\left(\frac{1}{n^{2\textup{Re}(\mu)+2-\epsilon}}\right),
\end{equation*}
which clearly shows that as long as Re$(\mu)\geq-1/2+\delta$, for any $\delta>0$, the series 
\begin{equation*}
\sum_{n=1}^\infty r_k(n)n^{\frac{1}{2}-\frac{k}{4}}{}_\mu K_{\frac{1}{2}}\left(\frac{\pi^2n}{z},\frac{k}{4}\right)
\end{equation*}
converges uniformly in Re$(z)>0$. Since the summand of this series is clearly analytic in Re$(z)>0$, the series itself represents an analytic function in Re$(z)>0$ as long as Re$(\mu)>-1/2$. Moreover, \eqref{mueq-nu} implies that the same is true for $\mu=-1/2$. Since all other functions in \eqref{rk(n)identityeqn} are clearly analytic for Re$(z)>0$, by analytic continuation, \eqref{rk(n)identityeqn} holds for Re$(z)>0$.
\end{proof}

\begin{proof}[Corollary \textup{\ref{rkncor}}][]
Let $\mu=-1/2$ in Theorem \ref{rk(n)identity}, use \eqref{mueq-nu} and simplify.
\end{proof}

\section{A transformation formula associated with the generalized divisor function $\sigma_a(n)$}\label{gdf}

This section is devoted to proving Theorem \ref{main} and several corollaries resulting from it. We first obtain some lemmas which are interesting in their own right. 

\begin{lemma}\label{integralevaluation}
	Let $y>0$. For $\mathrm{Re}(a)>-1$, we have
{\allowdisplaybreaks	\begin{align}\label{integralevaluation1}
&\frac{2\pi}{y\sin\left(\frac{\pi a}{2}\right)}\int_{0}^{\infty}x^{\frac{ a}{2}} \Bigg\{\frac{x^{-\frac{a}{2}}(2\pi)^{-a}}{\Gamma(1-a)} {}_1F_2\left(1;\frac{1-a}{2},1-\frac{a}{2};\frac{4\pi^4x^2}{y^2} \right)-\left(\frac{4\pi^2x}{y^2}\right)^\frac{a}{2}\cosh\left(\frac{4\pi^2x}{y}\right)\Bigg\}\mathrm{d}x\nonumber\\
&=\frac{2^{-2-a}\pi^{-a}}{\Gamma(-a)}\sec\left(\frac{\pi a}{2}\right).
\end{align}}
\end{lemma}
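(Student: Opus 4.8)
The plan is to identify the bracketed integrand with a constant multiple of the Bessel function ${}_{1/2}K_{a/2}$ and then to read off the integral as a single value of a Mellin transform. First I would specialise \eqref{def2varbessel} to $\mu=\tfrac12,\ \nu=\tfrac a2,\ w=0$: the second ${}_1F_2$ there collapses to ${}_0F_1(;\tfrac12;\tfrac{z^2}{4})=\cosh z$ (its numerator parameter $1+\tfrac a2$ cancels a denominator one), while \eqref{dupl} gives $\Gamma(1-\tfrac a2)\Gamma(\tfrac{1-a}2)=\sqrt\pi\,2^{a}\Gamma(1-a)$. A short computation — the same reduction that underlies Remark~\ref{sigmatrans} — then shows that the whole integrand in \eqref{integralevaluation1}, i.e. $x^{a/2}$ times the braces, equals
\begin{equation*}
x^{a/2}\Big\{\cdots\Big\}=\frac{\sin(\pi a/2)}{\sqrt{\pi/2}\;y^{a/2}}\,x^{a/2}\,{}_{1/2}K_{a/2}\!\Big(\frac{4\pi^2x}{y},0\Big).
\end{equation*}
Substituting this and then rescaling $u=4\pi^2x/y$ reduces the left-hand side of \eqref{integralevaluation1} to a fixed constant times $\int_0^\infty u^{a/2}\,{}_{1/2}K_{a/2}(u,0)\,du$, which is precisely the Mellin transform $M[{}_{1/2}K_{a/2}(\cdot,0);s]$ evaluated at $s=1+\tfrac a2$.

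Next I would compute $M[{}_{1/2}K_{a/2}(\cdot,0);s]$ for $s$ in a vertical strip using the representation \eqref{koshlyakovg-w} with $\mu=\tfrac12,\nu=\tfrac a2,w=0$, namely ${}_{1/2}K_{a/2}(u,0)=\int_0^\infty K_{1/2}(t)\,t^{(1+a)/2}\mathscr{G}_{a/2}(ut,0)\,dt$. Taking the Mellin transform in $u$ and interchanging the order of integration — legitimate on the common strip by Parseval's formula \eqref{parseval-1} and the conditional convergence of the Mellin integral of $\mathscr{G}_{a/2}$ — turns this into the product of $M[\mathscr{G}_{a/2}(\cdot,0);s]$, supplied by Theorem~\ref{gkkmellin}/\eqref{mellin-kkk} with $x=1$, and $\int_0^\infty t^{(1+a)/2-s}K_{1/2}(t)\,dt=\sqrt{\pi/2}\,\Gamma(1+\tfrac a2-s)$, evaluated from \eqref{khalfz} (equivalently \eqref{kbessmel}). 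The outcome, valid for $|\mathrm{Re}(a)|/2<\mathrm{Re}(s)<\min(\tfrac34,\,1+\tfrac{\mathrm{Re}(a)}2)$ (a non-empty strip once $|\mathrm{Re}(a)|<\tfrac32$), is
\begin{equation*}
M[{}_{1/2}K_{a/2}(\cdot,0);s]=2^{2s-1}\sqrt{\tfrac\pi2}\,\frac{\Gamma\!\big(\tfrac{s-a/2}2\big)\Gamma\!\big(\tfrac{s+a/2}2\big)}{\Gamma\!\big(\tfrac{1-s-a/2}2\big)\Gamma\!\big(\tfrac{1-s+a/2}2\big)}\,\Gamma\!\Big(1+\tfrac a2-s\Big).
\end{equation*}

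The crux is then the evaluation at $s=1+\tfrac a2$, which sits on the boundary of the strip above. I would first note that the integral defining the Mellin transform is genuinely analytic in a neighbourhood of $s=1+\tfrac a2$: by Lemma~\ref{asymukhalfo} the integrand decays at least like $u^{\mathrm{Re}(s)-2-\mathrm{Re}(a)/2}$ as $u\to\infty$ and is $O\big(u^{\mathrm{Re}(s)-1-|\mathrm{Re}(a)|/2}\big)$ as $u\to0^+$, so both ends converge exactly when $\mathrm{Re}(a)>-1$. Hence the integral equals the analytic continuation of the closed form, and at $s=1+\tfrac a2$ the factors $\Gamma(1+\tfrac a2-s)$ (numerator) and $\Gamma(\tfrac{1-s+a/2}2)$ (denominator) each have a simple pole, their quotient tending to $\tfrac12$. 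Evaluating the surviving gammas gives $2^{a}\tfrac{\pi}{\sqrt2}\,\Gamma(\tfrac{1+a}2)/\Gamma(-\tfrac a2)$, which \eqref{refl} and \eqref{dupl} collapse to $-\sqrt{\pi/2}\,\sin(\tfrac{\pi a}2)\,\Gamma(1+a)$. Reinstating the prefactor $\tfrac{2\pi}{y\sin(\pi a/2)}$ and the scaling constants, and rewriting $\Gamma(1+a)$ through $1/\Gamma(-a)=-\sin(\pi a)\Gamma(1+a)/\pi$ together with $\sin(\pi a)=2\sin(\tfrac{\pi a}2)\cos(\tfrac{\pi a}2)$, turns the answer into $2^{-2-a}\pi^{-a}\sec(\tfrac{\pi a}2)/\Gamma(-a)$, as required. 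This establishes \eqref{integralevaluation1} for $-1<\mathrm{Re}(a)<\tfrac32$, and since both sides are analytic in $a$ on $\mathrm{Re}(a)>-1$, analytic continuation in $a$ completes the proof.

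The main obstacle is exactly this boundary evaluation. The two series making up the integrand each grow like $e^{u}$ as $u\to\infty$, so neither piece has a convergent Mellin transform and one cannot integrate them term by term; the finiteness of $\int_0^\infty u^{a/2}{}_{1/2}K_{a/2}(u,0)\,du$ comes from the cancellation of these exponential parts, which manifests analytically as the removable singularity at $s=1+\tfrac a2$ where the indeterminate quotient of gamma functions yields the decisive factor $\tfrac12$. A more computational alternative, closer to the proof of Lemma~\ref{mukhalfintegraliszero}, is to insert a Gaussian regulator $e^{-\tau u^2}$, evaluate the regularised integral via \eqref{intof1f2} as a combination of a ${}_2F_2$ and a ${}_1F_1$, and extract the $\tau\to0^+$ limit by dominated convergence; I prefer the Mellin route precisely because it sidesteps the full confluent-hypergeometric asymptotics needed to see the exponential cancellation there.
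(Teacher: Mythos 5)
Your argument is correct, but it is a genuinely different proof from the one in the paper. The paper proceeds computationally: it inserts a Gaussian regulator $e^{-ux^2}$, evaluates the two regularised pieces in closed form via \eqref{intofpfq} and \eqref{mtofcosexp} as a ${}_2F_2$ and a ${}_1F_1$, expands both as $u\to0^+$ (using Kim's asymptotic \eqref{kimresult} and \eqref{1f1asym}), watches the exponentially large ${}_2F_0$ terms cancel, and finishes with dominated convergence. You instead fold the integrand into ${}_{\frac12}K_{\frac a2}$ via Remark \ref{sigmatrans}, reduce the lemma to a single value $M[{}_{\frac12}K_{\frac a2}(\cdot,0);1+\tfrac a2]$ of a Mellin transform whose Gamma-quotient closed form is already implicit in the paper (it can be read off from \eqref{aftcond}, or obtained from Theorem \ref{gkkmellin}, \eqref{kbessmel} and Parseval exactly as in the proof of Theorem \ref{koshlyakovg-wthm}), and then evaluate at the boundary point by analytic continuation into the removable singularity, where the exponential cancellation of the paper's proof reappears as the quotient $\Gamma(\epsilon)/\Gamma(\epsilon/2)\to\tfrac12$. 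Your route buys a cleaner conceptual picture and avoids the confluent-hypergeometric asymptotics entirely, at the price of having to justify that the Mellin integral converges and is analytic up to $\mathrm{Re}(s)=1+\tfrac{\mathrm{Re}(a)}2$ and then continue in $a$ past $\mathrm{Re}(a)=\tfrac32$; the paper's route is heavier but sets up the regularisation technique it reuses in Lemmas \ref{mukhalfintegraliszero} and \ref{integralevaluationmain}. I verified your constants: the normalisation $x^{a/2}\{\cdots\}=\sqrt{2/\pi}\,\sin(\tfrac{\pi a}2)y^{-a/2}x^{a/2}\,{}_{\frac12}K_{\frac a2}(4\pi^2x/y,0)$, the value $-\sqrt{\pi/2}\,\sin(\tfrac{\pi a}2)\Gamma(1+a)$ for the Mellin transform at $s=1+\tfrac a2$, and the final identity with $2^{-2-a}\pi^{-a}\sec(\tfrac{\pi a}2)/\Gamma(-a)$ all check out. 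Two small corrections: Lemma \ref{asymukhalfo} gives ${}_{\frac12}K_{\frac a2}(u,0)=O(u^{-2-\mathrm{Re}(a)/2})$, so the Mellin integrand is $O(u^{\mathrm{Re}(s)-3-\mathrm{Re}(a)/2})$ at infinity, not $O(u^{\mathrm{Re}(s)-2-\mathrm{Re}(a)/2})$ — with your stated exponent the integral at $s=1+\tfrac a2$ would be borderline $u^{-1}$ and the argument would stall, so the extra power matters; and the initial strip is non-empty precisely for $-1<\mathrm{Re}(a)<\tfrac32$, not $|\mathrm{Re}(a)|<\tfrac32$. Neither affects the conclusion.
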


\begin{proof}
Note that one cannot separately integrate each of the two expressions in the integrand as both diverge. As in Lemma \ref{mukhalfintegraliszero}, the idea is to introduce the exponential dampening factor $e^{-ux^2}, u>0$, and evaluate the new integral. Then at the end, we let $u\to 0$ and appeal to Lebesgue's dominated convergence theorem. 

Let $p=1,\ q=2$, $a_1=1, b_1=\frac{1}{2}(1-a)$, $b_2=1-\frac{1}{2}a$ in \eqref{intofpfq}, replace $x$ by $x^2u$, and then let $s=\frac{1}{2},\ \d=\frac{4\pi^4}{uy^2}, u>0,$ in the resulting equation to arrive at
\begin{align}\label{intexp1f2}
\int_0^\infty e^{-ux^2}{}_1F_2\left(1;\frac{1-a}{2},1-\frac{a}{2};\frac{4\pi^4x^2}{y^2} \right)\ dx=\frac{1}{2}\sqrt{\frac{\pi}{u}}{}_2F_2\left(\frac{1}{2},1;\frac{1-a}{2},1-\frac{a}{2};\frac{4\pi^4}{uy^2}\right).
\end{align}
For $\mathrm{Re}(s)>0 $ and $\mathrm{Re}(c)>0$, we have \cite[p.~47, Equation (5.30)]{ober}
\begin{align}\label{mtofcosexp}
\int_0^\infty x^{s-1}e^{-cx^2}\cos(bx)\ dx=\frac{1}{2}c^{-\frac{s}{2}}\Gamma\left(\frac{s}{2}\right)e^{-\frac{b^2}{4c}}{}_1F_1\left(\frac{1-s}{2};\frac{1}{2};\frac{b^2}{4c}\right).
\end{align}
Let $s=a+1,\ c=u$ and $b=\frac{i4\pi^2}{y}$ in \eqref{mtofcosexp} and apply Kummer's transformation ${}_1F_{1}(\a_1;\a_2;z)=e^{z}{}_1F_{1}(\a_2-\a_1;\a_2;-z)$ so as to get, for Re$(a)>-1$,
\begin{align}\label{intxexpcosh}
\int_0^\infty x^ae^{-ux^2}\cosh\left(\frac{4\pi^2x}{y}\right)\ dx=\frac{1}{2}u^{-\frac{a}{2}-\frac{1}{2}}\Gamma\left(\frac{a+1}{2}\right){}_1F_1\left(\frac{1+a}{2};\frac{1}{2};\frac{4\pi^4}{uy^2}\right),
\end{align}
From \eqref{intexp1f2} and \eqref{intxexpcosh},
{\allowdisplaybreaks\begin{align}\label{beforelimit}
&\int_{0}^{\infty}x^{\frac{ a}{2}} e^{-ux^2} \Bigg\{\frac{x^{-\frac{a}{2}}(2\pi)^{-a}}{\Gamma(1-a)} {}_1F_2\left(1;\frac{1-a}{2},1-\frac{a}{2};\frac{4\pi^4x^2}{y^2} \right)-\left(\frac{4\pi^2x}{y^2}\right)^\frac{a}{2}\cosh\left(\frac{4\pi^2x}{y}\right)\Bigg\}\ dx\nonumber\\
&=\frac{(2\pi)^{-a}}{\Gamma(1-a)}\frac{1}{2}\sqrt{\frac{\pi}{u}}{}_2F_2\left(\frac{1}{2},1;\frac{1-a}{2},1-\frac{a}{2};\frac{4\pi^4}{uy^2}\right)\nonumber\\
&\qquad-\left(\frac{4\pi^2}{y^2}\right)^\frac{a}{2}\frac{1}{2}u^{-\frac{a}{2}-\frac{1}{2}}\Gamma\left(\frac{a+1}{2}\right){}_1F_1\left(\frac{1+a}{2};\frac{1}{2};\frac{4\pi^4}{uy^2}\right).
\end{align}}
For $\alpha\neq\mathbb{Z}\cup\{0\}$ and $-\frac{3}{2}\pi<\arg(z)<\frac{\pi}{2}$, Kim \cite{kim} has obtained the following asymptotic expansion as $z\to\infty$:
\begin{align}\label{kimresult}
{}_2F_2(1,\alpha;\rho_1,\rho_2;z)\sim\frac{\Gamma(\rho_1)\Gamma(\rho_2)}{\Gamma(\alpha)}\left(K_{22}(z)+L_{22}(-z)\right),\ 
\end{align}
where, with $ \nu=1+\alpha-\rho_1-\rho_2$,
{\allowdisplaybreaks\begin{align*}
K_{22}(z)&=z^\nu e^z{}_2F_0\left(\rho_1-\alpha,\rho_2-\alpha;-;\frac{1}{z}\right),\nonumber\\
L_{22}(z)&=z^{-1}\frac{\Gamma(\alpha-1)}{\Gamma(\rho_1-1)\Gamma(\rho_2-1)}{}_3F_1\left(1,2-\rho_1,2-\rho_2;2-\alpha;\frac{1}{z}\right)\nonumber\\
&\quad+z^{-\alpha}\frac{\Gamma(\alpha)\Gamma(1-\alpha)}{\Gamma(\rho_1-\alpha)\Gamma(\rho_2-\alpha)}{}_2F_0\left(1+\alpha-\rho_1,1+\alpha-\rho_2;-;\frac{1}{z}\right).
\end{align*}}
Hence letting $\alpha=\frac{1}{2},\ \rho_1=\frac{1-a}{2},\ \rho_2=1-\frac{a}{2}$ and $z=\frac{4\pi^4}{uy^2}$ in \eqref{kimresult}, we find that as $u\to0$,
{\allowdisplaybreaks\begin{align}\label{final2f2}
&{}_2F_2\left(1,\frac{1}{2};\frac{1-a}{2},1-\frac{a}{2};\frac{4\pi^4}{uy^2}\right)\nonumber\\
&=\frac{\Gamma\left(\frac{1-a}{2}\right)\Gamma\left(1-\frac{a}{2}\right)}{\Gamma\left(\frac{1}{2}\right)}\Bigg\{\left(\frac{4\pi^4}{uy^2}\right)^a\exp\left(\frac{4\pi^4}{uy^2}\right){}_2F_0\left(-\frac{a}{2},\frac{1-a}{2};-;\frac{uy^2}{4\pi^4}\right)\nonumber\\
&\quad+\frac{uy^2\sqrt{\pi}}{2\pi^4\Gamma\left(\frac{-a}{2}\right)\Gamma\left(\frac{-a-1}{2}\right)}+O_{a,y}(u^2)+\left(-\frac{4\pi^4}{uy^2}\right)^{-\frac{1}{2}}\frac{\pi}{\Gamma\left(-\frac{a}{2}\right)\Gamma\left(\frac{1-a}{2}\right)}+O_{a,y}(u^{3/2})\Bigg\}.
\end{align}}
From \cite[p.~189, Exercise (7.7)]{temme}, for $-\frac{3}{2}\pi<\arg(z)<\frac{\pi}{2}$, as $z\to\infty$,
\begin{align}\label{1f1asym}
{}_1F_1(b;c;z)\sim\frac{e^zz^{b-c}\Gamma(c)}{\Gamma(b)}\sum_{n=0}^\infty \frac{(c-b)_n(1-b)_n}{n!}z^{-n}+\frac{e^{-\pi ib}z^{-b}\Gamma(c)}{\Gamma(c-b)}\sum_{n=0}^\infty\frac{(b)_n(1+b-c)_n}{n!}(-z)^{-n}.
\end{align}
Let $b=\frac{1+a}{2},\ c=\frac{1}{2}$ and $z=\frac{4\pi^4}{uy^2}$ in \eqref{1f1asym} and observe that the infinite series in the first expression on the right-hand side is nothing but ${}_2F_0\left(-\frac{a}{2},\frac{1-a}{2};-;\frac{4\pi^4}{uy^2}\right)$, so that, as $u\to0$,
\begin{align}\label{1f1asym1}
{}_1F_1\left(\frac{1+a}{2};\frac{1}{2};\frac{4\pi^4}{uy^2}\right)&=\frac{\sqrt{\pi}\exp\left(\frac{4\pi^4}{uy^2}\right)}{\Gamma\left(\frac{1+a}{2}\right)}\left(\frac{4\pi^4}{uy^2}\right)^{\frac{a}{2}}{}_2F_0\left(-\frac{a}{2},\frac{1-a}{2};-;\frac{4\pi^4}{uy^2}\right)\nonumber\\
&\quad+\sqrt{\pi}\frac{\exp\left(\frac{-\pi i(a+1)}{2}\right)}{\Gamma\left(-\frac{a}{2}\right)}\left(\frac{4\pi^4}{uy^2}\right)^{\frac{-a-1}{2}}+O_{a,y}\left(u^{\frac{1}{2}(a+3)}\right).
\end{align}
Using \eqref{final2f2} and \eqref{1f1asym1} and observing that the expressions involving ${}_2F_{0}$ completely cancel out, we deduce that
{\allowdisplaybreaks\begin{align}\label{expr}
&\lim_{u\to0}\Bigg\{\frac{(2\pi)^{-a}}{\Gamma(1-a)}\frac{1}{2}\sqrt{\frac{\pi}{u}}{}_2F_2\left(\frac{1}{2},1;\frac{1-a}{2},1-\frac{a}{2};\frac{4\pi^4}{uy^2}\right)\nonumber\\
&\qquad-\left(\frac{4\pi^2}{y^2}\right)^\frac{a}{2}\frac{1}{2}u^{-\frac{a}{2}-\frac{1}{2}}\Gamma\left(\frac{a+1}{2}\right){}_1F_1\left(\frac{1+a}{2};\frac{1}{2};\frac{4\pi^4}{uy^2}\right)\Bigg\}\nonumber\\
&=\frac{2^{-a-3}y}{i\pi^{a+1}\Gamma(-a)}-\frac{y\exp\left(\frac{-\pi i(a+1)}{2}\right)}{4\pi^{a+\frac{3}{2}}}\frac{\Gamma\left(\frac{a+1}{2}\right)}{\Gamma\left(-\frac{a}{2}\right)}\nonumber\\
&=\frac{y2^{-a-3}}{\Gamma(-a)\pi^{a+1}}\left(\frac{1}{i}+2i\frac{e^{\frac{-\pi ia}{2}}}{\cos\left(\frac{\pi a}{2}\right)}\right)\nonumber\\
&=\frac{y2^{-a-3}}{\Gamma(-a)\pi^{a+1}}\tan\left(\frac{\pi a}{2}\right),
\end{align}}
where in the penultimate step, we used \eqref{dupl} as well as the well-known variant of the reflection formula in \eqref{refl}, namely, $\Gamma\left(\frac{1+a}{2}\right)\Gamma\left(\frac{1-a}{2}\right)=\frac{\pi}{\cos\left(\frac{\pi a}{2}\right)}$.

Finally, let $u\to0$ in \eqref{beforelimit}, invoke \eqref{expr} and use dominated convergence theorem of Lebesgue to arrive at
{\allowdisplaybreaks\begin{align*}
&\int_{0}^{\infty}x^{\frac{ a}{2}} \Bigg\{\frac{x^{-\frac{a}{2}}(2\pi)^{-a}}{\Gamma(1-a)} {}_1F_2\left(1;\frac{1-a}{2},1-\frac{a}{2};\frac{4\pi^4x^2}{y^2} \right)-\left(\frac{4\pi^2x}{y^2}\right)^\frac{a}{2}\cosh\left(\frac{4\pi^2x}{y}\right)\Bigg\}\ dx\nonumber\\
&=\frac{y2^{-a-3}}{\Gamma(-a)\pi^{a+1}}\tan\left(\frac{\pi a}{2}\right).
\end{align*}}
Now multiply both sides of the above equation by $\frac{2\pi}{y\sin\left(\frac{\pi a}{2}\right)}$ to obtain \eqref{integralevaluation1}.
\end{proof}

\begin{lemma}\label{integralevaluationmain}
Let $y>0$. Then, for $-1<\mathrm{Re}(a)<1$, we have
\begin{align}\label{integralevaluationmaineqn}
\int_{0}^{\infty}x^{-\frac{ a}{2}} \Bigg\{\frac{x^{-\frac{a}{2}}(2\pi)^{-a}}{\Gamma(1-a)} {}_1F_2\left(1;\frac{1-a}{2},1-\frac{a}{2};\frac{4\pi^4x^2}{y^2} \right)-\left(\frac{4\pi^2x}{y^2}\right)^\frac{a}{2}\cosh\left(\frac{4\pi^2x}{y}\right)\Bigg\}\, dx=0.
\end{align}	
\end{lemma}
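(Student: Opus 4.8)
The plan is to follow the same damping strategy used in Lemma \ref{integralevaluation}: insert the factor $e^{-ux^2}$, $u>0$, into the integrand, evaluate the two resulting pieces in closed form for $u>0$, let $u\to0^{+}$, and invoke dominated convergence. The essential difference is that the prefactor $x^{-a/2}$ (rather than $x^{a/2}$) makes the relevant hypergeometric function degenerate, so that its residual contribution \emph{vanishes} in the limit; this is exactly why the answer here is $0$ instead of the nonzero constant obtained in Lemma \ref{integralevaluation}.

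Write $B(x)$ for the expression inside the braces. First I would record that $x^{-a/2}B(x)\in L^{1}(0,\infty)$ when $-1<\textup{Re}(a)<1$. Near $x=0$ the first term of $B(x)$ behaves like $x^{-a/2}$, so $x^{-a/2}B(x)=O(x^{-a})$, integrable for $\textup{Re}(a)<1$; near $x=\infty$ one uses that, by Remark \ref{sigmatrans} together with \eqref{def2varbessel}, $B(x)$ is a constant multiple of ${}_{1/2}K_{a/2}(4\pi^2x/y,0)$, whence Lemma \ref{asymukhalfo} (with $\mu=\tfrac12,\ \nu=\tfrac a2,\ w=0$) gives $B(x)=O\!\bigl(x^{-2-\textup{Re}(a)/2}\bigr)$ and therefore $x^{-a/2}B(x)=O\!\bigl(x^{-2-\textup{Re}(a)}\bigr)$, integrable for $\textup{Re}(a)>-1$. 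Since $e^{-ux^2}\le1$, the integrable function $|x^{-a/2}B(x)|$ dominates the damped integrand uniformly in $u$, which will justify passing $u\to0^{+}$ inside the integral at the end.

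Next I would compute $J(u)=\int_0^\infty x^{-a/2}e^{-ux^2}B(x)\,dx$ for $u>0$ by integrating the two pieces separately (now legitimate, as each is exponentially damped). Applying \eqref{intofpfq} with $s=\tfrac{1-a}{2}$ (valid since $\textup{Re}(a)<1$) to the first piece yields ${}_2F_2\!\left(\tfrac{1-a}{2},1;\tfrac{1-a}{2},1-\tfrac a2;\tfrac{4\pi^4}{uy^2}\right)$, in which a numerator parameter cancels a denominator parameter and collapses it to ${}_1F_1\!\left(1;1-\tfrac a2;\tfrac{4\pi^4}{uy^2}\right)$. For the $\cosh$ piece the powers of $x$ cancel, and \eqref{mtofcosexp} with $s=1$ (using ${}_1F_1(0;\tfrac12;\cdot)=1$) gives exactly $\bigl(\tfrac{4\pi^2}{y^2}\bigr)^{a/2}\tfrac{\sqrt\pi}{2\sqrt u}\exp\!\bigl(\tfrac{4\pi^4}{uy^2}\bigr)$, with no subdominant terms.

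Finally I would analyze $J(u)$ as $u\to0^{+}$, i.e.\ as $z:=\tfrac{4\pi^4}{uy^2}\to\infty$, via the asymptotic expansion \eqref{1f1asym} of ${}_1F_1(1;1-\tfrac a2;z)$. The crucial point is that the choice $b=1$ makes the Pochhammer factor $(1-b)_n$ vanish for $n\ge1$, so the exponentially large contribution is the \emph{single} term $\Gamma(1-\tfrac a2)e^{z}z^{a/2}$; applying the duplication formula \eqref{dupl} to $\Gamma(\tfrac{1-a}{2})\Gamma(1-\tfrac a2)$ one checks that, after multiplication by the prefactor $\tfrac{(2\pi)^{-a}}{\Gamma(1-a)}\tfrac{\Gamma((1-a)/2)}{2}u^{-(1-a)/2}$, this term cancels the $\cosh$ piece \emph{exactly}. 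What remains is the algebraically small part of \eqref{1f1asym}, of order $z^{-1}=O(u)$, which together with the prefactor is $O\!\left(u^{(1+a)/2}\right)$ and tends to $0$ precisely because $\textup{Re}(a)>-1$. Hence $\lim_{u\to0^{+}}J(u)=0$, and by the dominated-convergence step the stated integral equals $0$. I expect the main obstacle to be verifying the \emph{exact} cancellation of the two exponentially divergent contributions (the bookkeeping of $\Gamma$-factors through \eqref{dupl}); the integrability claim supporting dominated convergence is the only other point requiring care, and the restriction $\textup{Re}(a)<1$ enters exactly to keep $s=\tfrac{1-a}{2}$ in the domain of \eqref{intofpfq}.
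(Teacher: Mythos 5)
Your proposal is correct and follows essentially the same route as the paper: insert the damping factor $e^{-ux^2}$, evaluate the two pieces via \eqref{intofpfq} (with $s=\tfrac{1-a}{2}$, collapsing the ${}_2F_2$ to ${}_1F_1(1;1-\tfrac a2;\cdot)$) and \eqref{mtofcosexp}, observe via \eqref{1f1asym} that the single exponentially large term cancels the $\cosh$ contribution exactly by the duplication formula, and conclude that the remainder is $O(u^{(1+a)/2})\to0$ before invoking dominated convergence. The only (welcome) difference is that you make the dominating function for the $u\to0^+$ passage explicit via Lemma \ref{asymukhalfo}, a point the paper leaves implicit.
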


\begin{proof}
Let $p=1,\ q=2$, $a_1=1, b_1=\frac{1}{2}(1-a)$, $b_2=1-\frac{1}{2}a$ in \eqref{intofpfq}, replace $x$ by $x^2u$, and then in the resulting equation let $s=\frac{1-a}{2},\ \d=\frac{4\pi^4}{uy^2}$ so that for Re$(a)<1$,
\begin{align}\label{xexp1f1}
\int_0^\infty e^{-ux^2}x^{-a}{}_1F_2\left(1;\frac{1-a}{2},1-\frac{a}{2};\frac{4\pi^4x^2}{y^2} \right)\ dx=\frac{\Gamma\left(\frac{1-a}{2}\right)}{2u^{\frac{1-a}{2}}}{}_1F_1\left(1;1-\frac{a}{2};\frac{4\pi^4}{uy^2}\right).
\end{align}
Let $s=1,\ c=u$ and $b=\frac{4\pi^2i}{y}$ in \eqref{mtofcosexp} to get
\begin{align}\label{expcosh}
\int_0^\infty e^{-ux^2}\cosh\left(\frac{4\pi^2x}{y}\right)\ dx=\frac{1}{2}\sqrt{\frac{\pi}{u}}e^{\frac{4\pi^4}{uy^2}}.
\end{align}
From \eqref{xexp1f1} and \eqref{expcosh}, for Re$(a)<1$, we find that
\begin{align}\label{beforelimitzero}
&\int_{0}^{\infty}x^{-\frac{ a}{2}}e^{-ux^2} \Bigg\{\frac{x^{-\frac{a}{2}}(2\pi)^{-a}}{\Gamma(1-a)} {}_1F_2\left(1;\frac{1-a}{2},1-\frac{a}{2};\frac{4\pi^4x^2}{y^2} \right)-\left(\frac{4\pi^2x}{y^2}\right)^\frac{a}{2}\cosh\left(\frac{4\pi^2x}{y}\right)\Bigg\}\ dx\nonumber\\
&=\frac{(2\pi)^{-a}}{2\Gamma(1-a)u^{\frac{1-a}{2}}}\Gamma\left(\frac{1-a}{2}\right){}_1F_1\left(1;1-\frac{a}{2};\frac{4\pi^4}{uy^2}\right)-\frac{1}{2}\sqrt{\frac{\pi}{u}}\left(\frac{2\pi}{y}\right)^a e^{\frac{4\pi^4}{uy^2}}.
\end{align}
Let $b=1,\ c=1-\frac{a}{2}$ and $z=\frac{4\pi^4}{uy^2}$ in \eqref{1f1asym}. Observe that only the constant term of the first infinite series on the right-hand of \eqref{1f1asym} survives, so that as $u\to0$,
\begin{align}\label{1f1b}
{}_1F_1\left(1;1-\frac{a}{2};\frac{4\pi^4}{uy^2}\right)= \left(\frac{4\pi^4}{uy^2}\right)^{\frac{a}{2}}\Gamma\left(1-\frac{a}{2}\right) e^{\frac{4\pi^4}{uy^2}}+\frac{uy^2}{4\pi^4}e^{-\pi i}\frac{\Gamma\left(1-\frac{a}{2}\right)}{\Gamma\left(-\frac{a}{2}\right)}+O_{a,y}\left(u^2\right).
\end{align}
Using \eqref{1f1b}, we see that for $-1<\textup{Re}(a)<1$,
\begin{align}\label{limitzerozero}
&\lim_{u\to0}\left(\frac{(2\pi)^{-a}}{2\Gamma(1-a)u^{\frac{1-a}{2}}}\Gamma\left(\frac{1-a}{2}\right){}_1F_1\left(1;1-\frac{a}{2};\frac{4\pi^4}{uy^2}\right)-\frac{1}{2}\sqrt{\frac{\pi}{u}}\left(\frac{2\pi}{y}\right)^a e^{\frac{4\pi^4}{uy^2}}\right)\nonumber\\
&=\lim_{u\to0}\left(\frac{ay^2}{(2\pi)^{a+4}\G(1-a)}u^{\frac{a+1}{2}}+O_{a,y}\left((u^{\frac{1}{2}(a+3)}\right)\right)\nonumber\\
&=0.
\end{align}
Now let $u\to0$ in \eqref{beforelimitzero}, invoke Lebesgue's dominated convergence theorem and then employ \eqref{limitzerozero} to complete the proof of \eqref{integralevaluationmaineqn}. 
\end{proof}
We are now ready to prove our new transformation involving infinite series of the generalized divisor function $\sigma_a(n)$. We do this by employing the following analogue of Vorono\"{\dotlessi} summation formula for $\sigma_a(n)$ due to Guinand \cite[Equation (1)]{guinand}.
\begin{theorem}\label{guinandSummationFormula}
Let $-\frac{1}{2}<\textup{Re}(a)<\frac{1}{2}$. If $f(x)$ and $f'(x)$ are integrals, $f$ tends to zero as $x\rightarrow\infty,f(x),xf'(x)$ and $x^2f''(x)$ belong to $L^2(0,\infty)$, and
\begin{equation}\label{fgkt}
		g(x)=2\pi\int_0^\infty f(t)\left(\cos\left(\frac{\pi a}{2}\right)M_a(4\pi\sqrt{xt})-\sin\left(\frac{\pi a}{2}\right)J_a(4\pi\sqrt{xt})\right)dt,
		\end{equation}
		then the following transformation holds:
		\begin{align}\label{guinandSummationFormulaeqn}
		&\sum_{n=1}^\infty\sigma_{-a}(n)n^{\frac{a}{2}}f(n)-\zeta(1+a)\int_0^\infty x^{\frac{a}{2}}f(x)\, dx-\zeta(1-a)\int_0^\infty x^{-\frac{a}{2}}f(x)\, dx\nonumber\\
		&=\sum_{n=1}^\infty\sigma_{-a}(n)n^{\frac{a}{2}}g(n)-\zeta(1+a)\int_0^\infty x^{\frac{a}{2}}g(x)\, dx-\zeta(1-a)\int_0^\infty x^{-\frac{a}{2}}g(x)\, dx.
		\end{align}
\end{theorem}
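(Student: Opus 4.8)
The plan is to prove this Voronoï-type summation formula by the Mellin-transform method, exploiting the fact that $\sum_{n=1}^{\infty}\sigma_{-a}(n)n^{a/2}n^{-s}=\zeta(s-a/2)\zeta(s+a/2)=:\Phi(s)$, a Dirichlet series whose functional equation under $s\mapsto 1-s$ is recorded (with $z=a$) in \eqref{prodzeta}. Writing $\mathfrak{F}(s):=\int_{0}^{\infty}f(x)x^{s-1}\,dx$ for the Mellin transform of $f$ and inserting the inversion $f(n)=\frac{1}{2\pi i}\int_{(c)}\mathfrak{F}(s)n^{-s}\,ds$ with $c$ large enough that $\sum_n\sigma_{-a}(n)n^{a/2-s}$ converges absolutely, I would interchange summation and integration to get $\sum_{n=1}^{\infty}\sigma_{-a}(n)n^{a/2}f(n)=\frac{1}{2\pi i}\int_{(c)}\mathfrak{F}(s)\Phi(s)\,ds$. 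Shifting the line of integration to the left past the two simple poles of $\Phi(s)$, located at $s=1+a/2$ and $s=1-a/2$ and each coming from the simple pole (residue $1$) of a single zeta factor, produces exactly the residual terms $\zeta(1+a)\int_{0}^{\infty}x^{a/2}f\,dx+\zeta(1-a)\int_{0}^{\infty}x^{-a/2}f\,dx$, since $\mathfrak{F}(1\pm a/2)=\int_{0}^{\infty}x^{\pm a/2}f(x)\,dx$. Hence the left-hand side of \eqref{guinandSummationFormulaeqn} would equal $\frac{1}{2\pi i}\int_{(c')}\mathfrak{F}(s)\Phi(s)\,ds$ on a line $s=c'$ lying to the left of both poles.

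The next step is to compute the Mellin transform of the transform $g$ in \eqref{fgkt}. By \eqref{1st} the kernel there is precisely $\mathscr{G}_{a/2}(4\pi^{2}xt,0)$, so $g(x)=2\pi\int_{0}^{\infty}f(t)\mathscr{G}_{a/2}(4\pi^{2}xt,0)\,dt$. Applying Theorem \ref{gkkmellin} with $w=0$, $\nu=a/2$ and argument $4\pi^{2}t$, and interchanging the $x$- and $t$-integrations, I would obtain the reciprocity relation $\mathfrak{G}(s)=\pi^{1-2s}\,\dfrac{\Gamma\!\left(\frac{s-a/2}{2}\right)\Gamma\!\left(\frac{s+a/2}{2}\right)}{\Gamma\!\left(\frac{1-s-a/2}{2}\right)\Gamma\!\left(\frac{1-s+a/2}{2}\right)}\,\mathfrak{F}(1-s)$ for the Mellin transform $\mathfrak{G}$ of $g$, where the constant $2\pi\cdot2^{2s-1}(4\pi^{2})^{-s}$ collapses to $\pi^{1-2s}$. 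Running the first paragraph's argument verbatim for $g$ then shows that the right-hand side of \eqref{guinandSummationFormulaeqn} equals $\frac{1}{2\pi i}\int_{(c'')}\mathfrak{G}(s)\Phi(s)\,ds$.

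It remains to show the two contour integrals agree. Substituting the reciprocity relation and making the change of variable $s\mapsto 1-s$ turns $\frac{1}{2\pi i}\int\mathfrak{G}(s)\Phi(s)\,ds$ into $\frac{1}{2\pi i}\int\mathfrak{F}(s)\,\Phi(1-s)\,\pi^{2s-1}\frac{\Gamma(\frac{1-s-a/2}{2})\Gamma(\frac{1-s+a/2}{2})}{\Gamma(\frac{s-a/2}{2})\Gamma(\frac{s+a/2}{2})}\,ds$; invoking \eqref{prodzeta} to replace $\Phi(1-s)=\zeta(1-s-a/2)\zeta(1-s+a/2)$ by $\Phi(s)$ times the accompanying Gamma and sine factors, I expect the surviving coefficient of $\mathfrak{F}(s)\Phi(s)$ to reduce identically to $1$ after two applications each of the duplication \eqref{dupl} and reflection \eqref{refl} formulas — this is the same Gamma-factor collapse already carried out in the proof of Theorem \ref{selfrecigkk}. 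Both sides of \eqref{guinandSummationFormulaeqn} would then reduce to the single contour integral $\frac{1}{2\pi i}\int_{(c')}\mathfrak{F}(s)\Phi(s)\,ds$, completing the proof.

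The main obstacle is not this formal computation but its rigorous justification, which is exactly what the hypotheses are designed to supply. I expect the delicate points to be: (i) the absolute convergence needed to interchange $\sum_n$ with $\int_{(c)}$, which forces the initial line $c$ to lie to the right of $\mathrm{Re}(s)=1+|\mathrm{Re}(a)|/2$; (ii) the legitimacy of shifting the contours across the polar line, which requires decay of $\mathfrak{F}(s)$ and $\mathfrak{G}(s)$ on vertical lines — this is where the assumptions that $f$, $xf'$, $x^{2}f''\in L^{2}(0,\infty)$ and $f\to 0$ enter, via integration by parts in the Mellin transform (equivalently, through Parseval's formula \eqref{parseval-1}) together with the constraint $-\tfrac12<\mathrm{Re}(a)<\tfrac12$; and (iii) arranging that the strip of validity of Theorem \ref{gkkmellin}, the Dirichlet-series region, and the functional-equation step overlap, so that a common admissible line $s=c'$ exists and the reciprocity relation extends there by analytic continuation. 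Controlling the transforms uniformly enough to discard the horizontal connecting segments and to guarantee convergence of the shifted integrals is the crux, and it is precisely the content of the rigorous reformulation of Guinand's result quoted from \cite{bdkz}.
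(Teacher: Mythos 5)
You should first note that the paper does not prove this statement at all: Theorem \ref{guinandSummationFormula} is imported verbatim from Guinand's 1955 paper \cite{guinand} (the paper explicitly introduces it as ``the following analogue of Vorono\"{\dotlessi} summation formula for $\sigma_a(n)$ due to Guinand''), so there is no in-paper proof to compare against. Judged on its own terms, your formal skeleton is the standard and correct one, and the computations you leave implicit do check out: the residues of $\mathfrak{F}(s)\zeta(s-\tfrac{a}{2})\zeta(s+\tfrac{a}{2})$ at $s=1\pm\tfrac{a}{2}$ produce exactly the two residual integrals; the kernel in \eqref{fgkt} is indeed $\mathscr{G}_{a/2}(4\pi^2xt,0)$ by \eqref{1st}; the constant in the reciprocity relation does collapse to $\pi^{1-2s}$; and after the substitution $s\mapsto 1-s$ and the functional equation \eqref{prodzeta}, two applications each of \eqref{dupl} and \eqref{refl} reduce the surviving factor to $1$.

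The one genuine gap is the mismatch, which you partly acknowledge but do not resolve, between the hypotheses of the theorem and the requirements of your method. The conditions ($f$, $xf'$, $x^2f''\in L^2(0,\infty)$, $f\to 0$) are the Hardy--Titchmarsh/Guinand $L^2$ conditions: they place $\mathfrak{F}$ on the critical line $\mathrm{Re}(s)=\tfrac12$ with mean-square decay, but they do not guarantee that $\mathfrak{F}(s)$ exists as an absolutely convergent integral on a line $\mathrm{Re}(s)=c>1+\tfrac{|\mathrm{Re}(a)|}{2}$, nor that $\sum_n\sigma_{-a}(n)n^{a/2}f(n)$ converges absolutely, nor the pointwise decay on vertical lines needed to discard horizontal segments when shifting contours. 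Guinand's own proof works entirely inside the $L^2$ theory of reciprocal functions (mean convergence, general summation formulae), precisely to avoid these issues; your Mellin--Barnes route would be rigorous under strictly stronger pointwise hypotheses than the ones stated. Deferring the analytic difficulties to ``the rigorous reformulation in \cite{bdkz}'' does not close this, since that reference addresses the $r_k(n)$ analogue (Theorem \ref{guirknsuminfthm}), not this $\sigma_a$ formula. So either strengthen the hypotheses to fit your method, or carry out the argument in the $L^2$ framework as Guinand does.
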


\begin{proof}[Theorem \textup{\ref{main}}][]
We first prove the result for $0<a<\frac{1}{2}$ and $y>0$, and then extend it to $\textup{Re}(a)>-1$ and Re$(y)>0$ by analytic continuation.

Let $w=0, \mu=\frac{1}{2},\nu=\frac{a}{2}$ and replace $x$ by $4\pi^2 x$ in Theorem \ref{koshlyakovg-wthm} (or, equivalently, $\mu=\frac{1}{2},\nu=\frac{a}{2}$ and replace $x$ by $4\pi^2 x$ in \eqref{koshlyakovg-1}) and then use \eqref{khalfz} and \eqref{1st} in the resulting equation to obtain
\begin{align*}
&\int_{0}^{\infty}e^{-t}t^\frac{a}{2}\left\{\cos\left(\frac{\pi a }{2}\right)M_{a}(4\pi\sqrt{xt})-\sin\left(\frac{\pi a }{2}\right)J_{a}(4\pi\sqrt{xt})\right\}\ dt\nonumber\\
&=\frac{1}{\sin\left(\frac{\pi a}{2}\right)}\Bigg\{\frac{x^{-\frac{a}{2}}\pi^{\frac{1}{2}-a}}{\Gamma\left(1-\frac{a}{2}\right)\Gamma\left(\frac{1-a}{2}\right)} {}_1F_2\left(1;\frac{1-a}{2},1-\frac{a}{2};4\pi^4x^2 \right)-(4\pi^2x)^\frac{a}{2}\cosh(4\pi^2x)\Bigg\},
\end{align*}
where we used the fact that ${}_0F_1(-;1/2;x^2)=\cosh(2x).$ 

Replace $t$ by $yt$ and replace $x$ by $\frac{x}{y}$ in the above equation to get 
\begin{align}\label{kosh1/2}
&\int_{0}^{\infty}e^{-ty}t^\frac{a}{2}\left(\cos\left(\frac{\pi a }{2}\right)M_{a}(4\pi\sqrt{xt})-\sin\left(\frac{\pi a }{2}\right)J_{a}(4\pi\sqrt{xt})\right)\ dt\nonumber\\
&=\frac{1}{y\sin\left(\frac{\pi a}{2}\right)}\Bigg\{\frac{x^{-\frac{a}{2}}{(2\pi)}^{-a}}{\Gamma(1-a)} {}_1F_2\left(1;\frac{1-a}{2},1-\frac{a}{2};\frac{4\pi^4x^2}{y^2} \right)-\left(\frac{2\pi \sqrt{x}}{y}\right)^a\cosh\left(\frac{4\pi^2x}{y}\right)\Bigg\}.
\end{align}
We would like to now put $f(x):=e^{-xy}x^{\frac{a}{2}}$ in Theorem \ref{guinandSummationFormula}. To that end, observe that $f(x)$ and $f'(x)$ are integrals. Also, it is easy to see that $f(x)\to0$ as $x\to\infty$, and that $f(x),\ xf'(x)$ and $x^2f''(x)$ belong to $L^2(0,\infty)$. 

From \eqref{fgkt} and \eqref{kosh1/2}, we have
\begin{align}\label{g(x)2}
g(x)=\frac{2\pi}{y\sin\left(\frac{\pi a}{2}\right)}\Bigg\{\frac{x^{-\frac{a}{2}}(2\pi)^{-a}}{\Gamma(1-a)} {}_1F_2\left(1;\frac{1-a}{2},1-\frac{a}{2};\frac{4\pi^4x^2}{y^2} \right)-\left(\frac{2\pi \sqrt{x}}{y}\right)^a\cosh\left(\frac{4\pi^2x}{y}\right)\Bigg\}.
\end{align}
Our next task is to evaluate the four integrals in \eqref{guinandSummationFormulaeqn} for $0<a<1/2$. To that end, note that
{\allowdisplaybreaks\begin{align}\label{f(x)12}
\int_0^\infty x^{\frac{a}{2}}f(x)\, dx=y^{-a-1}\Gamma(a+1),\hspace{5mm} \int_0^\infty x^{-\frac{a}{2}}f(x)\, dx=\frac{1}{y}.
\end{align}}
Further, from \eqref{g(x)2}, Lemmas \ref{integralevaluation} and \ref{integralevaluationmain},
\begin{align}\label{g(x)12}
\int_{0}^{\infty}x^{\frac{a}{2}}g(x)\ dx=\frac{2^{-2-a}\pi^{-a}\sec\left(\frac{\pi a}{2}\right)}{\Gamma(-a)}, \hspace{5mm}
\int_{0}^{\infty}x^{-\frac{a}{2}}g(x)\ dx=0.
\end{align}
Thus, from \eqref{guinandSummationFormulaeqn}, \eqref{f(x)12} and \eqref{g(x)12} and the elementary fact $\sigma_{-a}(n)n^{a}=\sigma_{a}(n)$, for $0<a<\frac{1}{2}$ and $y>0$,
\begin{align*}
&\sum_{n=1}^\infty  \sigma_a(n)e^{-ny}-\frac{\zeta(1+a)\Gamma(1+a)}{y^{1+a}}-\frac{\zeta(1-a)}{y}+\zeta(1+a)\frac{2^{-2-a}\pi^{-a}\sec\left(\frac{\pi a}{2}\right)}{\Gamma(-a)}\nonumber\\
&=\frac{2\pi}{y\sin\left(\frac{\pi a}{2}\right)}\sum_{n=1}^\infty \sigma_{a}(n)\Bigg(\frac{(2\pi n)^{-a}}{\Gamma(1-a)} {}_1F_2\left(1;\frac{1-a}{2},1-\frac{a}{2};\frac{4\pi^4n^2}{y^2} \right) -\left(\frac{2\pi}{y}\right)^{a}\cosh\left(\frac{4\pi^2n}{y}\right)\Bigg).
\end{align*}
Now use the functional equation \eqref{zetafealt} with $s=-a$ twice to write $\zeta(1+a)$ in the above equation in terms of $\zeta(-a)$ thereby obtaining \eqref{maineqn} after simplification. This proves Theorem \ref{main} for $0<a<1/2$ and $y>0$.

Next, we show the validity of the transformation for $\textup{Re}(a)>-1$. From Remark \ref{sigmatrans}, the series on the right-hand side of \eqref{maineqn} is $\displaystyle\sum_{n=1}^{\infty}\sigma_a(n)n^{-\frac{a}{2}}{}_{\frac{1}{2}}K_{\frac{a}{2}}\left(\frac{4\pi^2n}{y},0\right)$. Now Lemma \ref{asymukhalfo} gives ${}_{\frac{1}{2}}K_{\frac{a}{2}}\left(\frac{4\pi^2n}{y},0\right)=O_{a,y}\left(n^{-\frac{1}{2}\textup{Re}(a)-2}\right)$ as $n\to\infty$. Along with the elementary bound $\sigma_a(n)n^{-\frac{a}{2}}=O\left(n^{\frac{1}{2}|\textup{Re}(a)|+\epsilon}\right)$ which is valid for every $\epsilon>0$, we see that $\displaystyle\sum_{n=1}^{\infty}\sigma_a(n)n^{-\frac{a}{2}}{}_{\frac{1}{2}}K_{\frac{a}{2}}\left(\frac{4\pi^2n}{y},0\right)$ converges uniformly as long as Re$(a)>-1$. Since the summand of the series is also analytic for Re$(a)>-1$, by Weierstrass' theorem on analytic functions, we see that it represents an analytic function of $a$ when Re$(a)>-1$. 

Since the left-hand side of \eqref{maineqn} is analytic for $\textup{Re}(a)>-1$, by the principle of analytic continuation, we see that \eqref{maineqn} holds for $\textup{Re}(a)>-1$ and $y>0$. Using similar method as above, both sides of \eqref{maineqn} are seen to be analytic, as a function of $y$, in Re$(y)>0$. Therefore by the principle of analytic continuation, \eqref{maineqn} holds for $\textup{Re}(a)>-1$ and $\textup{Re}(y)>0$.

%
%
\end{proof}
Theorem \ref{main}, which we just proved, can be analytically continued to $\mathrm{Re}(a)>-2m-3$, where $m$ is any non-negative integer. This is done in Theorem \ref{extendedid} and is crucial in deriving Corollary \ref{maineqn2cor} and Theorem \ref{trans-2m}.
\begin{proof}[Theorem \textup{\ref{extendedid}}][]
From Theorem \ref{main} and Remark \ref{sigmatrans}, we have $\textup{Re}(a)>-1$,
{\allowdisplaybreaks\begin{align*}
\sum_{n=1}^\infty  \sigma_a(n)e^{-ny}+\frac{1}{2}\left(\left(\tfrac{2\pi}{y}\right)^{1+a}\mathrm{cosec}\left(\tfrac{\pi a}{2}\right)+1\right)\zeta(-a)-\frac{1}{y}\zeta(1-a)=\tfrac{2\sqrt{2\pi}}{y^{1+\frac{a}{2}}}\sum_{n=1}^{\infty}\tfrac{\sigma_a(n)}{n^{\frac{a}{2}}}{}_{\frac{1}{2}}K_{\frac{a}{2}}\left(\frac{4\pi^2n}{y},0\right).
\end{align*}}
We rewrite the above identity as
\begin{align}\label{extendedideqn00}
&\sum_{n=1}^\infty  \sigma_a(n)e^{-ny}+\frac{1}{2}\left(\left(\frac{2\pi}{y}\right)^{1+a}\mathrm{cosec}\left(\frac{\pi a}{2}\right)+1\right)\zeta(-a)-\frac{\zeta(1-a)}{y}\nonumber\\
&=\frac{2\sqrt{2\pi}}{y^{1+\frac{a}{2}}}\sum_{n=1}^\infty\sigma_a(n)n^{-\frac{a}{2}}\left\{{}_{\frac{1}{2}}{K}_{\frac{a}{2}}\left(\frac{4\pi^2n}{y},0\right)-\frac{\pi2^{\frac{3}{2}+a}}{\sin\left(\frac{\pi a}{2}\right)}\left(\frac{4\pi^2n}{y}\right)^{-\frac{a}{2}-2}A_m\left(\frac{1}{2},\frac{a}{2},0;\frac{4\pi^2n}{y}\right)\right\}\nonumber\\
&\qquad+\frac{y\pi^{-a-5/2}}{2\sin\left(\frac{\pi a}{2}\right)}\sum_{n=1}^{\infty}\frac{\sigma_a(n)}{n^{a+2}}A_m\left(\frac{1}{2},\frac{a}{2},0;\frac{4\pi^2n}{y}\right).
\end{align} 
Using the definition of $A_m$ in \eqref{am} and employing \eqref{dupl}, we see that
\begin{align}\label{extendedideqn01}
&\frac{y\pi^{-a-5/2}}{2\sin\left(\frac{\pi a}{2}\right)}\sum_{n=1}^{\infty}\frac{\sigma_a(n)}{n^{a+2}}A_m\left(\frac{1}{2},\frac{a}{2},0;\frac{4\pi^2n}{y}\right)\nonumber\\
&=-\frac{y(2\pi)^{-a-3}}{\sin\left(\frac{\pi a}{2}\right)}\sum_{k=0}^m\frac{1}{\Gamma(-a-1-2k)}\left(\frac{4\pi^2}{y}\right)^{-2k}\sum_{n=1}^\infty \frac{\sigma_{a}(n)}{n^{a+2k+2}}\nonumber\\
&=-\frac{y(2\pi)^{-a-3}}{\sin\left(\frac{\pi a}{2}\right)}\sum_{k=0}^m\frac{\zeta(a+2k+2)\zeta(2k+2)}{\Gamma(-a-1-2k)}\left(\frac{4\pi^2}{y}\right)^{-2k},
\end{align}
where in the last step we used the formula \cite[p.~8, Equation (1.3.1)]{titch} $\sum_{n=1}^\infty\frac{\sigma_z(n)}{n^s}=\zeta(s)\zeta(s-z)$, valid for Re$(s)>\mathrm{max}\{1,\ 1+\mathrm{Re}(z)\}$. Hence from \eqref{extendedideqn00} and \eqref{extendedideqn01}, for Re$(a)>-1$,
\begin{align}\label{extendedideqn02}
&\sum_{n=1}^\infty  \sigma_a(n)e^{-ny}+\frac{1}{2}\left(\left(\frac{2\pi}{y}\right)^{1+a}\mathrm{cosec}\left(\frac{\pi a}{2}\right)+1\right)\zeta(-a)-\frac{\zeta(1-a)}{y}\nonumber\\
&=\frac{2\sqrt{2\pi}}{y^{1+\frac{a}{2}}}\sum_{n=1}^\infty\sigma_a(n)n^{-\frac{a}{2}}\left\{{}_{\frac{1}{2}}{K}_{\frac{a}{2}}\left(\frac{4\pi^2n}{y},0\right)-\frac{\pi2^{\frac{3}{2}+a}}{\sin\left(\frac{\pi a}{2}\right)}\left(\frac{4\pi^2n}{y}\right)^{-\frac{a}{2}-2}A_m\left(\frac{1}{2},\frac{a}{2},0;\frac{4\pi^2n}{y}\right)\right\}\nonumber\\
&\quad-\frac{y(2\pi)^{-a-3}}{\sin\left(\frac{\pi a}{2}\right)}\sum_{k=0}^m\frac{\zeta(a+2k+2)\zeta(2k+2)}{\Gamma(-a-1-2k)}\left(\frac{4\pi^2}{y}\right)^{-2k}.
\end{align}
Invoking Lemma \ref{asymukhalfo} and the fact $B_m\left(\frac{1}{2},\frac{a}{2},0;\frac{4\pi^2n}{y}\right)=0$, we see that for any $\epsilon>0$,
{\allowdisplaybreaks\begin{align*}
&\sigma_a(n)n^{-\frac{a}{2}}\left\{{}_{\frac{1}{2}}{K}_{\frac{a}{2}}\left(\frac{4\pi^2n}{y},0\right)-\frac{\pi2^{\frac{3}{2}+a}}{\sin\left(\frac{\pi a}{2}\right)}\left(\frac{4\pi^2n}{y}\right)^{-\frac{a}{2}-2}A_m\left(\frac{1}{2},\frac{a}{2},0;\frac{4\pi^2n}{y}\right)\right\}\nonumber\\
&=O_{a, y}\left(n^{-2m-4-\frac{1}{2}\textup{Re}(a)+\frac{1}{2}|\textup{Re}(a)|+\epsilon}\right).
\end{align*}}
This implies that the series
\begin{align*}
\sum_{n=1}^{\infty}\sigma_a(n)n^{-\frac{a}{2}}\left\{{}_{\frac{1}{2}}{K}_{\frac{a}{2}}\left(\frac{4\pi^2n}{y},0\right)-\frac{\pi2^{\frac{3}{2}+a}}{\sin\left(\frac{\pi a}{2}\right)}\left(\frac{4\pi^2n}{y}\right)^{-\frac{a}{2}-2}A_m\left(\frac{1}{2},\frac{a}{2},0;\frac{4\pi^2n}{y}\right)\right\}
\end{align*}
is uniformly convergent in Re$(a)\geq-2m-3+\epsilon$ for any $\epsilon>0$. Since the summand of the above series is analytic in Re$(a)>-2m-3$, we see by Weierstrass' theorem that this series represents an analytic function of $a$ for Re$(a)>-2m-3$.

Since the left-hand side of \eqref{extendedideqn02} as well as the finite sum on its right-hand side are also analytic in Re$(a)>-2m-3$, by the principle of analytic continuation, we see that \eqref{extendedideqn02} holds for Re$(a)>-2m-3$ for any $m\geq 0$. This completes the proof of Theorem \ref{extendedid}.
\end{proof}

\section{Transformation formulas for $\sum_{n=1}^{\infty}\sigma_a(n)e^{-ny}$ for $a$ even}\label{aeven}

Here, we derive some lemmas crucial in obtaining the transformations for $\sum_{n=1}^{\infty}\sigma_a(n)e^{-ny}$ for $a=2m$ as well as $-2m$.

\begin{lemma}\label{minusonelemma}
Let $\mathrm{Re}(w)>0$. Let $\textup{Shi}(z)$ and $\textup{Chi}(z)$ be defined in \eqref{shichi}. Then
\begin{equation}\label{zero}
\int_{0}^{\infty}\frac{t\cos t\, dt}{t^2+w^2}=\sinh(w)\mathrm{Shi}(w)-\cosh(w)\mathrm{Chi}(w).
\end{equation}
\end{lemma}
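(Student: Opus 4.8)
The plan is to prove \eqref{zero} first for real $w>0$ and then extend it to the whole half-plane $\operatorname{Re}(w)>0$ by analytic continuation: the denominator $t^2+w^2$ does not vanish for $t\in(0,\infty)$ when $\operatorname{Re}(w)>0$, so the integral converges locally uniformly and defines an analytic function there, while $\mathrm{Shi}$ and $\mathrm{Chi}$ are analytic on the cut plane. For $w>0$ the natural route is to evaluate the integral in closed form in terms of the exponential integrals $\mathrm{Ei}$ and $E_1(w)=\int_w^\infty e^{-v}v^{-1}\,dv$, and then to translate the answer into $\mathrm{Shi},\mathrm{Chi}$ using the elementary identities $\mathrm{Shi}(w)+\mathrm{Chi}(w)=\mathrm{Ei}(w)$ and $\mathrm{Chi}(w)-\mathrm{Shi}(w)=-E_1(w)$, both of which follow immediately from $\sinh v+\cosh v=e^{v}$ together with the definitions in \eqref{shichi}.

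To evaluate the integral I would represent $t/(t^2+w^2)$ as a Laplace transform: for $w>0$ one has $\tfrac{t}{t^2+w^2}=\int_0^\infty e^{-wu}\sin(tu)\,du$. Inserting this and interchanging the order of integration (legitimized by first including a convergence factor $e^{-\varepsilon t}$ and using dominated convergence as $\varepsilon\to0^+$) reduces matters to the inner integral $\int_0^\infty e^{-\varepsilon t}\cos t\,\sin(tu)\,dt$, which tends to $u/(u^2-1)$ in the principal-value sense. This gives
\begin{equation*}
\int_0^\infty \frac{t\cos t}{t^2+w^2}\,dt
=\mathrm{P.V.}\int_0^\infty \frac{u\,e^{-wu}}{u^2-1}\,du
=\frac12\,\mathrm{P.V.}\int_0^\infty e^{-wu}\left(\frac{1}{u-1}+\frac{1}{u+1}\right)du .
\end{equation*}
The two remaining one-dimensional integrals are elementary: the substitution $v=w(u+1)$ yields $\int_0^\infty \tfrac{e^{-wu}}{u+1}\,du=e^{w}E_1(w)$, while $v=w(u-1)$ together with the definition $\mathrm{Ei}(w)=-\,\mathrm{P.V.}\int_{-w}^\infty e^{-v}v^{-1}\,dv$ gives $\mathrm{P.V.}\int_0^\infty \tfrac{e^{-wu}}{u-1}\,du=-e^{-w}\mathrm{Ei}(w)$. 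Hence the integral equals $\tfrac12\big(e^{w}E_1(w)-e^{-w}\mathrm{Ei}(w)\big)$.

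It remains to match this against the right-hand side of \eqref{zero}. Writing $\mathrm{Shi}=\tfrac12(\mathrm{Ei}+E_1)$ and $\mathrm{Chi}=\tfrac12(\mathrm{Ei}-E_1)$ and using $\sinh w-\cosh w=-e^{-w}$ and $\sinh w+\cosh w=e^{w}$, the combination collapses:
\begin{equation*}
\sinh(w)\,\mathrm{Shi}(w)-\cosh(w)\,\mathrm{Chi}(w)
=\tfrac12\,\mathrm{Ei}(w)\,(\sinh w-\cosh w)+\tfrac12\,E_1(w)\,(\sinh w+\cosh w)
=\tfrac12\big(e^{w}E_1(w)-e^{-w}\mathrm{Ei}(w)\big),
\end{equation*}
which is exactly the value found for the integral; analytic continuation then finishes the proof for $\operatorname{Re}(w)>0$.

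The one genuinely delicate point is the rigorous justification of the interchange of integrations and the passage to the principal value, since $\tfrac{t\cos t}{t^2+w^2}$ is only conditionally convergent at infinity; this is precisely where the $e^{-\varepsilon t}$ regularization and dominated convergence must be invoked carefully, and it is the step I expect to require the most care. As an independent cross-check one can verify that both sides of \eqref{zero} satisfy the ordinary differential equation $F''(w)-F(w)=w^{-2}$ — for the right-hand side this drops out directly from $\mathrm{Shi}'(w)=\sinh(w)/w$ and $\mathrm{Chi}'(w)=\cosh(w)/w$ — which provides a reassuring consistency test on the closed form obtained above.
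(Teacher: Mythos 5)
Your argument is correct, and it reaches the same intermediate closed form as the paper, namely $\int_{0}^{\infty}\frac{t\cos t}{t^{2}+w^{2}}\,dt=\tfrac12\bigl(e^{w}E_{1}(w)-e^{-w}\mathrm{Ei}(w)\bigr)=-\tfrac{1}{2}\bigl(e^{w}\mathrm{Ei}(-w)+e^{-w}\mathrm{Ei}(w)\bigr)$, but by a genuinely different route on both ends. The paper simply cites this evaluation from Prudnikov's tables and then converts it to $\mathrm{Shi}$ and $\mathrm{Chi}$ by expanding $\mathrm{Ei}(\pm w)$ in power series, regrouping even and odd powers, and recognizing the series for $\mathrm{Shi}$ and $\mathrm{Chi}$; you instead derive the closed form from scratch via the Laplace representation $t/(t^{2}+w^{2})=\int_{0}^{\infty}e^{-wu}\sin(tu)\,du$, a regularized Fubini interchange, and a principal-value computation, and then you convert using the exact functional identities $\mathrm{Shi}+\mathrm{Chi}=\mathrm{Ei}$ and $\mathrm{Chi}-\mathrm{Shi}=-E_{1}$ rather than termwise series manipulation. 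Your version is more self-contained (no table lookup) and the algebraic conversion is cleaner, at the cost of the one delicate analytic step you correctly flag: the passage $\varepsilon\to0^{+}$ in $\int_{0}^{\infty}e^{-wu}\frac{u-1}{\varepsilon^{2}+(u-1)^{2}}\,du$ is not a plain dominated-convergence argument near $u=1$ (one should, e.g., split off the odd part of the integrand about $u=1$, which vanishes on symmetric intervals, and bound the remainder), though the Abel regularization $I_{\varepsilon}\to I$ and the Fubini step for the absolutely convergent double integral are routine. Both proofs finish identically with analytic continuation to $\operatorname{Re}(w)>0$, and your ODE cross-check $F''-F=w^{-2}$ is a nice independent confirmation absent from the paper.
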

\begin{proof}
First, let $w>0$. From \cite[p.~395, Formula \textbf{2.5.9.12}]{Prudnikov},
\begin{equation}\label{one}
\int_{0}^{\infty}\frac{t\cos t\, dt}{t^2+w^2}=-\frac{1}{2}\left(e^{w}\textup{Ei}(-w)+e^{-w}\textup{Ei}(w)\right),
\end{equation}
where $\textup{Ei}(x)$ is the exponential integral given \cite[p.~788]{Prudnikov} for $x>0$ by $\textup{Ei}(x):=\int_{-\infty}^{x}e^{t}/t\, dt,$ or, as can be equivalently seen from \cite[p.~1]{je}, by $\textup{Ei}(-x):=-\int_{x}^{\infty}e^{-t}/t\, dt$.
Also, from \cite[p.~884, \textbf{8.214.1}, \textbf{8.214.2}]{grad},
\begin{align}
\textup{Ei}(x)=\gamma+\log(-x)+\sum_{k=1}^{\infty}\frac{x^k}{kk!}\hspace{5mm}(x<0),\hspace{4mm}
\textup{Ei}(x)=\gamma+\log x+\sum_{k=1}^{\infty}\frac{x^k}{kk!}\hspace{5mm}(x>0).\label{four}
\end{align}
Hence from \eqref{one} and \eqref{four}, for $w>0$,
{\allowdisplaybreaks\begin{align*}
&\int_{0}^{\infty}\frac{t\cos t\, dt}{t^2+w^2}\nonumber\\
&=-\frac{1}{2}\left\{e^w\left(\g+\log w+\sum_{k=1}^{\infty}\frac{(-w)^k}{kk!}\right)+e^{-w}\left(\g+\log w+\sum_{k=1}^{\infty}\frac{w^k}{kk!}\right)\right\}\nonumber\\
&=\frac{e^w}{2}\left\{\sum_{k=1}^{\infty}\frac{w^{2k-1}}{(2k-1)(2k-1)!}-\sum_{k=1}^{\infty}\frac{w^{2k}}{(2k)(2k)!}-\g-\log w\right\}\nonumber\\
&\quad+\frac{e^{-w}}{2}\left\{-\sum_{k=1}^{\infty}\frac{w^{2k-1}}{(2k-1)(2k-1)!}-\sum_{k=1}^{\infty}\frac{w^{2k}}{(2k)(2k)!}-\g-\log w\right\}\nonumber\\
&=\sinh(w)\mathrm{Shi}(w)-\cosh(w)\mathrm{Chi}(w),
\end{align*}}
where in the last step, we used the series representations of $\mathrm{Shi}(x)$ and $\mathrm{Chi}(x)$, namely,
\begin{align}\label{shichiseries}
\mathrm{Shi}(x)=\sum_{k=1}^{\infty}\frac{x^{2k-1}}{(2k-1)(2k-1)!},\hspace{5mm}
\mathrm{Chi}(x)=\g+\log x+\sum_{k=1}^{\infty}\frac{x^{2k}}{(2k)(2k)!},
\end{align}
which can be easily derived from their definitions in \eqref{shichi}.
Thus \eqref{zero} is established for $w>0$. Using \eqref{shichiseries}, we see that both sides of \eqref{zero} are analytic for Re$(w)>0$  Hence by analytic continuation, \eqref{zero} holds for Re$(w)>0$. 

\end{proof}

\begin{lemma}\label{closedformpsi}
Let $m\in\mathbb{N}\cup\{0\}$.  Then for\footnote{This result is actually true for all $z\in\mathbb{C}$. Note that at any non-positive real number $z$, the right-hand side has a removable singularity.} $\textup{Re}(z)>0$,
\begin{align}\label{closedformpsieqn}
\sum_{j=0}^\infty\frac{\psi(2j+2m+1)}{(m+1)_j\left(m+\frac{1}{2}\right)_j}z^{2j}&=\frac{\Gamma(2m+1)}{(2z)^{2m}}\Bigg\{\sinh(2z)\mathrm{Shi}(2z)-\cosh(2z)\mathrm{Chi}(2z)\nonumber\\
&\quad+\log(2z)\cosh(2z)\Bigg\}-\frac{\Gamma(2m+1)}{(2z)^{2}}\sum_{j=0}^{m-1}\frac{\psi(2m-2j-1)}{\Gamma(2m-2j-1)}(2z)^{-2j}.
\end{align}
\end{lemma}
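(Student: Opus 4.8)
The plan is to collapse \eqref{closedformpsieqn} to a single exponential generating-function identity for harmonic numbers, and then to recognize the hyperbolic sine and cosine integrals directly from their definitions \eqref{shichi}.

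First I would simplify the summand. Two applications of the duplication formula \eqref{dupl} (with $s=m+\tfrac12$ and with $s=m+\tfrac12+j$) give
\begin{equation*}
\left(m+1\right)_j\left(m+\tfrac12\right)_j=\frac{\Gamma(2m+1+2j)}{2^{2j}\,\Gamma(2m+1)},
\end{equation*}
so, writing $w:=2z$, the left-hand side of \eqref{closedformpsieqn} becomes $\Gamma(2m+1)\,T(w)$, where
\begin{equation*}
T(w):=\sum_{j=0}^{\infty}\frac{\psi(2j+2m+1)}{\Gamma(2j+2m+1)}\,w^{2j}.
\end{equation*}
It therefore suffices to evaluate $T(w)$ for $\mathrm{Re}(w)>0$. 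Since $2j+2m+1$ is a positive integer I would use $\psi(2j+2m+1)=H_{2j+2m}-\gamma$, where $H_n=\sum_{\ell=1}^{n}1/\ell$, and split $T$ into a ``$-\gamma$'' part and an ``$H$'' part. Reindexing by $n=2j+2m$ turns each part into the even-order portion of an exponential generating function; the $-\gamma$ part is elementary, namely
\begin{equation*}
-\gamma\sum_{j=0}^{\infty}\frac{w^{2j}}{(2j+2m)!}=-\gamma\,w^{-2m}\left(\cosh w-\sum_{k=0}^{m-1}\frac{w^{2k}}{(2k)!}\right),
\end{equation*}
using $\sum_{n\ \mathrm{even}}w^n/n!=\cosh w$.

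The crux is the $H$ part, namely the even-order portion of $\sum_{n}H_n w^n/n!$. Here I would first establish the generating function
\begin{equation*}
\sum_{n=1}^{\infty}\frac{H_n}{n!}x^{n}=e^{x}\int_0^x\frac{1-e^{-t}}{t}\,dt,
\end{equation*}
which follows from $H_n=\int_0^1\frac{1-t^n}{1-t}\,dt$ upon interchanging summation and integration. Extracting the even part as $\tfrac12\bigl(F(w)+F(-w)\bigr)$, substituting $t\mapsto-t$ in the integral coming from $F(-w)$, and writing $1-e^{\mp t}=(1-\cosh t)\pm\sinh t$, the two resulting integrals are, up to sign, $\mathrm{Shi}(w)=\int_0^w\frac{\sinh t}{t}\,dt$ and $\mathrm{Chi}(w)-\log w-\gamma=\int_0^w\frac{\cosh t-1}{t}\,dt$ in view of \eqref{shichi}; collecting the coefficients of $e^{\pm w}$ then yields, for $\mathrm{Re}(w)>0$,
\begin{equation*}
\sum_{n\ \mathrm{even}}\frac{H_n}{n!}w^{n}=\sinh(w)\,\mathrm{Shi}(w)-\cosh(w)\bigl(\mathrm{Chi}(w)-\log w\bigr)+\gamma\cosh(w).
\end{equation*}
I expect this to be the main obstacle: one must recognize the hyperbolic integrals inside the even part of the harmonic-number generating function and keep careful track of the branch of $\log w$ for $\mathrm{Re}(w)>0$. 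The series \eqref{shichiseries} and the companion evaluation in Lemma \ref{minusonelemma} furnish a convenient independent check.

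Finally I would assemble the pieces. Subtracting the terms with $n<2m$ from the even generating function and combining with the $-\gamma$ part, the two occurrences of $\gamma\cosh w$ cancel, and using $\gamma-H_{2k}=-\psi(2k+1)$ one obtains
\begin{equation*}
T(w)=w^{-2m}\left\{\sinh(w)\,\mathrm{Shi}(w)-\cosh(w)\bigl(\mathrm{Chi}(w)-\log w\bigr)-\sum_{k=0}^{m-1}\frac{\psi(2k+1)}{(2k)!}\,w^{2k}\right\}.
\end{equation*}
Reversing the index by $k\mapsto m-1-k$ rewrites the finite sum as $w^{-2}\sum_{j=0}^{m-1}\psi(2m-2j-1)\,w^{-2j}/\Gamma(2m-2j-1)$, which is precisely the residual term on the right of \eqref{closedformpsieqn}; note also that $-\cosh(w)\bigl(\mathrm{Chi}(w)-\log w\bigr)=-\cosh(w)\mathrm{Chi}(w)+\log(w)\cosh(w)$. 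Multiplying through by $\Gamma(2m+1)$ and restoring $w=2z$ then gives the claim. Since $\psi(2j+2m+1)=O(\log j)$ while the Pochhammer product grows factorially, $T(w)$ converges for all $w$, so no separate convergence argument is needed for $\mathrm{Re}(z)>0$.
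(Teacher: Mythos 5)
Your proof is correct, and it checks out line by line: the duplication-formula reduction of $(m+1)_j(m+\tfrac12)_j$, the shift that produces the finite residual sum, the harmonic-number generating function $\sum_{n\ge1}H_nx^n/n!=e^x\int_0^x(1-e^{-t})t^{-1}\,dt$, the extraction of the even part, and the cancellation of the two $\gamma\cosh w$ terms via $H_{2k}-\gamma=\psi(2k+1)$ all work as claimed. The overall skeleton matches the paper's: the authors perform the same Pochhammer simplification and the same reindexing $k=j+m$, completing the sum to $k=0$ and peeling off the finite tail. Where you genuinely diverge is in the key evaluation of $\sum_{k\ge0}\psi(2k+1)w^{2k}/(2k)!$. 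The paper imports this from an external reference (Lemma 3.2 of \cite{dgkm}) combined with its own Lemma \ref{minusonelemma}, which expresses the answer through the integral $\int_0^\infty t\cos t\,(t^2+w^2)^{-1}dt$ and the exponential integral $\textup{Ei}$; you instead derive it self-containedly from $H_n=\int_0^1(1-t^n)(1-t)^{-1}dt$ and the even part of the resulting exponential generating function. Your route is more elementary and removes the dependence on \cite{dgkm}, at the cost of redoing an evaluation the paper reuses elsewhere (Lemma \ref{minusonelemma} is also needed for Corollary \ref{kan} and Theorem \ref{trans-2m}, so citing it here is economical in context). The only cosmetic point worth adding is a one-line justification for interchanging the sum and the integral over $[0,1]$ when deriving the generating function, which is immediate from absolute convergence.
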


\begin{proof}
Note that
\begin{align*}
\sum_{j=0}^\infty\frac{\psi(2j+2m+1)}{(m+1)_j\left(m+\frac{1}{2}\right)_j}z^{2j}&=\Gamma(2m+1)\sum_{j=0}^\infty\frac{\psi(2j+2m+1)}{\Gamma\left(2j+2m+1\right)}(2z)^{2j}.
\end{align*}
Changing the index of summation from $j$ to $k$ with $k=j+m$ on the right-hand side of the above equation, we see that
{\allowdisplaybreaks\begin{align}\label{suminsum}
&\sum_{j=0}^\infty\frac{\psi(2j+2m+1)}{(m+1)_j\left(m+\frac{1}{2}\right)_j}z^{2j}\nonumber\\
&=\frac{\Gamma(2m+1)}{(2z)^{2m}}\sum_{k=m}^\infty\frac{\psi(2k+1)}{\Gamma\left(2k+1\right)}(2z)^{2k}\nonumber\\
&=\frac{\Gamma(2m+1)}{(2z)^{2m}}\sum_{k=0}^\infty\frac{\psi(2k+1)}{\Gamma\left(2k+1\right)}(2z)^{2k}-\frac{\Gamma(2m+1)}{(2z)^{2}}\sum_{j=0}^{m-1}\frac{\psi(2m-2j-1)}{\Gamma\left(2m-2j-1\right)}(2z)^{-2j}.
\end{align}}
However, from \cite[Lemma 3.2]{dgkm} and Lemma \ref{minusonelemma}, for Re$(z)>0$,
\begin{align}\label{suminsinh}
\sum_{k=0}^\infty\frac{\psi(2k+1)}{\Gamma\left(2k+1\right)}(2z)^{2k}&=\sinh(2z)\mathrm{Shi}(2z)-\cosh(2z)\mathrm{Chi}(2z)+\log(2z)\cosh(2z).
\end{align}
Now substitute \eqref{suminsinh} in \eqref{suminsum} to arrive at \eqref{closedformpsieqn}.
\end{proof}

\subsection{A transformation for $\sum_{n=1}^{\infty}\sigma_{2m}(n)e^{-ny}, m\in\mathbb{N}$}\label{aeven2m}

\begin{lemma}\label{der1f2at2m}
Let $n\in\mathbb{N}$, $m\in\mathbb{N}\cup\{0\}$ and $y\in\mathbb{C}$. Then
\begin{align*}
&\frac{d}{da}\left(\frac{1}{\Gamma(1-a)}{}_1F_2\left(1;1-\frac{a}{2},\frac{1-a}{2};\frac{4\pi^4n^2}{y^2}\right)\right)\Bigg|_{a=2m}\nonumber\\
&=\left(\frac{4\pi^2n}{y}\right)^{2m}\Bigg\{\sinh\left(\frac{4\pi^2n}{y}\right)\mathrm{Shi}\left(\frac{4\pi^2n}{y}\right)-\cosh\left(\frac{4\pi^2n}{y}\right)\mathrm{Chi}\left(\frac{4\pi^2n}{y}\right)\nonumber\\
&\quad+\log\left(\frac{4\pi^2n}{y}\right)\cosh\left(\frac{4\pi^2n}{y}\right)+\sum_{j=1}^m(2j-1)!\left(\frac{4\pi^2n}{y}\right)^{-2j}\Bigg\}.
\end{align*}
\end{lemma}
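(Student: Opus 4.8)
The plan is to first reduce the function to a much simpler shape before differentiating. Writing $w:=\frac{4\pi^2 n}{y}$, so that the hypergeometric argument is $\frac{4\pi^4 n^2}{y^2}=\frac{w^2}{4}$, and using $(1)_j=j!$, the series becomes $\sum_{j=0}^\infty \frac{(w^2/4)^j}{(1-\frac a2)_j(\frac{1-a}2)_j}$. I would then write the Pochhammer symbols through Gamma functions and apply the duplication formula \eqref{dupl} with $s=\frac{1-a}2$ (note $s+\frac12=1-\frac a2$) both to the prefactor, giving $\Gamma(1-\frac a2)\Gamma(\frac{1-a}2)=\sqrt\pi\,2^{a}\Gamma(1-a)$, and, with $s=\frac{1-a}2+j$, to the generic denominator. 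After simplification the factor $\frac1{\Gamma(1-a)}$ cancels and the whole expression collapses to the single, manifestly entire series
\begin{equation*}
\Phi(a):=\frac{1}{\Gamma(1-a)}\,\pFq12{1}{1-\frac a2,\frac{1-a}2}{\frac{w^2}4}=\sum_{j=0}^{\infty}\frac{w^{2j}}{\Gamma(1-a+2j)}.
\end{equation*}
This identity is the crux: it shows that the apparent $0\cdot\infty$ at $a=2m$ (a simple zero of $1/\Gamma(1-a)$ meeting a simple pole of the ${}_1F_2$) is in fact removable, so that $\Phi$ and its derivative are genuinely analytic there.

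Since $1/\Gamma(1-a+2j)$ decays factorially in $j$, the series for $\Phi$ converges locally uniformly in $a$, and I may differentiate term by term. Using $\frac{d}{da}\frac{1}{\Gamma(1-a+2j)}=\frac{\psi(1-a+2j)}{\Gamma(1-a+2j)}$ (the chain-rule sign cancelling the sign in $\frac{d}{du}\frac1{\Gamma(u)}=-\frac{\psi(u)}{\Gamma(u)}$) gives $\Phi'(a)=\sum_{j\ge0}\frac{\psi(1-a+2j)}{\Gamma(1-a+2j)}w^{2j}$. Setting $a=2m$ I would split this sum at $j=m$, because the argument $1-2m+2j$ is a negative odd integer exactly when $0\le j\le m-1$ and a positive odd integer when $j\ge m$.

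For the tail $j\ge m$ I would put $k=j-m$, factor out $w^{2m}$, and recognize $\sum_{k\ge0}\frac{\psi(2k+1)}{\Gamma(2k+1)}w^{2k}$ as the case $2z=w$ of the closed form \eqref{suminsinh} established in the proof of Lemma \ref{closedformpsi}; this yields $w^{2m}\bigl(\sinh(w)\mathrm{Shi}(w)-\cosh(w)\mathrm{Chi}(w)+\log(w)\cosh(w)\bigr)$, the transcendental part of the claim. The head $0\le j\le m-1$ is where the only real subtlety lies: there $\frac{\psi}{\Gamma}$ is an $\frac\infty\infty$ form. I would evaluate it as the limit at $u=-N$ with $N=2m-2j-1$, using $\psi(u)\sim-\frac1{u+N}$ and $\Gamma(u)\sim\frac{(-1)^N}{N!}\frac1{u+N}$ to get $\frac{\psi(u)}{\Gamma(u)}\to(-1)^{N+1}N!=(2m-2j-1)!$ (the sign being $+1$ since $N+1=2m-2j$ is even). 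Re-indexing $i=m-j$ turns the head into $w^{2m}\sum_{i=1}^m(2i-1)!\,w^{-2i}$.

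Adding the two pieces and restoring $w=\frac{4\pi^2 n}{y}$ reproduces the stated formula. The main obstacle, as flagged, is the correct finite evaluation of the $j<m$ terms at the coincident poles of $\psi$ and $\Gamma$; equivalently one may bypass the $\frac\infty\infty$ issue entirely by reading $\frac{\psi(1-a+2j)}{\Gamma(1-a+2j)}$ as the (entire) derivative $\frac{d}{da}\frac1{\Gamma(1-a+2j)}$ and computing $\frac{d}{du}\frac1{\Gamma(u)}\big|_{u=-N}=(-1)^N N!$ directly from the factorization $\frac1{\Gamma(u)}=(-1)^N N!\,(u+N)+O((u+N)^2)$, the factor $du/da=-1$ restoring the sign. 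A secondary point worth a line is justifying the collapse $\Phi(a)=\sum_j w^{2j}/\Gamma(1-a+2j)$ for generic $a$ and invoking analyticity so that it, and hence the derivative computation, is valid at $a=2m$.
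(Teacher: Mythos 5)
Your proposal is correct and follows essentially the same route as the paper: collapse the regularized ${}_1F_2$ via the duplication formula to $\sum_{j\ge 0} w^{2j}/\Gamma(1-a+2j)$ with $w=4\pi^2 n/y$, differentiate term by term to get $\sum_j \psi(1-a+2j)w^{2j}/\Gamma(1-a+2j)$, split at $j=m$, evaluate the finite head by the limit $\psi(u)/\Gamma(u)\to(-1)^{N+1}N!$ at $u=-N$, and identify the tail with \eqref{suminsinh}. The paper phrases the tail evaluation as the $m=0$ case of Lemma \ref{closedformpsi} with $z=2\pi^2 n/y$, which is exactly the identity you invoke, so there is no substantive difference.
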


\begin{proof}
Using the series definition of ${}_1F_2$ and \eqref{dupl} in the first step, we see that
\begin{align*}
\frac{d}{da}\left(\frac{1}{\Gamma(1-a)}{}_1F_2\left(1;1-\frac{a}{2},\frac{1-a}{2};\frac{4\pi^4n^2}{y^2}\right)\right)&=\frac{d}{da}\left(\sum_{k=0}^\infty\frac{2^{2k}}{\Gamma(1-a+2k)}\left(\frac{4\pi^4n^2}{y^2}\right)^{k}\right)\nonumber\\
&=\sum_{k=0}^\infty\frac{\psi(1-a+2k)}{\Gamma(1-a+2k)}\left(\frac{16\pi^4n^2}{y^2}\right)^{k}.
\end{align*}
We evaluate the derivative in the above equation at $a=2m$ to get
{\allowdisplaybreaks\begin{align*}
&\frac{d}{da}\left(\frac{1}{\Gamma(1-a)}{}_1F_2\left(1;1-\frac{a}{2},\frac{1-a}{2};\frac{4\pi^4n^2}{y^2}\right)\right)\Bigg|_{a=2m}\nonumber\\
&=\lim_{a\to 2m}\sum_{k=0}^{m-1}\frac{\psi(1+2k-a)}{\Gamma(1+2k-a)}\left(\frac{16\pi^4n^2}{y^2}\right)^{k}+\sum_{k=m}^{\infty}\frac{\psi(1+2k-2m)}{\Gamma(1+2k-2m)}\left(\frac{16\pi^4n^2}{y^2}\right)^{k}\nonumber\\
&=\left(\frac{16\pi^4n^2}{y^2}\right)^{m}\left(\sum_{j=1}^{m}\lim_{a\to 2m}\frac{\psi(1+2m-2j-a)}{\Gamma(1+2m-2j-a)}\left(\frac{16\pi^4n^2}{y^2}\right)^{-j}+\sum_{j=0}^{\infty}\frac{\psi(1+2j)}{\Gamma(1+2j)}\left(\frac{16\pi^4n^2}{y^2}\right)^{j}\right)\nonumber\\
&=\left(\frac{4\pi^2n}{y}\right)^{2m}\Bigg\{\sum_{j=1}^m(2j-1)!\left(\frac{4\pi^2n}{y}\right)^{-2j}+\sinh\left(\frac{4\pi^2n}{y}\right)\mathrm{Shi}\left(\frac{4\pi^2n}{y}\right)\nonumber\\
&\quad-\cosh\left(\frac{4\pi^2n}{y}\right)\mathrm{Chi}\left(\frac{4\pi^2n}{y}\right)+\log\left(\frac{4\pi^2n}{y}\right)\cosh\left(\frac{4\pi^2n}{y}\right)\Bigg\},
\end{align*}}
where in the last step, we used the fact that $\lim_{a\to 2m}\displaystyle\frac{\psi(1+2m-2j-a)}{\Gamma(1+2m-2j-a)}=(2j-1)!$ as well as Lemma \ref{closedformpsi} with $m=0$ and $z=2\pi^2n/y$. 
\end{proof}

We are now ready to prove Theorem \ref{a=2mcase}.

\begin{proof}[Theorem \textup{\ref{a=2mcase}}][]
First, using the series definition of ${}_1F_{2}$, we have
\begin{align}\label{9.12}
\lim_{a\to 2m}\frac{1}{\Gamma(1-a)}{}_1F_2\left(1;1-\frac{a}{2},\frac{1-a}{2};\frac{4\pi^4n^2}{y^2}\right)=\left(\frac{4\pi^2n}{y}\right)^{2m}\cosh\left(\frac{4\pi^2n}{y}\right).
\end{align}
Using the above identity, we see that right-hand side of\eqref{maineqn} has $\frac{0}{0}$ form. Also due to the term $\displaystyle\frac{\zeta(-a)}{\sin\left(\frac{\pi a}{2}\right)}$, the left-hand side of \eqref{maineqn} has $\frac{0}{0}$ form. Therefore in order to let $a\to2m$ in Theorem \ref{main}, we use L'Hopital's rule. First note that 
\begin{align}\label{9.13}
&\frac{d}{da}\left(\frac{(2\pi n)^{-a}}{\Gamma(1-a)} {}_1F_2\left(1;\frac{1-a}{2},1-\frac{a}{2};\frac{4\pi^4n^2}{y^2} \right) -\left(\frac{2\pi}{y}\right)^{a}\cosh\left(\frac{4\pi^2n}{y}\right)\right)\nonumber\\
&=-\frac{(2\pi n)^{-a}\log(2\pi n)}{\Gamma(1-a)}{}_1F_2\left(1;\frac{1-a}{2},1-\frac{a}{2};\frac{4\pi^4n^2}{y^2} \right)\nonumber\\
&\quad+(2\pi n)^{-a}\frac{d}{da}\left(\frac{1}{\Gamma(1-a)}{}_1F_2\left(1;\frac{1-a}{2},1-\frac{a}{2};\frac{4\pi^4n^2}{y^2} \right)\right)\nonumber\\
&\quad-\left(\frac{2\pi}{y}\right)^a\log\left(\frac{2\pi}{y}\right)\cosh\left(\frac{4\pi^2n}{y}\right).
\end{align}
Let $a\to 2m$ in \eqref{9.13}, then invoke Lemma \ref{der1f2at2m} with $z=\frac{2\pi^2n}{y}$ and then use \eqref{9.12} so as to arrive at
\begin{align}\label{additional}
&\frac{d}{da}\left(\frac{(2\pi n)^{-a}}{\Gamma(1-a)} {}_1F_2\left(1;\frac{1-a}{2},1-\frac{a}{2};\frac{4\pi^4n^2}{y^2} \right) -\left(\frac{2\pi}{y}\right)^{a}\cosh\left(\frac{4\pi^2n}{y}\right)\right)\Bigg|_{a=2m}\nonumber\\
&=\left(\frac{2\pi}{y}\right)^{2m}\Bigg\{\sinh\left(\frac{4\pi^2n}{y}\right)\mathrm{Shi}\left(\frac{4\pi^2n}{y}\right)-\cosh\left(\frac{4\pi^2n}{y}\right)\mathrm{Chi}\left(\frac{4\pi^2n}{y}\right)+\sum_{j=1}^m(2j-1)!\left(\frac{4\pi^2n}{y}\right)^{-2j}\Bigg\}.
\end{align}
We wish to take limit $a\to 2m$ on both sides of \eqref{maineqn}. First, the right-hand side is evaluated using L'Hopital's rule and \eqref{additional}.
{\allowdisplaybreaks\begin{align}\label{rhseqn}
&\lim_{a\to2m}\Bigg\{\frac{2\pi}{y\sin\left(\frac{\pi a}{2}\right)}\sum_{n=1}^\infty \sigma_{a}(n)\Bigg(\frac{(2\pi n)^{-a}}{\Gamma(1-a)} {}_1F_2\left(1;\frac{1-a}{2},1-\frac{a}{2};\frac{4\pi^4n^2}{y^2} \right) -\left(\frac{2\pi}{y}\right)^{a}\cosh\left(\frac{4\pi^2n}{y}\right)\Bigg)\Bigg\}\nonumber\\
&=(-1)^m\frac{2}{\pi}\left(\frac{2\pi}{y}\right)^{2m+1}\sum_{n=1}^\infty\sigma_{2m}(n)\Bigg\{\sinh\left(\frac{4\pi^2n}{y}\right)\mathrm{Shi}\left(\frac{4\pi^2n}{y}\right)\nonumber\\
&\quad-\cosh\left(\frac{4\pi^2n}{y}\right)\mathrm{Chi}\left(\frac{4\pi^2n}{y}\right)+\sum_{j=1}^m(2j-1)!\left(\frac{4\pi^2n}{y}\right)^{-2j}\Bigg\}.
\end{align}}
We now evaluate the left-hand side of \eqref{maineqn} as $a\to2m$. Using $\zeta(-2m)=0$, we have
\begin{align}\label{lhseqn}
&\lim_{a\to2m}\left\{\sum_{n=1}^\infty  \sigma_a(n)e^{-ny}+\frac{1}{2}\left(\left(\frac{2\pi}{y}\right)^{1+a}\mathrm{cosec}\left(\frac{\pi a}{2}\right)+1\right)\zeta(-a)-\frac{\zeta(1-a)}{y}\right\}\nonumber\\
&=\sum_{n=1}^\infty \sigma_{2m}(n)e^{-ny}-\frac{(-1)^m}{\pi}\left(\frac{2\pi}{y}\right)^{2m+1}\zeta'(-2m)-\frac{\zeta(1-2m)}{y}\nonumber\\
&=\sum_{n=1}^\infty \sigma_{2m}(n)e^{-ny}-\frac{(2m)!}{y^{2m+1}}\zeta(2m+1)+\frac{B_{2m}}{2my},
\end{align}
where in the last step we used \cite[Equation (1)]{ktyham}
\begin{align}\label{zetaprime}
\zeta'(-2m)=(-1)^{m}\frac{(2m)!}{2(2\pi)^{2m}}\zeta(2m+1)
\end{align}
as well as \cite[p.~264, Equation (17)]{apostol}
\begin{align}\label{zeta1-2m}
\zeta(1-2m)=-\frac{B_{2m}}{2m}.
\end{align}
Hence from \eqref{rhseqn} and \eqref{lhseqn}, we obtain \eqref{a=2midentity}.
\end{proof}

\subsection{A transformation for $\sum_{n=1}^{\infty}\sigma_{-2m}(n)e^{-ny}, m\in\mathbb{N}$}\label{aeven-2m}

\begin{lemma}\label{derivatveof1f2a=-2m}
Let $m\in\mathbb{N}\cup\{0\}$. Let $\textup{Shi}(z)$ and $\textup{Chi}(z)$ be defined in \eqref{shichi}. Then for $\textup{Re}(z)>0$,
\begin{align}\label{derivatveof1f2a=-2meqn}
&\frac{d}{da}{}_1F_2\left(1;\frac{1-a}{2},1-\frac{a}{2};z^2 \right)\Bigg|_{a=-2m}\nonumber\\
&=\frac{\Gamma(2m+1)}{(2z)^{2m}}\left\{\sinh(2z)\mathrm{Shi}(2z)-\cosh(2z)\mathrm{Chi}(2z)+\log(2z)\cosh(2z)\right\}\nonumber\\
&\quad-\frac{\Gamma(2m+1)}{(2z)^{2}}\sum_{j=0}^{m-1}\frac{\psi(2m-2j-1)}{\Gamma(2m-2j-1)}(2z)^{-2j}-\psi(2m+1){}_1F_2\left(1;m+1,m+\frac{1}{2};z^2\right).
\end{align}
\end{lemma}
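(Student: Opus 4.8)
The plan is to reduce everything to Lemma \ref{closedformpsi} by first rewriting the ${}_1F_2$ so that its entire dependence on $a$ is carried by reciprocal Gamma factors. Starting from the series definition \eqref{pfqgen} and using $(1)_k=k!$, one has
\begin{equation*}
{}_1F_2\left(1;\frac{1-a}{2},1-\frac{a}{2};z^2\right)=\sum_{k=0}^\infty\frac{z^{2k}}{\left(\frac{1-a}{2}\right)_k\left(1-\frac{a}{2}\right)_k}.
\end{equation*}
Applying the duplication formula \eqref{dupl} with $s=\frac{1-a}{2}+k$, so that $s+\tfrac12=1-\tfrac a2+k$, gives $\left(\frac{1-a}{2}\right)_k\left(1-\frac a2\right)_k=2^{-2k}\Gamma(1-a+2k)/\Gamma(1-a)$, whence
\begin{equation*}
{}_1F_2\left(1;\frac{1-a}{2},1-\frac{a}{2};z^2\right)=\Gamma(1-a)\sum_{k=0}^\infty\frac{(2z)^{2k}}{\Gamma(1-a+2k)}.
\end{equation*}
This is the key preliminary reduction; the super-exponential decay of $1/\Gamma(1-a+2k)$ makes the series (and the one obtained after differentiation) locally uniformly convergent in $a$, which justifies differentiating term by term.

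Next I would differentiate with respect to $a$ by the product rule, using $\frac{d}{da}\Gamma(1-a)=-\Gamma(1-a)\psi(1-a)$ and $\frac{d}{da}\frac{1}{\Gamma(1-a+2k)}=\frac{\psi(1-a+2k)}{\Gamma(1-a+2k)}$, to obtain
\begin{equation*}
\frac{d}{da}{}_1F_2\left(1;\frac{1-a}{2},1-\frac a2;z^2\right)=-\psi(1-a)\,{}_1F_2\left(1;\frac{1-a}{2},1-\frac a2;z^2\right)+\Gamma(1-a)\sum_{k=0}^\infty\frac{\psi(1-a+2k)}{\Gamma(1-a+2k)}(2z)^{2k}.
\end{equation*}
Here I would stress a point that makes the present case \emph{cleaner} than the $a=2m$ case treated in Lemma \ref{der1f2at2m}: at $a=-2m$ every argument $1-a+2k=1+2m+2k$ is a positive integer, so none of the $1/\Gamma$ factors vanishes and no delicate limits are needed. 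Setting $a=-2m$, the first term immediately becomes $-\psi(2m+1)\,{}_1F_2\left(1;m+\tfrac12,m+1;z^2\right)$, which is precisely the last term of \eqref{derivatveof1f2a=-2meqn}.

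It then remains to identify the second term. Using the Pochhammer duplication $(2m+1)_{2k}=2^{2k}\left(m+\tfrac12\right)_k(m+1)_k$ (a restatement of \eqref{dupl}), one has $\Gamma(2m+1)/\Gamma(2m+1+2k)=1/\big(2^{2k}\left(m+\tfrac12\right)_k(m+1)_k\big)$, so that
\begin{equation*}
\Gamma(2m+1)\sum_{k=0}^\infty\frac{\psi(2m+1+2k)}{\Gamma(2m+1+2k)}(2z)^{2k}=\sum_{k=0}^\infty\frac{\psi(2k+2m+1)}{(m+1)_k\left(m+\tfrac12\right)_k}z^{2k},
\end{equation*}
which is exactly the left-hand side of Lemma \ref{closedformpsi}. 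Invoking that lemma replaces this series by the bracketed $\mathrm{Shi}/\mathrm{Chi}/\log$ expression together with the finite correction sum $-\frac{\Gamma(2m+1)}{(2z)^2}\sum_{j=0}^{m-1}\frac{\psi(2m-2j-1)}{\Gamma(2m-2j-1)}(2z)^{-2j}$, and combining with the first term yields \eqref{derivatveof1f2a=-2meqn}. The only genuine obstacle is bookkeeping: correctly applying the two duplication identities and justifying the termwise differentiation and the subsequent interchange of summation orders; the substantive analytic content (the closed form of the $\psi$-series) has already been isolated in Lemma \ref{closedformpsi}, so beyond those routine verifications there is nothing further to prove.
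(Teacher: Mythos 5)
Your proposal is correct and follows essentially the same route as the paper: both reduce the derivative to the expression $\sum_{j\ge0}z^{2j}\bigl(\psi(2j+2m+1)-\psi(2m+1)\bigr)/\bigl((m+1)_j(m+\tfrac12)_j\bigr)$ and then invoke Lemma \ref{closedformpsi}. The only cosmetic difference is that you apply the duplication formula \eqref{dupl} before differentiating (turning the Pochhammer product into $\Gamma(1-a+2k)/\Gamma(1-a)$ and using the product rule), whereas the paper differentiates the Pochhammer symbols logarithmically and telescopes the resulting harmonic sums into $\psi(2j+1-a)-\psi(1-a)$; the intermediate identity and the conclusion are identical.
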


\begin{proof}
Using the series definition of ${}_1F_2$, we have
\begin{align*}
	\frac{d}{da}{}_1F_2\left(1;\frac{1-a}{2},1-\frac{a}{2};z^2 \right)
    &=-\sum_{j=0}^{\infty}\frac{z^{2j}}{\left(\frac{1-a}{2}\right)_j\left(1-\frac{a}{2}\right)_j}\frac{d}{da}\left(\log\left(\frac{1-a}{2}\right)_j+\log\left(1-\frac{a}{2}\right)_j\right)	\nonumber\\	
     &=\frac{1}{2}\sum_{j=0}^{\infty}\frac{z^{2j}}{\left(\frac{1-a}{2}\right)_j\left(1-\frac{a}{2}\right)_j}\sum_{k=0}^{j-1}\left(\frac{1}{\frac{1-a}{2}+k}+\frac{1}{1-\frac{a}{2}+k}\right)\nonumber\\	
      &=\sum_{j=0}^{\infty}z^{2j}\frac{\psi(2j+1-a)-\psi(1-a)}{(1-\frac{a}{2})_j(\frac{1-a}{2})_j}.
      \end{align*}
Thus,
\begin{align*}
\frac{d}{da}{}_1F_2\left(1;\frac{1-a}{2},1-\frac{a}{2};z^2 \right)\Bigg|_{a=-2m}
=\sum_{j=0}^{\infty}z^{2j}\frac{\psi(2j+2m+1)}{(m+1)_j(m+\frac{1}{2})_j}-\psi(2m+1){}_1F_2\left(1;m+1,m+\frac{1}{2};z^2\right).
\end{align*}
Using Lemma \ref{closedformpsi} in the above equation, we arrive at \eqref{derivatveof1f2a=-2meqn}.
\end{proof}

\begin{lemma}\label{1f2at-2m}
Let $m\in\mathbb{N}\cup\{0\}$, $n\in\mathbb{C}$ and $y\in\mathbb{C}\backslash\{0\}$,
\begin{align}\label{1f2at-2meqn}
&\frac{1}{\Gamma(2m+1)}{}_1F_2\left(1;m+1,m+\frac{1}{2};\frac{4\pi^4n^2}{y^2}\right)\nonumber\\
&=\left(\frac{4\pi^2n}{y}\right)^{-2m}\cosh\left(\frac{4\pi^2n}{y}\right)-\frac{y^2}{16\pi^4n^2}\sum_{k=0}^{m-1}\frac{1}{\Gamma(2m-2k-1)}\left(\frac{4\pi^2n}{y}\right)^{-2k}.
\end{align}
\end{lemma}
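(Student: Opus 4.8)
The plan is to strip the hypergeometric function down to an elementary power series. Writing $w:=\frac{4\pi^4n^2}{y^2}$ and $z:=\frac{4\pi^2n}{y}$, so that $w=z^2/4$, I would first use $(1)_j=j!$ to record
\begin{equation*}
{}_1F_2\left(1;m+1,m+\tfrac12;w\right)=\sum_{j=0}^{\infty}\frac{w^{j}}{(m+1)_j\left(m+\tfrac12\right)_j}.
\end{equation*}
The crucial step is to collapse the two Pochhammer symbols in the denominator by means of the duplication formula \eqref{dupl}. Expressing $(m+1)_j\left(m+\tfrac12\right)_j=\frac{\Gamma(m+1+j)\Gamma(m+\tfrac12+j)}{\Gamma(m+1)\Gamma(m+\tfrac12)}$ and applying \eqref{dupl} with $s=m+j+\tfrac12$ in the numerator and $s=m+\tfrac12$ in the denominator yields
\begin{equation*}
(m+1)_j\left(m+\tfrac12\right)_j=\frac{\Gamma(2m+2j+1)}{2^{2j}\,\Gamma(2m+1)}.
\end{equation*}

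Substituting this and using $2^{2j}w^{j}=2^{2j}(z^2/4)^{j}=z^{2j}$, I obtain the clean identity
\begin{equation*}
\frac{1}{\Gamma(2m+1)}{}_1F_2\left(1;m+1,m+\tfrac12;w\right)=\sum_{j=0}^{\infty}\frac{z^{2j}}{\Gamma(2m+2j+1)}.
\end{equation*}
The rest is a matter of shifting the summation index. Setting $k=m+j$ rewrites the right-hand side as $z^{-2m}\sum_{k=m}^{\infty}\frac{z^{2k}}{(2k)!}$, and I would then split off the missing initial terms of the hyperbolic cosine, via $\sum_{k=0}^{\infty}z^{2k}/(2k)!=\cosh(z)$, to express it as $z^{-2m}\bigl(\cosh z-\sum_{k=0}^{m-1}z^{2k}/(2k)!\bigr)$. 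The first piece is already the leading term $\left(\frac{4\pi^2n}{y}\right)^{-2m}\cosh\left(\frac{4\pi^2n}{y}\right)$ of the claimed right-hand side.

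Finally I would re-index the finite correction sum to bring it into the stated form. Replacing $k$ by $m-1-k$ in $z^{-2m}\sum_{k=0}^{m-1}z^{2k}/(2k)!$ sends the exponent $2k-2m$ to $-2-2k$ and the factorial $(2k)!$ to $\Gamma(2m-2k-1)$, giving $z^{-2}\sum_{k=0}^{m-1}z^{-2k}/\Gamma(2m-2k-1)$; recalling $z=\frac{4\pi^2n}{y}$ and hence $z^{-2}=\frac{y^2}{16\pi^4n^2}$ reproduces exactly the right-hand side of \eqref{1f2at-2meqn}. Since the ${}_1F_2$ is entire in its argument, no analytic subtlety arises and the identity is an equality of entire functions valid for all $n\in\mathbb{C}$ and $y\in\mathbb{C}\backslash\{0\}$; the only delicate point is the bookkeeping in the two index shifts, which is the one place an off-by-one or sign slip could occur.
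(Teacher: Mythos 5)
Your proof is correct, and it takes a genuinely different route from the paper's. The paper proves this lemma by iterating the contiguous relation ${}_1F_2(a+1;b+1,c+1;x)=-\frac{bc}{x}\left({}_1F_2(a;b,c;x)-{}_1F_2(a+1;b,c;x)\right)$ downward from parameters $\left(m+1,m+\tfrac12\right)$ to $\left(1,\tfrac12\right)$, peeling off one term of the finite correction sum at each step and finishing with ${}_1F_2\left(1;1,\tfrac12;z^2/4\right)=\cosh(z)$. You instead collapse the product $(m+1)_j\left(m+\tfrac12\right)_j$ in one stroke via the Legendre duplication formula, reducing the ${}_1F_2$ to the tail $\sum_{k\geq m}z^{2k}/(2k)!$ of the $\cosh$ series, after which only index shifts remain. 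Your computation checks out at every step: the duplication identity gives $(m+1)_j\left(m+\tfrac12\right)_j=\Gamma(2m+2j+1)/\bigl(2^{2j}\Gamma(2m+1)\bigr)$, the substitution $k=m+j$ and the split of $\cosh$ are right, and the reindexing $k\mapsto m-1-k$ correctly turns $(2k)!$ into $\Gamma(2m-2k-1)$ and produces the prefactor $z^{-2}=y^2/(16\pi^4n^2)$. The $m=0$ case degenerates to the empty sum and ${}_1F_2\left(1;1,\tfrac12;z^2/4\right)=\cosh(z)$, consistent with both arguments. What each approach buys: yours is shorter, avoids any induction or iteration bookkeeping, and makes the structure (a truncated $\cosh$ series) transparent; it is also the same series-plus-duplication device the paper itself uses in the neighbouring Lemma on the $a$-derivative of the ${}_1F_2$. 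The paper's recurrence route is more in the spirit of classical contiguous-function manipulations and generalizes to situations where the parameters do not collapse so neatly under duplication, but for this particular identity it is the longer path.
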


\begin{proof}
Using the series definition of ${}_1F_{2}$, it is easy to see that
\begin{align}\label{1f2transrec}
{}_1F_2\left(a+1;b+1,c+1;x\right)=-\frac{bc}{x}\left({}_1F_2\left(a;b,c;x\right)-{}_1F_2\left(a+1;b,c;x\right)\right).
\end{align}
Use \eqref{1f2transrec} with $a=0,\ b=m,\ c=m-1/2$ and $x=4\pi^4n^2/y^2$ to see that
\begin{align*}
{}_1F_2\left(1;m+1,m+\frac{1}{2};\frac{4\pi^4n^2}{y^2}\right)&=-m\left(m-\frac{1}{2}\right)\left(\frac{4\pi^4n^2}{y^2}\right)^{-1}\left(1-{}_1F_2\left(1;m,m-\frac{1}{2};\frac{4\pi^4n^2}{y^2}\right)\right)
\end{align*}
Again invoke \eqref{1f2transrec} with $a=0,\ b=m-1,\ c=m-3/2$ and $x=4\pi^4n^2/y^2$ on the right-hand side of the above equation to get
{\allowdisplaybreaks\begin{align*}
&{}_1F_2\left(1;m+1,m+\frac{1}{2};\frac{4\pi^4n^2}{y^2}\right)\nonumber\\
&=-m\left(m-\frac{1}{2}\right)\left(\frac{4\pi^4n^2}{y^2}\right)^{-1}-m(m-1)\left(m-\frac{1}{2}\right)\left(m-\frac{3}{2}\right)\left(\frac{4\pi^4n^2}{y^2}\right)^{-2}\nonumber\\
&\qquad\times\left(1-{}_1F_2\left(1;m-1,m-\frac{3}{2};\frac{4\pi^4n^2}{y^2}\right)\right)\nonumber\\
&=\Gamma(m+1)\Gamma\left(m+\frac{1}{2}\right)\Bigg\{-\frac{1}{\Gamma(m)\Gamma\left(m-\frac{1}{2}\right)}\left(\frac{4\pi^4n^2}{y^2}\right)^{-1}-\frac{1}{\Gamma(m-1)\Gamma\left(m-\frac{3}{2}\right)}\left(\frac{4\pi^4n^2}{y^2}\right)^{-2}\nonumber\\
&\qquad+\frac{1}{\Gamma(m-1)\Gamma\left(m-\frac{3}{2}\right)}\left(\frac{4\pi^4n^2}{y^2}\right)^{-2}{}_1F_2\left(1;m-1,m-\frac{3}{2};\frac{4\pi^4n^2}{y^2}\right)\Bigg\}.
\end{align*}}
Iterating this process using \eqref{1f2transrec} and employing the elementary fact ${}_1F_2\left(1;1,\frac{1}{2};\frac{4\pi^4n^2}{y^2}\right)=\cosh\left(\frac{4\pi^2n}{y}\right)$, we deduce that
{\allowdisplaybreaks\begin{align*}
&{}_1F_2\left(1;m+1,m+\frac{1}{2};\frac{4\pi^4n^2}{y^2}\right)\nonumber\\
&=-\Gamma(m+1)\Gamma\left(m+\frac{1}{2}\right)\left(\sum_{k=0}^{m-1}\frac{(4\pi^4n^2/y^2)^{-(k+1)}}{\Gamma(m-k)\Gamma\left(m-k-\frac{1}{2}\right)}-\frac{1}{\sqrt{\pi}}\left(\frac{4\pi^4n^2}{y^2}\right)^{-m}\cosh\left(\frac{4\pi^2n}{y}\right)\right).
\end{align*}}
Using \eqref{dupl} twice and then dividing both sides by $\Gamma(2m+1)$, we arrive at \eqref{1f2at-2meqn}.
\end{proof}

We are now ready to prove Theorem \ref{trans-2m}.

\begin{proof}[Theorem \textup{\ref{trans-2m}}][]
Using \eqref{def2varbessel} and \eqref{am}, we rewrite \eqref{extendedideqn} as
\begin{align}\label{a=-2mbeforelimit}
&\sum_{n=1}^\infty  \sigma_a(n)e^{-ny}+\frac{1}{2}\zeta(-a)-\frac{\zeta(1-a)}{y}+\frac{1}{\sin\left(\frac{\pi a}{2}\right)}\Bigg\{\frac{1}{2}\left(\frac{2\pi}{y}\right)^{1+a}\zeta(-a)\nonumber\\
&\quad+y(2\pi)^{-a-3}\sum_{k=0}^{m-1}\frac{\zeta(a+2k+2)\zeta(2k+2)}{\Gamma(-a-1-2k)}\left(\frac{4\pi^2}{y}\right)^{-2k}\Bigg\}\nonumber\\
&=\frac{1}{\sin\left(\frac{\pi a}{2}\right)}\sum_{n=1}^\infty\sigma_a(n)\Bigg\{\frac{2\pi}{y}\left(\frac{(2\pi n)^{-a}}{\Gamma(1-a)} {}_1F_2\left(1;\frac{1-a}{2},1-\frac{a}{2};\frac{4\pi^4n^2}{y^2} \right) -\left(\frac{2\pi}{y}\right)^{a}\cosh\left(\frac{4\pi^2n}{y}\right)\right)\nonumber\\
&\quad+\frac{yn^{-a-2}}{2\pi^{a+\frac{5}{2}}}\sum_{k=0}^{m-1}\frac{(2\pi^2n/y)^{-2k}}{\Gamma\left(-\frac{a}{2}-\frac{1}{2}-k\right)\Gamma\left(-\frac{a}{2}-k\right)}\Bigg\}.
\end{align}
We wish to take $a\to-2m$ in  \eqref{a=-2mbeforelimit}.  We show that we have $\frac{0}{0}$ form on both sides of \eqref{a=-2mbeforelimit} at $a=-2m$. By invoking Lemma \ref{1f2at-2m}, we observe that
{\allowdisplaybreaks\begin{align}\label{rhszero}
&\Bigg[\frac{2\pi}{y}\left(\frac{(2\pi n)^{-a}}{\Gamma(1-a)} {}_1F_2\left(1;\frac{1-a}{2},1-\frac{a}{2};\frac{4\pi^4n^2}{y^2} \right) -\left(\frac{2\pi}{y}\right)^{a}\cosh\left(\frac{4\pi^2n}{y}\right)\right)\nonumber\\
&\quad+\frac{yn^{-a-2}}{2\pi^{a+\frac{5}{2}}}\sum_{k=0}^{m-1}\frac{(2\pi^2n/y)^{-2k}}{\Gamma\left(-\frac{a}{2}-\frac{1}{2}-k\right)\Gamma\left(-\frac{a}{2}-k\right)}\Bigg]_{a=-2m}=0.
\end{align}}
Also,
{\allowdisplaybreaks\begin{align}\label{lhszero}
&\left[\frac{1}{2}\left(\frac{2\pi}{y}\right)^{1+a}\zeta(-a)+y(2\pi)^{-a-3}\sum_{k=0}^{m-1}\frac{\zeta(a+2k+2)\zeta(2k+2)}{\Gamma(-a-1-2k)}\left(\frac{4\pi^2}{y}\right)^{-2k}\right]_{a=-2m}\nonumber\\
&=\frac{1}{2}\left(\frac{2\pi}{y}\right)^{1-2m}\zeta(2m)+\left(\frac{2\pi}{y}\right)^{1-2m}\zeta(0)\zeta(2m)\nonumber\\
&\qquad+y(2\pi)^{2m-3}\sum_{k=0}^{m-2}\frac{\zeta(-2m+2k+2)\zeta(2k+2)}{\Gamma(2m-1-2k)}\left(\frac{4\pi^2}{y}\right)^{-2k}\nonumber\\
&=y(2\pi)^{2m-3}\sum_{j=0}^{m-2}\frac{\zeta(-2j-2)\zeta(2m-2j-4)}{\Gamma(2j+3)}\left(\frac{4\pi^2}{y}\right)^{2j-2m+4}\nonumber\\
&=0,
\end{align}}%
where, in the first step we separated out the term $k=m-1$ from the summation to cancel out the first term of the right-hand side, in the penultimate step we let $k=m-2-j$, and in the last step we used the fact that $\zeta(s)$ has zeros at negative even integers.

By using \eqref{rhszero}, \eqref{lhszero} and the fact that $\sin(m\pi)=0$, we see that we have $\frac{0}{0}$ form on both sides of \eqref{a=-2mbeforelimit}. Therefore we use L'Hopital's rule after letting $a\to-2m$ on both sides of \eqref{a=-2mbeforelimit}. To that end,
\begin{align*}
&\frac{d}{da}\left(\frac{1}{2}\left(\frac{2\pi}{y}\right)^{1+a}\zeta(-a)+y(2\pi)^{-a-3}\sum_{k=0}^{m-1}\frac{\zeta(a+2k+2)\zeta(2k+2)}{\Gamma(-a-1-2k)}\left(\frac{4\pi^2}{y}\right)^{-2k}\right)\nonumber\\
&=\frac{1}{2}\left(\frac{2\pi}{y}\right)^{1+a}\log\left(\frac{2\pi}{y}\right)\zeta(-a)-\frac{1}{2}\left(\frac{2\pi}{y}\right)^{1+a}\zeta'(-a)-y(2\pi)^{-a-3}\log(2\pi)\\
&\quad\times\left\{\frac{\zeta(a+2m)\zeta(2m)}{\G(-a+1-2m)}\left(\frac{4\pi^2}{y}\right)^{2-2m}+\sum_{k=0}^{m-2}\frac{\zeta(a+2k+2)\zeta(2k+2)}{\Gamma(-a-1-2k)}\left(\frac{4\pi^2}{y}\right)^{-2k}\right\}\\
&\quad+y(2\pi)^{-a-3}\left\{\zeta(2m)\left(\frac{4\pi^2}{y}\right)^{2-2m}\left(\frac{\zeta'(a+2m)}{\G(-a+1-2m)}+\frac{\zeta(a+2m)\psi(-a+1-2m)}{\G(-a+1-2m)}\right)\right.\\
&\quad\left.+\sum_{k=0}^{m-2}\zeta(2k+2)\left(\frac{4\pi^2}{y}\right)^{-2k}\left(\frac{\zeta'(a+2k+2)}{\Gamma(-a-1-2k)}+\frac{\zeta(a+2k+2)\psi(-a-1-2k)}{\Gamma(-a-1-2k)}\right)\right\},
\end{align*}
where we separate the $k=m-1$ term in each of the two sums. Thus,
{\allowdisplaybreaks\begin{align}\label{dofrhs}
&\frac{d}{da}\left(\frac{1}{2}\left(\frac{2\pi}{y}\right)^{1+a}\zeta(-a)+y(2\pi)^{-a-3}\sum_{k=0}^{m-1}\frac{\zeta(a+2k+2)\zeta(2k+2)}{\Gamma(-a-1-2k)}\left(\frac{4\pi^2}{y}\right)^{-2k}\right)\Bigg|_{a=-2m}\nonumber\\
&=\frac{1}{2}\left(\frac{2\pi}{y}\right)^{1-2m}\log\left(\frac{2\pi}{y}\right)\zeta(2m)-\frac{1}{2}\left(\frac{2\pi}{y}\right)^{1-2m}\zeta'(2m)+\frac{\gamma}{2}\left(\frac{2\pi}{y}\right)^{1-2m}\zeta(2m)\nonumber\\
&\quad+y(2\pi)^{2m-3}\sum_{k=0}^{m-2}\frac{\zeta'(-2m+2k+2)\zeta(2k+2)}{\Gamma(2m-1-2k)}\left(\frac{4\pi^2}{y}\right)^{-2k}\nonumber\\
&=\left(\frac{2\pi}{y}\right)^{1-2m}\left(\frac{1}{2}\log\left(\frac{2\pi}{y}\right)\zeta(2m)-\frac{1}{2}\zeta'(2m)+\frac{\gamma}{2}\zeta(2m)\right)\nonumber\\
&\quad+\frac{1}{2}\left(\frac{y}{2\pi}\right)^{2m-3}\sum_{k=0}^{m-2}(-1)^{k+1}\zeta(2k+3)\zeta(2m-2k-2)\left(\frac{2\pi}{y}\right)^{2k},
\end{align}}%
where we employed \eqref{zetaprime} in the last step.
Next, invoking Lemma \ref{derivatveof1f2a=-2m} with $z=\frac{2\pi^2n}{y}$ and Lemma \ref{1f2at-2m}, we see upon simplification that
{\allowdisplaybreaks\begin{align}\label{rhsderivative}
&\frac{d}{da}\Bigg[\frac{2\pi}{y}\left(\frac{(2\pi n)^{-a}}{\Gamma(1-a)} {}_1F_2\left(1;\frac{1-a}{2},1-\frac{a}{2};\frac{4\pi^4n^2}{y^2} \right) -\left(\frac{2\pi}{y}\right)^{a}\cosh\left(\frac{4\pi^2n}{y}\right)\right)\nonumber\\
&\quad+\frac{y(2\pi)^{-3}(2\pi n)^{-a}}{n^2}\sum_{k=0}^{m-1}\frac{1}{\Gamma\left(-a-2k-1\right)}\left(\frac{4\pi^2n}{y}\right)^{-2k}\Bigg]\Bigg|_{a=-2m}\nonumber\\
&=\left(\frac{y}{2\pi}\right)^{2m-1}\left\{\sinh\left(\frac{4\pi^2n}{y}\right)\mathrm{Shi}\left(\frac{4\pi^2n}{y}\right)-\cosh\left(\frac{4\pi^2n}{y}\right)\mathrm{Chi}\left(\frac{4\pi^2n}{y}\right)\right\}.
\end{align}}
Now let $a\to-2m$ in \eqref{a=-2mbeforelimit}, interchange the order of limit and summation on both sides, which is permissible because of uniform convergence, and then use \eqref{dofrhs} and \eqref{rhsderivative} so as to obtain 
\begin{align*}
&\sum_{n=1}^\infty \sigma_{-2m}(n) e^{-ny}+\frac{1}{2}\zeta(2m)-\frac{1}{y}\zeta(2m+1)+\frac{2(-1)^{m}}{\pi}\Bigg\{\left(\frac{2\pi}{y}\right)^{1-2m}\Bigg(\frac{1}{2}\log\left(\frac{2\pi}{y}\right)\zeta(2m)\nonumber\\
&-\frac{1}{2}\zeta'(2m)+\frac{\gamma}{2}\zeta(2m)\Bigg)+\frac{1}{2}\left(\frac{y}{2\pi}\right)^{2m-3}\sum_{k=0}^{m-2}(-1)^{k+1}\zeta(2k+3)\zeta(2m-2k-2)\left(\frac{2\pi}{y}\right)^{2k}\Bigg\}\nonumber\\
&=\frac{2(-1)^{m}}{\pi}\left(\frac{y}{2\pi}\right)^{2m-1}\sum_{n=1}^\infty\sigma_{-2m}(n)\left\{\sinh\left(\frac{4\pi^2n}{y}\right)\mathrm{Shi}\left(\frac{4\pi^2n}{y}\right)-\cosh\left(\frac{4\pi^2n}{y}\right)\mathrm{Chi}\left(\frac{4\pi^2n}{y}\right)\right\}.
\end{align*}
Finally, to get to \eqref{resultinsincosh} from the above identity, note that $-\frac{1}{y}\zeta(2m+1)$ can be inducted into the finite sum thereby resulting in its upper index of summation being $m-1$.

We now simplify \eqref{resultinsincosh} further and represent it in the equivalent form given in \eqref{equiforma=-2mano}. Note that the right-hand side of \eqref{resultinsincosh} can be written using \eqref{zero} in the form
{\allowdisplaybreaks\begin{align}\label{almost}
&\frac{2(-1)^{m}}{\pi}\left(\frac{y}{2\pi}\right)^{2m-1}\sum_{n=1}^\infty\sigma_{-2m}(n)\left\{\sinh\left(\frac{4\pi^2n}{y}\right)\mathrm{Shi}\left(\frac{4\pi^2n}{y}\right)-\cosh\left(\frac{4\pi^2n}{y}\right)\mathrm{Chi}\left(\frac{4\pi^2n}{y}\right)\right\}\nonumber\\
&=\frac{2(-1)^{m}}{\pi}\left(\frac{y}{2\pi}\right)^{2m-1}\sum_{n=1}^\infty\sigma_{-2m}(n)\int_0^\infty \frac{t\cos(t)}{t^2+\left(\frac{4\pi^2n}{y}\right)^2}\ dt\nonumber\\
&=\frac{2(-1)^{m}}{\pi}\left(\frac{y}{2\pi}\right)^{2m-1}\sum_{d=1}^\infty d^{-2m}\sum_{k=1}^\infty\int_0^\infty \frac{t\cos(t)}{t^2+\left(\frac{4\pi^2d}{y}\right)^2k^2}\ dt\nonumber\\
&=\frac{(-1)^{m}}{\pi}\left(\frac{y}{2\pi}\right)^{2m-1}\sum_{d=1}^\infty d^{-2m}\left\{\log\left(\frac{2\pi d}{y}\right)-\frac{1}{2}\left(\psi\left(\frac{2\pi id}{y}\right)+\psi\left(-\frac{2\pi id}{y}\right)\right)\right\},
\end{align}}
where we used \eqref{dgkmresult} with $u=\frac{4\pi^2d}{y}$ in the last step.
Using the fact that $\zeta'(s)=-\sum_{n=1}^{\infty}\log n/n^s$ whenever Re$(s)>1$, we see that
\begin{align}\label{sumlog}
\sum_{n=1}^\infty  \frac{1}{n^{2m}}\log\left(\frac{2\pi n}{y}\right)=\zeta(2m)\log\left(\frac{2\pi}{y}\right)-\zeta'(2m).
\end{align}
Finally, substitute \eqref{almost} and \eqref{sumlog} in \eqref{resultinsincosh}, replace $k$ by $m-1-k$ in the finite sum of the resulting identity and then use Euler's formula in \eqref{zeta(2m)} to arrive at \eqref{equiforma=-2mano} upon simplification.
\end{proof}
We now use \eqref{equiforma=-2mano} to obtain a nice companion to Ramanujan's formula \eqref{rameqn}. See also \eqref{berndtanalogue}.
\begin{proof}[Corollary \textup{\ref{companion}}][]
Let $y=2\a, \a\b=\pi^2$ in \eqref{equiforma=-2mano} and multiply both sides by $\a^{-\left(m-\frac{1}{2}\right)}$ to arrive at \eqref{companioneqn} after simplification.
\end{proof}
As special cases of \eqref{equiforma=-2mano}, we obtain two following corollaries involving $\zeta(3)$ and $\zeta(5)$.
\begin{corollary}\label{zeta3}
For $\mathrm{Re}(y)>0$, we have
\begin{align}\label{zeta3formula}
&\sum_{n=1}^\infty \frac{n^{-2}}{e^{ny}-1}+\frac{\pi^2-\gamma y}{12}-\frac{1}{y}\zeta(3)=\frac{y}{4\pi^2}\sum_{n=1}^\infty\frac{1}{n^{2}}\left(\psi\left(\frac{2\pi in}{y}\right)+\psi\left(-\frac{2\pi in}{y}\right)\right).
\end{align}
\end{corollary}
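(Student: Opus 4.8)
The plan is to obtain Corollary~\ref{zeta3} as the special case $m=1$ of the equivalent form \eqref{equiforma=-2mano} of Theorem~\ref{trans-2m}, which is already established for every $m\in\mathbb{N}$ and $\mathrm{Re}(y)>0$. Thus no new analytic input is needed; the whole argument is a specialization followed by elementary rearrangement, and the only care required is in bookkeeping.

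First I would set $m=1$ in \eqref{equiforma=-2mano}. On the left-hand side the finite sum $\sum_{k=0}^{m-1}\frac{B_{2k}}{(2k)!}\zeta(2m-2k+1)y^{2k}$ collapses to its single term $k=0$, which equals $\zeta(3)$ since $B_0=1$. Hence the left-hand side becomes $\tfrac12\zeta(2)+\sum_{n=1}^{\infty}\frac{n^{-2}}{e^{ny}-1}-\frac1y\zeta(3)$.

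Next I would simplify the right-hand side. With $m=1$ one has $(-1)^{m+1}=1$ and $\frac1y\bigl(\frac{y}{2\pi}\bigr)^{2}=\frac{y}{4\pi^2}$, so the prefactor multiplying the braces is $\frac{y}{4\pi^2}$. Splitting off the $2\gamma\zeta(2)$ term inside the braces and substituting $\zeta(2)=\frac{\pi^2}{6}$ produces the explicit constant $\frac{y}{4\pi^2}\cdot 2\gamma\zeta(2)=\frac{\gamma y}{12}$, while the remaining series retains the shape $\frac{y}{4\pi^2}\sum_{n=1}^{\infty}n^{-2}\bigl(\psi(\tfrac{2\pi in}{y})+\psi(-\tfrac{2\pi in}{y})\bigr)$.

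Finally I would transpose the constant $\frac{\gamma y}{12}$ to the left-hand side and combine it with $\tfrac12\zeta(2)=\frac{\pi^2}{12}$, giving the single constant $\frac{\pi^2-\gamma y}{12}$; this is precisely \eqref{zeta3formula}. I expect no genuine obstacle here: the argument is a direct specialization, and the only steps needing attention are the correct handling of the collapsed Bernoulli sum (where $B_0=1$) and the routine arithmetic with $\zeta(2)=\frac{\pi^2}{6}$.
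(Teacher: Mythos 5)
Your proposal is correct and is exactly the paper's proof: the authors simply write ``Let $m=1$ in \eqref{equiforma=-2mano},'' and your explicit bookkeeping (the single $k=0$ term giving $\zeta(3)$, the prefactor $\frac{y}{4\pi^2}$, and the constant $\frac{y}{4\pi^2}\cdot 2\gamma\zeta(2)=\frac{\gamma y}{12}$ combining with $\frac{1}{2}\zeta(2)=\frac{\pi^2}{12}$) checks out.
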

\begin{proof}
Let $m=1$ in \eqref{equiforma=-2mano}.
\end{proof}
\begin{corollary}\label{zeta35}
Let $\mathrm{Re}(y)>0$. Then 
\begin{align*}
\sum_{n=1}^\infty\frac{n^{-4}}{e^{ny}-1}+\left(1+\frac{\gamma y^3}{4\pi^4}\right)\frac{\pi^4}{180}-\frac{1}{y}\zeta(5)-\frac{y}{12}\zeta(3)=-\frac{y^3}{16\pi^4}\sum_{n=1}^\infty\frac{1}{n^{4}}\left(\psi\left(\frac{2\pi in}{y}\right)+\psi\left(-\frac{2\pi in}{y}\right)\right).
\end{align*}
\end{corollary}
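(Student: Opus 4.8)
The plan is to obtain Corollary \ref{zeta35} as the special case $m=2$ of the equivalent transformation \eqref{equiforma=-2mano} established in Theorem \ref{trans-2m}, in exact parallel with the way Corollary \ref{zeta3} arises from the case $m=1$. No new analytic input is required; the whole content of the corollary is the specialization $m=2$ together with the evaluation of the (short) finite sum and the insertion of the relevant special values of $\zeta$ and the Bernoulli numbers.

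First I would set $m=2$ in \eqref{equiforma=-2mano}. On the left-hand side, the constant term becomes $\tfrac12\zeta(4)=\tfrac{\pi^4}{180}$ since $\zeta(4)=\pi^4/90$, and the divisor series is $\sum_{n=1}^\infty n^{-4}/(e^{ny}-1)$. The finite sum $\tfrac1y\sum_{k=0}^{1}\tfrac{B_{2k}}{(2k)!}\zeta(5-2k)\,y^{2k}$ contains only two terms: the $k=0$ term gives $\zeta(5)/y$ (using $B_0=1$), and the $k=1$ term gives $\tfrac{y}{12}\zeta(3)$ (using $B_2=1/6$ and $2!=2$). These account for the $-\zeta(5)/y$ and $-\tfrac{y}{12}\zeta(3)$ on the left-hand side of the stated identity.

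Next I would simplify the right-hand side of \eqref{equiforma=-2mano} at $m=2$. The prefactor $\tfrac{(-1)^{m+1}}{y}\bigl(\tfrac{y}{2\pi}\bigr)^{2m}$ reduces to $-\tfrac{y^3}{16\pi^4}$, while the bracketed constant $2\gamma\zeta(4)=\tfrac{\gamma\pi^4}{45}$ contributes $-\tfrac{\gamma y^3}{720}$. Transposing this $\gamma$-term to the left and combining it with $\tfrac{\pi^4}{180}$ through the identity $\tfrac{\pi^4}{180}+\tfrac{\gamma y^3}{720}=\bigl(1+\tfrac{\gamma y^3}{4\pi^4}\bigr)\tfrac{\pi^4}{180}$ yields precisely the grouped constant appearing in the statement, leaving the surviving digamma series $-\tfrac{y^3}{16\pi^4}\sum_{n=1}^\infty n^{-4}\bigl(\psi(2\pi in/y)+\psi(-2\pi in/y)\bigr)$ as the right-hand side of the corollary. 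The only points demanding any care—and hence the closest thing to an obstacle—are the correct bookkeeping of the two Bernoulli terms in the finite sum and the recombination of the $\gamma$-dependent constants into the factor $1+\gamma y^3/(4\pi^4)$; both are entirely elementary, which completes the argument.
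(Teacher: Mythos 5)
Your proposal is correct and is exactly the paper's proof: the authors also obtain Corollary \ref{zeta35} by setting $m=2$ in \eqref{equiforma=-2mano}, and your bookkeeping of the two Bernoulli terms, the prefactor $-y^3/(16\pi^4)$, and the regrouping of the $\gamma$-term into $\bigl(1+\tfrac{\gamma y^3}{4\pi^4}\bigr)\tfrac{\pi^4}{180}$ all check out.
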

\begin{proof}
Let $m=2$ in \eqref{equiforma=-2mano}.
\end{proof}
\begin{remark}
Corollaries \textup{\ref{zeta3}} and \textup{\ref{zeta35}}  together give the following representation for $\zeta(5)$:
\begin{align*}
\zeta(5)&=\left(1+\frac{\gamma y^3}{4\pi^4}\right)\frac{\pi^4y}{180}-\frac{y^3(\pi^2-\gamma y)}{144}+\sum_{n=1}^\infty\left(\frac{y}{n^4}-\frac{y^3}{12n^2}\right)\frac{1}{e^{ny}-1}\\
&\qquad+\frac{y^4}{16\pi^2}\sum_{n=1}^\infty\left(\frac{1}{3n^2}+\frac{1}{\pi^2n^4}\right)\left(\psi\left(\frac{2\pi in}{y}\right)+\psi\left(-\frac{2\pi in}{y}\right)\right),
\end{align*}
which is valid for $\mathrm{Re}(y)>0$.
\end{remark}
\subsection{A transformation for $\sum_{n=1}^{\infty}d(n)e^{-ny}$}\label{d(n)}

\begin{proof}[Corollary \textup{\ref{kan}}][]
Let $a\to 0$ in Theorem \ref{main}. This gives
\begin{align}\label{maineqn1}
&\sum_{n=1}^\infty  d(n)e^{-ny}+\lim_{a\to 0}\left(\frac{1}{2}\left(\frac{2\pi}{y}\right)^{1+a}\mathrm{cosec}\left(\frac{\pi a}{2}\right)\zeta(-a)-\frac{1}{y}\zeta(1-a)\right)+\frac{1}{2}\zeta(0)\nonumber\\
&=\frac{2\pi}{y}\sum_{n=1}^\infty d(n)\lim_{a\to 0}\frac{1}{\sin\left(\frac{\pi a}{2}\right)}\Bigg(\frac{(2\pi n)^{-a}}{\Gamma(1-a)} {}_1F_2\left(1;\frac{1-a}{2},1-\frac{a}{2};\frac{4\pi^4n^2}{y^2} \right) -\left(\frac{2\pi}{y}\right)^{a}\cosh\left(\frac{4\pi^2n}{y}\right)\Bigg).
\end{align}
We first evaluate the limit on the right-hand side. Note that since ${}_1F_2\left(1;\frac{1}{2},1;\frac{4\pi^4n^2}{y^2}\right)=\cosh\left(\frac{4\pi^2n}{y}\right)$, we have $\frac{0}{0}$ form. Hence we need to use L'Hopital's rule. To that end, use Lemma \ref{der1f2at2m} with $m=0$ so as to obtain
{\allowdisplaybreaks\begin{align}\label{serieseval}
&\lim_{a\to 0}\frac{1}{\sin\left(\frac{\pi a}{2}\right)}\Bigg(\frac{(2\pi n)^{-a}}{\Gamma(1-a)} {}_1F_2\left(1;\frac{1-a}{2},1-\frac{a}{2};\frac{4\pi^4n^2}{y^2} \right) -\left(\frac{2\pi}{y}\right)^{a}\cosh\left(\frac{4\pi^2n}{y}\right)\Bigg)\nonumber\\
&=\frac{2}{\pi}\Bigg\{\sinh\left(\frac{4\pi^2n}{y}\right)\mathrm{Shi}\left(\frac{4\pi^2n}{y}\right)-\cosh\left(\frac{4\pi^2n}{y}\right)\mathrm{Chi}\left(\frac{4\pi^2n}{y}\right)+\log\left(\frac{4\pi^2n}{y}\right)\cosh\left(\frac{4\pi^2n}{y}\right)\nonumber\\
&\quad-\log(2\pi n)\cosh\left(\frac{4\pi^2n}{y}\right)-\log\left(\frac{2\pi}{y}\right)\cosh\left(\frac{4\pi^2n}{y}\right)\Bigg\}\nonumber\\
&=\frac{2}{\pi}\left(\sinh\left(\frac{4\pi^2n}{y}\right)\mathrm{Shi}\left(\frac{4\pi^2n}{y}\right)-\cosh\left(\frac{4\pi^2n}{y}\right)\mathrm{Chi}\left(\frac{4\pi^2n}{y}\right)\right).
\end{align}}
We next evaluate the limit on the left-hand side of \eqref{maineqn1}. Using the well-known power-series expansions of $\zeta(-a), \zeta(1-a), (2\pi)^{1+a}, y^{-(1+a)}$ and $\csc\left(\frac{\pi a}{2}\right)$, as $a\to0$, we find that
	\begin{align}\label{limiteval}
\lim_{a\to 0}\left(\frac{1}{2}\left(\frac{2\pi}{y}\right)^{1+a}\mathrm{cosec}\left(\frac{\pi a}{2}\right)\zeta(-a)-\frac{1}{y}\zeta(1-a)\right)
=\frac{-\gamma+\log y}{y}.
\end{align}
Finally, from \eqref{maineqn1}, \eqref{serieseval} and \eqref{limiteval} and the fact that $\zeta(0)=-1/2$, we arrive at the first identity in \eqref{kaneqn}.

To obtain \eqref{kanot}, we invoke Lemma \ref{minusonelemma} with $w=4\pi^2n/y$ in the first step below and then use \eqref{dgkmresult} with $u=4\pi^2n/y$ in the second step so that
\begin{align*}
&\sum_{n=1}^{\infty}d(n)\left\{\sinh\left(\frac{4\pi^2n}{y}\right)\mathrm{Shi}\left(\frac{4\pi^2n}{y}\right)-\cosh\left(\frac{4\pi^2n}{y}\right)\mathrm{Chi}\left(\frac{4\pi^2n}{y}\right)\right\}\nonumber\\
&=\sum_{k=1}^{\infty}\sum_{n=1}^{\infty}\int_{0}^{\infty}\frac{t\cos(t)}{t^2+\left(\frac{4\pi^2nk}{y}\right)^2}\, dt\nonumber\\
&=\frac{1}{2}\sum_{n=1}^{\infty}\left\{\log\left(\frac{2\pi n}{y}\right)-\frac{1}{2}\left(\psi\left(\frac{2\pi in}{y}\right)+\psi\left(-\frac{2\pi in}{y}\right)\right)\right\}.
\end{align*}
This completes the proof of \eqref{kanot}.
\end{proof}

As an equivalent form of Corollary \ref{kan}, we obtain a result of Wigert and Bellman given in \cite[Theorem 2]{kanemitsu}\footnote{In the statement of their theorem, the $1/\pi^2$ appearing in front of the summation on the right-hand side should be $1/\pi$.}.
\begin{corollary}\label{kan1}
Let $U(a;c;z)$ be defined in \eqref{tricomiu}. For $\textup{Re}(y)>0$, 
\begin{align*}
\sum_{n=1}^{\infty}d(n)e^{-ny}-\frac{1}{4}-\frac{\left(\gamma-\log(y)\right)}{y}=\frac{2}{y}\sum_{n=1}^{\infty}d(n)\left\{U\left(1;1;\frac{4\pi^2 n}{y}\right)+U\left(1;1;-\frac{4\pi^2 n}{y}\right)\right\},
\end{align*}
\end{corollary}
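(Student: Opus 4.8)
The \textbf{key observation} is that the left-hand side of the asserted identity coincides \emph{verbatim} with the left-hand side of \eqref{kaneqn} in Corollary \ref{kan}. Hence it suffices to show that the two right-hand sides agree term by term, that is, for each $n\geq 1$, writing $w=\frac{4\pi^2 n}{y}$,
\begin{equation}\label{Utarget}
U(1;1;w)+U(1;1;-w)=2\left(\sinh(w)\mathrm{Shi}(w)-\cosh(w)\mathrm{Chi}(w)\right).
\end{equation}
Indeed, multiplying \eqref{Utarget} by $\frac{2}{y}d(n)$ and summing over $n$ turns $\frac{2}{y}\sum_n d(n)\{U(1;1;w)+U(1;1;-w)\}$ into $\frac{4}{y}\sum_n d(n)\{\sinh(w)\mathrm{Shi}(w)-\cosh(w)\mathrm{Chi}(w)\}$, which is exactly the right-hand side of \eqref{kaneqn}.

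The plan is to evaluate $U(1;1;\pm w)$ in terms of the exponential integral and then invoke the computation already carried out in the proof of Lemma \ref{minusonelemma}. First I would record the closed form
\begin{equation}\label{Uei}
U(1;1;z)=-e^{z}\,\mathrm{Ei}(-z).
\end{equation}
For $\mathrm{Re}(z)>0$ this is immediate from the standard integral representation $U(1;1;z)=\int_0^\infty e^{-zt}(1+t)^{-1}\,dt$: the substitution $u=1+t$ gives $U(1;1;z)=e^{z}\int_z^\infty \frac{e^{-s}}{s}\,ds=-e^{z}\mathrm{Ei}(-z)$, the last equality being the definition of $\mathrm{Ei}$ recalled in the proof of Lemma \ref{minusonelemma}. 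Since $\mathrm{Re}(y)>0$ forces $\mathrm{Re}(w)>0$, this settles the first summand $U(1;1;w)=-e^{w}\mathrm{Ei}(-w)$ directly.

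For the second summand $U(1;1;-w)$ the argument lies on the negative axis, where the integral representation no longer applies; here I would instead use the logarithmic ($c=1$) series expansion of the Tricomi function, which for $a=c=1$ collapses to $U(1;1;z)=-e^{z}\log z+\sum_{k=0}^{\infty}\frac{\psi(k+1)}{k!}z^{k}$. Comparing this with the power series for $\mathrm{Ei}$ in \eqref{four} and using $\psi(k+1)=-\gamma+H_k$ (with $H_k=\sum_{j=1}^k 1/j$), the identity \eqref{Uei} reduces to the classical binomial--harmonic identity $\sum_{m=1}^{k}\frac{(-1)^{m+1}}{m}\binom{k}{m}=H_k$, valid for every $k$; this yields $U(1;1;-w)=-e^{-w}\mathrm{Ei}(w)$, with $\mathrm{Ei}(w)$ interpreted, as in Lemma \ref{minusonelemma}, via its principal value for $w>0$. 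Adding the two evaluations gives $U(1;1;w)+U(1;1;-w)=-\left(e^{w}\mathrm{Ei}(-w)+e^{-w}\mathrm{Ei}(w)\right)$, which by \eqref{one} equals $2\int_0^\infty \frac{t\cos t}{t^2+w^2}\,dt$ and hence, by \eqref{zero}, equals the right-hand side of \eqref{Utarget}.

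The main obstacle is the branch/domain subtlety in \eqref{Uei} when the argument is negative: the integral representation holds only for $\mathrm{Re}(z)>0$, and on the negative axis both $\log z$ and $\mathrm{Ei}$ require a choice of branch. I expect to resolve this exactly as above, either through the $c=1$ series (analytic in the cut plane) together with the principal-value convention for $\mathrm{Ei}$ on the positive axis already fixed in Lemma \ref{minusonelemma}, or, if one prefers to stay within the half-plane, by establishing \eqref{Utarget} first for $w$ in the right half-plane and then appealing to analytic continuation in $y$, which is legitimate since both sides are analytic for $\mathrm{Re}(y)>0$, precisely as was used to extend Corollary \ref{kan} itself. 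Everything else is routine bookkeeping.
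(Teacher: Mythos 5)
Your proof is correct and follows essentially the same route as the paper: both reduce the claim, via Corollary \ref{kan}, to the identity $U(1;1;w)+U(1;1;-w)=-\left(e^{w}\mathrm{Ei}(-w)+e^{-w}\mathrm{Ei}(w)\right)=2\left(\sinh(w)\mathrm{Shi}(w)-\cosh(w)\mathrm{Chi}(w)\right)$ for $w>0$ and then conclude by combining \eqref{one} and \eqref{zero} and appealing to analytic continuation. The only difference is that the paper simply cites $U(1;1;x)=e^{x}\Gamma(0,x)=-e^{x}\mathrm{Ei}(-x)$ from the literature, whereas you derive this closed form (including the branch discussion for the argument $-w$) from the integral representation and the logarithmic series; this is a harmless, slightly more self-contained substitute for the same step.
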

\begin{proof}
In view of \eqref{kaneqn}, it suffices to show that for Re$(x)>0$,
\begin{equation}\label{ushichi}
U\left(1;1;x\right)+U\left(1;1;-x\right)=2\left(\sinh(x)\mathrm{Shi}(x)-\cosh(x)\mathrm{Chi}(x)\right).
\end{equation}
First let $x>0$. From \cite[p.~612]{kanemitsu}, $U(1;1;x)=e^x\G(0,x)$. Also, from \cite[p.~902, Equation \textbf{8.359.1}]{grad}, $\G(0,x)=-\textup{Ei}(-x)$. Hence
\begin{align}\label{ushichi1}
U\left(1;1;x\right)+U\left(1;1;-x\right)&=e^{x}\G(0,x)+e^{-x}\G(0,-x)\nonumber\\
&=-\left(e^x\textup{Ei}(-x)+e^{-x}\textup{Ei}(x)\right).
\end{align}
The equality in \eqref{ushichi} now follows for $x>0$ from \eqref{ushichi1} and by equating the right-hand sides of \eqref{zero} and \eqref{one}. By analytic continuation, it holds for Re$(x)>0$.
\end{proof}
\section{Transformation formulas for $\sum_{n=1}^{\infty}\sigma_a(n)e^{-ny}$ for $a$ odd}\label{aodd}
As discussed in the Section \ref{mr}, we get some results in modular forms as corollaries of our master identity, that is, Theorem \ref{extendedid}. These results are derived in this section.

\subsection{Modular transformation for Eisenstein series on $\textup{SL}_{2}(\mathbb{Z})$}\label{aodd2m+1}

\begin{proof}[Corollary \textup{\ref{a=2m-1}}][]
Let $a=2m-1, m>1,$ in Theorem \ref{main}. Using \eqref{zeta1-2m}, the fact that $\zeta(-2m)=0,\ m\in\mathbb{N}$, and noting that
\begin{align}\label{derreg1f22m-1}
\lim_{a\to 2m-1}\frac{1}{\Gamma(1-a)}{}_1F_2\left(1;1-\frac{a}{2},\frac{1-a}{2},\frac{4\pi^4n^2}{y^2}\right)=\left(\frac{4\pi^2n}{y}\right)^{2m-1}\sinh\left(\frac{4\pi^2n}{y}\right),
\end{align}
we see that
\begin{align*}
&\sum_{n=1}^\infty\sigma_{2m-1}(n)e^{-ny}-\frac{1}{2}\left((-1)^{m+1}\left(\frac{2\pi}{y}\right)^{2m}+1\right)\frac{B_{2m}}{2m}\nonumber\\
&=\frac{2\pi(-1)^{m}}{y}\sum_{n=1}^\infty\sigma_{2m-1}(n)\left\{\left(\frac{2\pi}{y}\right)^{2m-1}\cosh\left(\frac{4\pi^2n}{y}\right)-\left(\frac{2\pi}{y}\right)^{2m-1}\sinh\left(\frac{4\pi^2n}{y}\right)\right\}\nonumber\\
&=(-1)^m\left(\frac{2\pi}{y}\right)^{2m}\sum_{n=1}^\infty\sigma_{2m-1}(n)e^{-\frac{4\pi^2n}{y}}.
\end{align*}
This gives
\begin{align}\label{a=2m-1eqn}
&\sum_{n=1}^\infty\sigma_{2m-1}(n)e^{-ny}-\frac{1}{2}\left((-1)^{m+1}\left(\frac{2\pi}{y}\right)^{2m}+1\right)\frac{B_{2m}}{2m}=(-1)^m\left(\frac{2\pi}{y}\right)^{2m}\sum_{n=1}^\infty\sigma_{2m-1}(n)e^{-\frac{4\pi^2n}{y}}.
\end{align}
To prove \eqref{a=2m-1equv}, let $y=2\alpha$ with $\alpha\beta=\pi^2$ in \eqref{a=2m-1eqn} and simplify.
\end{proof}

\subsection{Transformation formula for weight-$2$ Eisenstein series on $\textup{SL}_{2}(\mathbb{Z})$}\label{a1}
\begin{proof}[Corollary \textup{\ref{e2transcor}}][]
Let $a=1$ in Theorem \ref{main}. Using the well-known special values $\zeta(-1)=-1/12,\ \zeta(0)=-1/2$ and invoking \eqref{derreg1f22m-1}, we see that
 \begin{align*}
 \sum_{n=1}^\infty\sigma(n)e^{-ny}-\frac{1}{24}\left(1+\frac{4\pi^2}{y^2}\right)+\frac{1}{2y}&=-\frac{4\pi^2}{y^2}\sum_{n=1}^{\infty}\sigma(n)\left(\cosh\left(\frac{4\pi^2n}{y}\right)-\sinh\left(\frac{4\pi^2n}{y}\right)\right)\nonumber\\
&=-\frac{4\pi^2}{y^2}\sum_{n=1}^\infty\sigma(n) e^{-\frac{4\pi^2n}{y}}.
\end{align*}
Letting $y=2\a$ with $\a\b=\pi^2$ then leads to \eqref{e2trans}.
\end{proof}

\subsection{Ramanujan's famous formula for $\zeta(2m+1)$}\label{aodd-2m-1}

We begin with a lemma which evaluates ${}_{\frac{1}{2}}K_{\frac{-2m-1}{2}}(z,0)$ in terms of elementary functions.
\begin{lemma}\label{khalfwithm}
Let $z\in\mathbb{C}$ and $m\in\mathbb{N}\cup\{0\}$. Then
\begin{align}\label{khalfwithmeqn}
{}_{\frac{1}{2}}K_{\frac{-2m-1}{2}}(z,0)&=\frac{(-1)^{m}\sqrt{\pi}}{\sqrt{2}}\left\{z^{-\frac{2m+1}{2}}e^{-z}+z^{\frac{2m-3}{2}}\sum_{k=0}^m\frac{z^{-2k}}{\Gamma\left(2m-2k\right)}\right\}.
\end{align}
\end{lemma}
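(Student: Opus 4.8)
The plan is to substitute $\mu=\tfrac12$, $\nu=-\tfrac{2m+1}{2}=-m-\tfrac12$ and $w=0$ directly into the definition \eqref{def2varbessel} of ${}_{\mu}K_{\nu}(z,w)$ and to reduce each of the two resulting ${}_1F_2$'s to elementary functions. First I would record the scalar factors: $2^{\mu+\nu-1}=2^{-m-1}$ and $\sin(\nu\pi)=(-1)^{m+1}$, so that the leading factor $\pi z^{w}2^{\mu+\nu-1}/\sin(\nu\pi)$ becomes $\pi(-1)^{m+1}2^{-m-1}$. In the second summand of \eqref{def2varbessel} the numerator parameter $\mu+\nu+w+\tfrac12=\tfrac12-m$ coincides with the lower parameter $1+\nu=\tfrac12-m$; since $\tfrac12-m$ is never a non-positive integer, the Pochhammer symbols cancel termwise and the ${}_1F_2$ collapses to ${}_0F_1(-;\tfrac12;z^2/4)=\cosh z$. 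Together with the Gamma quotient $\Gamma(\tfrac12-m)/(\Gamma(\tfrac12-m)\Gamma(\tfrac12))=1/\sqrt\pi$, this term is already elementary.

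The heart of the proof is a closed-form evaluation of the first hypergeometric factor, ${}_1F_2(1;m+1,m+\tfrac32;z^2/4)$, which I expect to be the main obstacle. This is the ``$+\tfrac32$'' analogue of Lemma \ref{1f2at-2m}, and I would establish it by the same device: iterating the contiguous relation \eqref{1f2transrec} with $a=0$, decreasing both lower parameters in step, down to the base case ${}_1F_2(1;1,\tfrac32;z^2/4)=\sinh(z)/z$. This yields
\begin{equation*}
{}_1F_2\!\left(1;m+1,m+\tfrac32;\tfrac{z^2}{4}\right)=\Gamma(2m+2)\left\{z^{-2m-1}\sinh z-\sum_{k=0}^{m-1}\frac{z^{-2k-2}}{\Gamma(2m-2k)}\right\},
\end{equation*}
which checks against $m=0,1$ and follows in general by induction on $m$ using \eqref{1f2transrec}.

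Finally I would assemble the pieces. The duplication formula \eqref{dupl} in the form $\Gamma(2m+2)=\tfrac{2^{2m+1}}{\sqrt\pi}\Gamma(m+1)\Gamma(m+\tfrac32)$ simplifies $\Gamma(2m+2)/(\Gamma(m+1)\Gamma(m+\tfrac32))=2^{2m+1}/\sqrt\pi$, and after collecting the powers of $2$ the overall prefactor reduces cleanly to $(-1)^{m+1}\sqrt{\pi}/\sqrt2$. The powers of $z$ combine so that the $\sinh z$ arising from the first term and the $\cosh z$ from the second both carry the common factor $z^{-m-1/2}$; applying $\sinh z-\cosh z=-e^{-z}$ absorbs a further sign to produce the claimed constant $(-1)^{m}\sqrt\pi/\sqrt2$, the term $z^{-(2m+1)/2}e^{-z}$, and the finite sum $z^{(2m-3)/2}\sum_{k=0}^{m-1}z^{-2k}/\Gamma(2m-2k)$. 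Since $\Gamma(0)=\infty$ makes the $k=m$ summand vanish, the upper limit may be raised to $m$, giving exactly \eqref{khalfwithmeqn}. As a consistency check, the case $m=0$ (empty sum) collapses to $\sqrt{\pi/(2z)}\,e^{-z}=K_{1/2}(z)$, in agreement with \eqref{mueq-nu} and \eqref{khalfz}.
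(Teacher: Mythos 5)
Your proposal is correct and follows essentially the same route as the paper: substitute $\mu=\tfrac12$, $\nu=-\tfrac{2m+1}{2}$, $w=0$ into \eqref{def2varbessel}, collapse the second ${}_1F_2$ to $\cosh z$ via the coincidence of a numerator and denominator parameter, evaluate ${}_1F_2(1;m+1,m+\tfrac32;z^2/4)$ by iterating the contiguous relation \eqref{1f2transrec} down to $\sinh(z)/z$ (your closed form is exactly the paper's \eqref{z21f2}), and finish with $\sinh z-\cosh z=-e^{-z}$ together with the vanishing of the $k=m$ term through $\Gamma(0)$. No gaps.
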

\begin{proof}
Using the definition of ${}_{\mu}K_{\nu}(z, w)$ from \eqref{def2varbessel}, we see that
\begin{align}\label{khalfwithz}
{}_{\frac{1}{2}}K_{\frac{-2m-1}{2}}(z,0)=\frac{(-1)^{m+1}\sqrt{\pi}}{\sqrt{2}}\left\{\frac{z^{\frac{2m+1}{2}}}{\Gamma(2m+2)}{}_1F_2\left(1;m+1,m+\frac{3}{2};\frac{z^2}{4}\right)-\frac{\cosh(z)}{z^{\frac{2m+1}{2}}}\right\}.
\end{align}
Now it can be proved along the similar lines as in the proof of Lemma \ref{1f2at-2m} that
\begin{align}\label{z21f2}
&\frac{1}{\Gamma(2m+2)}{}_1F_2\left(1;m+1,m+\frac{3}{2};\frac{z^2}{4}\right)=-\frac{1}{z^2}\sum_{k=0}^{m-1}\frac{z^{-2k}}{\Gamma(2m-2k)}+\frac{1}{z^{2m+1}}\sinh(z).
\end{align}
Substitute \eqref{z21f2} in \eqref{khalfwithz} to arrive at \eqref{khalfwithmeqn}.
\end{proof}

\begin{proof}[Corollary \textup{\ref{ram}}][]
Let $a=-2m-1$ in Theorem \ref{extendedid} to get
{\allowdisplaybreaks\begin{align}\label{khalfnb}
&\sum_{n=1}^\infty  \sigma_{-2m-1}(n)e^{-ny}+\frac{1}{2}\left(\left(\frac{2\pi}{y}\right)^{-2m}(-1)^{m+1}+1\right)\zeta(2m+1)-\frac{\zeta(2m+2)}{y}\nonumber\\
&=\frac{2\sqrt{2\pi}}{y^{\frac{1}{2}-m}}\sum_{n=1}^\infty\sigma_{-2m-1}(n)n^{\frac{2m+1}{2}}\Bigg\{{}_{\frac{1}{2}}{K}_{\frac{-2m-1}{2}}\left(\frac{4\pi^2n}{y},0\right)-(-1)^{m+1}\pi2^{\frac{1}{2}-2m}\left(\frac{4\pi^2n}{y}\right)^{\frac{2m-3}{2}}\nonumber\\
&\hspace{1mm} \times A_m\left(\frac{1}{2},\frac{-2m-1}{2},0;\frac{4\pi^2n}{y}\right)\Bigg\}-(-1)^{m+1}y(2\pi)^{2m-2}\sum_{k=0}^m\frac{\zeta(2k-2m+1)\zeta(2k+2)}{\Gamma(2m-2k)}\left(\frac{4\pi^2}{y}\right)^{-2k}.
\end{align}}
By using \eqref{khalfwithmeqn} with $z=4\pi^2n/y$ and the definition of $A_m$ in \eqref{am}, we see that
\begin{align}\label{khalfexp}
&{}_{\frac{1}{2}}{K}_{\frac{-2m-1}{2}}\left(\frac{4\pi^2n}{y},0\right)-(-1)^{m+1}\pi2^{\frac{1}{2}-2m}\left(\frac{4\pi^2n}{y}\right)^{\frac{2m-3}{2}} A_m\left(\frac{1}{2},\frac{-2m-1}{2},0;\frac{4\pi^2n}{y}\right)\nonumber\\
&=\frac{(-1)^m\sqrt{\pi}}{\sqrt{2}(ny)^{m+\frac{1}{2}}}\left(\frac{2\pi}{y}\right)^{-2m-1}\exp\left(-\frac{4\pi^2n}{y}\right).
\end{align}
From \eqref{khalfnb} and \eqref{khalfexp},
{\allowdisplaybreaks\begin{align}\label{idinzetazeta}
&\sum_{n=1}^\infty  \sigma_{-2m-1}(n)e^{-ny}+\frac{1}{2}\left(\left(\frac{2\pi}{y}\right)^{-2m}(-1)^{m+1}+1\right)\zeta(2m+1)-\frac{\zeta(2m+2)}{y}\nonumber\\
&=\left(\frac{2\pi}{y}\right)^{-2m}(-1)^m\sum_{n=1}^\infty\sigma_{-2m-1}(n)\exp\left(-\frac{4\pi^2n}{y}\right)\nonumber\\
&\qquad-(-1)^{m+1}y(2\pi)^{2m-2}\sum_{k=0}^m\frac{\zeta(2k-2m+1)\zeta(2k+2)}{\Gamma(2m-2k)}\left(\frac{4\pi^2}{y}\right)^{-2k}\nonumber\\
&=\left(\frac{2\pi}{y}\right)^{-2m}(-1)^m\sum_{n=1}^\infty\sigma_{-2m-1}(n)\exp\left(-\frac{4\pi^2n}{y}\right)\nonumber\\
&\qquad-\frac{y^{2m+1}}{2}\sum_{k=0}^m\frac{(-1)^{k}B_{2m+2-2k}B_{2k}}{(2m+2-2k)!(2k)!}\left(\frac{2\pi}{y}\right)^{2k},
\end{align}}%
where in the last step, we replaced $k$ by $m-k$ in the finite sum and then used \eqref{zeta(2m)} and \eqref{zeta1-2m}.
Now use \eqref{zeta(2m)} on the left-hand side of \eqref{idinzetazeta} to obtain 
{\allowdisplaybreaks\begin{align*}
&\sum_{n=1}^\infty  \sigma_{-2m-1}(n)e^{-ny}+\frac{1}{2}\zeta(2m+1)+\frac{(-1)^{m+1}}{2}\left(\frac{y}{2\pi}\right)^{2m}\zeta(2m+1)+\frac{(-1)^{m+1}(2\pi)^{2m+2}B_{2m+2}}{2y(2m+2)!}\nonumber\\
&=\left(\frac{2\pi}{y}\right)^{-2m}(-1)^m\sum_{n=1}^\infty\sigma_{-2m-1}(n)\exp\left(-\frac{4\pi^2n}{y}\right)-\frac{y^{2m+1}}{2}\sum_{k=0}^m\frac{(-1)^{k}B_{2m+2-2k}B_{2k}}{(2m+2-2k)!(2k)!}\left(\frac{2\pi}{y}\right)^{2k}.
\end{align*}}
Now multiply both sides of the above expression by $\left(\frac{y}{2}\right)^{-m}$ and simplify to see that
{\allowdisplaybreaks\begin{align*}
&\left(\frac{y}{2}\right)^{-m}\left\{\frac{1}{2}\zeta(2m+1)+\sum_{n=1}^\infty \sigma_{-2m-1}(n)e^{-ny}\right\}-\left(\frac{-y}{2\pi^2}\right)^m\left\{\frac{1}{2}\zeta(2m+1)+\sum_{n=1}^\infty\sigma_{-2m-1}(n)e^{-\frac{4\pi^2n}{y}}\right\}\nonumber\\
&=-2^{m-1}y^{m+1}\sum_{k=0}^{m+1}\frac{(-1)^{k}B_{2m+2-2k}B_{2k}}{(2m+2-2k)!(2k)!}\left(\frac{2\pi}{y}\right)^{2k}.
\end{align*}}
Finally let $y=2\alpha$ and $\alpha\beta=\pi^2$ and simplify to get \eqref{rameqn}.
\end{proof}

\subsection{Transformation formula for the logarithm of the Dedekind eta function}\label{a-1}

\begin{proof}[Corollary \textup{\ref{maineqn2cor}}][]
Let $m=0$ in Theorem \ref{extendedid} and then let $a\to-1$ so that 
\begin{align*}
&\sum_{n=1}^{\infty}\sigma_{-1}(n)e^{-ny}+\lim_{a\to-1}\left\{\left(\left(\frac{2\pi}{y}\right)^{1+a}\mathrm{cosec}\left(\frac{\pi a}{2}\right)+1\right)\zeta(-a)\right\}-\frac{1}{y}\zeta(2)\nonumber\\
&=\frac{2\sqrt{2\pi}}{\sqrt{y}}\sum_{n=1}^\infty \sigma_{-1}(n)\sqrt{n}{}_{\frac{1}{2}}{K}_{\frac{-1}{2}}\left(\frac{4\pi^2n}{y},0\right)+\frac{2\sqrt{2\pi}}{\sqrt{y}}\left(\frac{y^{3/2}}{24\sqrt{2\pi}}\lim_{a\to-1}\frac{\zeta(a+2)}{\G\left(\frac{-1-a}{2}\right)}\right).
\end{align*}
Using the limit evaluations $\displaystyle\lim_{a\to-1}\frac{\zeta(a+2)}{\G\left(\frac{-1-a}{2}\right)}=-\frac{1}{2}$ and 
\begin{align*}
\lim_{a\to-1}\left\{\left(\left(\frac{2\pi}{y}\right)^{1+a}\mathrm{cosec}\left(\frac{\pi a}{2}\right)+1\right)\zeta(-a)\right\}=\log\left(\frac{2\pi}{y}\right)
\end{align*}
in the above identity, invoking \eqref{mueq-nu} with $\nu=-1/2$ and using $\zeta(2)=\pi^2/6$, we are led to 
\begin{align*}
&\sum_{n=1}^\infty  \sigma_{-1}(n)e^{-ny}+\frac{1}{2}\log\left(\frac{2\pi}{y}\right)-\frac{\pi^2}{6y}+\frac{y}{24}=\sum_{n=1}^{\infty}\sigma_{-1}(n)e^{-\frac{4\pi^2n}{y}}.
\end{align*}
upon simplification. To obtain \eqref{maineqn2alt}, simply let $y=2\a$ and use the fact that $\a\b=\pi^2$ in the above identity.
\end{proof}

\section{Concluding Remarks and future directions}\label{cr}

Koshliakov \cite{kosh1938} studied an integral transform with respect to the $w=0$ case of Watson's kernel $\mathscr{G}_{\nu}(x, w)$, that is, the first Koshliakov kernel \eqref{kk1}, which motivated us to obtain an explicit transformation formula for $\sum_{n=1}^{\infty}\sigma_a(n)e^{-ny}$ for \emph{any} $a\in\mathbb{C}$ and Re$(y)>0$ that was missing in the literature. The wealth of information that this transformation contains is evident from the numerous corollaries derived from it in Section \ref{mr}. These include the modular properties of Eisenstein series on $\textup{SL}_{2}\left(\mathbb{Z}\right)$ as well as explicit transformations for the series $\sum_{n=1}^{\infty}\sigma_{2m}(n)e^{-ny}, m\in\mathbb{Z}$.  They include a novel companion of Ramanujan's famous formula for $\zeta(2m+1)$ given in Corollary \ref{companion}. 

Are there any applications of results such as \eqref{kanot} or \eqref{zeta3formula} from the point of view of transcendental number theory? Note that Erd\"{o}s \cite{erdos} has shown that the series $\sum_{n=1}^{\infty}d(n)q^{-n}$ is irrational for any integer $q$ with $|q|\geq 2$. Now observe that, if we let $y=\log (2)$ in \eqref{kanot}, the left-hand side becomes $
\sum_{n=1}^\infty\frac{1}{e^{n\log 2}-1}=\sum_{n=1}^\infty d(n)2^{-n}$
so that, Erd\"{o}s result implies that it is an irrational number and hence \eqref{kanot}, in turn, implies that $$\frac{\left(\gamma-\log(\log (2))\right)}{\log (2)}+\frac{2}{y}\sum_{n=1}^{\infty}\left\{\log\left(\frac{2\pi n}{\log (2)}\right)-\frac{1}{2}\left(\psi\left(\frac{2\pi in}{\log (2)}\right)+\psi\left(-\frac{2\pi in}{\log (2)}\right)\right)\right\}$$
is irrational.

The two-variable extension of the modified Bessel function, namely, ${}_{\mu}K_{\nu}(z, w)$, is instrumental in generalizing the well-known modular transformations (as well as modular-type) appearing in the results of Section \ref{mr}, and hence deserves further study. In this paper, we have restricted ourselves to obtaining only those properties of ${}_{\mu}K_{\nu}(z, w)$ relevant to deriving various transformations.

Ramanujan conceived an overarching generalization of his formula \eqref{rameqn}. See \cite[pp.~429-432]{bcbramfournote} for its rigorous formulation and a proof. In the same spirit, it may be interesting to look at the corresponding generalization of \eqref{companion}.

Our companion to Ramanujan's formula for $\zeta(2m+1)$, namely Equation \eqref{companioneqn}, contains the higher Herglotz function of Vlasenko and Zagier. We note that the Herglotz as well as the higher Herglotz functions are useful in algebraic as well as analytic number theory. Thus it would be of merit to study \eqref{companioneqn} from this viewpoint.

Letting $\a=\b=\pi$ in Corollary \ref{companion} implies that the hyperbolic cotangent Dirichlet series and $\sum_{n=1}^{\infty}\left(\psi(in)+\psi(-in)\right)n^{-2m}$ are intimately connected at even positive integers $s=2m$, that is,
\begin{align}\label{berndtanalogue}
\sum_{n=1}^{\infty}\frac{\textup{coth}(\pi n)}{n^{2m}}&=(-1)^{m+1}\frac{2\g}{\pi}\zeta(2m)+\frac{2}{\pi}\sum_{k=0}^{m-1}\frac{2^{2k-1}B_{2k}\zeta(2m-2k+1)\pi^{2k}}{(2k)!}\nonumber\\
&\quad+\frac{(-1)^{m+1}}{\pi}\sum_{n=1}^{\infty}\frac{\psi(in)+\psi(-in)}{n^{2m}}.
\end{align}
Equation \eqref{berndtanalogue} is an analogue of Lerch's formula \cite{lerch}, namely, when $m\in\mathbb{N}$ is odd, 
\begin{equation*}
\sum_{n=1}^{\infty}\frac{\textup{coth}(\pi n)}{n^{2m+1}}=2^{2m}\pi^{2m+1}\sum_{k=0}^{m+1}(-1)^{k+1}\frac{B_{2k}}{(2k)!}\frac{B_{2m+2-2k}}{(2m+2-2k)!}.
\end{equation*}
This suggests further work, in the setting of \eqref{berndtanalogue}, along the lines in \cite{komori} and \cite{straubramanujan}.

Finally, it may be worthwhile extending our Theorem \ref{koshlyakovg-wthm} wherein the kernel $\mathscr{G}_{\nu}(x, w)$, or equivalently $\varpi_{\mu, \nu}(x)$, is replaced by its generalizations, for example, those studied in \cite{bhatnagarbelgique}, \cite{bhatnagar1954} and \cite{olkharathie}.

\begin{center}
\textbf{Acknowledgements}
\end{center}

The authors sincerely thanks the referees for giving very nice suggestions which improved the exposition of the paper. They would also like to thank Olivia da Costa Maya, Professors Alexandru Zaharescu, Lin Jiu and Gaurav Dwivedi respectively for sending them the copies of references \cite{bhatnagarbelgique}, \cite{dahiyaromanian}, \cite{jekhowsky1} and \cite{mainrasingh}. They also thank Becky Burner, a library staff at the University of Illinois at Urbana-Champaign, for procuring the copies of \cite{bhatnagar1954} and \cite{singhrajasthan}, Dr. T. S. Kumbar, the librarian at IIT Gandhinagar for obtaining a copy of \cite{bhatnagarganita}, and Suresh Kumar, the librarian at the Harish-Chandra Research Institute, for arranging a copy of \cite{bhatnagar1954f}. The first author's research was supported by the CRG grant CRG/2020/002367. He sincerely thanks SERB for the support. The third author's research was supported by the grant IBS-R003-D1 of the IBS-CGP, POSTECH, South Korea and by IIT Gandhinagar. He sincerely thanks both the institutes for the support.

\textbf{Conflict of interest statement:} On behalf of all authors, the corresponding author states that there is no conflict of interest.

\textbf{Data availability statement:} Data sharing not applicable to this article as no datasets were generated or analysed during the current study.

\end{document}